\newtheorem{thm}{Theorem}[section]
\newtheorem{lem}[thm]{Lemma}
\newtheorem{prop}[thm]{Proposition}
\newtheorem{cor}[thm]{Corollary}
\theoremstyle{definition}
\newtheorem{df}[thm]{Definition}
\newtheorem{rem}[thm]{Remark}
\numberwithin{equation}{section}
\renewcommand{\phi}{\varphi}
\newcommand{\ep}{\varepsilon}
\newcommand{\Ad}{\operatorname{Ad}}
\newcommand{\aInn}{\overline{\operatorname{Inn}}}
\newcommand{\Aut}{\operatorname{Aut}}
\newcommand{\Coker}{\operatorname{Coker}}
\newcommand{\Ext}{\operatorname{Ext}}
\newcommand{\HInn}{\operatorname{HInn}}
\newcommand{\Homeo}{\operatorname{Homeo}}
\newcommand{\id}{\operatorname{id}}
\newcommand{\Lip}{\operatorname{Lip}}
\newcommand{\Tor}{\operatorname{Tor}}
\newcommand{\tr}{\operatorname{tr}}
\newcommand{\N}{\mathbb{N}}
\newcommand{\Z}{\mathbb{Z}}
\newcommand{\Q}{\mathbb{Q}}
\newcommand{\R}{\mathbb{R}}
\newcommand{\C}{\mathbb{C}}
\newcommand{\T}{\mathbb{T}}
\title{$\mathcal{Z}$-stability of crossed products 
by strongly outer actions}
\author{Hiroki Matui \\
Graduate School of Science \\
Chiba University \\
Inage-ku, Chiba 263-8522, Japan 
\and
Yasuhiko Sato \\
Department of Mathematics \\
Hokkaido University \\
Kita-ku, Sapporo 060-0810, Japan}
\date{}
\begin{document}
\maketitle

\begin{abstract}
We consider a certain class of 
unital simple stably finite $C^*$-algebras 
which absorb the Jiang-Su algebra $\mathcal{Z}$ tensorially. 
Under a mild assumption, 
we show that the crossed product of a $C^*$-algebra in this class 
by a strongly outer action of $\Z^N$ or a finite group 
is $\mathcal{Z}$-stable. 
As an application, we also prove that 
all strongly outer actions of $\Z^2$ on $\mathcal{Z}$ are 
mutually cocycle conjugate. 
\end{abstract}

\section{Introduction}

The Jiang-Su algebra $\mathcal{Z}$, 
which was introduced by Jiang and Su in \cite{JS}, 
is a unital, simple, separable, stably finite and nuclear $C^*$-algebra 
$KK$-equivalent to $\C$. 
In Elliott's program to classify nuclear $C^*$-algebras 
via $K$-theoretic invariants 
(see \cite{Rtext} for an introduction to this subject), 
the Jiang-Su algebra plays a central role. 
More precisely, 
the recent progress of the program \cite{Winter0708,LN1} tells us that 
one can only expect $K$-theoretic classification results 
up to $\mathcal{Z}$-stability, 
where a $C^*$-algebra is called $\mathcal{Z}$-stable 
if it absorbs $\mathcal{Z}$ tensorially. 
One may view $\mathcal{Z}$ as being the stably finite analogue 
of the Cuntz algebra $\mathcal{O}_\infty$. 

It is then natural to study permanence of $\mathcal{Z}$-stability 
under taking crossed products. 
I. Hirshberg and W. Winter \cite{HW} showed that 
$\mathcal{D}$-stability passes to crossed products 
by $\Z$, $\R$ or compact groups 
provided the group action has a Rohlin property, 
where $\mathcal{D}$ is 
a $K_1$-injective strongly self-absorbing $C^*$-algebra. 
The second-named author \cite{S2,S3} showed 
$\mathcal{Z}$-stability of the crossed product $A\rtimes_\alpha\Z$ 
for certain projectionless $C^*$-algebras with unique trace and 
automorphisms whose extension to the weak closure 
in the tracial representation are aperiodic. 
The present paper generalizes this result and 
gives a positive solution to the question of permanence 
under taking crossed products in the following setting. 
Let $A$ be a unital, simple, separable and $\mathcal{Z}$-stable $C^*$-algebra 
such that $A\otimes B$ has tracial rank zero for any UHF algebra $B$. 
Let $G$ be $\Z^N$ or a finite group. 
An action $\alpha:G\curvearrowright A$ is said to be strongly outer 
if its extension to the weak closure in any tracial representation is outer 
(see Definition \ref{so&wR} for the precise definition). 
Under certain technical assumptions, 
we show that any strongly outer action $\alpha$ has the weak Rohlin property 
(Theorem \ref{wRohlintype}). 
The weak Rohlin property, which was first introduced in \cite{S3}, 
is a variant of the Rohlin property (Definition \ref{so&wR}). 
Usually the definition of the (tracial) Rohlin property 
for single automorphisms or group actions involves projections. 
However the $C^*$-algebra $A$ may not contain enough projections, 
and so we have to weaken the definition, that is, 
we use positive elements in place of projections. 
Then it will be shown that 
when $A$ is further assumed to be nuclear and 
satisfy the universal coefficient theorem, 
any action $\alpha:G\curvearrowright A$ with the weak Rohlin property is 
cocycle conjugate to $\alpha\otimes\id:G\curvearrowright A\otimes\mathcal{Z}$ 
(Corollary \ref{absorbZ0}). 
In particular, 
the crossed product $A\rtimes_\alpha G$ is $\mathcal{Z}$-stable. 
It is also natural to ask 
if the class of $C^*$-algebras under consideration is closed 
under taking crossed products by strongly outer actions of 
$\Z$ or a finite group. 
We give partial answers to this question 
(Theorems \ref{closedbyfnt}, \ref{closedbyZ}). 
With the aid of the classification theorem \cite{Winter0708,LN1}, 
we also prove the following (Corollary \ref{AHclosedbyZ}): 
Let $A$ be a unital simple AH algebra 
with real rank zero and slow dimension growth 
and let $\alpha:\Z\curvearrowright A$ be a strongly outer action. 
Suppose that $A$ has a unique trace and 
$\alpha_k$ induces the identity on $K_i(A)\otimes\Q$ for some $k\in\N$. 
Then the crossed product $A\rtimes_\alpha\Z$ is again 
a unital simple AH algebra with real rank zero and slow dimension growth. 

In the latter half of the paper, 
we study $\Z^2$-actions on the Jiang-Su algebra. 
Classification of group actions is 
one of the most fundamental subjects in the theory of operator algebras. 
For AFD factors, 
a complete classification is known for actions of countable amenable groups. 
However, classification of automorphisms or group actions 
on $C^*$-algebras is still a far less developed subject, 
partly because of $K$-theoretical difficulties. 
We briefly review classification results of automorphisms or $\Z^N$-actions 
known so far. 
For AF and AT algebras, 
A. Kishimoto \cite{K95crelle,K96JFA,K98JOT,K98JFA} showed 
the Rohlin property for a certain class of automorphisms and 
obtained a cocycle conjugacy result. 
The first-named author \cite{M09} extended this result to 
unital simple AH algebras with real rank zero and slow dimension growth. 
The second-named author \cite{S3} proved that 
strongly outer $\Z$-actions on $\mathcal{Z}$ are 
unique up to cocycle conjugacy. 
As for $\Z^N$-actions, 
H. Nakamura \cite{N1} introduced the notion of the Rohlin property and 
classified product type actions of $\Z^2$ on UHF algebras. 
T. Katsura and the first-named author \cite{KM} gave 
a complete classification of uniformly outer $\Z^2$-actions on UHF algebras 
by using the Rohlin property, 
and then this result was extended to a certain class of 
uniformly outer $\Z^2$-actions on unital simple AF algebras \cite{M09}. 
For Kirchberg algebras, 
complete classification of aperiodic automorphisms was given 
by H. Nakamura \cite{N2}. 
M. Izumi and the first-named author \cite{IM} classified 
a large class of $\Z^2$-actions 
and also showed the uniqueness of $\Z^N$-actions 
on the Cuntz algebras $\mathcal{O}_2$ and $\mathcal{O}_\infty$ 
(see also \cite{M08}). 

Our main result in the latter half of this paper is 
the uniqueness of strongly outer $\Z^2$-actions 
on the Jiang-Su algebra $\mathcal{Z}$ 
up to strong cocycle conjugacy (Theorem \ref{uniqueZ2}). 
The proof needs three ingredients. 
The first one is 
the absorption of the trivial action up to cocycle conjugacy, 
which is obtained in the first half of the paper. 
The second is 
the uniqueness of $\Z^2$-actions on UHF algebras of infinite type, 
which was shown in \cite{KM}. 
The last one is a lemma concerning certain homotopies of unitaries 
(Lemma \ref{3torus}). 
In order to prove it, we discuss 
when an almost commuting triple of unitary matrices is approximated 
by a commuting triple of unitary matrices (Lemma \ref{GongLin}). 
In the last section 
we determine when a strongly outer cocycle action of $\Z^2$ 
on UHF algebras or $\mathcal{Z}$ is equivalent to a genuine action.

\section{Preliminaries}

The cardinality of a set $F$ is written $\lvert F\rvert$. 
We let $\log$ be the standard branch 
defined on the complement of the negative real axis.
For a Lipschitz continuous function $f$, 
we denote its Lipschitz constant by $\Lip(f)$. 
The normalized trace on $M_n$ is written $\tr$. 

Let $A$ be a $C^*$-algebra. 
For $a,b\in A$, we mean by $[a,b]$ the commutator $ab-ba$. 
The set of tracial states on $A$ is denoted by $T(A)$. 
For $a\in A$, we define 
\[
\lVert a\rVert_2=\sup_{\tau\in T(A)}\lVert a\rVert_\tau, 
\]
where $\lVert a\rVert_\tau=\tau(a^*a)^{1/2}$. 
If $A$ is simple and $T(A)$ is non-empty, 
then $\lVert\cdot\rVert_2$ is a norm. 
For $\tau\in T(A)$, 
we let $(\pi_\tau,H_\tau)$ denote 
the GNS representation of $A$ associated with $\tau$. 
When $A$ is unital, 
we mean by $U(A)$ the set of all unitaries in $A$. 
The connected component of the identity in $U(A)$ is denoted by $U(A)_0$. 
For $\tau\in T(A)$, 
the de la Harpe-Skandalis determinant associated with $\tau$ is 
written $\Delta_\tau:U(A)_0\to\R/\tau(K_0(A))$ (\cite[Section 1]{HS}). 
We frequently use the following fact: 
when $u,v\in U(A)$ satisfy $\lVert u-1\rVert+\lVert v-1\rVert<2$, 
one has $\tau(\log(uv))=\tau(\log u)+\tau(\log v)$ (see \cite[Lemma 1]{HS}). 
For $u\in U(A)$, 
the inner automorphism induced by $u$ is written $\Ad u$. 
An automorphism $\alpha\in\Aut(A)$ is called outer 
when it is not inner. 
For $\alpha\in\Aut(A)$, 
we let $T(A)^\alpha=\{\tau\in T(A)\mid\tau\circ\alpha=\tau\}$. 
A single automorphism is often identified 
with the $\Z$-action generated by it. 
Let $\alpha:G\curvearrowright A$ be 
an action of a discrete group $G$ on a unital $C^*$-algebra $A$. 
When $\alpha_g$ is outer for all $g\in G$ except for the neutral element, 
the action $\alpha$ is said to be outer. 
We let $T(A)^\alpha=\bigcap_{g\in G}T(A)^{\alpha_g}$. 
The fixed point subalgebra of $A$ is $A^\alpha$. 
When $\phi$ is a homomorphism between $C^*$-algebras, 
$K_0(\phi)$ and $K_1(\phi)$ mean the induced homomorphisms on $K$-groups. 

Let $A$ be a separable $C^*$-algebra. 
Set 
\[
c_0(A)=\{(a_n)\in\ell^\infty(\N,A)\mid
\lim_{n\to\infty}\lVert a_n\rVert=0\},\quad 
A^\infty=\ell^\infty(\N,A)/c_0(A). 
\]
We identify $A$ with the $C^*$-subalgebra of $A^\infty$ 
consisting of equivalence classes of constant sequences. 
We let 
\[
A_\infty=A^\infty\cap A'
\]
and call it the central sequence algebra of $A$. 
A sequence $(x_n)_n\in\ell^\infty(\N,A)$ is called a central sequence 
if $\lVert[a,x_n]\rVert\to0$ as $n\to\infty$ for all $a\in A$. 
A central sequence is a representative of an element in $A_\infty$. 
When $\alpha$ is an automorphism of $A$ or 
an action of a discrete group on $A$, 
we can consider its natural extension on $A^\infty$ and $A_\infty$. 
We denote it by the same symbol $\alpha$. 

We set up some terminology for group actions. 

\begin{df}
Let $\alpha:G\curvearrowright A$ and $\beta:G\curvearrowright B$ 
be actions of a discrete group $G$ on unital $C^*$-algebras $A$ and $B$. 
\begin{enumerate}
\item The two actions $\alpha$ and $\beta$ are said to be conjugate 
when there exists an isomorphism $\mu:A\to B$ such that 
$\alpha_g=\mu^{-1}\circ\beta_g\circ\mu$ for all $g\in G$. 
\item A family of unitaries $\{u_g\}_{g\in G}$ in $A$ is called 
an $\alpha$-cocycle 
if one has $u_g\alpha_g(u_h)=u_{gh}$ for all $g,h\in G$. 
When $\{u_g\}_g$ is an $\alpha$-cocycle, 
the perturbed action $\alpha^u:G\curvearrowright A$ is 
defined by $\alpha^u_g=\Ad u_g\circ\alpha_g$. 
\item The two actions $\alpha$ and $\beta$ are said to be cocycle conjugate 
if there exists an $\alpha$-cocycle $\{u_g\}_{g\in G}$ in $A$ such that 
$\alpha^u$ is conjugate to $\beta$. 
\item The two actions $\alpha$ and $\beta$ are said to be 
strongly cocycle conjugate 
if there exist an $\alpha$-cocycle $\{u_g\}_{g\in G}$ in $A$ 
and a sequence of unitaries $\{v_n\}_{n=1}^\infty$ in $A$ such that 
$\alpha^u$ is conjugate to $\beta$ and 
$\lim_{n\to\infty}\lVert u_g-v_n\alpha_g(v_n^*)\rVert=0$ for all $g\in G$. 
We remark that if $\alpha$ and $\beta$ are strongly cocycle conjugate, 
then they are approximately conjugate 
in the sense of \cite[Definition 7 (1)]{N1}. 
\end{enumerate}
\end{df}

\begin{df}\label{ca}
Let $A$ be a unital $C^*$-algebra and let $G$ be a discrete group. 
\begin{enumerate}
\item A pair $(\alpha,u)$ of 
a map $\alpha:G\to\Aut(A)$ and a map $u:G\times G\to U(A)$ 
is called a cocycle action of $G$ on $A$ 
if 
\[
\alpha_g\circ\alpha_h=\Ad u(g,h)\circ\alpha_{gh}
\]
and 
\[
u(g,h)u(gh,k)=\alpha_g(u(h,k))u(g,hk)
\]
hold for any $g,h,k\in G$. 
Notice that $\alpha$ gives rise to a (genuine) action of $G$ on $A_\infty$. 
\item A cocycle action $(\alpha,u)$ is said to be outer 
if $\alpha_g$ is outer for every $g\in G$ except for the neutral element. 
\item Two cocycle actions $(\alpha,u)$ and $(\beta,v)$ of $G$ on $A$ are 
said to be equivalent 
if there exists a map $w:G\to U(A)$ such that 
\[
\alpha_g=\Ad w(g)\circ\beta_g
\]
and 
\[
u(g,h)=w(g)\beta_g(w(h))v(g,h)w(gh)^*
\]
for every $g,h\in G$. 
\end{enumerate}
\end{df}

We denote the Jiang-Su algebra by $\mathcal{Z}$ and 
the unique trace on $\mathcal{Z}$ by $\omega$. 
X. Jiang and H. Su proved in \cite[Theorem 3, Theorem 4]{JS} that 
any unital endomorphisms of $\mathcal{Z}$ is approximately inner and that 
$\mathcal{Z}$ is isomorphic to 
the infinite tensor product of copies of itself. 
In particular, 
$\mathcal{Z}$ is strongly self-absorbing, cf. \cite[Definition 1.3]{TW07TAMS}. 
These facts are freely used in this article. 
When a $C^*$-algebra $A$ satisfies $A\cong A\otimes\mathcal{Z}$, 
we say that $A$ absorbs $\mathcal{Z}$ tensorially, 
or $A$ is $\mathcal{Z}$-stable. 

We let $Q$ denote the universal UHF algebra, that is, 
$Q$ is the UHF algebra satisfying $K_0(Q)=\Q$. 

We recall the definition of tracial rank zero introduced by H. Lin. 

\begin{df}[{\cite{L01TAMS,L01PLMS}}]
A unital simple $C^*$-algebra $A$ is said to have tracial rank zero 
if for any finite subset $F\subset A$, any $\ep>0$ and 
any non-zero positive element $x\in A$ 
there exists a finite dimensional subalgebra $B\subset A$ with $p=1_B$ 
satisfying the following. 
\begin{enumerate}
\item $\lVert[a,p]\rVert<\ep$ for all $a\in F$. 
\item The distance from $pap$ to $B$ is less than $\ep$ for all $a\in F$. 
\item $1-p$ is unitarily equivalent to a projection in $\overline{xAx}$. 
\end{enumerate}
\end{df}

In \cite{L04Duke}, H. Lin gave a classification theorem 
for unital separable simple nuclear $C^*$-algebras 
with tracial rank zero which satisfy the UCT 
(\cite[Theorem 5.2]{L04Duke}). 
Indeed, the class of such $C^*$-algebras agrees with 
the class of all unital simple AH algebras 
with real rank zero and slow dimension growth. 

The following lemma is a variant of \cite[Theorem 3.6]{LN2}. 

\begin{lem}\label{onlyQ}
Let $A$ be a unital simple separable and 
approximately divisible $C^*$-algebra. 
If $A\otimes Q$ has tracial rank zero, then so does $A$. 
\end{lem}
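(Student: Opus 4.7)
The plan is to use approximate divisibility of $A$ to embed $Q$ unitally into the central sequence algebra $A_\infty$, and then to transfer tracial rank zero decompositions from $A\otimes Q$ back to $A$ via this embedding together with semiprojectivity of finite-dimensional $C^*$-algebras.

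First, I would construct a unital $*$-homomorphism $\iota:Q\to A_\infty$. Approximate divisibility yields, for every $n\ge 2$, a unital $*$-homomorphism $M_n\oplus M_{n+1}\to A_\infty$; using the reindexing trick one can arrange such copies (for varying $n$) to mutually commute inside $A_\infty$. By successively splitting the central unit into equivalent projections---using the comparison properties available in the central sequence algebra of an approximately divisible simple algebra---one assembles a unital matricial chain inside $A_\infty$ whose closure is $Q$ (every prime power eventually divides a factor of $k(k+1)$ for some $k$ in the sequence, so every UHF block appears). Since $\iota(Q)\subset A'\cap A^\infty$, the assignment $a\otimes q\mapsto a\,\iota(q)$ extends to a unital $*$-homomorphism
\[
\Phi:A\otimes Q\longrightarrow A^\infty
\]
which restricts to the canonical inclusion on $A$.

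Next, fix a finite set $F\subset A$, a tolerance $\ep>0$, and a non-zero positive element $x\in A$. Apply the tracial rank zero property of $A\otimes Q$ to the data $F\otimes 1$, $\ep/2$, $x\otimes 1$ to obtain a finite-dimensional subalgebra $D\subset A\otimes Q$ with unit $q$ satisfying the three defining conditions. Pushing this data through $\Phi$ yields a finite-dimensional subalgebra $\tilde D=\Phi(D)\subset A^\infty$ with unit $\tilde q=\Phi(q)$. Because $\iota(Q)$ commutes with $A$ inside $A^\infty$, the three tracial rank zero conditions transfer verbatim to $A^\infty$ relative to $F\subset A$ and $x\in A_+$, with $1-\tilde q$ unitarily equivalent in $A^\infty$ to a projection in $\overline{xA^\infty x}$.

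Finally, lift $\tilde D\hookrightarrow A^\infty$ back to $A$ using semiprojectivity of finite-dimensional $C^*$-algebras: this yields unital $*$-homomorphisms $\rho_n:\tilde D\to A$ whose diagonal sequence represents the inclusion $\tilde D\subset A^\infty$. For sufficiently large $n$, the finite-dimensional subalgebras $D_n=\rho_n(\tilde D)\subset A$ with units $p_n=\rho_n(\tilde q)$ satisfy the first two conditions with tolerance $\ep$. For the third, lift a witness unitary $u\in A^\infty$ implementing the equivalence together with a projection representative of $u(1-\tilde q)u^*$ from $\overline{xA^\infty x}$ down to sequences of unitaries in $A$ and projections in $\overline{xAx}$, and apply the standard perturbation argument: two projections close in norm are unitarily equivalent via a unitary close to the identity, producing an honest unitary equivalence of $1-p_n$ with a projection in $\overline{xAx}$ for large $n$. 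The main obstacle is the first step, since approximate divisibility only supplies $M_n\oplus M_{n+1}$-type blocks rather than single matrix algebras of prescribed size; assembling a unital copy of $Q$ in $A_\infty$ requires a careful combinatorial argument combined with the comparison theory available inside the central sequence algebra of an approximately divisible simple $C^*$-algebra.
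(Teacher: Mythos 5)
There is a genuine gap at the very first step, and it is not merely a technical obstacle that ``careful combinatorics'' can overcome: for a general unital simple separable approximately divisible $C^*$-algebra $A$ there is \emph{no} unital embedding of $Q$ into $A_\infty$. Take $A$ to be the UHF algebra of type $3^\infty$. A unital copy of $Q$ (or even of $M_2$) in $A_\infty$ would produce a projection $p\in A_\infty$ with $\tau(p)=1/2$ for the induced trace, whereas every projection in $(M_{3^\infty})_\infty$ has trace in $\Z[1/3]\cap[0,1]$, which does not contain $1/2$. Approximate divisibility only guarantees unital maps $M_n\oplus M_{n+1}\to A_\infty$; since $\gcd(n,n+1)=1$ these impose no $K_0$-divisibility constraint, and that is exactly as much as one can hope for. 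So your map $\Phi:A\otimes Q\to A^\infty$ does not exist in general, and the rest of the argument has nothing to push forward.

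The paper's proof is designed precisely to avoid this. It applies the tracial rank zero hypothesis to $A\otimes Q$, arranges the resulting finite dimensional algebra $B$ to sit inside $A\otimes M_m$, and then \emph{doubles} it via $\phi(b)=b\oplus b$ into $D=A\otimes(M_m\oplus M_{m+1})$ --- an algebra which, unlike $A\otimes M_m$ itself, does admit a unital homomorphism to $A^\infty$ restricting to the identity on $A$, by approximate divisibility. The price of doubling is that $1_D-\phi(p)$ acquires an extra ``column'' of minimal projections from the $M_{m+1}$ summand; this is absorbed by first splitting $x$ into two orthogonal non-zero positive elements $x_1,x_2$ and using simplicity to compare the extra minimal projections into the hereditary subalgebra generated by $x_1\otimes 1$, while the original defect sits under $x_2\otimes 1$. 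Your proposal also glosses over this third condition (``the conditions transfer verbatim''), but the more fundamental problem is the nonexistent embedding of $Q$. If you want to salvage your outline, you must replace the target $Q$ by $M_m\oplus M_{m+1}$ and then confront the unit mismatch exactly as the paper does.
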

\begin{proof}
Suppose that we are given a finite subset $F\subset A$, $\ep>0$ and 
a non-zero positive element $x\in A$. 
We can find mutually orthogonal non-zero positive elements $x_1,x_2$ 
in $\overline{xAx}$, because $A$ is simple and infinite dimensional. 
Simplicity of $A$ also implies that 
there exists $k\in\N$ such that 
for any $n\geq k$ and a minimal projection $e$ in $M_n$, 
$1\otimes e$ is Murray-von Neumann equivalent to a projection 
in the hereditary subalgebra generated by $x_1\otimes1$ in $A\otimes M_n$. 
Since $A\otimes Q$ has tracial rank zero, 
there exists a finite dimensional subalgebra $B\subset A\otimes Q$ 
with $p=1_B$ satisfying the following. 
\begin{itemize}
\item $\lVert[a\otimes1,p]\rVert<\ep$ for all $a\in F$. 
\item The distance from $p(a\otimes1)p$ to $B$ is less than $\ep$ 
for all $a\in F$. 
\item $1-p$ is Murray-von Neumann equivalent to a projection 
in the hereditary subalgebra generated by $x_2\otimes 1$ in $A\otimes Q$. 
\end{itemize}
Clearly we may assume that there exists $m\geq k$ such that 
$B$ is a subalgebra of $A\otimes M_m$ and 
$1-p$ is Murray-von Neumann equivalent to a projection 
in the hereditary subalgebra generated by $x_2\otimes 1$ in $A\otimes M_m$. 
Set $D=A\otimes(M_m\oplus M_{m+1})$. 
Define a homomorphism $\phi:B\to D$ 
by $\phi(b)=b\oplus b$. 
One can easily verify the following. 
\begin{itemize}
\item $\lVert[a\otimes1,\phi(p)]\rVert<\ep$ for all $a\in F$. 
\item The distance from $\phi(p)(a\otimes1)\phi(p)$ to $\phi(B)$ is 
less than $\ep$ for all $a\in F$. 
\item $1_D-\phi(p)$ is Murray-von Neumann equivalent to a projection 
in the hereditary subalgebra generated by $x\otimes 1$ in $D$. 
\end{itemize}
As $A$ is approximately divisible, one can find 
a unital homomorphism from $M_m\oplus M_{m+1}$ to $A_\infty$ 
(see \cite[Lemma 3.1]{M09} for example). 
It follows that there exists a unital homomorphism $\pi:D\to A^\infty$ 
such that $\pi(a\otimes1)=a$ for any $a\in A$. 
The finite dimensional subalgebra $\pi(\phi(B))\subset A^\infty$ 
lifts to a finite dimensional subalgebra of $A$, 
and so the proof is completed. 
\end{proof}

We introduce two classes $\mathcal{C}$ and $\mathcal{C}_0$ of $C^*$-algebras 
as follows. 

\begin{df}
Let $\mathcal{C}$ be 
the class of unital simple separable $C^*$-algebras $A$ such that 
$A\otimes\mathcal{Z}\cong A$ and $A\otimes Q$ has tracial rank zero. 
Let $\mathcal{C}_0$ be the family of all $C^*$-algebras $A\in\mathcal{C}$ 
which are nuclear and satisfy the universal coefficient theorem. 
\end{df}

\begin{rem}\label{RemonC0}
\begin{enumerate}
\item By Lemma \ref{onlyQ}, if $A$ is in $\mathcal{C}$, then 
$A\otimes B$ has tracial rank zero for any UHF algebra $B$. 
\item A unital simple separable nuclear $C^*$-algebra with tracial rank zero 
satisfying the UCT is in $\mathcal{C}_0$, 
because it absorbs $\mathcal{Z}$ tensorially 
by \cite[Corollary 3.1]{TW08CJM}. 
The Jiang-Su algebra $\mathcal{Z}$ itself is in $\mathcal{C}_0$ 
by \cite[Theorem 4, Theorem 5]{JS}. 
\item The class $\mathcal{C}_0$ is the same as that 
treated in the classification theorems 
due to W. Winter \cite[Theorem 7.1]{Winter0708}, 
H. Lin and Z. Niu \cite[Theorem 5.4]{LN1} 
(see also \cite[Theorem 5.1]{TW0903}). 
Namely, for any $A,B\in\mathcal{C}_0$ and 
any graded ordered isomorphism $\phi:K_*(A)\to K_*(B)$, 
there exists an isomorphism $\Phi:A\to B$ inducing $\phi$. 
\item By \cite[Corollary 8.6]{Winter0708}, 
the following three conditions are equivalent for $A\in\mathcal{C}_0$: 
(i) $A$ has tracial rank zero, (ii) $A$ has real rank zero, 
(iii) $K_0(A)$ has Riesz interpolation and $K_0(A)/\Tor(K_0(A))\neq\Z$. 
\item If $A$ is a unital simple ASH algebra 
whose projections separate traces, then 
$A\otimes B$ has tracial rank zero for any UHF algebra $B$. 
In fact, by \cite[Corollary 2.2]{NgWinter}, 
$A$ has locally finite decomposition rank. 
By \cite[Theorem 1.4 (e)]{BKR}, $A\otimes B$ has real rank zero 
since projections separate traces. 
Then \cite[Theorem 2.1]{Winter07JFA} tells us that 
$A\otimes B$ has tracial rank zero. 
We remark that $A$ satisfies the UCT by \cite[Corollary 2.1]{NgWinter}. 
\item In \cite{W1006,Toms0910} it is shown that 
a unital simple ASH algebra $A$ has slow dimension growth 
if and only if $A$ is $\mathcal{Z}$-stable. 
It is also known that 
if $A$ is a unital simple separable $C^*$-algebra 
with finite decomposition rank, then 
$A\cong A\otimes\mathcal{Z}$ (\cite[Theorem 5.1]{Winter0806}). 
\item Let $\alpha\in\Homeo(X)$ be a minimal homeomorphism 
of an infinite, compact, metrizable, finite dimensional space $X$ and 
let $A=C(X)\rtimes_\alpha\Z$. 
By Theorem 0.2 of \cite{TW0903}, $A$ is $\mathcal{Z}$-stable. 
By Proposition 5.3 and Proposition 5.7 of \cite{TW0903}, 
if projections in $A$ separate traces, then 
$A\otimes B$ has tracial rank zero for any UHF algebra $B$ of infinite type, 
and hence $A$ is in $\mathcal{C}_0$. 
\item When $A$ is in $\mathcal{C}$ and $\tau\in T(A)$ is extremal, 
it is easy to see that 
$\pi_\tau(A)''$ is the hyperfinite II$_1$-factor 
(see \cite[Lemma 2.16]{OP2}). 
\end{enumerate}
\end{rem}

Next we introduce the notions of strong outerness and 
the weak Rohlin property for group actions on $C^*$-algebras. 
Let $G$ be a discrete group. 
For a finite subset $F\subset G$ and $\ep>0$, 
we say that a finite subset $K\subset G$ is $(F,\ep)$-invariant 
if $\lvert K\cap\bigcap_{g\in F}g^{-1}K\rvert\geq(1{-}\ep)\lvert K\rvert$. 

\begin{df}\label{so&wR}
Let $A$ be a unital simple $C^*$-algebra with $T(A)$ non-empty. 
\begin{enumerate}
\item We say that an automorphism $\alpha\in\Aut(A)$ 
is not weakly inner for $\tau\in T(A)^\alpha$ 
if the weak extension of $\alpha$ to an automorphism of $\pi_\tau(A)''$ 
is outer. 
\item An action $\alpha:G\curvearrowright A$ of a discrete group $G$ on $A$ 
is said to be strongly outer 
if $\alpha_g$ is not weakly inner 
for any $\tau\in T(A)^{\alpha_g}$ and $g\in G\setminus\{e\}$. 
Likewise, a cocycle action $(\alpha,u)$ of $G$ on $A$ 
is said to be strongly outer 
if $\alpha_g$ is not weakly inner 
for any $\tau\in T(A)^{\alpha_g}$ and $g\in G\setminus\{e\}$. 
\item Assume $G=\Z^N$. 
We say that $\alpha$ has the weak Rohlin property 
if for any finite subset $F\subset G$ and $\ep>0$ 
there exists an $(F,\ep)$-invariant finite subset $K\subset G$ and 
a central sequence $(f_n)_n$ in $A$ such that $0\leq f_n\leq1$, 
\[
\lim_{n\to\infty}\lVert\alpha_g(f_n)\alpha_h(f_n)\rVert=0
\]
for all $g,h\in K$ with $g\neq h$ and 
\[
\lim_{n\to\infty}
\max_{\tau\in T(A)}\lvert\tau(f_n)-\lvert K\rvert^{-1}\rvert=0. 
\]
When $G=\Z$, in addition to the conditions above, 
we further impose the restriction that 
$K$ is of the form $\{0,1,\dots,k\}$ for some $k\in\N$. 
\item Suppose that $G$ is a finite group. 
We say that $\alpha$ has the weak Rohlin property 
if there exists a central sequence $(f_n)_n$ in $A$ such that 
the properties in (3) hold for $G$ in place of $K$. 
\item For a cocycle action $(\alpha,u)$, we can define 
the weak Rohlin property in the same way, 
because $\alpha$ induces a genuine action on $A_\infty$. 
\end{enumerate}
\end{df}

The projection free tracial Rohlin property introduced 
in \cite[Definition 2.7]{A} is 
quite similar to the weak Rohlin property defined above. 
But we do not know if either implies the other in general. 

\begin{rem}\label{wR>so}
One can show that the weak Rohlin property implies strong outerness 
in a similar fashion to \cite[Lemma 4.4]{K96JFA}. 
In fact, if $\alpha:G\curvearrowright A$ has the weak Rohlin property, 
then for any $g\in G\setminus\{e\}$, $a\in A$ and $\ep>0$ 
there exist $f_1,f_2,\dots,f_k$ in $A$ such that 
\[
0\leq f_i\leq1,\quad \lVert[f_i,a]\rVert\approx0,\quad 
\lVert f_ia\alpha_g(f_i)\rVert\approx0,\quad 
f_if_j\approx0
\]
for all $i\neq j$ and 
$\tau(f_1+f_2+\dots+f_k)>1-\ep$ for all $\tau\in T(A)$. 
Hence, by the same argument as in \cite[Lemma 4.3]{K96JFA}, 
we get $\phi(a\lambda)=0$ for any $\phi\in T(A\rtimes_{\alpha_g}\Z)$, 
where $\lambda\in A\rtimes_{\alpha_g}\Z$ is the implementing unitary 
of the automorphism $\alpha_g$. 
It follows from the proof of \cite[Lemma 4.4]{K96JFA} that 
$\alpha_g$ is not weakly inner for any $\tau\in T(A)^{\alpha_g}$. 
Therefore $\alpha$ is strongly outer. 
This observation also shows that 
$T(A)^\alpha$ is canonically isomorphic to $T(A\rtimes_\alpha G)$ 
via the inclusion of $A$ into $A\rtimes_\alpha G$. 
\end{rem}

\begin{rem}\label{sotimesid}
Let $\alpha:G\curvearrowright A$ be a strongly outer action and 
let $B$ be a unital simple $C^*$-algebra with a unique trace. 
Then one can easily see that 
$\alpha\otimes\id:G\curvearrowright A\otimes B$ is strongly outer, too. 
\end{rem}

\begin{rem}\label{so=uo}
In Section 4 and 5 of \cite{M09} it is shown that 
uniformly outer actions of $\Z$ or $\Z^2$ on certain AH algebras have 
the (tracial) Rohlin property 
under some technical assumptions. 
By \cite[Lemma 4.4]{K96JFA} uniform outerness implies strong outerness, 
and the proofs in \cite{M09} only use strong outerness of the actions. 
Hence one can replace uniform outerness with strong outerness 
in the results of \cite{M09}. 
In particular, for a $\Z$-action $\alpha$ stated in \cite[Theorem 4.8]{M09}, 
the following three conditions are equivalent: 
(i) $\alpha$ is uniformly outer, 
(ii) $\alpha$ is strongly outer, 
(iii) $\alpha$ has the Rohlin property. 
In the same way, for a $\Z^2$-action $\alpha$ 
as in Corollary 5.6 or Corollary 5.7 of \cite{M09} 
(including $\alpha$ on a UHF algebra), 
the three conditions above are equivalent, too. 
\end{rem}

\section{A Rohlin type theorem}

In this section we establish a Rohlin type theorem 
for actions of $\Z^N$ and a finite group on $C^*$-algebras in $\mathcal{C}$ 
(Theorem \ref{wRohlintype}). 
To this end 
we first need to recall the inductive limit construction of $\mathcal{Z}$ 
from \cite{S1,JS}. 
As mentioned in Section 2, 
$\omega$ denotes the unique tracial state on $\mathcal{Z}$. 

For natural numbers $p,q\in\N$, we let 
\[
I(p,q)=\{f\in C([0,1],M_p\otimes M_q)\mid
f(0)\in M_p\otimes\C,\ f(1)\in\C\otimes M_q\}. 
\]
Let $(p_n)_n$ and $(q_n)_n$ be increasing sequences 
of natural numbers such that 
\begin{itemize}
\item $p_n$ and $q_n$ are relatively prime. 
\item $p_n$ divides $p_{n+1}$ and $q_n$ divides $q_{n+1}$. 
\item $p_{n+1}$ and $q_{n+1}$ are greater than $2p_nq_n$. 
\end{itemize}
Let $C_n=C([0,1],M_{p_n}\otimes M_{q_n})$ and 
let $B_n\subset C_n$ be the subalgebra consisting of constant functions 
(i.e. $B_n\cong M_{p_n}\otimes M_{q_n}$). 
It was proved in \cite[Section 2]{S1} that 
there exist a unital injective homomorphism $\psi_n:C_n\to C_{n+1}$ and 
a unital subalgebra $A_n\subset C_n$ such that the following hold. 
(The reader should be warned that 
$\phi_n$ of \cite[Section 2]{S1} is not the same as $\psi_n$ used here, 
but is equal to $\Ad u\circ\psi_n$ for some unitary $u\in C_{n+1}$. )
\begin{itemize}
\item $\psi_n(A_n)\subset A_{n+1}$ and $\psi_n(B_n)\subset B_{n+1}$. 
\item $A_n$ is isomorphic to $I(p_n,q_n)$ and 
contains $\{f\in C_n\mid f(0)=f(1)=0\}$. 
\item The inductive limit $C^*$-algebra constructed from 
$\psi_n:A_n\to A_{n+1}$ is isomorphic to the Jiang-Su algebra $\mathcal{Z}$. 
\item Let $\tilde\psi_n:A_n\to\mathcal{Z}$ be the canonical inclusion and 
let $\nu_n$ be the probability measure on $[0,1]$ satisfying 
\[
\omega(\tilde\psi_n(f))=\int_0^1\tr(f(t))\,d\nu_n(t)\quad 
\]
for all $f\in A_n$. 
Then $\nu_n$ has no atoms in $\{0,1\}$ (\cite[Lemma 2.2]{S1}). 
\end{itemize}
We let $B$ be the inductive limit of $\psi_n:B_n\to B_{n+1}$. 
Of course, $B$ is a UHF algebra. 
In what follows, 
we omit $\psi_n$ and regard $C_n$ as a subalgebra of $C_{n+1}$. 
Define $\omega_n\in T(C_n)$ by 
\[
\omega_n(f)=\int_0^1\tr(f(t))\,d\nu_n(t)
\]
for $f\in C_n$. 
Clearly $\omega_n$ is an extension of $\omega|A_n$ to $C_n$. 
It is also easy to see that $\omega_{n+1}|C_n$ equals $\omega_n$. 

\begin{prop}\label{central}
Let $A$ be a unital simple separable $C^*$-algebra 
such that $A\otimes B$ has tracial rank zero. 
For any finite subset $F\subset A$ and $\ep>0$, 
there exist a finite subset $G\subset A\otimes\mathcal{Z}$ and $\delta>0$ 
satisfying the following. 
If $x\in A\otimes\mathcal{Z}$ is a positive element such that 
$\lVert x\rVert\leq1$ and 
\[
\lVert[x,a]\rVert_2<\delta
\]
for all $a\in G$, 
then there exists a positive element $y\in A\otimes\mathcal{Z}$ such that 
$\lVert y\rVert\leq1$, $\lVert x-y\rVert_2<\ep$ and 
\[
\lVert[y,a\otimes1]\rVert<\ep
\]
for all $a\in F$. 
\end{prop}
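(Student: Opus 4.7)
The approach is to use tracial rank zero of $A\otimes B$ to produce a projection $p$ and a finite-dimensional subalgebra $D$ (with $p=1_D$) that approximately absorb $F\otimes 1$ in norm, then to average $x$ over the unitary group of $D$ and compress by $p$. Averaging moves $x$ to a nearby element $y_0$ commuting exactly with $D$, while compression by $p$ excises the $(1-p)$-part, which is small in trace; the remaining element $y$ then norm-commutes with $F\otimes 1$ because $D$ contains norm-approximants of the compressions $p(a\otimes 1)p$.

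\emph{Setup.} Fix a small auxiliary $\ep'>0$, to be chosen in terms of $\ep$ and $|F|$. Apply tracial rank zero of $A\otimes B$ to the finite set $\{a\otimes 1:a\in F\}$ with tolerance $\ep'$, using a nonzero positive element of sufficiently small trace to force $\tau(1-p)$ small. This yields a projection $p$ and a finite-dimensional subalgebra $D$ with $p=1_D$ such that $\lVert[p,a\otimes 1]\rVert<\ep'$ and there exists $d_a\in D$ with $\lVert p(a\otimes 1)p-d_a\rVert<\ep'$ for every $a\in F$, together with $\max_\tau\tau(1-p)<\ep'$. Using the structural relation between $B$ and $\mathcal{Z}$ built at the start of Section~3 (together with the fact that $\mathcal{Z}$-stability makes $B$ available inside central sequence algebras of $A\otimes\mathcal{Z}$), realize $p$ and $D$ as genuine elements of $A\otimes\mathcal{Z}$ satisfying the same estimates modulo arbitrarily small slack by reifying a suitable representative far out in the sequence.

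\emph{Averaging and compression.} Let $G$ consist of $p$ together with a finite set of unitary generators of $D$, and choose $\delta>0$ small enough that $\lVert[x,g]\rVert_2<\delta$ for $g\in G$ forces $\lVert[x,u]\rVert_2<\ep'$ for every $u\in U(D)$. Set $y_0=\int_{U(D)}uxu^*\,du$ with respect to normalized Haar measure on $U(D)$. Then $y_0$ is a positive contraction in $A\otimes\mathcal{Z}$, commutes exactly with $D$, and satisfies $\lVert y_0-x\rVert_2\leq\sup_{u\in U(D)}\lVert[x,u]\rVert_2<\ep'$. Since $p=1_D$ commutes with $y_0$, the element $y:=py_0p=y_0p$ is a positive contraction in $D'\cap p(A\otimes\mathcal{Z})p$, and $\lVert y-y_0\rVert_2=\lVert y_0(1-p)\rVert_2\leq(\max_\tau\tau(1-p))^{1/2}<(\ep')^{1/2}$; hence $\lVert x-y\rVert_2<\ep$ for $\ep'$ chosen small.

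\emph{Norm commutation and main obstacle.} For $a\in F$, using $\lVert[p,a\otimes 1]\rVert<\ep'$ together with $y=pyp$ gives $[y,a\otimes 1]=p[y_0,a\otimes 1]p+O(\ep')=[y,p(a\otimes 1)p]+O(\ep')=[y,d_a]+O(\ep')$, which vanishes since $y\in D'$ and $d_a\in D$; hence $\lVert[y,a\otimes 1]\rVert<C\ep'<\ep$. The principal obstacle is the passage from $A\otimes B$ to $A\otimes\mathcal{Z}$ in the setup step: a priori, tracial rank zero furnishes $p$ and $D$ only inside $A\otimes B$, whereas $x$ lives in $A\otimes\mathcal{Z}$. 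The resolution relies on using the inductive-limit presentation to locate $B$ inside $A\otimes\mathcal{Z}$ (or its central sequence algebra) and then extracting a bona fide pair $(p,D)\subset A\otimes\mathcal{Z}$ which retains the required approximate commutation with $A\otimes 1$ and trace control on $1-p$.
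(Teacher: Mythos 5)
Your averaging-and-compression scheme is the right core idea and coincides with the paper's, but the step you yourself flag as ``the principal obstacle'' is a genuine gap, and the resolution you sketch cannot work. Tracial rank zero produces the projection $p$ and the finite dimensional algebra $D$ inside $A\otimes B$, and there is in general no way to ``extract a bona fide pair $(p,D)\subset A\otimes\mathcal{Z}$'': take $A=\mathcal{Z}$, which satisfies the hypothesis since $\mathcal{Z}\otimes B\cong B$ is UHF. Then $A\otimes\mathcal{Z}\cong\mathcal{Z}$ is projectionless, so it contains no projection $p$ with $0<\tau(1-p)<1$ and no unital finite dimensional subalgebra other than $\C$; the same holds for the central sequence algebra $(A\otimes\mathcal{Z})_\infty$, since any projection there is represented by a sequence of projections of $A\otimes\mathcal{Z}$, i.e.\ of $0$'s and $1$'s. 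So the claim that ``$\mathcal{Z}$-stability makes $B$ available inside central sequence algebras of $A\otimes\mathcal{Z}$'' is false, and your setup step cannot be carried out.

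The paper's actual resolution is the substance of the proposition. In the inductive limit picture of Section 3, both $B_n\cong M_{p_n}\otimes M_{q_n}$ (the constant functions) and $A_n\cong I(p_n,q_n)$ (whose limit is $\mathcal{Z}$) sit inside the common interval algebra $C_n=C([0,1],M_{p_n}\otimes M_{q_n})$. The projection $e$ and the finite dimensional algebra $E$ are kept in $A\otimes B_n\subset A\otimes C_n$ and are never moved into $A\otimes\mathcal{Z}$. Instead one chooses $f\in C([0,1])$ with $f(0)=f(1)=0$ and $\int_0^1(1-f(t))^2\,d\nu_n(t)$ small; then $b(1\otimes f)$ \emph{does} lie in $A\otimes A_n\subset A\otimes\mathcal{Z}$ for $b\in E$, and these elements form the test set $G$. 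The hypothesis $\lVert[x,b(1\otimes f)]\rVert_2<\delta$ yields that $\lVert[x,b]\rVert_{\tau\otimes\omega_m}$ is small for $b\in E$ --- an estimate in the trace norm of the larger algebra $A\otimes C_m$, not in operator norm --- and the average $y=\int_{U(E)}uxu^*\,du$ is formed in $A\otimes C_m$. Finally $y$ is pushed back into $A\otimes A_m\subset A\otimes\mathcal{Z}$ by multiplying with $1\otimes g$ for another endpoint-vanishing function $g$, again at a cost controlled only in $\lVert\cdot\rVert_2$. This two-way traffic through $A\otimes C_m$ via endpoint-vanishing cut-down functions is exactly what your proposal is missing, and it is also the reason the hypothesis on $x$ and the conclusion $\lVert x-y\rVert_2<\ep$ must be stated in the trace norm rather than the operator norm.
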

\begin{proof}
We may assume that $F$ is contained in the unit ball of $A$. 
Use tracial rank zero to find a projection $e\in A\otimes B$ and 
a finite dimensional unital subalgebra $E\subset e(A\otimes B)e$ 
such that the following are satisfied. 
\begin{itemize}
\item For any $a\in F$, $\lVert[a\otimes1,e]\rVert<\ep/4$. 
\item For any $a\in F$, the distance from $e(a\otimes1)e$ to $E$ is 
less than $\ep/4$. 
\item $\lVert1-e\rVert_2<\ep/4$. 
\end{itemize}
We may assume that there exists $n\in\N$ such that 
$e$ is in $A\otimes B_n$ and $E$ is a subalgebra of $A\otimes B_n$. 
Choose a real valued continuous function $f\in C([0,1])$ so that 
\[
f(0)=f(1)=0,\quad 0\leq f\leq 1\quad\text{and}\quad
\int_0^1(1-f(t))^2\,d\nu_n(t)<\ep^2/16. 
\]
We regard $f$ as an element in $A_n$. 
Set 
\[
K=\{b(1\otimes f)\mid b\in E,\ \lVert b\rVert=1\}\subset A\otimes A_n
\subset A\otimes\mathcal{Z}. 
\]
Since $K$ is compact, 
there exist a finite subset $G\subset K$ and $\delta>0$ 
such that the following holds: 
If a positive element $x\in A\otimes\mathcal{Z}$ satisfies 
$\lVert x\rVert\leq1$ and 
$\lVert[x,a]\rVert_2<\delta$ for all $a\in G$, then 
$\lVert[x,a]\rVert_2$ is less than $\ep/4$ for all $a\in K$. 
Suppose that 
we are given a positive element $x\in A\otimes\mathcal{Z}$ such that 
$\lVert x\rVert\leq1$ and 
$\lVert[x,a]\rVert_2$ is less than $\delta$ for all $a\in G$. 
We may assume that there exists $m\geq n$ such that $x\in A\otimes A_m$. 
For any $\tau\in T(A)$ and $b\in E$ with $\lVert b\rVert=1$, 
one has 
\begin{align*}
\lVert[x,b]\rVert_{\tau\otimes\omega_m}
&\leq\lVert[x,b(1\otimes f)]\rVert_{\tau\otimes\omega_m}
+2\lVert b-b(1\otimes f)\rVert_{\tau\otimes\omega_m}\\
&\leq\lVert[x,b(1\otimes f)]\rVert_{\tau\otimes\omega}
+2\lVert1-1\otimes f\rVert_{\tau\otimes\omega_n}\\
&<\ep/4+\ep/2=3\ep/4. 
\end{align*}
Using the Haar measure on the compact group $U(E)$, 
we define a positive element $y\in e(A\otimes C_m)e$ by 
\[
y=\int_{U(E)}uxu^*\,du. 
\]
Since $y$ commutes with every element in $E$, we obtain 
$\lVert[y,a\otimes1]\rVert<\ep$ for all $a\in F$. 
Furthermore, 
\[
\lVert x-y\rVert_{\tau\otimes\omega_m}
\leq\lVert x-xe\rVert_{\tau\otimes\omega_m}
+\lVert xe-y\rVert_{\tau\otimes\omega_m}
<\ep/4+3\ep/4=\ep
\]
for any $\tau\in T(A)$. 
Choose a real valued continuous function $g\in C([0,1])$ so that 
\[
g(0)=g(1)=0,\quad 0\leq g\leq 1\quad\text{and}\quad
\int_0^1(1-g(t))^2\,d\nu_m(t)\approx0. 
\]
and regard $g$ as an element in the center of $C_m$. 
Then $y(1\otimes g)$ is in $A\otimes A_m$ and 
$\lVert y-y(1\otimes g)\rVert_{\tau\otimes\omega_m}\approx0$ 
for all $\tau\in T(A)$. 
By replacing $y$ with $y(1\otimes g)$, 
we can complete the proof. 
\end{proof}

As a direct consequence of Proposition \ref{central}, 
we get the following. 

\begin{prop}\label{central2}
Let $A\in\mathcal{C}$. 
For any finite subset $F\subset A$ and $\ep>0$, 
there exist a finite subset $G\subset A$ and $\delta>0$ such that 
the following hold. 
If $x\in A$ is a positive element such that 
$\lVert x\rVert\leq1$ and 
\[
\lVert[x,a]\rVert_2<\delta
\]
for any $a\in G$, 
then there exists a positive element $y\in A$ such that 
$\lVert y\rVert\leq1$, $\lVert x-y\rVert_2<\ep$ and 
\[
\lVert[y,a]\rVert<\ep
\]
for any $a\in F$. 
\end{prop}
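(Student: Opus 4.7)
The plan is to deduce Proposition \ref{central2} from Proposition \ref{central} by using that $A$ absorbs $\mathcal{Z}$ tensorially. Since $A\in\mathcal{C}$, Remark \ref{RemonC0}(1) (via Lemma \ref{onlyQ}) ensures that $A\otimes B$ has tracial rank zero for every UHF algebra $B$, so Proposition \ref{central} is directly applicable with $A$ itself as the base algebra. The only gap between the two statements is that Proposition \ref{central} controls commutators with elements of the first tensor factor $a\otimes 1\in A\otimes\mathcal{Z}$, whereas Proposition \ref{central2} needs commutator control with arbitrary $a\in A$; bridging this requires an approximate unitary equivalence input coming from $\mathcal{Z}$-stability of $A$.

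Concretely, I would invoke the following standard consequence of the strong self-absorption of $\mathcal{Z}$: for every finite subset $F\subset A$ and every $\eta>0$ there exists an isomorphism $\sigma:A\to A\otimes\mathcal{Z}$ with $\lVert\sigma(a)-a\otimes 1\rVert<\eta$ for all $a\in F$. Taking $\eta=\ep/4$, let $G_0\subset A\otimes\mathcal{Z}$ and $\delta_0>0$ be supplied by Proposition \ref{central} applied to $(A,F,\ep/4)$, and set $G=\sigma^{-1}(G_0)\subset A$ and $\delta=\delta_0$. If $x\in A$ is a positive contraction with $\lVert[x,a]\rVert_2<\delta$ for $a\in G$, then $\tilde x=\sigma(x)\in A\otimes\mathcal{Z}$ satisfies $\lVert[\tilde x,g_0]\rVert_2<\delta_0$ for $g_0\in G_0$, because $\sigma$ preserves the $\lVert\cdot\rVert_2$-seminorm via the bijection $\tau\mapsto\tau\circ\sigma^{-1}$ of tracial states. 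Proposition \ref{central} then yields a positive contraction $\tilde y\in A\otimes\mathcal{Z}$ with $\lVert\tilde x-\tilde y\rVert_2<\ep/4$ and $\lVert[\tilde y,a\otimes 1]\rVert<\ep/4$ for $a\in F$. Setting $y=\sigma^{-1}(\tilde y)$, one gets $\lVert x-y\rVert_2<\ep/4$, and since $\lVert a-\sigma^{-1}(a\otimes 1)\rVert=\lVert\sigma(a)-a\otimes 1\rVert<\ep/4$, the commutator estimate transfers:
\[
\lVert[y,a]\rVert\le\lVert[y,\sigma^{-1}(a\otimes 1)]\rVert+2\lVert y\rVert\cdot\lVert a-\sigma^{-1}(a\otimes 1)\rVert<\ep/4+\ep/2=3\ep/4<\ep.
\]

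The only delicate ingredient is the existence of the approximating isomorphism $\sigma$. This is a standard consequence of the fact that $\mathcal{Z}$ is strongly self-absorbing (\cite{TW07TAMS}): for a separable unital $\mathcal{Z}$-stable $C^*$-algebra $A$, any isomorphism $A\to A\otimes\mathcal{Z}$ is approximately unitarily equivalent to the first-factor embedding, so one may modify any fixed isomorphism by an inner automorphism to obtain $\sigma$ norm-close to $a\mapsto a\otimes 1$ on the prescribed finite set. Everything else is routine bookkeeping of constants, so this approximate unitary equivalence is the main point on which the reduction rests.
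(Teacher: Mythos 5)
Your argument is correct and is exactly the deduction the paper intends: the paper states Proposition \ref{central2} with no proof beyond the phrase ``as a direct consequence of Proposition \ref{central}'', and the step you isolate --- choosing, via strong self-absorption of $\mathcal{Z}$, an isomorphism $\sigma:A\to A\otimes\mathcal{Z}$ that is close to $a\mapsto a\otimes 1$ on $F$ so that commutator control against $a\otimes 1$ transfers to control against $a$ --- is precisely the routine identification the authors leave implicit. Your bookkeeping (trace-preservation of $\lVert\cdot\rVert_2$ under $\sigma$, the $\ep/4+\ep/2$ commutator estimate) is sound.
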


\begin{prop}\label{central3}
Let $A$ be in $\mathcal{C}$ 
and let $\Gamma\subset\Aut(A)$ be a finite subset containing the identity. 
Suppose that there exists a sequence $(e_n)_n$ in $A$ 
such that $0\leq e_n\leq1$ and the following hold. 
\begin{enumerate}
\item $\lVert\gamma(e_n)\gamma'(e_n)\rVert_2\to0$ 
for any $\gamma,\gamma'\in\Gamma$ such that $\gamma\neq\gamma'$. 
\item For every $a\in A$, we have 
$\lVert[a,e_n]\rVert_2\to0$. 
\end{enumerate}
Then there exists a central sequence $(f_n)_n$ in $A$ 
such that $0\leq f_n\leq1$ and the following hold. 
\begin{enumerate}
\item $\lVert\gamma(f_n)\gamma'(f_n)\rVert\to0$ 
for any $\gamma,\gamma'\in\Gamma$ such that $\gamma\neq\gamma'$. 
\item $\lVert e_n-f_n\rVert_2\to0$. 
\end{enumerate}
\end{prop}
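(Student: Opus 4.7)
The plan is to upgrade the two $\lVert\cdot\rVert_2$-conditions on $(e_n)$ (almost centrality, almost $\Gamma$-orthogonality) to the two norm conditions on $(f_n)$, in that order. The first upgrade is a routine diagonal application of Proposition \ref{central2}; the second is the crux of the argument.

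For Stage 1 (norm-centralization), I would fix an increasing exhaustion $F_1 \subset F_2 \subset \cdots$ of the unit ball of $A$ with dense union. For each $k$, Proposition \ref{central2} applied with $(F_k,1/k)$ produces a finite $G_k \subset A$ and $\delta_k > 0$. Since $\lVert[e_n,a]\rVert_2 \to 0$ for every fixed $a$ and $G_k$ is finite, for all $n$ large enough (say $n \geq N_k$) one has $\lVert[e_n,a]\rVert_2 < \delta_k$ for every $a \in G_k$, and Proposition \ref{central2} then furnishes $y_n \in A$ with $0 \leq y_n \leq 1$, $\lVert e_n - y_n\rVert_2 < 1/k$, and $\lVert[y_n, a]\rVert < 1/k$ for all $a \in F_k$. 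A standard diagonal choice over $k$ produces a single sequence $(y_n)$ of positive contractions in $A$ that is norm-central and satisfies $\lVert e_n - y_n\rVert_2 \to 0$. Because each $\gamma \in \Gamma$ is a $\lVert\cdot\rVert_2$-isometry on $A$ (the map $\tau \mapsto \tau \circ \gamma$ bijects $T(A)$), the orthogonality hypothesis carries over: $\lVert\gamma(y_n)\gamma'(y_n)\rVert_2 \to 0$ for every $\gamma \neq \gamma'$ in $\Gamma$.

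Stage 2 is the main point: upgrade this $\lVert\cdot\rVert_2$-orthogonality to asymptotic norm-orthogonality while preserving both the norm-centrality and the $\lVert\cdot\rVert_2$-approximation to $(e_n)$. I would view $(y_n)$ as an element $y$ of $A_\infty = A^\infty \cap A'$, on which $\Gamma$ still acts by automorphisms; the hypothesis becomes that $\gamma(y)\gamma'(y)$ is annihilated by every limit trace for $\gamma \neq \gamma'$, and the goal is to find $f \in A_\infty$ with $y - f$ annihilated by every limit trace and $\gamma(f)\gamma'(f)$ equal to zero in $A_\infty$ (which translates back to $\lVert\gamma(f_n)\gamma'(f_n)\rVert \to 0$). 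To accomplish this I would exploit the two hypotheses defining $\mathcal{C}$: the isomorphism $A \cong A \otimes \mathcal{Z}$ embeds a unital copy of $\mathcal{Z}$ into $A_\infty \cap A'$, and hence (as recalled at the start of the section) unital copies of the dimension-drop algebras $A_m \cong I(p_m,q_m)$; meanwhile tracial rank zero of $A \otimes Q$ (extended to $A \otimes B$ for any UHF $B$ via Lemma \ref{onlyQ}) supplies projections of arbitrarily small trace inside $A_\infty \cap A'$. Cutting down by an element of the form $f_n = y_n(1 \otimes g_n)$, where $g_n$ is a carefully chosen element in the embedded copy of $A_{m(n)}$ paralleling the cut-off $1 \otimes g$ used in the proof of Proposition \ref{central}, realizes the asymptotic norm-orthogonality of $\Gamma$-translates after a $\lVert\cdot\rVert_2$-small perturbation.

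The hard part will be Stage 2 and specifically the simultaneous control of all three requirements: staying $\lVert\cdot\rVert_2$-close to $e_n$, preserving the norm-central property of $(f_n)$ for $A$, and forcing $\lVert\gamma(f_n)\gamma'(f_n)\rVert \to 0$ for all $\gamma \neq \gamma'$. The first two demands are governed by the same cut-off mechanism as in Proposition \ref{central}, and the third forces one to combine the $\mathcal{Z}$-stability of $A$ with the projection structure coming from tracial rank zero of $A \otimes Q$, so that the embedded copies of $\mathcal{Z}$ can be used to separate $\gamma(y_n)$ from $\gamma'(y_n)$ at the norm level rather than only the tracial one.
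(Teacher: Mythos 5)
Your Stage 1 is exactly the paper's first step: Proposition \ref{central2} plus a diagonal/reindexation argument lets one assume from the outset that $(e_n)_n$ is a norm-central sequence, and the $\lVert\cdot\rVert_2$-orthogonality survives this perturbation since each $\gamma\in\Gamma$ is a $\lVert\cdot\rVert_2$-isometry. The gap is in Stage 2, where the proposed mechanism does not work. An element $1\otimes g_n$ taken from an embedded dimension-drop algebra is (asymptotically) central, so $\gamma(y_ng_n)\gamma'(y_ng_n)\approx\gamma(y_n)\gamma'(y_n)\gamma(g_n)\gamma'(g_n)$; for this to be small in operator norm one would need either $\gamma(y_n)\gamma'(y_n)$ already norm-small (the statement to be proved) or the $\Gamma$-translates of $g_n$ to be almost mutually orthogonal, which forces $g_n$ to have trace bounded away from $1$ and is therefore incompatible with $\lVert y_n-y_ng_n\rVert_2\to0$. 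The cut-off $1\otimes g$ in the proof of Proposition \ref{central} serves an entirely different purpose (adjusting boundary values so that $y$ lands in $A\otimes A_m$ rather than $A\otimes C_m$) and carries no information about where, spectrally, the overlap of the $\Gamma$-translates is concentrated. In fact no further input from $\mathcal{Z}$-stability or tracial rank zero is needed at this stage.

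The device the paper actually uses (following \cite[Theorem 1.2]{S2}) is functional calculus on an overlap element built from $e_n$ itself. Fix $\ep>0$, set $\Gamma_0=\{\gamma^{-1}\gamma'\mid\gamma,\gamma'\in\Gamma,\ \gamma\neq\gamma'\}$ and $e'_n=e_n^{1/2}\bigl(\sum_{\gamma\in\Gamma_0}\gamma(e_n)\bigr)e_n^{1/2}$, which satisfies $\lVert e'_n\rVert_2\to0$ by hypothesis, and put $f_n=e_n^{1/2}(1-g(e'_n))e_n^{1/2}$, where $g$ is the piecewise linear function equal to $\ep^{-1}t$ on $[0,\ep]$ and to $1$ above $\ep$. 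Since $g(e'_n)\leq\ep^{-1}e'_n$, the perturbation is tracially small, $\lVert f_n-e_n\rVert_2\leq\lVert g(e'_n)\rVert_2\to0$; since $t(1-g(t))<\ep$ for all $t\in[0,1]$, one gets the operator-norm bound $\lVert f_n\gamma(f_n)\rVert^2\leq\lVert e'_n(1-g(e'_n))\rVert<\ep$ for every $\gamma\in\Gamma_0$; and $(f_n)_n$ is automatically central because $e'_n$ is built from central sequences. Letting $\ep$ vary and diagonalizing finishes the proof. This is the missing idea: the cut-off must be a function of the overlap $\sum_{\gamma}\gamma(e_n)$ localized against $e_n$, not an external central element.
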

\begin{proof}
Using the proposition above, 
one can prove this in a similar fashion to \cite[Theorem 1.2]{S2}. 
We give a proof here for completeness. 
By Proposition \ref{central2}, 
we may assume $\lVert[a,e_n]\rVert\to0$ for every $a\in A$, 
that is, $(e_n)_n$ is a central sequence. 
Let $\Gamma_0=\{\gamma^{-1}\gamma'
\mid\gamma,\gamma'\in\Gamma,\ \gamma\neq\gamma'\}$. 
Let $\ep>0$. 
It suffices to show that there exists a central sequence $(f_n)_n$ in $A$ 
such that $0\leq f_n\leq1$, 
$\lVert f_n\gamma(f_n)\rVert_2<\ep$ for every $\gamma\in\Gamma_0$ 
and $\lVert e_n-f_n\rVert_2\to0$. 
Put 
\[
e'_n=e_n^{1/2}\left(\sum_{\gamma\in\Gamma_0}\gamma(e_n)\right)e_n^{1/2}. 
\]
Then $\lVert e'_n\rVert_2\to0$ as $n\to\infty$, 
because $\lVert e_n^{1/2}\gamma(e_n)\rVert_2\to0$ 
for each $\gamma\in\Gamma_0$. 
Define a continuous function $g$ by 
\[
g(t)=\begin{cases}\ep^{-1}t&0\leq t\leq\ep \\
1&\ep\leq t. \end{cases}
\]
Let 
\[
f_n=e_n^{1/2}(1-g(e'_n))e_n^{1/2}. 
\]
Notice that $(f_n)_n$ is a central sequence and $0\leq f_n\leq e_n\leq1$. 
Then for any $\gamma\in\Gamma_0$ and $n\in\N$ one has 
\begin{align*}
\lVert f_n\gamma(f_n)\rVert^2
&\leq\lVert f_n\gamma(f_n)f_n\rVert \\
&\leq\left\lVert f_n
\left(\sum_{\gamma\in\Gamma_0}\gamma(e_n)\right)f_n\right\rVert \\
&\leq\left\lVert 
e_n^{1/2}(1-g(e'_n))e_n^{1/2}\left(\sum_{\gamma\in\Gamma_0}\gamma(e_n)\right)
e_n^{1/2}(1-g(e'_n))e_n^{1/2}\right\rVert \\
&\leq\left\lVert e_n^{1/2}
\left(\sum_{\gamma\in\Gamma_0}\gamma(e_n)\right)
e_n^{1/2}(1-g(e'_n))e_n^{1/2}\right\rVert \\
&=\lVert e'_n(1-g(e'_n))e_n^{1/2}\rVert \\
&\leq\lVert e'_n(1-g(e'_n))\rVert<\ep. 
\end{align*}
Moreover 
\[
\lVert f_n-e_n\rVert_2\leq\lVert g(e'_n)\rVert_2\to0
\]
as $n\to\infty$. 
The proof is completed. 
\end{proof}

\begin{thm}\label{wRohlintype}
Let $A$ be a $C^*$-algebra in $\mathcal{C}$ 
with finitely many extremal tracial states and 
let $\alpha:G\curvearrowright A$ be an action of $G$ on $A$. 
Assume either of the following. 
\begin{enumerate}
\item $G=\Z$. 
\item $G=\Z^N$ and $T(A)^\alpha=T(A)$. 
\item $G$ is a finite group. 
\end{enumerate}
Then $\alpha$ has the weak Rohlin property if and only if 
$\alpha$ is strongly outer. 
\end{thm}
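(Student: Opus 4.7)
The easy direction---weak Rohlin implies strong outerness---is already Remark~\ref{wR>so}; the work lies in the converse implication, so from now on assume $\alpha$ is strongly outer.

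The first move is a reduction to a $\lVert\cdot\rVert_2$-statement. By Proposition~\ref{central3} it suffices, for each finite $F\subset A$ and $\ep>0$, to produce a sequence of positive contractions $(e_n)$ in $A$ satisfying $\lVert[a,e_n]\rVert_2\to 0$ for every $a\in A$, $\lVert\alpha_g(e_n)\alpha_h(e_n)\rVert_2\to 0$ for all prescribed distinct $g,h$ in the tower set $K$ (equal to $G$ when $G$ is finite, of the form $\{0,1,\dots,k\}$ when $G=\Z$, and $(F,\ep)$-invariant when $G=\Z^N$), together with $\max_{\tau\in T(A)}\lvert\tau(e_n)-\lvert K\rvert^{-1}\rvert\to 0$. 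Proposition~\ref{central3} will then automatically upgrade these tracial bounds to the norm orthogonality and norm centrality demanded by Definition~\ref{so&wR}.

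To manufacture such $(e_n)$, the plan is to pass to the tracial weak closure. Since $A$ has only finitely many extremal traces $\tau_1,\dots,\tau_k$ and each $\pi_{\tau_i}(A)''$ is the hyperfinite II$_1$-factor $\mathcal{R}$ by Remark~\ref{RemonC0}(8), the weak closure in the universal tracial representation decomposes as $M=\bigoplus_{i=1}^k\mathcal{R}$, and $\alpha$ extends to an automorphism $\bar\alpha$ of $M$ that permutes the summands according to its action on $\{\tau_1,\dots,\tau_k\}$. In each of the three cases one builds the sought sequence in the tracial ultrapower $M^\omega$ using a classical Rohlin-type theorem for outer actions on $\mathcal{R}$: for finite $G$, Ocneanu's Rohlin theorem is applied orbit by orbit on $\{\tau_1,\dots,\tau_k\}$ by inducing from the stabilizer of a representative trace, on which strong outerness yields an outer action of the stabilizer on $\mathcal{R}$; for $G=\Z^N$ with $T(A)^\alpha=T(A)$ every extremal trace is fixed, so $\bar\alpha$ restricts to an outer $\Z^N$-action on each summand, and an Ornstein--Weiss quasi-tiling argument produces $(F,\ep)$-invariant Rohlin systems in $M^\omega$ with prescribed tracial mass; for $G=\Z$ strong outerness makes every nontrivial $\bar\alpha_p$ outer on each fixed-trace summand, hence Connes--Kishimoto Rohlin intervals of arbitrary prescribed length are available on each orbit of extremal traces, and one synchronises orbits of different periods into a single interval $\{0,1,\dots,k\}$.

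The resulting positive contractions in $M^\omega$ are finally lifted to a sequence in $\ell^\infty(\N,A)$ by ordinary Kaplansky density applied simultaneously to the finitely many summands, and the Rohlin relations in $M$ transfer directly to the required $\lVert\cdot\rVert_2$-estimates in $A$. The main obstacle I anticipate is the bookkeeping in the $\Z$-case, where a single automorphism may permute several extremal traces of different periods and one must fit the resulting Rohlin towers into a common interval $\{0,1,\dots,k\}$ with tracial values uniformly close to $(k+1)^{-1}$; the Ornstein--Weiss step for $\Z^N$ is also technically delicate, though essentially classical once one has been reduced to outer $\Z^N$-actions on the hyperfinite factor.
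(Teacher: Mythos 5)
Your proposal follows essentially the same route as the paper: reduce to $\lVert\cdot\rVert_2$-estimates via Proposition~\ref{central3}, pass to $M=\bigoplus_{\tau\in E}\pi_\tau(A)''$ (a finite direct sum of hyperfinite II$_1$-factors), invoke the Ocneanu/Kishimoto Rohlin theorems there (with the stabilizer-and-coset-translate argument in the finite group case), and lift back to $A$ by Kaplansky density. The one point you flag as unresolved --- synchronising the $\Z$-towers across extremal traces of different periods --- is handled in the paper by letting $l$ be the common period of all traces, building a Kishimoto tower $(q_n)$ of length $kl$ with $k\equiv1\pmod{l}$, and collapsing it to a tower of length $k$ via $p_n=\sum_{j=0}^{l-1}\bar\alpha_{kj}(q_n)$, which forces $k\tau(p_n)\to1$ for every trace.
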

\begin{proof}
We have already seen that 
the weak Rohlin property implies strong outerness (Remark \ref{wR>so}). 
Let us prove the other implication. 
Let $E$ be the set of extremal tracial states on $A$. 
For $\tau\in E$, by Remark \ref{RemonC0} (8), 
$\pi_\tau(A)''$ is the hyperfinite II$_1$-factor. 
Set $\rho=\bigoplus_{\tau\in E}\pi_\tau$ and 
$M=\rho(A)''=\bigoplus_{\tau\in E}\pi_\tau(A)''$ 
(cf. \cite[3.8.11, 3.8.12, 3.8.13]{Pedersenbook}). 
There exists an action $\bar\alpha:G\curvearrowright M$ 
satisfying $\bar\alpha_g\circ\rho=\rho\circ\alpha_g$. 
We identify $T(A)$ with $T(M)$ via $\rho$. 
Note that, for a bounded sequence $(x_n)_n$ in $M$, 
$x_n$ converges to zero in the strong operator topology 
if and only if $\lVert x_n\rVert_2$ converges to zero. 

(1)
Let $l$ be the minimum positive integer 
such that $\tau\circ\alpha_l=\tau$ for all $\tau\in T(A)$. 
Let $k$ be a natural number satisfying $k\equiv1\pmod{l}$. 
In the same way as \cite[Lemma 3.1]{K95crelle}, 
one can find a sequence of projections $(q_n)_n$ in $M$ such that 
\[
\lVert[x,q_n]\rVert_2\to0,\quad 
\lVert q_n\bar\alpha_i(q_n)\rVert_2\to0\quad\text{and}\quad 
\left\lVert1-\sum_{i=0}^{kl-1}\bar\alpha_i(q_n)\right\rVert_2\to0
\]
for all $x\in M$ and $i=1,2,\dots,kl{-}1$ as $n\to\infty$. 
Define a projection $p_n\in M$ by 
\[
p_n=\sum_{j=0}^{l-1}\bar\alpha_{kj}(q_n). 
\]
Then we obtain 
\[
\lVert[x,p_n]\rVert_2\to0\quad\text{and}\quad 
\lVert p_n\bar\alpha_i(p_n)\rVert_2\to0
\]
for all $x\in M$ and $i=1,2,\dots,k{-}1$ as $n\to\infty$. 
Also, because of $k\equiv1\pmod{l}$ and $\tau\circ\bar\alpha_l=\tau$, 
we have 
\[
k\tau(p_n)=k\sum_{j=0}^{l-1}\tau(\bar\alpha_{kj}(q_n))
=k\sum_{j=0}^{l-1}\tau(\bar\alpha_j(q_n))
=\sum_{j=0}^{kl-1}\tau(\bar\alpha_j(q_n))\to1. 
\]
for all $\tau\in T(M)$ as $n\to\infty$. 
By Kaplansky's density theorem, 
we may replace $p_n$ with $\rho(e_n)$, 
where $e_n$ is in $A$ and $0\leq e_n\leq1$. 
Thanks to Proposition \ref{central3}, we can obtain the desired sequence. 

(2)
The action $\bar\alpha$ preserves each $\pi_\tau(A)''$ and 
the restriction of $\bar\alpha$ to it is outer. 
It follows from \cite{O} that, 
for any finite subset $F\subset\Z^N$ and $\ep>0$, 
there exist an $(F,\ep)$-invariant finite subset $K\subset\Z^N$ and 
a sequence of projections $(p_n)_n$ in $M$ such that 
\[
\lVert[x,p_n]\rVert_2\to0,\quad 
\lVert\bar\alpha_g(p_n)\bar\alpha_h(p_n)\rVert_2\to0\quad\text{and}\quad 
\tau(p_n)\to1/\lvert K\rvert
\]
for all $x\in M$, $g,h\in K$ with $g\neq h$ and $\tau\in T(M)$ 
as $n\to\infty$. 
The rest of the proof is exactly the same as (1). 

(3)
We claim that 
there exists a sequence of projections $(p_n)_n$ in $M$ such that 
\[
\lVert[x,p_n]\rVert_2\to0,\quad 
\lVert p_n\bar\alpha_g(p_n)\rVert_2\to0\quad\text{and}\quad 
\tau(p_n)\to1/\lvert G\rvert
\]
for all $x\in M$, $g\in G\setminus\{e\}$ and $\tau\in T(M)$ 
as $n\to\infty$. 
Once this is done, the proof is completed in the same way as above. 
To this end, without loss of generality, we may assume that 
$\bar\alpha$ acts transitively on the set of extremal tracial states on $M$, 
that is, $\bar\alpha$ acts ergodically on the center of $M$. 
Fix an extremal tracial state $\tau_0$ and 
let $H=\{g\in G\mid\tau_0\circ\bar\alpha_g=\tau_0\}$. 
From the assumption, 
$\bar\alpha|H:H\curvearrowright\pi_{\tau_0}(A)''$ is outer. 
Therefore, by \cite{O}, 
there exists a sequence of projections $(q_n)_n$ in $\pi_{\tau_0}(A)''$ 
such that 
\[
\lVert[x,q_n]\rVert_2\to0,\quad 
\lVert q_n\bar\alpha_h(q_n)\rVert_2\to0\quad\text{and}\quad 
\tau_0(q_n)\to1/\lvert H\rvert
\]
for all $x\in M$ and $h\in H\setminus\{e\}$ as $n\to\infty$. 
Choose $g_1,g_2,\dots,g_k\in G$ so that $G=g_1H\cup\dots\cup g_kH$, 
where $k=\lvert G\rvert/\lvert H\rvert$. 
Since the central sequence algebra of $\pi_{\tau_0}(A)''$  
(in the sense of von Neumann algebras) is again a II$_1$-factor, 
for each $i=1,2,\dots,k$, we can find a sequence of projections 
$(q_{i,n})_n$ in $\pi_{\tau_0}(A)''$ such that 
\[
q_n=\sum_{i=1}^kq_{i,n},\quad 
\lVert[x,q_{i,n}]\rVert_2\to0\quad\text{and}\quad 
\tau_0(q_{i,n})\to1/\lvert G\rvert
\]
for all $x\in M$ as $n\to\infty$. 
Define a projection $p_n\in M$ by 
\[
p_n=\sum_{i=1}^k\bar\alpha_{g_i}(q_{i,n}). 
\]
It is not so hard to see that $(p_n)_n$ is the desired sequence. 
\end{proof}

\begin{rem}\label{wRohlintypeca}
Theorem \ref{wRohlintype} also holds 
for cocycle actions of $\Z^N$ or a finite group. 
We can prove this in the same way as above, 
because (as mentioned in Definition \ref{ca}) 
a cocycle action $(\alpha,u)$ induces a genuine action on $A_\infty$. 
\end{rem}

\section{$\mathcal{Z}$-stability of crossed products}

The main theorem in this section asserts that 
the crossed product $C^*$-algebra $A\rtimes_\alpha G$ absorbs $\mathcal{Z}$ 
if $A$ is in $\mathcal{C}_0$ and 
$\alpha:G\curvearrowright A$ has the weak Rohlin property 
(Theorem \ref{embedZ}). 
We begin with the definition of the property (SI), 
which was introduced in \cite{S3}. 

\begin{df}[{\cite[Definition 3.3]{S3}}]
We say that a $C^*$-algebra $A$ has the property (SI) 
when for any central sequences $(e_n)_n$ and $(f_n)_n$ in $A$ satisfying 
$0\leq e_n\leq1$, $0\leq f_n\leq1$, 
\[
\lim_{n\to\infty}\max_{\tau\in T(A)}\tau(e_n)=0\quad\text{and}\quad 
\inf_{m\in\N}\liminf_{n\to\infty}\min_{\tau\in T(A)}\tau(f_n^m)>0, 
\]
there exists a central sequence $(s_n)_n$ in $A$ such that 
\[
\lim_{n\to\infty}\lVert s_n^*s_n-e_n\rVert=0\quad\text{and}\quad 
\lim_{n\to\infty}\lVert f_ns_n-s_n\rVert=0. 
\]
\end{df}

The definition above is slightly different from the original one, 
but we can easily check that these two definitions are equivalent. 
Indeed, for any fixed $m\in\N$ one has 
\[
\liminf_{n\to\infty}\min_{\tau\in T(A)}\tau(f_n^n)
\leq\liminf_{n\to\infty}\min_{\tau\in T(A)}\tau(f_n^m), 
\]
and so the definition above is stronger than \cite[Definition 3.3]{S3}. 
Conversely, suppose that 
we are given a sequence $(f_n)_n$ of positive contractions in $A$ satisfying 
\[
c=\inf_{m\in\N}\liminf_{n\to\infty}\min_{\tau\in T(A)}\tau(f_n^m)>0. 
\]
For any $m\in\N$ there exists $n_m\in\N$ such that 
$\tau(f_n^m)>2c/3$ holds for all $\tau\in T(A)$ and $n\geq n_m$. 
We may assume $n_m<n_{m+1}$. 
Let $\delta_m=1-(c/3)^{1/m}$ and 
define $h_m\in C([0,1])$ by $h_m(t)=\min\{t+\delta_m,1\}$. 
For each $n\in\N$, put 
\[
\tilde f_n
=\begin{cases}1&n<n_1\\ h_m(f_n)&n_m\leq n<n_{m+1}. \end{cases}
\]
Evidently $\lVert f'_n-f_n\rVert\to0$ as $n\to\infty$ 
because $\delta_m\to0$ as $m\to\infty$. 
Moreover 
\[
\tau(\tilde f_n^n)=\tau(h_m(f_n)^n)
\geq\tau(f_n^m)-(1-\delta_m)^m>2c/3-c/3=c/3
\]
holds for every $\tau\in T(A)$ and $n\in\N$, 
where $m$ is the natural number satisfying $n_m\leq n<n_{m+1}$. 
Therefore the definition above is weaker than \cite[Definition 3.3]{S3}. 

\begin{lem}\label{SI>projSI}
Let $A$ be a unital simple separable $C^*$-algebra 
with tracial rank zero. 
\begin{enumerate}
\item If $(e_n)_n$ is a central sequence in $A$ satisfying 
$0\leq e_n\leq1$ and 
\[
\lim_{n\to\infty}\max_{\tau\in T(A)}\tau(e_n)=0, 
\]
then there exists a central sequence of projections $(q_n)_n$ in $A$ 
such that 
\[
\lim_{n\to\infty}\lVert q_ne_n-e_n\rVert=0\quad\text{and}\quad 
\lim_{n\to\infty}\max_{\tau\in T(A)}\tau(q_n)=0. 
\]
\item If $(f_n)_n$ is a central sequence in $A$ satisfying 
$0\leq f_n\leq1$ and 
\[
\inf_{m\in\N}\liminf_{n\to\infty}\min_{\tau\in T(A)}\tau(f_n^m)>0, 
\]
then there exists a central sequence of projections $(q_n)_n$ in $A$ 
such that 
\[
\lim_{n\to\infty}\lVert f_nq_n-q_n\rVert=0\quad\text{and}\quad 
\liminf_{n\to\infty}\min_{\tau\in T(A)}\tau(q_n)>0. 
\]
\end{enumerate}
\end{lem}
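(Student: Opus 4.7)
The plan is to use the tracial rank zero of $A$ to extract central projections from central sequences of positive contractions, by combining a spectral cut-off with a finite-dimensional approximation inside $A$. Both parts follow the same strategy; I describe (1) first.

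For (1), set $\eta_n=(\max_\tau\tau(e_n))^{1/2}$, so that $\eta_n\to 0$ and $\max_\tau\tau(e_n)/\eta_n\to 0$. Enumerate a dense sequence $(a_k)$ in the unit ball of $A$ and apply the definition of tracial rank zero with finite subset $F_n=\{a_1,\dots,a_n,(e_n-\eta_n)_+\}$, error $\ep_n$ satisfying $\ep_n/\eta_n\to 0$, and a small positive element ensuring $\max_\tau\tau(1-p_n)\to 0$. This yields a finite-dimensional subalgebra $B_n\subset A$ with unit $p_n$ and an element $\tilde e_n\in B_n$ with $\lVert p_n(e_n-\eta_n)_+p_n-\tilde e_n\rVert<\ep_n$. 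Since $\tilde e_n$ has finite spectrum, a small adjustment of $\eta_n$ secures a positive spectral gap around the cut-off, and one sets $q_n=\chi_{[\eta_n,1]}(\tilde e_n)+(1-p_n)$, a projection in $A$. Centrality of $(q_n)$ follows from a Davis--Kahan-type estimate using $\lVert[a_k,\tilde e_n]\rVert\to 0$ (from centrality of $(e_n)$) and the gap; the trace bound $\max_\tau\tau(q_n)\leq\max_\tau\tau(\tilde e_n)/\eta_n+\max_\tau\tau(1-p_n)\leq(\max_\tau\tau(e_n)+\ep_n)/\eta_n+o(1)\to 0$; and the norm estimate $\lVert(1-q_n)e_n\rVert\leq 2\eta_n+O(\ep_n)\to 0$ follows by splitting $e_n=(e_n-\eta_n)_++\min(e_n,\eta_n)$ and using that the complementary projection $p_n-\chi_{[\eta_n,1]}(\tilde e_n)$ is supported on the low-spectrum part of $\tilde e_n$, hence multiplies $\tilde e_n$ with norm $\leq\eta_n$.

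For (2), the extra ingredient is a diagonal argument on the spectral inequality $\tau(\chi_{[1-\eta,1]}(f_n))\geq\tau(f_n^m)-(1-\eta)^m$: with $C:=\inf_m\liminf_n\min_\tau\tau(f_n^m)>0$, choose $m_n\to\infty$ and $\eta_n\to 0$ with $m_n\eta_n\to\infty$ to secure $\liminf_n\min_\tau\tau(\chi_{[1-\eta_n,1]}(f_n))\geq C$. Apply tracial rank zero with $f_n$ in the finite subset and $\ep_n/\eta_n\to 0$, obtaining $\hat f_n\in B_n$ approximating $p_n f_n p_n$ within $\ep_n$. Take the Lipschitz function $h_n:[0,1]\to[0,1]$ equal to $1$ on $[1-\eta_n,1]$ and $0$ on $[0,1-2\eta_n]$ (so $\Lip(h_n)=1/\eta_n$) and set $q_n=\chi_{[C/4,1]}(h_n(\hat f_n))\in B_n$, again adjusting the cut-off to secure a spectral gap. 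Because the tracial state on $A$ factors through the hyperfinite II$_1$-factor $\pi_\tau(A)''$ (Remark \ref{RemonC0}(8)), in which the trace is $\Lip$-Lipschitz on self-adjoints in operator norm, one shows $|\tau(h_n(\hat f_n))-\tau(h_n(f_n))|\leq O(\ep_n/\eta_n)\to 0$, giving $\min_\tau\tau(h_n(\hat f_n))\geq C-o(1)$ and hence $\liminf_n\min_\tau\tau(q_n)>0$. Moreover, on $q_n$ one has $h_n(\hat f_n)\geq C/4$, which forces $\hat f_n\geq 1-(2-C/4)\eta_n$, so $\lVert f_n q_n-q_n\rVert\leq O(\eta_n+\ep_n)\to 0$.

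The main obstacle is producing a projection (rather than a positive contraction) from the finite-dimensional approximant: the spectral projection at the chosen cut-off is discontinuous, so approximate centrality requires either a Davis--Kahan bound with a secured spectral gap or an alternative smoothing argument. A subsidiary technicality is that tracial rank zero controls only $\max_\tau\tau(1-p_n)$, not $\lVert 1-p_n\rVert$; this is why in (1) we absorb $1-p_n$ into $q_n$, so that $1-q_n$ lies inside $p_n A p_n\cap B_n$ where its product with $e_n$ can be controlled in operator norm, whereas in (2) the conclusion $\lVert f_nq_n-q_n\rVert\to 0$ allows $q_n$ to remain inside $B_n\subset p_n A p_n$ and the Lipschitz trace-continuity takes care of the rest.
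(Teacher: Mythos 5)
Your overall architecture (finite-dimensional approximation from tracial rank zero, a spectral cut-off at height $\eta_n$, Chebyshev for the trace bound in (1), and the inequality $\tau(\chi_{[1-\eta,1]}(f_n))\geq\tau(f_n^m)-(1-\eta)^m$ in (2)) is in the same spirit as the paper, and your trace and norm estimates are essentially sound. But the one genuinely hard step of the lemma --- approximate centrality of the spectral projection --- is not justified. You take $\tilde e_n\in B_n$ approximating $p_n(e_n-\eta_n)_+p_n$ and set $q_n=\chi_{[\eta_n,1]}(\tilde e_n)+(1-p_n)$, arguing centrality ``from a Davis--Kahan-type estimate using $\lVert[a_k,\tilde e_n]\rVert\to0$ and the gap.'' The gap you secure is merely \emph{positive} (automatic, since $\tilde e_n$ has finite spectrum); Davis--Kahan gives $\lVert[\chi_{[c,1]}(\tilde e_n),a]\rVert\lesssim\lVert[\tilde e_n,a]\rVert/\mathrm{gap}$, so you need the gap to \emph{dominate} the commutator norm, and nothing in the construction provides that: the spectrum of $\tilde e_n$ near $\eta_n$ can be arbitrarily dense, and $\dim B_n$ is not controlled relative to $\lVert[a_k,\tilde e_n]\rVert$. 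This is not a removable technicality: in $M_N$, if $\tilde e$ is the diagonal ``tent'' taking values $0\to2\eta\to0$ in steps of $2\eta/N$ and $b$ is the cyclic shift, then $\lVert[b,\tilde e]\rVert=O(\eta/N)$ while $\lVert[b,\chi_{[c,1]}(\tilde e)]\rVert=1$ for \emph{every} cut-off $c$ in the ramp. Since $\tilde e_n$ and the approximants $b_{k,n}$ of $p_na_kp_n$ sit in $B_n$ with exactly this kind of freedom, your $q_n$ need not be a central sequence. The problem is worse in your part (2), where centrality of $q_n=\chi_{[C/4,1]}(h_n(\hat f_n))$ is not addressed at all and $\Lip(h_n)=1/\eta_n\to\infty$ further degrades any commutator control.

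The paper's proof circumvents exactly this obstruction, and the device is worth internalizing: for a fixed finite set $F$ and $\ep>0$, choose the projection $p$ and the finite-dimensional algebra $E\subset pAp$ approximating $F$ \emph{once}, independent of $n$; then use that $(e_n)$ is central and $E$ is finite-dimensional to replace $pe_np$ by a nearby positive contraction $x_n$ in the relative commutant $pAp\cap E'$; finally use real rank zero of $pAp\cap E'$ to perturb $x_n$ so that the cut-off point is not in its spectrum and take $y_n=\chi_{[\ep,1]}(x_n)$ \emph{inside} $pAp\cap E'$. The resulting projection commutes exactly with $E$, hence within $\ep$ with $F$, and no spectral-gap estimate is ever needed. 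If you want to keep your quantitative cut-off scheme, you must incorporate this relative-commutant step (or an equivalent smoothing-plus-real-rank-zero argument); as written, the passage from the almost-central positive contraction to an almost-central projection is a gap.
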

\begin{proof}
(1) 
Take a finite subset $F\subset A$ and $\ep>0$. 
It suffices to show that, for any sufficiently large $n$, 
there exists a projection $q_n\in A$ such that 
\[
\lVert q_ne_n-e_n\rVert\leq\ep,\quad 
\max_{\tau\in T(A)}\tau(q_n)<\ep\quad\text{and}\quad 
\lVert[a,q_n]\rVert<\ep
\]
for every $a\in F$. 
Use tracial rank zero to find a projection $p\in A$ and 
a finite dimensional unital subalgebra $E\subset pAp$ 
such that the following are satisfied. 
\begin{itemize}
\item For any $a\in F$, $\lVert[a,p]\rVert<\ep/5$. 
\item For any $a\in F$, the distance from $pap$ to $E$ is 
less than $\ep/5$. 
\item For all $\tau\in T(A)$, $\tau(1-p)<\ep$. 
\end{itemize}
Since $[pe_np,b]\to0$ as $n\to\infty$ for every $b\in E$ and 
$E$ is finite dimensional, 
there exists $x_n\in pAp\cap E'$ such that $0\leq x_n\leq1$ 
and $pe_np-x_n\to0$. 
In particular, one has $\max\tau(x_n)\to0$. 
We may further assume that 
$\ep$ is not contained in the spectrum of $x_n$, 
because $pAp\cap E'$ has real rank zero. 
Let $\chi_{[\ep,1]}$ be the characteristic function of the interval $[\ep,1]$ 
and let $y_n=\chi_{[\ep,1]}(x_n)\in pAp\cap E'$. 
Then $\lVert y_nx_n-x_n\rVert<\ep$ and $\tau(y_n)<\ep^{-1}\tau(x_n)$. 
Set $q_n=y_n+(1-p)$. 
It is easy to verify $\lVert[a,q_n]\rVert<\ep$ for any $a\in F$ and $n\in\N$. 
Besides, we have 
\[
\max_{\tau\in T(A)}\tau(q_n)=\max_{\tau\in T(A)}\tau(y_n+1-p)
<\ep^{-1}\max_{\tau\in T(A)}\tau(x_n)+\ep\to\ep
\]
as $n\to\infty$ and 
\[
\limsup_{n\to\infty}\lVert q_ne_n-e_n\rVert
=\limsup_{n\to\infty}\lVert(y_n-p)e_n\rVert
=\limsup_{n\to\infty}\lVert y_nx_n-x_n\rVert\leq\ep, 
\]
thereby completing the proof. 

(2) 
Let $d=\inf_m\liminf_n\min_\tau\tau(f_n^m)>0$. 
Take a finite subset $F\subset A$ and $\ep>0$. 
Choose $m\in\N$ so that $(1-\ep)^m<\ep/2$. 
It suffices to show that, for any sufficiently large $n$, 
there exists a projection $q_n\in A$ such that 
\[
\lVert f_nq_n-q_n\rVert\leq\ep,\quad 
\min_{\tau\in T(A)}\tau(q_n)>d-\ep\quad\text{and}\quad 
\lVert[a,q_n]\rVert<\ep
\]
for every $a\in F$. 
Use tracial rank zero to find a projection $p\in A$ and 
a finite dimensional unital subalgebra $E\subset pAp$ 
such that the following are satisfied. 
\begin{itemize}
\item For any $a\in F$, $\lVert[a,p]\rVert<\ep/4$. 
\item For any $a\in F$, the distance from $pap$ to $E$ is 
less than $\ep/4$. 
\item For all $\tau\in T(A)$, $\tau(1-p)<\ep/2$. 
\end{itemize}
Since $[pf_np,b]\to0$ as $n\to\infty$ for every $b\in E$ and 
$E$ is finite dimensional, 
there exists $x_n\in pAp\cap E'$ such that $0\leq x_n\leq1$ 
and $pf_np-x_n\to0$. 
In particular, 
\begin{align*}
\liminf_{n\to\infty}\min_{\tau\in T(A)}\tau(x_n^m)
&=\liminf_{n\to\infty}\min_{\tau\in T(A)}\tau(pf_n^mp) \\
&\geq\liminf_{n\to\infty}\min_{\tau\in T(A)}\tau(f_n^m)-\tau(1-p)
>d-\ep/2. 
\end{align*}
We may further assume that 
$1-\ep$ is not contained in the spectrum of $x_n$, 
because $pAp\cap E'$ has real rank zero. 
Let $\chi_{[1-\ep,1]}$ be the characteristic function 
of the interval $[1{-}\ep,1]$ 
and let $q_n=\chi_{[1-\ep,1]}(x_n)\in pAp\cap E'$. 
Then $\lVert x_nq_n-q_n\rVert<\ep$ and $\tau(q_n)>\tau(x_n^m)-\ep/2$. 
It is easy to verify $\lVert[a,q_n]\rVert<\ep$. 
Besides, we have 
\[
\liminf_{n\to\infty}\min_{\tau\in T(A)}\tau(q_n)
\geq\liminf_{n\to\infty}\min_{\tau\in T(A)}\tau(x_n^m)-\ep/2
>d-\ep
\]
and 
\[
\limsup_{n\to\infty}\lVert f_nq_n-q_n\rVert
=\limsup_{n\to\infty}\lVert pf_npq_n-q_n\rVert
=\limsup_{n\to\infty}\lVert x_nq_n-q_n\rVert
\leq\ep, 
\]
which completes the proof. 
\end{proof}

\begin{lem}\label{C0hasSI}
Any $C^*$-algebra in $\mathcal{C}_0$ has the property (SI). 
\end{lem}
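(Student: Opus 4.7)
The plan is to reduce (SI) for $A$ to a comparison statement for projections in $A \otimes Q$, where tracial rank zero delivers everything we need, and then to transport the resulting partial isometry back to $A$ using $\mathcal{Z}$-absorption.

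First I would view the given central sequences $(e_n), (f_n)$ of positive contractions in $A$ as central sequences $(e_n \otimes 1), (f_n \otimes 1)$ in $A \otimes Q$. Since every trace on $A \otimes Q$ restricts to a multiple of a trace on $A$ (it is of the form $\tau \otimes \tr_Q$), the trace hypotheses in the (SI) condition carry over unchanged to $(e_n \otimes 1), (f_n \otimes 1)$. Because $A \otimes Q$ has tracial rank zero (by definition of $\mathcal{C}_0$ together with Remark \ref{RemonC0}(1)), Lemma \ref{SI>projSI} would yield central sequences of projections $(p_n), (q_n)$ in $A \otimes Q$ satisfying $\lVert p_n(e_n \otimes 1) - e_n \otimes 1\rVert \to 0$, $\max_\tau \tau(p_n) \to 0$, $\lVert (f_n \otimes 1) q_n - q_n\rVert \to 0$, and $\liminf_n \min_\tau \tau(q_n) > 0$.

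Next, for $n$ large the inequality $\tau(p_n) < \tau(q_n)$ holds uniformly in $\tau$, and strict comparison of projections in the tracial-rank-zero algebra $A \otimes Q$ (performed inside the central sequence algebra so as to respect centrality for $A$) would yield partial isometries $v_n \in A \otimes Q$ with $v_n^* v_n = p_n$, $v_n v_n^* \leq q_n$, and $\lim_n \lVert [v_n, a]\rVert = 0$ for every $a \in A$. Then $t_n = v_n (e_n \otimes 1)^{1/2}$ would satisfy $\lVert t_n^* t_n - (e_n \otimes 1)\rVert \to 0$ and $\lVert (f_n \otimes 1) t_n - t_n\rVert \to 0$, establishing (SI) \emph{inside} $A \otimes Q$.

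The hard part — and main obstacle — is transferring $t_n \in A \otimes Q$ back to an element $s_n \in A$ without disturbing the asymptotic relations with $(e_n), (f_n)$. Here I would exploit $\mathcal{Z}$-absorption: since $A \cong A \otimes \mathcal{Z}$ and $\mathcal{Z} \otimes Q \cong Q$, arbitrarily large dimension-drop subalgebras of $\mathcal{Z}$ sit inside $A$ as asymptotically central subalgebras, and a standard reindexation/diagonal argument across these approximate absorptions would let me replace $t_n$ by an asymptotically equivalent $s_n$ in $A$. The delicate point is simultaneously preserving centrality of $(s_n)$ in $A$, the approximate identity $s_n^* s_n \approx e_n$, and the invariance $f_n s_n \approx s_n$; this compatibility is maintained by carrying out the entire construction inside the central sequence algebra relative to $A$.
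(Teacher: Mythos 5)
Your first half tracks the paper's proof closely: the paper likewise reduces to a tracial rank zero algebra, applies Lemma \ref{SI>projSI} to produce central sequences of projections $(p_n)_n$ and $(q_n)_n$, obtains a \emph{central} sequence of partial isometries $v_n$ with $v_n^*v_n=p_n$ and $v_nv_n^*\leq q_n$ by comparison (citing \cite[Lemma 3.3]{M09}, which is exactly your ``comparison performed inside the central sequence algebra''), and sets $s_n=v_ne_n^{1/2}$. Up to that point your plan is sound.

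The genuine gap is the final descent from $A\otimes Q$ back to $A$, which you correctly flag as the main obstacle but then dispose of with ``a standard reindexation/diagonal argument across these approximate absorptions.'' No such argument exists, and the mechanism you propose cannot work. Reindexation lets you pass between $A$ and $A\otimes\mathcal{Z}$ because these are isomorphic; it does not let you pass from $A\otimes Q$ to $A$, because $A\otimes Q\not\cong A$ in general and, more to the point, there is no unital embedding of $Q$ into $A_\infty$ compatible with $A$ unless $A$ is already $Q$-stable (if $Q\hookrightarrow A_\infty$ unitally, then $A\cong A\otimes Q$ since $Q$ is strongly self-absorbing). For $A=\mathcal{Z}$ this is stark: $\mathcal{Z}$ is projectionless, so the projections $p_n,q_n$ and the partial isometry $v_n$ you build in $\mathcal{Z}\otimes Q$ have no counterparts near $\mathcal{Z}\otimes 1$, and the element $t_n=v_n(e_n\otimes 1)^{1/2}$ cannot be perturbed into $A$. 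The paper outsources precisely this step to \cite[Proposition 3.5]{S3} (after using Remark \ref{RemonC0} (1) and \cite[Theorem 5.2]{L04Duke} to identify $A\otimes B$ with an AH algebra of real rank zero and slow dimension growth): one runs the argument in $A\otimes M_{p^\infty}$ and $A\otimes M_{q^\infty}$ for coprime $p,q$, then \emph{glues} the two witnesses along a path into an element of $A\otimes I(p^\infty,q^\infty)\subset A\otimes\mathcal{Z}\cong A$, where the existence of the connecting path of approximate witnesses is the real technical content. Your proof is incomplete without an argument of this kind; the dimension-drop subalgebras of $\mathcal{Z}$ enter not as receptacles for an approximation of $t_n$, but as the frame over which two different UHF-level solutions are interpolated.
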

\begin{proof}
By Remark \ref{RemonC0} (1), \cite[Theorem 5.2]{L04Duke} 
and \cite[Proposition 3.5]{S3}, 
it suffices to show that any unital simple AH algebra $A$ 
with real rank zero and slow dimension growth has the property (SI). 
Suppose that 
we are given two central sequences $(e_n)_n$ and $(f_n)_n$ in $A$ satisfying 
$0\leq e_n\leq1$, $0\leq f_n\leq1$, 
\[
\lim_{n\to\infty}\max_{\tau\in T(A)}\tau(e_n)=0\quad\text{and}\quad 
\inf_{m\in\N}\liminf_{n\to\infty}\min_{\tau\in T(A)}\tau(f_n^m)>0. 
\]
By means of the lemma above, 
we can find central sequences of projections $(p_n)_n$ and $(q_n)_n$ in $A$ 
such that 
\[
\lim_{n\to\infty}\lVert p_ne_n-e_n\rVert=0,\quad 
\lim_{n\to\infty}\max_{\tau\in T(A)}\tau(p_n)=0, 
\]
\[
\lim_{n\to\infty}\lVert f_nq_n-q_n\rVert=0\quad\text{and}\quad 
\liminf_{n\to\infty}\min_{\tau\in T(A)}\tau(q_n)>0. 
\]
By \cite[Lemma 3.3]{M09}, 
there exists a central sequence of partial isometries $(v_n)_n$ in $A$ 
satisfying $v_n^*v_n=p_n$ and $v_nv_n^*\leq q_n$. 
Set $s_n=v_ne_n^{1/2}$. 
Then 
\[
\lim_{n\to\infty}\lVert s_n^*s_n-e_n\rVert=0\quad\text{and}\quad 
\lim_{n\to\infty}\lVert f_ns_n-s_n\rVert=0, 
\]
as desired. 
\end{proof}

\begin{lem}[{\cite[Proposition 5.1]{RW},\cite[Proposition 2.1]{S3}}]\label{c&s}
The $C^*$-algebra $I(k,k{+}1)$ is isomorphic to the universal $C^*$-algebra 
generated by $k{+}1$ elements $c_1,c_2,\dots,c_k,s$ satisfying 
\[
c_1\geq0,\quad c_ic_j^*=\begin{cases}c_1^2&i{=}j\\0&i{\neq}j,\end{cases}\quad 
\sum_{i=1}^kc_i^*c_i+s^*s=1,\quad c_1s=s. 
\]
\end{lem}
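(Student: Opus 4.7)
The plan is to prove this by explicit construction combined with a universality argument. First I would exhibit concrete elements $c_1, \ldots, c_k, s \in I(k,k{+}1)$ satisfying the four relations. Fix matrix units $\{e_{ij}\}_{i,j=1}^k$ for $M_k$ and $\{f_{ij}\}_{i,j=1}^{k+1}$ for $M_{k+1}$. The natural endpoint choice is $c_i(0) = e_{1i} \otimes 1 \in M_k \otimes \C$ with $s(0) = 0$, and $c_i(1) = 1 \otimes f_{1i}$ for $1 \leq i \leq k$ with $s(1) = 1 \otimes f_{1,k+1}$; a direct computation verifies the four relations at the endpoints. To interpolate, I would select a continuous path $c_1 : [0,1] \to M_k \otimes M_{k+1}$ of positive contractions from $e_{11} \otimes 1$ to $1 \otimes f_{11}$, together with continuous partial-isometry paths $w_i : [0,1] \to M_k \otimes M_{k+1}$ so that $c_i(t) = w_i(t) c_1(t)$ and $w_i(t) w_j(t)^* = \delta_{ij}\, p(t)$, where $p(t)$ is the support projection of $c_1(t)^2$; then $s(t)$ is determined up to phase by $c_1(t) s(t) = s(t)$ and $s(t)^* s(t) = 1 - \sum_i c_i(t)^* c_i(t)$.

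Next I would check that these elements generate $I(k,k{+}1)$. At $t = 0$ the products $c_j(0)^* c_i(0)$ recover the matrix units of $M_k \otimes \C$; at $t = 1$ the products $c_j(1)^* c_i(1)$, $c_i(1)^* s(1)$, $s(1)^* c_i(1)$, and $s(1)^* s(1)$ recover the matrix units of $\C \otimes M_{k+1}$. Combined with the continuous functional calculus applied to $c_1$, these partial-isometry-like products yield a dense $*$-subalgebra of $I(k,k{+}1)$.

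For the universal property, let $B$ be any unital $C^*$-algebra generated by $c_1', \ldots, c_k', s'$ satisfying the listed relations. The relations force $\lVert c_1'\rVert \leq 1$ and provide a uniform row/column structure at every point of $\sigma(c_1')$. I would construct a $*$-homomorphism $\pi : I(k,k{+}1) \to B$ sending the explicit generators above to their primed counterparts, using continuous functional calculus on $c_1'$ together with the Murray-von Neumann relations encoded by $c_i c_j^* = \delta_{ij} c_1^2$; the remaining relations $c_1 s = s$ and $\sum c_i^* c_i + s^* s = 1$ ensure that the endpoint conditions defining $I(k,k{+}1)$ are automatically respected in the image.

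The main obstacle is the universal property: one must rule out any unexpected collapse in $B$ and confirm that the boundary relations of $I(k,k{+}1)$ are inherited from just the four algebraic conditions. The cleanest route, I expect, is to compare the generators used here with the (slightly different) universal generators of \cite[Proposition 5.1]{RW} and invoke that result directly; alternatively, one can adapt the elementary argument of \cite[Proposition 2.1]{S3}.
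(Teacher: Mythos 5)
The paper offers no proof of this lemma: it is quoted directly from \cite[Proposition 5.1]{RW} and \cite[Proposition 2.1]{S3}, so there is no internal argument to compare against. Your sketch follows the standard route of those references (exhibit concrete generators in $I(k,k{+}1)$, check generation, establish universality), but it does not amount to an independent proof, because the only step carrying real content --- the universal property, i.e.\ that the four relations force every representation of the universal algebra to factor through point evaluations on $I(k,k{+}1)$, so that the canonical surjection is injective --- is precisely the step you defer back to \cite{RW} and \cite{S3}. The net effect is the same citation the paper makes, with extra scaffolding around it.

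Within that scaffolding there are two concrete problems. First, your polar-decomposition convention is on the wrong side: with $c_i(t)=w_i(t)c_1(t)$ and $w_iw_j^*=\delta_{ij}p$ one gets $c_ic_j^*=w_ic_1^2w_j^*$, which need not equal $\delta_{ij}c_1^2$, and at $t=0$ your own endpoint data give $w_i(0)c_1(0)=(e_{1i}\otimes1)(e_{11}\otimes1)=0$ for $i\neq1$. You want $c_i=c_1w_i$, so that $c_ic_j^*=c_1w_iw_j^*c_1=\delta_{ij}c_1pc_1=\delta_{ij}c_1^2$. Second, the existence of the continuous paths $w_i(t)$ and $s(t)$ is asserted rather than shown: $s(t)$ is determined only up to a partial isometry on its range (not a phase), and one must arrange that $1-\sum_ic_i(t)^*c_i(t)$ is continuously Murray--von Neumann subequivalent to the eigenvalue-one spectral projection of $c_1(t)$; the explicit formula for $c_1(t)$ appearing in the proof of Lemma \ref{s} is the standard device for this. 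Likewise, density of the generated subalgebra in $I(k,k{+}1)$ requires a Stone--Weierstrass-type argument for subhomogeneous algebras, not only the matrix-unit computations at the two endpoints.
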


\begin{lem}\label{s}
Let $c_1,c_2,\dots,c_k,s$ be the generators of $I(k,k{+}1)$ 
described in the lemma above. 
\begin{enumerate}
\item For any $m\in\N$ and $\tau\in T(I(k,k{+}1))$, 
one has $\tau(c_1^m)\leq k^{-1}$. 
\item For any $m\in\N$ and $\ep>0$, 
there exists a unital embedding $\phi:I(k,k{+}1)\to\mathcal{Z}$ such that 
$k^{-1}-\ep<\omega(\phi(c_1^m))<k^{-1}$, 
where $\omega$ denotes the unique tracial states on $\mathcal{Z}$. 
\end{enumerate}
\end{lem}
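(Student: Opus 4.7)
Part (1) calls for a range-projection computation in the GNS von Neumann algebra. Fix $\tau \in T(I(k,k{+}1))$ and pass to $M = \pi_\tau(I(k,k{+}1))''$, with $\tau$ extended to a faithful normal trace. Let $p \in M$ be the range projection of $c_1$ (equivalently of $c_1 c_1^* = c_1^2$), and let $r_i$ be the range projection of $c_i^* c_i$. Two observations drive the proof. First, the $r_i$ are mutually orthogonal, because $(c_i^* c_i)(c_j^* c_j) = c_i^*(c_i c_j^*)c_j = 0$ for $i \neq j$ by the defining relation. Second, the polar decomposition $c_i = v_i |c_i|$ produces a partial isometry with $v_i v_i^* = p$ and $v_i^* v_i = r_i$, since $c_i c_i^* = c_1^2$ has range projection $p$; hence $r_i \sim p$ in the Murray-von Neumann sense, so $\tau(r_i) = \tau(p)$. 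Summing, $k\tau(p) = \tau(\sum_i r_i) \leq 1$, giving $\tau(p) \leq 1/k$. Since $0 \leq c_1 \leq 1$ implies $c_1^m \leq c_1 \leq p$ for every $m \geq 1$, we conclude $\tau(c_1^m) \leq 1/k$.

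For part (2), I would invoke the inductive-limit description of $\mathcal{Z}$ recalled just before the lemma, with the specific choice $p_1 = k$, $q_1 = k{+}1$. Then $A_1 = I(k,k{+}1)$ embeds unitally into $\mathcal{Z}$ via the canonical inclusion $\tilde\psi_1$, and $\omega(\tilde\psi_1(f)) = \int_0^1 \tr(f(t))\,d\nu_1(t)$ for a probability measure $\nu_1$ on $[0,1]$ without atoms at $\{0,1\}$. The integrand $g(t) := \tr(c_1(t)^m)$ is continuous, satisfies $g(0) = 1/k$ (by inspection of $c_1(0) \in M_k$ as a rank-one projection), and satisfies $g(t) < 1/k$ wherever $s(t) \neq 0$: indeed, applying the argument of part (1) pointwise to the evaluation trace at $t$, the relation $\sum_i c_i^* c_i = 1 - s^*s$ gives $\sum_i r_i(t) = 1 - \operatorname{supp}(s(t)^* s(t))$, so $s(t) \neq 0$ forces $k\operatorname{tr}(p(t)) < 1$ strictly. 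Concentrating $\nu_1$ near $t = 0$ while retaining positive mass on the open set $\{t : s(t) \neq 0\}$ then forces $\omega(\tilde\psi_1(c_1^m))$ into $(k^{-1}{-}\ep, k^{-1})$.

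The hard part is engineering the $\psi_n$'s so that $\nu_1$ concentrates near $t = 0$. This is a technical application of the Rordam-Winter path-system construction of $*$-homomorphisms between dimension-drop algebras: the measure $\nu_1$ is determined by the continuous eigenvalue-path functions $\xi_i \colon [0,1] \to [0,1]$ used to define each $\psi_n$, and subject to the endpoint compatibility built into the algebras $I(p_n,q_n)$, there is enough freedom to make $\nu_1$ arbitrarily concentrated near $0$ while preserving the no-atom condition at the endpoints.
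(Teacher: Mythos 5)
Your part (1) is correct and is essentially the paper's argument in different clothing: the paper works directly with the mutually orthogonal positive contractions $c_1^m,(c_2^*c_2)^{m/2},\dots,(c_k^*c_k)^{m/2}$, notes $\tau((c_i^*c_i)^{m/2})=\tau((c_ic_i^*)^{m/2})=\tau(c_1^m)$ and that their sum is dominated by $1$, and concludes $k\tau(c_1^m)\leq1$. Your passage to range projections in $\pi_\tau(I(k,k{+}1))''$ proves the same inequality (indeed the slightly stronger bound $\tau(p)\leq k^{-1}$ for the support projection $p$ of $c_1$) and is a legitimate, if less elementary, variant.

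Part (2) has a genuine gap. You correctly reduce the problem to producing a unital embedding of $I(k,k{+}1)$ into $\mathcal{Z}$ whose induced trace is integration against a probability measure $\mu$ on $[0,1]$ concentrated near $t=0$ (where $\tr(c_1(t)^m)=1/k$) while still charging the set where $s(t)\neq0$. But the existence of such an embedding is exactly the hard content of the statement, and your proposed route --- re-engineering the connecting maps $\psi_n$ of the Jiang--Su inductive limit so that the canonical trace measure $\nu_1$ concentrates near $0$ --- is asserted, not proved. It is not automatic: the freedom in choosing eigenvalue paths for the $\psi_n$ is constrained by unitality, injectivity, compatibility with the subalgebras $A_n\subset C_n$, and above all by the requirement that the limit still be $\mathcal{Z}$ (simplicity and unique trace are obtained in the Jiang--Su construction precisely by forcing the eigenvalue paths to spread out, which is in tension with concentrating all mass near an endpoint). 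The paper sidesteps this entirely by fixing the explicit form of $c_1(t)$ and invoking R{\o}rdam's realization theorem \cite[Theorem 2.1 (i)]{R04IJM}: for any probability measure $\mu$ on $[0,1]$ with full support there is a unital embedding $\phi:I(k,k{+}1)\to\mathcal{Z}$ with $\omega(\phi(f))=\int_0^1\tr(f(t))\,d\mu(t)$. Choosing $\mu$ of full support with $\int_0^1(\cos(\pi t/2))^m\,d\mu(t)\in(1-\ep,1)$ then gives
\[
\omega(\phi(c_1^m))=\int_0^1\frac{k+(\cos(\pi t/2))^m}{k(k+1)}\,d\mu(t)
\in(k^{-1}-\ep,\,k^{-1}).
\]
Either cite such a realization theorem or carry out the construction; as written, the decisive existence step is missing.
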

\begin{proof}
(1) 
For each $i=2,3,\dots,k$, 
\[
\tau((c_i^*c_i)^{m/2})=\tau((c_ic_i^*)^{m/2})=\tau(c_1^m). 
\]
We also have $c_1^m+(c_2^*c_2)^{m/2}+\dots+(c_k^*c_k)^{m/2}\leq1$. 
Hence $\tau(c_1^m)\leq k^{-1}$. 

(2) 
We may assume that the generator $c_1\in I(k,k{+}1)$ is given by 
\[
c_1(t)=u(t)\left(\sum_{i=1}^ke_{1,1}^{(k)}\otimes e_{i,i}^{(k+1)}
+\cos(\pi t/2)e_{1,1}^{(k)}\otimes e_{k+1,k+1}^{(k+1)}\right)u(t)^*, 
\]
where $u\in C([0,1],M_k\otimes M_{k+1})$ is a unitary, and 
$e_{i,j}^{(k)}$ and $e_{i,j}^{(k+1)}$ are systems of matrix units 
for $M_k$ and $M_{k+1}$, respectively (see \cite[Section 2]{S3}). 
Suppose that $m\in\N$ and $\ep>0$ are given. 
Let $\mu$ be a probability measure on $[0,1]$ such that 
\[
1-\ep<\int_0^1(\cos(\pi t/2))^m\,d\mu(t)<1
\]
and the support of $\mu$ is $[0,1]$. 
By \cite[Theorem 2.1 (i)]{R04IJM}, 
there exists a unital embedding $\phi:I(k,k{+}1)\to\mathcal{Z}$ such that 
\[
\omega(\phi(f))=\int_0^1\tr(f(t))\,d\mu(t), 
\]
where $\tr$ is the normalized trace on $M_k\otimes M_{k+1}$. 
It follows that 
\[
\omega(\phi(c_1^m))=\int_0^1\frac{k+(\cos(\pi t/2))^m}{k(k+1)}\,d\mu(t)
\]
is between $k^{-1}-\ep$ and $k^{-1}$. 
\end{proof}

\begin{lem}\label{CuntzPedersen}
Let $c$ be a positive element of 
a unital $C^*$-algebra $A$ with $T(A)$ non-empty and 
let $\theta\in\R$. 
Suppose that $(f_n)_n$ is a central sequence of $A$ 
such that $\lVert f_n\rVert\leq1$. 
Then we have 
\[
\limsup_{n\to\infty}
\max_{\tau\in T(A)}\lvert\tau(cf_n)-\theta\tau(f_n)\rvert
\leq2\max_{\tau\in T(A)}\lvert\tau(c)-\theta\rvert. 
\]
\end{lem}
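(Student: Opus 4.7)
The plan is to apply the Cuntz--Pedersen theorem to decompose $c - \theta\cdot 1$ into a positive part plus finitely many corrections of the form $z^*z - zz^*$, whose contribution to $\tau(\,\cdot\,f_n)$ vanishes uniformly by centrality of $(f_n)_n$.

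Set $a = c - \theta\cdot 1$ and $M = \max_{\tau\in T(A)}\lvert\tau(c) - \theta\rvert$. Both $M - a$ and $M + a$ are self-adjoint and trace-nonnegative on every $\tau\in T(A)$, so by the Cuntz--Pedersen theorem (a self-adjoint element of $A$ is trace-nonnegative iff it lies in the norm closure of $A_+$ plus finite sums $\sum_i(z_i^*z_i - z_iz_i^*)$), for any $\ep>0$ I can write
\[
M - a = y + \sum_{i=1}^N(z_i^*z_i - z_iz_i^*) + r,\qquad
M + a = y' + \sum_{j=1}^{N'}(w_j^*w_j - w_jw_j^*) + r',
\]
with $y,y'\geq 0$ and $\lVert r\rVert,\lVert r'\rVert<\ep$. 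Taking traces of both identities yields $\tau(y)+\tau(y') = 2M + O(\ep)$ uniformly in $\tau$.

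Multiplying by $f_n$ and applying $\tau$, the key identity --- immediate from $\tau(zz^*f_n)=\tau(z^*f_nz)$ --- is $\tau\bigl((z^*z-zz^*)f_n\bigr)=\tau\bigl(z^*[z,f_n]\bigr)$, whose absolute value is at most $\lVert z\rVert\lVert[z,f_n]\rVert\to 0$ as $n\to\infty$ by centrality of $(f_n)_n$. Since the sums are finite and $\lvert\tau(rf_n)\rvert\leq\ep$, I obtain (uniformly in $\tau$)
\[
M\tau(f_n) - \tau(af_n) = \tau(yf_n) + o_n(1) + O(\ep),\qquad
M\tau(f_n) + \tau(af_n) = \tau(y'f_n) + o_n(1) + O(\ep).
\]
Subtracting gives $2\tau(af_n) = \tau(y'f_n) - \tau(yf_n) + o_n(1) + O(\ep)$.

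The last step is to bound $\lvert\tau(yf_n)\rvert$ by the Cauchy--Schwarz inequality for the trace: since $y\geq 0$ and $\lVert f_n\rVert\leq 1$, $\lvert\tau(yf_n)\rvert^2 \leq \tau(y)\tau(f_n^*yf_n) \leq \tau(y)^2$, so $\lvert\tau(yf_n)\rvert\leq\tau(y)$ (and similarly for $y'$). Combined with $\tau(y)+\tau(y')=2M+O(\ep)$, this produces $\lvert\tau(af_n)\rvert\leq M + o_n(1) + O(\ep)$ uniformly in $\tau$; taking $\max_\tau$, then $\limsup_n$, then $\ep\to 0$ yields the claimed bound (indeed the sharper $M$ in place of $2M$). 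The main subtlety is maintaining uniformity in $\tau$ throughout --- this is automatic because the Cuntz--Pedersen decomposition is a norm approximation in $A$ that does not depend on $\tau$.
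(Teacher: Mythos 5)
Your proof is correct. It rests on the same engine as the paper's, namely the Cuntz--Pedersen theorem, but deploys it differently. The paper applies \cite[Proposition 2.7, 2.8]{CP} to $c-\theta$ itself, producing positive elements $a\sim b$ with $\lVert (c-\theta)-(a-b)\rVert<\delta+\ep$, and then uses the Riesz decomposition lemma \cite[Lemma 2.1]{CP} on the identity $c+d_-+b=\theta+d_++a$ to manufacture a single finite family $u_1,\dots,u_m$ with $\sum u_i^*u_i$ within $\delta+\ep$ of $c$ and $\sum u_iu_i^*$ within $\delta+\ep$ of $\theta$; centrality of $(f_n)_n$ then transports $\tau(cf_n)$ to $\theta\tau(f_n)$ through the chain $\tau(cf_n)\approx\sum\tau(u_i^*u_if_n)\approx\sum\tau(u_iu_i^*f_n)\approx\theta\tau(f_n)$, giving total error $2\delta+3\ep$. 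You instead decompose $M\pm(c-\theta)$ as a positive element plus a finite sum of self-commutators plus a norm-small remainder, kill the commutator contributions by the same centrality identity $\tau((z^*z-zz^*)f_n)=\tau(z^*[z,f_n])$, and then need one extra ingredient the paper does not use: the tracial Cauchy--Schwarz bound $\lvert\tau(yf_n)\rvert\le\tau(y)$ for $y\ge0$ and $\lVert f_n\rVert\le1$. The trade-off is that the paper's chain of approximations is shorter once the $u_i$ are in hand, while your route yields the sharper constant $1$ in place of $2$ (more than the lemma asks for, but a genuine improvement). Your uniformity-in-$\tau$ claims all check out, since every error term is controlled either by a norm in $A$ or by the trace identity $\tau(y)+\tau(y')\le 2M+2\ep$, none of which depends on $\tau$.
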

\begin{proof}
Put $\delta=\max_{\tau\in T(A)}\lvert\tau(c)-\theta\rvert$. 
Take $\ep>0$ arbitrarily. 
We use the notation in \cite{CP}. 
By \cite[Proposition 2.7, 2.8]{CP}, 
there exist positive elements $a,b\in A$ such that 
$a\sim b$ and $\lVert(c-\theta)-(a-b)\rVert<\delta+\ep$. 
Let $d=(c-\theta)-(a-b)$. 
Then $c+d_-+b=\theta+d_++a$. 
It follows from \cite[Lemma 2.1]{CP} that 
there exist $u_1,u_2,\dots,u_m\in A$ satisfying 
\[
\left\lVert c-\sum u_i^*u_i\right\rVert<\delta+\ep,\quad 
\left\lVert\theta-\sum u_iu_i^*\right\rVert<\delta+\ep. 
\]
Since $(f_n)_n$ is central, for sufficiently large $n$, 
one has $\lVert u_i^*[u_i,f_n]\rVert<m^{-1}\ep$ for $i=1,2,\dots,m$. 
Hence we get 
\[
\tau(cf_n)\approx_{\delta+\ep}\sum\tau(u_i^*u_if_n)
\approx_\ep\sum\tau(u_iu_i^*f_n)
\approx_{\delta+\ep}\theta\tau(f_n)
\]
for every $\tau\in T(A)$, thereby completing the proof. 
\end{proof}

\begin{thm}\label{embedZ}
Let $A$ be a $C^*$-algebra in $\mathcal{C}_0$ and 
let $G$ be $\Z^N$ or a finite group. 
Suppose that 
$\alpha:G\curvearrowright A$ is an action with the weak Rohlin property. 
Then there exists a unital embedding of $\mathcal{Z}$ 
into $(A_\infty)^\alpha$. 
\end{thm}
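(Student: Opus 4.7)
My plan is to produce, for each $k \geq 2$, a unital $*$-homomorphism $I(k,k{+}1) \to (A_\infty)^\alpha$, and then to assemble these into a unital embedding $\mathcal{Z} \hookrightarrow (A_\infty)^\alpha$ via the inductive-limit presentation $\mathcal{Z} = \varinjlim A_n$ (with $A_n \cong I(p_n, q_n)$) recalled at the start of Section~3, together with a standard reindexing argument. By Lemma~\ref{c&s}, constructing a unital homomorphism $I(k,k{+}1) \to (A_\infty)^\alpha$ amounts to exhibiting $c_1,\dots,c_k,s \in (A_\infty)^\alpha$ satisfying
\[
c_1 \geq 0, \qquad c_i c_j^* = \delta_{ij}\, c_1^2, \qquad \sum_{i=1}^k c_i^* c_i + s^* s = 1, \qquad c_1 s = s.
\]

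Fix $k \geq 2$. I would first apply the weak Rohlin property (Definition~\ref{so&wR}) to produce a central sequence $(f_n)$ of positive contractions in $A$ with pairwise asymptotically orthogonal $\alpha$-translates along a set $K \subset G$ of size $k$ --- a F\o lner set in the $\Z^N$ case; for finite $G$ of order $k$ one takes $K = G$, and for general $k$ one combines this with the matricial $M_n$-structure of $A$ coming from tracial rank zero of $A\otimes Q$ (Remark~\ref{RemonC0}(1)) and $\mathcal{Z}$-stability --- with $\max_\tau\lvert\tau(f_n) - 1/k\rvert \to 0$. Using the $\mathcal{Z}$-stability of $A$, I would pick a unital embedding $\mathcal{Z} \to A_\infty \cap A'$ and pull back the explicit unital copy of $I(k,k{+}1) \subset \mathcal{Z}$ supplied by Lemma~\ref{s}(2). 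Combining this data with the Rohlin tower via an averaging procedure --- an honest group-average for finite $G$; a F\o lner average for $\Z^N$ --- yields candidates $c_1,\dots,c_k \in (A_\infty)^\alpha$, where $c_1$ is a positive contraction with $\tau(c_1^m) \to 1/k$ uniformly, and the relation $c_i c_j^* = \delta_{ij}\, c_1^2$ persists because of the asymptotic orthogonality of the $\alpha_g(f)$. The remaining element $s$ is then manufactured using property (SI) (Lemma~\ref{C0hasSI}): since $1 - \sum c_i^* c_i$ has small trace while $c_1^2$ has trace bounded below, SI yields a central sequence of partial isometries realizing $s^* s = 1 - \sum c_i^* c_i$ and $c_1 s = s$.

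The main obstacle is to ensure that $c_1,\dots,c_k,s$ \emph{simultaneously} land in $(A_\infty)^\alpha$ while still exactly satisfying the $I(k,k{+}1)$-relations. Since the tower pieces $\alpha_g(f)$ are permuted (not fixed) by $\alpha$, the averaging must be orchestrated so that the algebraic relations are preserved rather than merely approximated; in the finite-group case this reduces to a clean group-average over $K=G$, but in the $\Z^N$ case one must take a F\o lner limit with uniform control on traces and commutators as $|K|$ grows. A closely related subtlety is that the SI construction of $s$ must itself be run equivariantly, which I expect to handle by applying SI inside the $\alpha$-invariant hereditary subalgebra cut out by the averaged Rohlin tower, so that the resulting $s$ is automatically $\alpha$-fixed. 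Finally, extracting a $\mathcal{Z}$-embedding from the family of $I(k,k{+}1)$-embeddings is routine but relies on stability of the defining relations under small perturbations in the sequence algebra.
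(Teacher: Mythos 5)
Your overall strategy coincides with the paper's: reduce via Lemma \ref{c&s} to constructing unital homomorphisms $I(k,k{+}1)\to(A_\infty)^\alpha$, obtain the $c_i$'s by combining a copy of $I(k,k{+}1)$ in $A_\infty\cap A'$ (from $\mathcal{Z}$-stability and Lemma \ref{s}) with an average over a Rohlin tower, and use property (SI) for $s$. One minor correction first: you should not ask the Rohlin set $K$ to have cardinality $k$. The weak Rohlin property hands you \emph{some} $(F,\delta)$-invariant $K$ whose size you do not control (F\o lner sets of a prescribed cardinality need not exist in $\Z^N$, and for finite $G$ the set is all of $G$ regardless of $k$). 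This is harmless, because the value $1/k$ of $\tau(\tilde c_1^m)$ comes entirely from the embedded copy of $I(k,k{+}1)$, not from the tower: the paper forms $\tilde c_i=\sum_{g\in K}h(g)\alpha_g(\phi(c_i)f^{1/m})$, where $\phi(c_1^m)f$ has trace about $(k\lvert K\rvert)^{-1}$ (this step needs Lemma \ref{CuntzPedersen}), and summing the $\lvert K\rvert$ orthogonal translates restores $1/k$; the weights $h(g)$, which vanish near the boundary of $K$, give the $1/m$-almost-invariance. Your ``matricial structure for general $k$'' detour is therefore unnecessary.

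The genuine gap is the construction of an invariant $s$. Applying (SI) ``inside the $\alpha$-invariant hereditary subalgebra cut out by the averaged tower'' does not make the output $\alpha$-fixed: (SI) is a non-equivariant existence statement for central sequences in $A$, and invariance of a subalgebra does not force invariance of an element of it (indeed, structural control of $(A_\infty)^\alpha$ is exactly what the theorem is trying to establish, so it cannot be presupposed). The paper's resolution is a specific trick your sketch does not contain: take a \emph{second} Rohlin element $e$, with $\alpha_g(e)\alpha_h(e)=0$ for distinct $g,h$ in a F\o lner set $L$ and commuting with $\tilde c_1$, apply (SI) with $f=\tilde c_1e$ (whose powers have trace bounded below by roughly $(k\lvert L\rvert)^{-1}$) to get $r\in A_\infty$ with $\tilde c_1er=r$ and $r^*r=1-\sum_i\tilde c_i^*\tilde c_i$, and then set $\tilde s=\lvert L\rvert^{-1/2}\sum_{g\in L}\alpha_g(r)$. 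Since $\alpha_g(r)=\tilde c_1\alpha_g(e)\alpha_g(r)$ and the $\alpha_g(e)$ are mutually orthogonal, the cross terms in $\tilde s^*\tilde s$ vanish, so $\tilde s^*\tilde s=r^*r$ \emph{exactly} (using that $1-\sum_i\tilde c_i^*\tilde c_i$ is already invariant), $\tilde c_1\tilde s=\tilde s$ holds, and the F\o lner property of $L$ makes $\tilde s$ almost invariant; a final reindexation puts it into $(A_\infty)^\alpha$. Note the $\lvert L\rvert^{-1/2}$ normalization: the summands behave like orthogonal partial isometries, not positive elements. The auxiliary element $e$ is essential precisely to localize $r$ so that this averaging preserves the relation $s^*s=1-\sum_ic_i^*c_i$ on the nose; without it you obtain an $s$ that is either non-invariant or only approximately satisfies the $I(k,k{+}1)$ relations.
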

\begin{proof}
First we consider the case $G=\Z^N$. 
As shown in \cite{JS}, 
$\mathcal{Z}$ is an inductive limit of $I(k,k{+}1)$'s. 
Hence, by \cite[Proposition 2.2]{TW08CJM}, 
it suffices to construct 
a unital homomorphism from $I(k,k{+}1)$ to $(A_\infty)^\alpha$. 
Let $c_1,c_2,\dots,c_k,s$ be the generators of $I(k,k{+}1)$ 
described in Lemma \ref{c&s}. 
We say that $x\in A^\infty$ converges uniformly to $\theta\in\C$ on $T(A)$ 
if 
\[
\lim_{n\to\infty}\max_{\tau\in T(A)}\lvert\tau(x_n)-\theta\rvert=0, 
\]
where $(x_n)_n$ is a representative of $x$. 
Since $A$ absorbs $\mathcal{Z}$ tensorially and 
$\mathcal{Z}$ is isomorphic to 
the infinite tensor product of copies of $\mathcal{Z}$, 
we may think of $A$ as $A\otimes\bigotimes_{n\in\N}\mathcal{Z}$. 
By Lemma \ref{s} (2), 
there exists a unital embedding $\phi_n:I(k,k{+}1)\to\mathcal{Z}$ 
such that $k^{-1}-n^{-1}<\omega(\phi_n(c_1^n))<k^{-1}$. 
We regard $\phi_n$ as an embedding into the $n{+}1$-st 
tensor component of $A\otimes\bigotimes_{n\in\N}\mathcal{Z}$. 
Then 
\[
I(k,k{+}1)\ni d\mapsto(\phi_1(d),\phi_2(d),\phi_3(d),\dots)\in\ell(\N,A)
\]
gives rise to an embedding $\phi:I(k,k{+}1)\to A_\infty$ and 
$\phi(c_1^m)$ converges uniformly to $k^{-1}$ on $T(A)$ for any $m\in\N$ 
by Lemma \ref{s} (1). 
We would like to modify $\phi$ 
so that its image is fixed by the action $\alpha$. 

Let $F\subset G$ be a finite generating set of $G$. 
Take $\ep>0$ and $m\in\N$ arbitrarily. 
Choose $\delta>0$ so that 
\[
\delta(1+\lvert F\rvert+\dots+\lvert F\rvert^{m-1})<\ep. 
\]
We can find an $(F,\delta)$-invariant finite subset $K\subset G$ 
and $f\in A_\infty$ such that $0\leq f\leq1$, 
\[
\alpha_g(f)\alpha_h(f)=0
\]
for $g,h\in K$ with $g\neq h$ 
and $f$ converges uniformly to $\lvert K\rvert^{-1}$ on $T(A)$. 
Let $(f_n)_n$ and $(d_{i,n})_n\in\ell^\infty(\N,A)$ be 
representative sequences of $f$ and $\phi(c_i)$, respectively. 
We may assume that $(f_n)_n$ and $(d_{1,n})_n$ are positive contractions. 
We can find a subsequence $(f_{l_n})_n$ of $(f_n)_n$ such that 
$\lVert[f_{l_n},d_{i,n}]\rVert\to0$ for all $i=1,2,\dots,k$ 
because $(f_n)_n$ is a central sequence. 
In addition, by Lemma \ref{CuntzPedersen}, we may further assume that 
\[
\max_{\tau\in T(A)}\lvert\tau(d_{1,n}^mf_{l_n})-k^{-1}\tau(f_{l_n})\rvert
<2\max_{\tau\in T(A)}\lvert\tau(d_{1,n}^m)-k^{-1}\rvert+1/n
\]
holds, where the right hand side converges to zero as $n\to\infty$. 
Consequently, by replacing $(f_n)_n$ with $(f_{l_n})_n$, 
we may assume that 
$f$ commutes with $\phi(c_1),\phi(c_2),\dots,\phi(c_k)$ and that 
$\phi(c_1^m)f$ converges uniformly to $(k\lvert K\rvert)^{-1}$ on $T(A)$. 
(Such a reindexation trick is rather standard and 
is frequently used in this proof. See \cite[5.3]{O}.) 

Define a function $\ell:K\to\N$ by 
\[
\ell(g)=\min\{n\in\N\mid\exists g_1,g_2,\dots,g_n\in F,\ 
g{+}g_1{+}\dots{+}g_n\notin K\}. 
\]
One has $\lvert\ell^{-1}(1)\rvert\leq\delta\lvert K\rvert$, 
because $K$ is $(F,\delta)$-invariant. 
Then 
$\lvert\ell^{-1}(n)\rvert\leq\delta\lvert F\rvert^{n-1}\lvert K\rvert$ 
is obtained inductively, so that 
\[
\lvert\ell^{-1}(\{1,2,\dots,m\})\rvert
\leq\delta(1+\lvert F\rvert+\dots+\lvert F\rvert^{m-1})
\lvert K\rvert<\ep\lvert K\rvert. 
\]
Let $h:K\to[0,1]$ be the function 
defined by $h(g)=m^{-1}\min\{\ell(g){-}1,m\}$. 
The estimate above implies 
$\lvert h^{-1}(1)\rvert>(1{-}\ep)\lvert K\rvert$. 
We define $\tilde c_i\in A_\infty$ by 
\[
\tilde c_i=\sum_{g\in K}h(g)\alpha_g(\phi(c_i)f^{1/m}). 
\]
It is easy to see that 
$\lVert\tilde c_i-\alpha_g(\tilde c_i)\rVert$ is not greater than $1/m$ 
for all $g\in F$. 
Since $f$ commutes with $\phi(c_1),\phi(c_2),\dots,\phi(c_k)$, one can check 
\[
\tilde c_1\geq0,\quad 
\tilde c_i\tilde c_j^*=\begin{cases}\tilde c_1^2&i{=}j\\0&i{\neq}j\end{cases}
\]
and $\sum_{i=1}^k\tilde c_i^*\tilde c_i\leq1$. 
Moreover, 
$\tilde c_1^m$ converges uniformly to 
$\sum_{g\in K}h(g)^m/(k\lvert K\rvert)$ on $T(A)$, 
which is between $k^{-1}(1-\ep)$ and $k^{-1}$. 

Since $m\in\N$ and $\ep>0$ were arbitrary, 
the standard argument on central sequences allows us to assume that 
$\tilde c_i$'s are in $(A_\infty)^\alpha$ and 
that $\tilde c_1^m$ converges uniformly to $1/k$ on $T(A)$ for any $m\in\N$. 
It follows that 
\[
1-\sum_{i=1}^k\tilde c_i^*\tilde c_i\in(A_\infty)^\alpha
\]
converges uniformly to zero on $T(A)$. 
We would like to construct the image of the generator $s$. 
Take $\ep>0$ arbitrarily. 
By using Theorem \ref{wRohlintype} again, 
one can find an $(F,\ep)$-invariant finite subset $L\subset G$ and 
$e\in A_\infty$ such that $0\leq e\leq1$, 
\[
\alpha_g(e)\alpha_h(e)=0
\]
for $g,h\in L$ with $g\neq h$ 
and $e$ converges uniformly to $\lvert L\rvert^{-1}$ on $T(A)$. 
Let $(e_n)_n\in\ell^\infty(\N,A)$ be a representative sequence of $e$ 
such that $0\leq e_n\leq1$. 
One can choose a sequence of natural numbers $(m_n)_n$ 
satisfying $\lim_{n\to\infty}m_n=\infty$ so that 
$(e_n^{1/m_n})_n$ is still a central sequence and 
\[
\lim_{n\to\infty}\alpha_g(e_n^{1/m_n})\alpha_h(e_n^{1/m_n})=0
\]
for all $g,h\in L$ with $g\neq h$. 
Therefore, by replacing $e_n$ with $e_n^{1/m_n}$, 
we may assume that 
$e^m$ converges uniformly to $\lvert L\rvert^{-1}$ on $T(A)$ for any $m\in\N$. 
In the same way as before, 
we may further assume that $e$ commutes with $\tilde c_1$ and that 
$\tilde c_1^me^m$ converges uniformly to $(k\lvert L\rvert)^{-1}$ on $T(A)$ 
for every $m\in\N$. 
By Lemma \ref{C0hasSI}, $A$ has the property (SI). 
Hence, there exists $r\in A_\infty$ such that 
\[
\tilde c_1er=r\quad\text{and}\quad
r^*r=1-\sum_{i=1}^k\tilde c_i^*\tilde c_i. 
\]
Define $\tilde s\in A_\infty$ by 
\[
\tilde s=\frac{1}{\sqrt{\lvert L\rvert}}\sum_{g\in L}\alpha_g(r). 
\]
Clearly 
$\tilde s^*\tilde s=r^*r=1-\sum_{i=1}^k\tilde c_i^*\tilde c_i$. 
By $0\leq\tilde c_1e\leq\tilde c_1\leq1$, 
we also have $\tilde c_1\tilde s=\tilde s$. 
Moreover, for any $h\in F$, 
\begin{align*}
\lVert\tilde s-\alpha_h(\tilde s)\rVert
&\leq\frac{1}{\sqrt{\lvert L\rvert}}
\left\lVert\sum_{g\in L\setminus h^{-1}L}\alpha_g(r)\right\rVert
+\frac{1}{\sqrt{\lvert L\rvert}}
\left\lVert\sum_{g\in L\setminus hL}\alpha_g(r)\right\rVert \\
&\leq\frac{\lvert L\setminus h^{-1}L\rvert^{1/2}}{\lvert L\rvert^{1/2}}
+\frac{\lvert L\setminus hL\rvert^{1/2}}{\lvert L\rvert^{1/2}}
\leq2\sqrt{\ep}. 
\end{align*}
Since $\ep>0$ was arbitrary, 
the standard argument on central sequences allows us to assume that 
$\tilde s$ is in $(A_\infty)^\alpha$. 
Consequently, from Lemma \ref{c&s}, 
we can obtain a unital homomorphism $\psi:I(k,k{+}1)\to(A_\infty)^\alpha$ 
as desired. 

When $G$ is a finite group, the proof is much easier. 
Indeed we can define $\tilde c_i\in A_\infty$ by 
\[
\tilde c_i=\sum_{g\in G}\alpha_g(\phi(c_i)f^{1/m}), 
\]
where $m$ is a sufficiently large natural number. 
The construction of the element $\tilde s$ is the same as above. 
\end{proof}

\begin{thm}\label{Rordam}
Let $\alpha:G\curvearrowright A$ be an action of 
a countable discrete group $G$ on a unital $C^*$-algebra $A$. 
Suppose that 
there exists a unital embedding of $\mathcal{Z}$ into $(A_\infty)^\alpha$. 
Then $(A,\alpha)$ is strongly cocycle conjugate to 
$(A\otimes\mathcal{Z},\alpha\otimes\id)$. 
In particular, $A\rtimes_\alpha G$ absorbs $\mathcal{Z}$ tensorially. 
\end{thm}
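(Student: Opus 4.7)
The plan is to execute an equivariant version of R\o rdam's approximate intertwining argument. The given unital embedding $\sigma:\mathcal{Z}\to(A_\infty)^\alpha$ induces a unital $*$-homomorphism $\tilde\sigma:A\otimes\mathcal{Z}\to A^\infty$ by $\tilde\sigma(a\otimes z)=a\cdot\sigma(z)$, which is equivariant for $\alpha\otimes\id$ on the source and the canonical extension of $\alpha$ on the target. I would use this, together with the strong self-absorption $\mathcal{Z}\otimes\mathcal{Z}\cong\mathcal{Z}$, to build by Elliott-type one-sided intertwining both an isomorphism $\mu:A\to A\otimes\mathcal{Z}$ and an $\alpha$-cocycle $(u_g)_{g\in G}$ in $A$ such that $\mu\circ\alpha^u_g=(\alpha_g\otimes\id)\circ\mu$, along with a sequence of unitaries $(v_n)$ in $A$ for which $u_g=\lim_n v_n\alpha_g(v_n^*)$.

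Concretely, I would fix an increasing exhaustion $F_1\subseteq F_2\subseteq\cdots$ of $G$ by finite subsets together with a dense sequence in the unit ball of $A$, and then inductively produce unitaries $w_n\in A$ that implement the intertwining at finite scales in the spirit of R\o rdam's proof that strong self-absorption passes through tensor factors. The approximate unitary equivalence of any two unital embeddings of $\mathcal{Z}$ into a unital $C^*$-algebra containing $\mathcal{Z}$ unitally in its central sequence algebra (a direct consequence of strong self-absorption) supplies the $w_n$. The new equivariant ingredient is that, because $\sigma(\mathcal{Z})$ sits inside the fixed-point algebra $(A_\infty)^\alpha$, a standard reindexation inside the sequence algebra lets us demand in addition that $\lVert\alpha_g(w_n)-w_n\rVert<2^{-n}$ for every $g\in F_n$. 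The partial products $v_n=w_1 w_2\cdots w_n$ then serve as the approximating unitaries, and $u_g:=\lim_n v_n\alpha_g(v_n^*)$ converges and defines the cocycle.

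The main obstacle is the simultaneous bookkeeping required at the inductive step: the choices of $w_n$ must ensure convergence of the intertwining to a genuine isomorphism $\mu$, the approximate cocycle identity $u_g\alpha_g(u_h)\approx u_{gh}$ must tighten to equality in the limit, and the coboundary relation $u_g=\lim_n v_n\alpha_g(v_n^*)$ must hold. These are all controlled by rapidly shrinking tolerances at each stage, but the interleaving with the increasing exhaustions $F_n$ and the dense subsets of $A$ has to be done with care. Once the strong cocycle conjugacy is established, the final $\mathcal{Z}$-stability statement follows from the chain
\[
A\rtimes_\alpha G\;\cong\;A\rtimes_{\alpha^u}G\;\cong\;(A\otimes\mathcal{Z})\rtimes_{\alpha\otimes\id}G\;\cong\;(A\rtimes_\alpha G)\otimes\mathcal{Z},
\]
where the first isomorphism is the standard one for a cocycle perturbation, the second is induced by $\mu$, and the third exploits the triviality of the $G$-action on the $\mathcal{Z}$ factor.
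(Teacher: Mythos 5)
Your proposal is correct and follows essentially the same route as the paper, whose entire proof is the single remark that the statement is a ``trivial generalization of R\o rdam's Theorem 7.2.2'': you are simply spelling out that generalization, namely the one-sided Elliott--R\o rdam intertwining driven by the unital copy of $\mathcal{Z}$ in the central sequence algebra, with the extra observation that since this copy lies in $(A_\infty)^\alpha$ the intertwining unitaries can be taken almost $\alpha$-invariant, so that the partial products yield both the approximating unitaries $v_n$ and the cocycle $u_g=\lim_n v_n\alpha_g(v_n^*)$. The concluding chain of isomorphisms for $A\rtimes_\alpha G$ is exactly the intended deduction of $\mathcal{Z}$-stability.
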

\begin{proof}
This is a trivial generalization of \cite[Theorem 7.2.2]{Rtext}. 
\end{proof}

Combining the theorems above, we obtain the following corollaries. 

\begin{cor}\label{absorbZ0}
Let $A$ be a $C^*$-algebra in $\mathcal{C}_0$ and 
let $G$ be $\Z^N$ or a finite group. 
Suppose that 
$\alpha:G\curvearrowright A$ is an action with the weak Rohlin property. 
Then $(A,\alpha)$ is strongly cocycle conjugate to 
$(A\otimes\mathcal{Z},\alpha\otimes\id)$. 
In particular, $A\rtimes_\alpha G$ absorbs $\mathcal{Z}$ tensorially. 
\end{cor}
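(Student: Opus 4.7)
This corollary is an immediate consequence of the two theorems stated just above it, so my plan is simply to chain them together. The hypotheses of the corollary (namely $A\in\mathcal{C}_0$, $G$ being $\Z^N$ or a finite group, and $\alpha$ having the weak Rohlin property) are exactly the hypotheses of Theorem \ref{embedZ}. Applying that theorem, I obtain a unital embedding $\mathcal{Z}\hookrightarrow(A_\infty)^\alpha$.

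Next, since any $\Z^N$ or finite $G$ is countable, the hypotheses of Theorem \ref{Rordam} (a unital $C^*$-algebra $A$, a countable discrete group action, and a unital embedding of $\mathcal{Z}$ into $(A_\infty)^\alpha$) are all verified. Applying Theorem \ref{Rordam} yields that $(A,\alpha)$ is strongly cocycle conjugate to $(A\otimes\mathcal{Z},\alpha\otimes\id)$, and in particular $A\rtimes_\alpha G$ absorbs $\mathcal{Z}$ tensorially.

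There is essentially no obstacle here, because all of the technical work has already been done: Theorem \ref{embedZ} absorbs the intricate construction (building $\tilde c_i$ and $\tilde s$ from the weak Rohlin tower together with the property (SI) coming from Lemma \ref{C0hasSI} and the embeddings $\phi_n:I(k,k{+}1)\to\mathcal{Z}$ from Lemma \ref{s}), while Theorem \ref{Rordam} is noted in the excerpt to be a trivial generalization of \cite[Theorem 7.2.2]{Rtext}. The proof of the corollary is therefore a one-line combination, and no separate argument or additional lemma is needed.
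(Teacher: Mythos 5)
Your proposal is correct and is exactly how the paper obtains this corollary: the text introduces it with ``Combining the theorems above, we obtain the following corollaries,'' i.e.\ Theorem \ref{embedZ} supplies the unital embedding of $\mathcal{Z}$ into $(A_\infty)^\alpha$ and Theorem \ref{Rordam} then gives the strong cocycle conjugacy and hence $\mathcal{Z}$-stability of the crossed product. No further argument is needed.
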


\begin{cor}\label{absorbZ}
Let $A$ be a $C^*$-algebra in $\mathcal{C}_0$ 
with finitely many extremal tracial states and 
let $\alpha:G\curvearrowright A$ be a strongly outer action. 
Assume one of the three conditions stated in Theorem \ref{wRohlintype}. 
Then $(A,\alpha)$ is strongly cocycle conjugate to 
$(A\otimes\mathcal{Z},\alpha\otimes\id)$. 
In particular, $A\rtimes_\alpha G$ absorbs $\mathcal{Z}$ tensorially. 
\end{cor}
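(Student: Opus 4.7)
The plan is to derive this corollary as a direct composition of the two deep results already proved in the paper: the Rohlin type theorem (Theorem \ref{wRohlintype}) and the absorption corollary (Corollary \ref{absorbZ0}). The hypotheses have been set up precisely so that the bridge between strong outerness and $\mathcal{Z}$-absorption passes through the weak Rohlin property as the intermediate notion.

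First, I would check that Theorem \ref{wRohlintype} applies. Its assumptions are: $A\in\mathcal{C}$ with finitely many extremal tracial states, and $\alpha$ satisfies one of the three group-theoretic conditions ($\Z$, or $\Z^N$ with $T(A)^\alpha=T(A)$, or a finite group). Since $\mathcal{C}_0\subset\mathcal{C}$ by definition, the assumption $A\in\mathcal{C}_0$ supplies the underlying algebraic hypothesis, the finiteness of extremal traces is given, and we have explicitly assumed one of the three conditions on $G$. Theorem \ref{wRohlintype} then turns strong outerness of $\alpha$ into the weak Rohlin property for $\alpha$.

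Second, with the weak Rohlin property in hand, Corollary \ref{absorbZ0} applies verbatim: it takes an action of $\Z^N$ or a finite group with the weak Rohlin property on a $C^*$-algebra in $\mathcal{C}_0$, and concludes that $(A,\alpha)$ is strongly cocycle conjugate to $(A\otimes\mathcal{Z},\alpha\otimes\id)$. The $\mathcal{Z}$-stability of the crossed product is then the standard consequence, using that strong cocycle conjugate actions give isomorphic crossed products together with the identification $(A\otimes\mathcal{Z})\rtimes_{\alpha\otimes\id}G\cong(A\rtimes_\alpha G)\otimes\mathcal{Z}$.

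There is no real obstacle in the proof itself: everything hard has been done upstream. The substantive content is concentrated in Theorem \ref{wRohlintype} (where the factor-level Rohlin results of Ocneanu are transported back to the $C^*$-algebra via Proposition \ref{central3} and Kaplansky density) and in the chain Theorem \ref{embedZ} $\Rightarrow$ Theorem \ref{Rordam} $\Rightarrow$ Corollary \ref{absorbZ0}. The proof of the present corollary is thus just a two-line citation.
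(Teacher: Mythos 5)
Your proposal is correct and is exactly the paper's (implicit) proof: the paper states Corollary \ref{absorbZ} immediately after the sentence ``Combining the theorems above, we obtain the following corollaries,'' meaning precisely the composition of Theorem \ref{wRohlintype} (strong outerness implies the weak Rohlin property under the stated hypotheses, noting $\mathcal{C}_0\subset\mathcal{C}$) with Corollary \ref{absorbZ0}.
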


\begin{rem}\label{absorbZca}
Theorem \ref{embedZ} also holds 
for cocycle actions of $\Z^N$ or a finite group. 
We can prove this in exactly the same way as above, 
because a cocycle action $(\alpha,u)$ gives rise to 
a genuine action on $A_\infty$. 
Theorem \ref{Rordam} also holds for cocycle actions $(\alpha,u)$ 
in the following sense (see also \cite[Theorem 6.3]{IM}): 
If there exists a unital embedding of $\mathcal{Z}$ into $(A_\infty)^\alpha$, 
then there exists an isomorphism $\mu:A\otimes\mathcal{Z}\to A$ such that 
$(\alpha,u)$ is equivalent to 
$(\mu\circ(\alpha\otimes\id)\circ\mu^{-1},\mu(u\otimes1))$. 
Hence one obtains the following: 
If a cocycle action $(\alpha,u)$ on $A\in\mathcal{C}_0$ has 
the weak Rohlin property, 
then there exists an isomorphism $\mu:A\otimes\mathcal{Z}\to A$ such that 
$(\alpha,u)$ is equivalent to 
$(\mu\circ(\alpha\otimes\id)\circ\mu^{-1},\mu(u\otimes1))$. 
This, together with Remark \ref{wRohlintypeca}, implies 
that Corollary \ref{absorbZ}  also holds for cocycle actions. 
\end{rem}

\section{Closedness of $\mathcal{C}_0$ under taking crossed products}

In this section we will show that 
the class $\mathcal{C}_0$ is closed under taking crossed products 
by strongly outer actions of finite groups or $\Z$ 
satisfying certain mild assumptions 
(Theorem  \ref{closedbyfnt} and \ref{closedbyZ}). 
We begin with the case of finite group actions. 
Note that the following theorem is shown in \cite[Theorem 5.5]{ELPW} 
when $A$ has a unique trace. 
See Definition 1.2 and Lemma 1.16 of \cite{P} 
for the definition of the tracial Rohlin property for finite group actions. 

\begin{thm}\label{finitegrp}
Let $A$ be a unital simple separable $C^*$-algebra with tracial rank zero 
and let $\alpha:G\curvearrowright A$ be a strongly outer action of 
a finite group $G$. 
Suppose that $A$ has finitely many extremal traces. 
Then $\alpha$ has the tracial Rohlin property. 
\end{thm}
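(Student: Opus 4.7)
The plan is to run the Ocneanu-type Rohlin argument of Theorem \ref{wRohlintype}(3) at the enveloping von Neumann algebra level and then use the projection-richness of tracial rank zero to pull it back to $A$ and to upgrade positive elements to projections. Let $E$ denote the finite set of extremal tracial states of $A$, set $\rho=\bigoplus_{\tau\in E}\pi_\tau$ and $M=\rho(A)''$. Tracial rank zero guarantees that each $\pi_\tau(A)''$ is the hyperfinite II$_1$-factor, and strong outerness of $\alpha$ gives outerness of $\bar\alpha$ restricted to the stabilizer subgroup of each extremal trace under the induced $G$-action on $E$. Ocneanu's theorem, applied exactly as in the third case of the proof of Theorem \ref{wRohlintype}, then produces a central sequence of projections $(p_n)\subset M$ with $\lVert p_n\bar\alpha_g(p_n)\rVert_2\to0$ for $g\in G\setminus\{e\}$ and $\tau(p_n)\to 1/\lvert G\rvert$ for every $\tau\in T(M)$.

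Kaplansky's density theorem lifts $(p_n)$ to a central sequence of positive contractions $(e_n)\subset A$ with $\lVert e_n-\rho^{-1}(p_n)\rVert_2\to0$. Since tracial rank zero is preserved under tensoring with UHF algebras, a variant of Propositions \ref{central2} and \ref{central3} applies and converts $\lVert\cdot\rVert_2$-approximate orthogonality into norm approximate orthogonality, yielding a central sequence of positive contractions $(f_n)\subset A$ with $\lVert\alpha_g(f_n)\alpha_h(f_n)\rVert\to0$ for $g\neq h$ and $\max_{\tau\in T(A)}\lvert\tau(f_n)-\lvert G\rvert^{-1}\rvert\to0$. After a reindexation ensuring $\tau(f_n^m)\to\lvert G\rvert^{-1}$ uniformly for each $m$, Lemma \ref{SI>projSI}(2) produces a central sequence of projections $(q_n)\subset A$ with $\lVert f_nq_n-q_n\rVert\to0$, and the estimate
\[
\lVert\alpha_g(q_n)\alpha_h(q_n)\rVert
\leq\lVert\alpha_g(q_n)\alpha_g(f_n)\alpha_h(f_n)\alpha_h(q_n)\rVert+o(1)\longrightarrow0
\]
for $g\neq h$ transfers approximate orthogonality from the $f_n$'s to the $q_n$'s. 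A small functional-calculus perturbation of $\sum_g\alpha_g(q_n)$ turns the nearly orthogonal family $\{\alpha_g(q_n)\}_{g\in G}$ into a genuinely orthogonal family of projections, and the fact that $\tau(p_n)\to\lvert G\rvert^{-1}$ for every $\tau\in T(M)$ transfers, through the lifting and SI-refinement, into $\tau(q_n)\to\lvert G\rvert^{-1}$ uniformly in $\tau\in T(A)$.

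Setting $e_g=\alpha_g(q_n)$ for $n$ sufficiently large (depending on the prescribed finite set $F\subset A$ and $\ep>0$) then realizes the first two clauses of the tracial Rohlin property on the nose: the covariance $\alpha_g(e_h)=e_{gh}$ is exact and approximate centrality descends from that of $(q_n)$. For the remainder clause, $\tau(1-\sum_g e_g)=1-\sum_g(\tau\circ\alpha_g)(q_n)\to0$ uniformly in $\tau\in T(A)$, and because the simple unital tracial rank zero algebra $A$ has strict comparison of projections, this forces $1-\sum_g e_g$ to be Murray--von Neumann subequivalent to any preassigned nonzero positive $x\in A$ for $n$ large. The main obstacle is precisely the trace-saturation step: converting the positive-element Rohlin tower $(f_n)$, whose traces tend to $\lvert G\rvert^{-1}$, into a projection tower $(q_n)$ with the \emph{same} limit trace while simultaneously preserving approximate centrality and acquiring norm orthogonality of the $G$-translates. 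Lemma \ref{SI>projSI}(2) alone provides only a positive lower bound on $\tau(q_n)$, so the matching upper bound must be extracted from the near-orthogonality of $\{\alpha_g(q_n)\}$ via a careful reindexation and functional-calculus refinement.
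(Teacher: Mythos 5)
Your first half coincides exactly with the paper's: both run the Ocneanu argument at the level of $M=\bigoplus_{\tau\in E}\pi_\tau(A)''$ as in case (3) of the proof of Theorem \ref{wRohlintype} to get the central sequence of projections $(p_n)$ with the right traces and almost orthogonal $G$-translates. Where you diverge is the descent to $A$. The paper's route is much shorter: because $A$ has tracial (hence real) rank zero, the projections $p_n$ can be lifted \emph{directly} to a central sequence of projections in $A$ by \cite[Lemma 2.15]{OP2}, and then \cite[Proposition 4.1]{M09} already converts such a sequence into tracial Rohlin towers; there is no need to pass through positive contractions at all. Your detour -- Kaplansky to positive contractions, norm-centrality and norm-orthogonality upgrading, then Lemma \ref{SI>projSI}(2) to recover projections -- can be made to work, but it has two soft spots you should repair. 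First, Propositions \ref{central2} and \ref{central3} are stated for $A\in\mathcal{C}$, i.e.\ they presuppose $\mathcal{Z}$-stability, which is \emph{not} among the hypotheses of Theorem \ref{finitegrp}; you must actually prove the ``variant'' you invoke. (It is true: for a tracial rank zero algebra the averaging over $U(E)$ for a finite-dimensional $E\subset pAp$ with $\tau(1-p)$ small can be performed inside $A$ itself, which is in fact easier than Proposition \ref{central}, but as written your citation does not apply.) Second, you correctly flag that Lemma \ref{SI>projSI}(2) only yields $\liminf_n\min_\tau\tau(q_n)\geq|G|^{-1}$ and leave the matching upper bound as an assertion; the clean way to close this is to note that once $\lVert\alpha_g(q_n)\alpha_h(q_n)\rVert\to0$ one has $\sum_{g}\tau(\alpha_g(q_n))\leq1+o(1)$, and since each summand equals $(\tau\circ\alpha_g)(q_n)\geq|G|^{-1}-o(1)$ uniformly over the (compact, $G$-permuted) trace space, every $\tau(q_n)$ is forced up to $|G|^{-1}+o(1)$. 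With those two points spelled out your argument is correct; it simply rebuilds by hand what \cite[Lemma 2.15]{OP2} and \cite[Proposition 4.1]{M09} give for free. A last cosmetic remark: you cannot have both exact covariance $e_{gh}=\alpha_g(e_h)$ and exact orthogonality after the functional-calculus perturbation -- Phillips's definition only asks for $\lVert\alpha_g(e_h)-e_{gh}\rVert<\ep$, so you should keep exact orthogonality and approximate covariance, not the other way around.
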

\begin{proof}
One can prove this statement 
by using Kishimoto's technique developed in \cite{K95crelle,K96JFA,K98JOT}. 
We sketch a proof for the reader's convenience. 
Let $\rho$, $M$ and $\bar\alpha$ be 
as in the proof of Theorem \ref{wRohlintype}. 
The proof of Theorem \ref{wRohlintype} (3) tells us that 
there exists a sequence of projections $(p_n)_n$ in $M$ such that 
\[
\lVert[x,p_n]\rVert_2\to0,\quad 
\lVert p_n\bar\alpha_g(p_n)\rVert_2\to0\quad\text{and}\quad 
\tau(p_n)\to1/\lvert G\rvert
\]
for all $x\in M$, $g\in G\setminus\{e\}$ and $\tau\in T(M)$ 
as $n\to\infty$. 
By \cite[Lemma 2.15]{OP2}, 
we may replace the projections $p_n$ with $\rho(e_n)$, 
where the $e_n$'s are projections in $A$. 
From \cite[Proposition 4.1]{M09}, 
we can conclude that $\alpha$ has the tracial Rohlin property. 
\end{proof}

The following theorem is due to N. C. Phillips. 

\begin{thm}[{\cite[Theorem 2.6]{P}}]\label{PhiRokhlin}
Let $A$ be a unital simple separable $C^*$-algebra with tracial rank zero. 
Let $\alpha:G\curvearrowright A$ be an action of a finite group $G$ on $A$ 
which has the tracial Rokhlin property. 
Then $A\rtimes_\alpha G$ has tracial rank zero. 
\end{thm}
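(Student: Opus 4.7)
The plan is to exploit the tracial Rohlin projections to cut the crossed product down by a large projection, producing a corner of $A \rtimes_\alpha G$ approximately isomorphic to $M_{\lvert G\rvert}(eAe)$ for a projection $e \in A$, and then to transfer the tracial rank zero approximation from the corner $eAe$ back up through this identification.

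Given a finite subset $F \subset A \rtimes_\alpha G$, a tolerance $\ep > 0$, and a nonzero positive element $x \in A \rtimes_\alpha G$, I may assume $F$ consists of elements $au_g$ with $a$ ranging over a finite set $F_0 \subset A$ and $g \in G$, where $\{u_g\}$ are the implementing unitaries. First I would apply the tracial Rohlin property of $\alpha$ to $F_0$ with a small tolerance $\delta \ll \ep$ and with a test element built from $x$ (to be chosen below) to obtain mutually orthogonal projections $\{e_g\}_{g \in G} \subset A$ satisfying $\lVert \alpha_g(e_h) - e_{gh}\rVert < \delta$, $\lVert [e_g, a]\rVert < \delta$ for $a \in F_0$, and a Murray-von Neumann comparison of $1 - p$ (where $p := \sum_g e_g$) against the chosen test element. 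Setting $e := e_{1_G}$, the projection $p$ is approximately $\alpha$-invariant and approximately commutes with $F$.

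Next I would introduce approximate matrix units $v_{g,h} := e_g u_{gh^{-1}}$ in $p(A \rtimes_\alpha G)p$. A short computation using the approximate covariance shows that the $v_{g,h}$ satisfy the $\lvert G\rvert \times \lvert G\rvert$ matrix unit relations up to $O(\delta)$, with $v_{g,g} = e_g$. Under the resulting approximate identification $p(A \rtimes_\alpha G)p \approx M_{\lvert G\rvert}(eAe)$, the finite set $\{p(au_g)p : a \in F_0,\ g \in G\}$ maps to an explicit finite set of matrices over $eAe$. Since $A$ has tracial rank zero, so does the corner $eAe$, and hence $M_{\lvert G\rvert}(eAe)$ as well. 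Applying the tracial rank zero axiom in $eAe$ to all of these matrix entries simultaneously yields a finite-dimensional subalgebra $B_0 \subset eAe$ with $1_{B_0}$ approximately central for the entries and approximating them to within $\ep/2$, and with $e - 1_{B_0}$ small. Lifting this to $B := M_{\lvert G\rvert}(B_0) \subset p(A \rtimes_\alpha G)p$ gives the candidate finite-dimensional subalgebra.

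Verifying the three axioms of tracial rank zero for $A \rtimes_\alpha G$ then amounts to combining the above estimates. Axioms (1) and (2) follow from $\lVert [p, au_g]\rVert = O(\delta)$ together with the matrix unit identification and the approximation properties of $B_0$ in $eAe$. The main obstacle I anticipate is axiom (3): the defect $1 - 1_B$ decomposes as $(1 - p) + (p - 1_B)$, where the first summand is controlled by the Rohlin comparison and the second is the matricial lift of $e - 1_{B_0}$, subequivalent to a projection in a small corner of $A$. Combining these two subequivalences into a single Murray-von Neumann subequivalence inside $\overline{x(A \rtimes_\alpha G)x}$ requires a Cuntz comparison argument; here I would use that the tracial Rohlin property forces the canonical conditional expectation $E \colon A \rtimes_\alpha G \to A$ to identify $T(A \rtimes_\alpha G)$ with $T(A)^\alpha$, so that trace conditions on $x$ translate to trace conditions on a positive element of $A$, while simplicity of $A \rtimes_\alpha G$ (implied by the outerness built into the tracial Rohlin property) makes standard Cuntz comparison arguments available inside the crossed product. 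The correct choice of the Rohlin test element at the outset, made so that $1 - p$ already fits into the relevant hereditary subalgebra of $A$, is what ties the two subequivalences together.
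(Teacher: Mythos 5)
The paper does not prove this statement at all: it is quoted verbatim from Phillips \cite[Theorem 2.6]{P}, so there is no in-paper argument to compare against. Your sketch is, in outline, the standard (and Phillips's) proof: Rokhlin projections $e_g$, approximate matrix units $v_{g,h}=e_g u_{gh^{-1}}$ identifying $p(A\rtimes_\alpha G)p$ with $M_{\lvert G\rvert}(eAe)$ up to $O(\delta)$ on the given finite set, tracial rank zero of the corner $eAe$ applied to the entries $e\alpha_h(a)e$, and a two-piece estimate of the defect $1-1_B=(1-p)+(p-1_B)$. The one step whose justification as written would not go through is your treatment of axiom (3): simplicity of $A\rtimes_\alpha G$ does not by itself make ``standard Cuntz comparison arguments available inside the crossed product,'' and trace-governed comparison of projections in $A\rtimes_\alpha G$ is not known at this stage --- it is essentially downstream of the theorem being proved. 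What is actually needed, and what Phillips supplies, is the more elementary lemma that for every nonzero positive $x\in A\rtimes_\alpha G$ there is a nonzero positive element (hence, by real rank zero of $A$, a nonzero projection) of $A$ that is Murray--von Neumann subequivalent to a projection in $\overline{x(A\rtimes_\alpha G)x}$; this is proved directly from pointwise outerness of $\alpha$ (which the tracial Rokhlin property implies) and the faithful conditional expectation, with no appeal to comparison. Granting that lemma, one splits the hereditary subalgebra generated by $x$ into two orthogonal nonzero pieces exactly as in the paper's own Lemma \ref{onlyQ}, uses one piece (pulled back into $A$) as the test element in the tracial Rokhlin property to absorb $1-p$ and the other as the test element in the tracial rank zero approximation inside $eAe$ to absorb $p-1_B$; the two orthogonal subequivalences then add. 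With that substitution for your comparison step, the argument is complete.
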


Using the theorems above, we can show the following. 

\begin{thm}\label{closedbyfnt}
Let $A\in\mathcal{C}_0$ and let $\alpha:G\curvearrowright A$ be 
a strongly outer action of a finite group $G$. 
Suppose that $A$ has finitely many extremal tracial states. 
Then $A\rtimes_\alpha G$ belongs to $\mathcal{C}$. 
\end{thm}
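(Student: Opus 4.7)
The plan is to verify directly each of the three defining conditions of $\mathcal{C}$: that $B:=A\rtimes_\alpha G$ is unital, simple, separable; that $B\otimes\mathcal{Z}\cong B$; and that $B\otimes Q$ has tracial rank zero. Unitality and separability are immediate, and simplicity of $B$ is a consequence of strong outerness of $\alpha$ together with simplicity of $A$ (outerness of each $\alpha_g$, $g\neq e$, suffices since $G$ is finite).

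For $\mathcal{Z}$-stability, I would simply invoke Corollary \ref{absorbZ}. Its hypotheses hold: $A\in\mathcal{C}_0$, $A$ has finitely many extremal tracial states, and $\alpha$ is a strongly outer action of a finite group, which is exactly condition (3) of Theorem \ref{wRohlintype}. Hence $(A,\alpha)$ is strongly cocycle conjugate to $(A\otimes\mathcal{Z},\alpha\otimes\id)$, and in particular $B$ absorbs $\mathcal{Z}$ tensorially.

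The remaining and main step is tracial rank zero of $B\otimes Q$. I would exploit the standard isomorphism
\[
B\otimes Q\cong (A\otimes Q)\rtimes_{\alpha\otimes\id}G
\]
and reduce to showing that $\alpha\otimes\id:G\curvearrowright A\otimes Q$ has the tracial Rohlin property so that Theorem \ref{PhiRokhlin} applies. First, since $A\in\mathcal{C}\subset\mathcal{C}_0$, by Remark \ref{RemonC0}(1) the algebra $A\otimes Q$ has tracial rank zero. Second, $Q$ has a unique trace, so by Remark \ref{sotimesid} the action $\alpha\otimes\id$ is strongly outer, and the extremal traces of $A\otimes Q$ are precisely $\tau\otimes\omega_Q$ for $\tau$ extremal on $A$, hence finite in number. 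These are exactly the hypotheses of Theorem \ref{finitegrp}, which then produces the tracial Rohlin property for $\alpha\otimes\id$. Theorem \ref{PhiRokhlin} now yields that $(A\otimes Q)\rtimes_{\alpha\otimes\id}G$ has tracial rank zero, completing the verification.

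The only mildly delicate point is to be explicit that $\alpha\otimes\id$ still has finitely many extremal traces on $A\otimes Q$ so that Theorem \ref{finitegrp} genuinely applies; everything else is a direct concatenation of the Rohlin-type theorem of the previous section with Phillips' crossed product result. I do not expect any serious obstruction.
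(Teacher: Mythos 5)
Your proposal is correct and follows essentially the same route as the paper: simplicity via Kishimoto's outerness theorem, $\mathcal{Z}$-stability via Corollary \ref{absorbZ}, and tracial rank zero of $(A\rtimes_\alpha G)\otimes Q\cong(A\otimes Q)\rtimes_{\alpha\otimes\id}G$ via Theorems \ref{finitegrp} and \ref{PhiRokhlin}. (One typo: the inclusion should read $A\in\mathcal{C}_0\subset\mathcal{C}$, not $\mathcal{C}\subset\mathcal{C}_0$; in fact, that $A\otimes Q$ has tracial rank zero is already part of the definition of $\mathcal{C}$.)
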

\begin{proof}
Clearly $A\rtimes_\alpha G$ is unital and separable. 
By \cite[Theorem 3.1]{K81CMP}, $A\rtimes_\alpha G$ is simple. 
By Theorem \ref{absorbZ}, $A\rtimes_\alpha G$ is $\mathcal{Z}$-stable. 
By Theorem \ref{finitegrp} and \ref{PhiRokhlin}, 
$(A\otimes Q)\rtimes_{\alpha\otimes\id}G\cong(A\rtimes_\alpha G)\otimes Q$ 
has tracial rank zero, where $Q$ is the universal UHF algebra. 
Hence $A\rtimes_\alpha G$ belongs to $\mathcal{C}$. 
\end{proof}

We now turn to $\Z$-actions. 
See \cite[Definition 1.1]{K95crelle} or \cite[Definition 4.1]{K98JFA} 
for the definition of the Rohlin property for $\Z$-actions. 

\begin{lem}\label{cyclic}
Let $A$ be a unital simple AH algebra 
with real rank zero and slow dimension growth. 
Suppose that $A$ has finitely many extremal traces. 
Let $\alpha:\Z\curvearrowright A$ be a strongly outer action. 
Suppose further that 
$(A_\infty)^\alpha$ admits a unital embedding of a UHF algebra. 
Then $\alpha$ has the Rohlin property. 
\end{lem}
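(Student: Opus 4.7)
Since $A$ is a unital simple AH algebra with real rank zero and slow dimension growth, the identification recalled in the introduction shows $A$ has tracial rank zero; together with nuclearity, the UCT, and the $\mathcal{Z}$-stability of Remark~\ref{RemonC0}(2), this places $A$ in $\mathcal{C}_0$. Since $A$ has only finitely many extremal traces and $\alpha$ is strongly outer, Theorem~\ref{wRohlintype}(1) yields the weak Rohlin property for $\alpha$. What remains is to promote this to Kishimoto's Rohlin property, which calls for two towers of mutually orthogonal projections of consecutive heights $k$ and $k+1$ summing to the identity.

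Fix $k\in\N$ and apply the weak Rohlin property at tower length $L=M(2k+1)$ for a large integer $M$, producing a central sequence $(f_n)$ of positive contractions in $A$ with $\lVert\alpha^i(f_n)\alpha^j(f_n)\rVert\to 0$ for distinct $i,j\in\{0,\ldots,L-1\}$ and $\max_{\tau\in T(A)}\lvert\tau(f_n)-1/L\rvert\to 0$. The decisive step is to replace $(f_n)$ by a central sequence of projections $(p_n)$ in $A$ enjoying the same limiting orthogonality and trace. This uses real rank zero of $A$, so that $f_n$ can be approximated in spectral shape by elements with finite spectrum from which spectral projections can be extracted, combined with the unital UHF subalgebra $U\subset(A_\infty)^\alpha$: the UHF supplies $\alpha$-fixed projections of any prescribed rational trace that commute with $A$, hence the equivariant divisibility required to adjust traces without disturbing the tower's shift and orthogonality structure.

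Once such $(p_n)$ is in hand, partition $\{0,1,\ldots,L-1\}$ into $M$ consecutive blocks of length $k$ (with starts $i_r^{(k)}$) and $M$ consecutive blocks of length $k+1$ (with starts $i_r^{(k+1)}$), and define
\[
e_j^{(n)}=\sum_{r=1}^{M}\alpha^{i_r^{(k)}+j}(p_n)\ \ (0\le j\le k-1),\qquad
g_j^{(n)}=\sum_{r=1}^{M}\alpha^{i_r^{(k+1)}+j}(p_n)\ \ (0\le j\le k).
\]
Each $e_j^{(n)},g_j^{(n)}$ is asymptotically a projection (being a sum of pairwise orthogonal projections in the limit); the combined family is pairwise orthogonal across all levels and towers; one has $\sum_j e_j^{(n)}+\sum_j g_j^{(n)}=\sum_{i=0}^{L-1}\alpha^i(p_n)\to 1$; and $\alpha(e_j^{(n)})=e_{j+1}^{(n)}$ for $0\le j\le k-2$, $\alpha(g_j^{(n)})=g_{j+1}^{(n)}$ for $0\le j\le k-1$, with the top-of-block shifts left unconstrained as permitted by the Rohlin definition. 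The main obstacle is the projection-upgrade step: simultaneously enforcing orthogonality, centrality, and correct traces for all $L$ translates of $p_n$ is delicate, and the $\alpha$-equivariant divisibility furnished by the UHF embedding in $(A_\infty)^\alpha$ is precisely what makes this simultaneous control possible.
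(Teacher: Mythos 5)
Your reduction to the weak Rohlin property via Theorem~\ref{wRohlintype} is not where the difficulty lies, and the subsequent ``projection-upgrade'' step, which you yourself flag as the main obstacle, is precisely the gap: it is asserted rather than proved, and the mechanism you propose for it does not work as described. A unital UHF subalgebra of $(A_\infty)^\alpha$ gives $\alpha$-invariant projections of prescribed trace commuting with $A$, but it does not give any ``equivariant divisibility'' that lets you convert the $L$ translates $\alpha^i(f_n)$ of a positive contraction into exactly orthogonal projections while preserving centrality, the shift relations, and the traces simultaneously. Moreover, even granting towers of projections, your construction only yields $\sum_i\alpha^i(p_n)\to 1$ in trace (from $\tau(p_n)\to 1/L$ plus orthogonality), not in norm; Kishimoto's Rohlin property requires the two towers to sum to the identity, and bridging that gap is exactly the content of the tracial-to-genuine Rohlin passage, which cannot be skipped.

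The paper's proof takes a different route that avoids both problems. It does not start from the weak Rohlin property at all: by Remark~\ref{so=uo} and (the proof of) \cite[Theorem 4.2]{M09}, strong outerness already yields a \emph{projection} $e\in A_\infty$ with $e\alpha^i(e)=0$ for $i=1,\dots,l-1$ and trace uniformly close to $1/l$, so no upgrade from positive contractions is needed. By \cite[Theorem 4.7]{M09} it then suffices to verify the tracial \emph{cyclic} Rohlin property, i.e.\ to produce a projection $p$ of trace nearly $1/m$ with $p\alpha^i(p)=0$ for $i<m$ and $\lVert p-\alpha^m(p)\rVert<\ep$; the norm condition $\sum\approx 1$ is handled by that theorem, not by hand. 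The UHF embedding into $(A_\infty)^\alpha$ enters for a purpose entirely different from the one you assign it: it supplies an $\alpha$-invariant projection $f$ of a prescribed trace $\theta$, and a partial isometry $v$ with $v^*v=f$, $vv^*\le e$ then yields $e'=vfv^*$ together with $w=\alpha(v)v^*$ satisfying $w^*w=e'$ and $ww^*=\alpha(e')$. This explicit Murray--von Neumann equivalence between $e'$ and $\alpha(e')$ is the input to Kishimoto's argument from \cite[Lemma 4.3]{K95crelle}, which splices translates $\alpha^{jm}(e')$ into an almost $\alpha^m$-invariant projection $p$ with $\lVert p-\alpha^m(p)\rVert<1/k_1+1/\sqrt{k_1}$. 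Your two-tower block decomposition never produces this approximate periodicity, so it cannot feed into the tracial cyclic Rohlin property, and as written it does not establish the Rohlin property either.
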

\begin{proof}
We regard $\alpha$ as a single automorphism. 
By \cite[Theorem 4.7]{M09}, it suffices to show that 
$\alpha$ possesses the tracial cyclic Rohlin property 
in the sense of \cite{LO}. 
Suppose that we are given $m\in\N$ and $\ep>0$. 
The proof uses a technique developed in \cite{K95crelle}. 
Choose $k_1,k_2\in\N$ so that 
\[
\frac{1}{k_1}+\frac{1}{\sqrt{k_1}}<\ep\quad\text{and}\quad 
\frac{k_1+k_2}{2k_1+k_2-1}>\sqrt{1-\ep}. 
\]
Put $l=m(2k_1+k_2-1)$. 

As in the proof of Theorem \ref{embedZ}, 
we say that $x\in A^\infty$ converges uniformly to $\theta\in\C$ on $T(A)$ 
if $\max_{\tau\in T(A)}\lvert\tau(x_n)-\theta\rvert\to0$ as $n\to0$, 
where $(x_n)_n$ is a representative of $x$. 
By (the proof of) \cite[Theorem 4.2]{M09} and Remark \ref{so=uo}, 
for any $l\in\N$, 
one can find a projection $e\in A_\infty$ 
converging uniformly to $1/l$ on $T(A)$ 
and satisfying $e\alpha^i(e)=0$ for all $i=1,2,\dots,l{-}1$. 
Choose $\theta\in\R$ 
so that $l^{-1}(1-\ep)^{1/2}<\theta<l^{-1}(1-\ep)^{1/3}$. 
Since there exists a unital homomorphism 
from a UHF algebra to $(A_\infty)^\alpha$, 
we can find a projection $f\in(A_\infty)^\alpha$ 
converging uniformly to $\theta$ on $T(A)$. 
It follows from \cite[Lemma 3.7]{M09} that 
there exists a partial isometry $v\in A_\infty$ 
such that $v^*v=f$ and $vv^*\leq e$. 
Put $e'=vfv^*$ and $w=\alpha(v)v^*$. 
Then $e'$ is a projection in $A_\infty$ 
which converges uniformly to $\theta$ on $T(A)$ 
and satisfies $e'\alpha^i(e')=0$ for all $i=1,2,\dots,l{-}1$. 
In addition, $w$ is a partial isometry in $A_\infty$ satisfying 
$w^*w=e'$ and $ww^*=\alpha(e')$. 
Then we can mimic the argument in \cite[Lemma 4.3]{K95crelle} and 
obtain a projection $p\in A_\infty$ satisfying 
\[
p\leq\sum_{j=0}^{2k_1+k_2-2}\alpha^{jm}(e'),\quad 
p\alpha^i(p)=0\quad\text{for any }i=1,2,\dots,m{-}1
\]
and 
\[
\lVert p-\alpha^m(p)\rVert<\frac{1}{k_1}+\frac{1}{\sqrt{k_1}}<\ep. 
\]
Besides $p$ converges uniformly to $\theta(k_1+k_2)$ on $T(A)$ and 
\[
\theta(k_1+k_2)>l^{-1}(1-\ep)^{1/2}(k_1+k_2)>m^{-1}(1-\ep). 
\]
Since $\ep>0$ is arbitrary, 
we can conclude that $\alpha$ has the tracial cyclic Rohlin property. 
See also \cite[Theorem 3.4]{LO} or \cite[Theorem 4.4]{M09}. 
\end{proof}

We would like to recall A. Kishimoto's results 
for automorphisms of unital simple AT algebras with real rank zero. 
For a unital $C^*$-algebra $A$, 
$\aInn(A)$ stands for the group of approximately inner automorphisms of $A$ 
and $\HInn(A)$ stands for the subgroup of $\aInn(A)$ 
consisting of automorphisms homotopic to an inner automorphism. 
Evidently $\HInn(A)$ is a normal subgroup of $\aInn(A)$. 

\begin{thm}[{\cite[Corollary 2.3]{K98JFA}}]\label{Kishimoto1}
Let $A$ be a unital simple AT algebra with real rank zero. 
Suppose that $A$ has a unique trace and $K_1(A)\neq\Z$. 
Then $\aInn(A)/\HInn(A)$ is isomorphic to 
\[
\Ext(K_1(A),K_0(A))\oplus\Ext(K_0(A),K_1(A)). 
\]
\end{thm}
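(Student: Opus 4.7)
The plan is to define a natural map
$\Phi:\aInn(A)\to\Ext(K_1(A),K_0(A))\oplus\Ext(K_0(A),K_1(A))$
via the mapping torus construction and show that it descends to an isomorphism on $\aInn(A)/\HInn(A)$.
For $\alpha\in\aInn(A)$, set
$M_\alpha=\{f\in C([0,1],A)\mid f(1)=\alpha(f(0))\}$,
which fits into a short exact sequence $0\to SA\to M_\alpha\to A\to 0$.
In the resulting six-term sequence in $K$-theory the connecting maps are of the form $\id-K_i(\alpha)$, and these vanish because every approximately inner automorphism acts trivially on $K$-theory.
Hence we obtain two short exact sequences
\[
0\to K_0(A)\to K_1(M_\alpha)\to K_1(A)\to 0,\qquad
0\to K_1(A)\to K_0(M_\alpha)\to K_0(A)\to 0,
\]
whose classes together define $\Phi(\alpha)$.

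Next I would verify that $\Phi$ is a homomorphism by comparing $M_{\alpha\beta}$ with a suitable Baer sum of $M_\alpha$ and $M_\beta$, and that $\Phi$ vanishes on $\HInn(A)$: for $\alpha=\Ad u$ with $u\in U(A)_0$ one trivializes $M_\alpha$ by choosing a continuous path from $1$ to $u$, and a homotopy of automorphisms induces a homotopy of mapping tori, hence an isomorphism of the extension classes. For general inner $\alpha=\Ad u$, one shows the extension class is still zero by an approximation argument, using that $K_1(A)$ is generated by unitaries which can be incorporated into the path; this is where the assumption $K_1(A)\neq\Z$ plays a role, to guarantee enough flexibility.

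For surjectivity, I would exploit the AT structure of $A$: since $A$ is an inductive limit of circle algebras $\bigoplus_j C(\T)\otimes M_{n_j}$, one realizes a prescribed Ext class by a Bott-type automorphism that winds coordinates around inductive limit unitaries and applies the de la Harpe--Skandalis formalism; unique traciality ensures the relevant determinant invariants can be read off correctly. Writing every class as a limit of elementary pieces, these automorphisms are approximately inner.

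The hard part, and the main obstacle, is injectivity on the quotient: if $\Phi(\alpha)=0$, both sequences split, so $K_*(M_\alpha)\cong K_*(C(\T)\otimes A)$ as graded ordered groups with the scale. Using Elliott's classification of unital simple AT algebras with real rank zero (note that $M_\alpha$ is such an algebra since $\alpha$ is approximately inner and $A$ is), one produces an isomorphism $M_\alpha\cong C(\T)\otimes A$. The delicate step is to upgrade this $C^*$-algebraic isomorphism to an \emph{automorphism homotopy} from $\alpha$ to an inner automorphism; this uses Kishimoto's intertwining-type argument for automorphisms of AT algebras, where the uniqueness of trace and the hypothesis $K_1(A)\neq\Z$ remove the remaining $KK$-theoretic obstructions and let one realize the homotopy inside $\Aut(A)$ rather than just at the level of mapping tori.
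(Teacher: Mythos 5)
First, a point of comparison: the paper offers no proof of this statement at all --- it is quoted verbatim from \cite[Corollary 2.3]{K98JFA}, so the only ``proof'' here is the citation, and you are really being asked to reconstruct Kishimoto's argument. Your choice of invariant is the right one: the pair of extension classes attached to $\alpha$ (via the mapping torus $M_\alpha$, or equivalently via the Pimsner--Voiculescu sequence of $A\rtimes_\alpha\Z$) is indeed what realizes the isomorphism, and the vanishing of the connecting maps for approximately inner $\alpha$ is correct. So the skeleton is sound.

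However, two steps as written are genuinely broken. First, your injectivity argument rests on the claim that $M_\alpha$ is a unital \emph{simple} AT algebra with real rank zero, to which Elliott's classification applies. This is false: $M_\alpha$ is never simple (evaluation at any interior point of $[0,1]$ is a surjection with nontrivial kernel, and for outer $\alpha$ its primitive ideal space contains a copy of $\T$), and a fortiori it never has real rank zero. The object that can be classified is the crossed product $A\rtimes_\alpha\Z$, and even there one needs $\alpha$ to be outer with the Rohlin property (this is exactly Theorem \ref{Kishimoto2} of the present paper), which is not available for a general approximately inner $\alpha$ with trivial Ext classes; Kishimoto's actual injectivity proof is a stability/intertwining argument producing a continuous path in $\Aut(A)$, not an application of Elliott's theorem to $M_\alpha$. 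Second, you locate the hypothesis $K_1(A)\neq\Z$ in the wrong place. The vanishing of the invariant on \emph{all} inner automorphisms is unconditional: for any unitary $u$ one has $A\rtimes_{\Ad u}\Z\cong A\otimes C(\T)$ (the unitary $u^*\lambda$ commutes with $A$), so the relevant six-term sequence splits with no assumption on $K_1(A)$. The hypothesis $K_1(A)\neq\Z$ is needed to make the Ext classes a \emph{complete} invariant, i.e.\ in the injectivity step; when $K_1(A)\cong\Z$ the quotient $\aInn(A)/\HInn(A)$ is detected only by finer (OrderExt-type) data --- precisely the refinements the paper points to in \cite{K99IJM} and \cite{M01}. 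The homomorphism property of your map and the surjectivity construction are also left as plausible sketches, but the two items above are the substantive gaps.
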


See \cite[Corollary 3.5]{K99IJM} and \cite[Theorem 6.3]{M01} 
for further developments of the theorem above. 

\begin{thm}[{\cite[Corollary 6.7]{K98JFA}}]\label{Kishimoto2}
Let $A$ be a unital simple AT algebra with real rank zero 
and with a unique trace. 
Let $B$ be a UHF algebra and let $\alpha\in\HInn(A)$. 
Then $\alpha$ has the Rohlin property if and only if 
$(A\rtimes_\alpha\Z)\otimes B$ is 
a unital simple AT algebra with real rank zero and with a unique trace. 
\end{thm}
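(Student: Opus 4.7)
The plan is to treat the two implications separately, with the forward direction the substantive one.

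For the forward direction, assume $\alpha$ has the Rohlin property. Simplicity of $A\rtimes_\alpha\Z$ is immediate from outerness of every non-trivial power of $\alpha$, and the unique trace $\tau$ on $A$ is automatically $\alpha$-invariant, so extends to $A\rtimes_\alpha\Z$; any second trace would restrict to a second $\alpha$-invariant trace on $A$, and uniqueness at the crossed product level follows by a standard Rohlin-tower averaging. For real rank zero of $(A\rtimes_\alpha\Z)\otimes B$, approximate any self-adjoint element by a finite Fourier sum $\sum_{|k|\leq N}a_ku^k$ in the implementing unitary $u$; a tall Rohlin tower slices $u$ into approximately finite pieces, reducing the problem to real rank zero of $A\otimes B$ on each tower level. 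The substantive step is producing the AT structure, which I would do via an Evans--Kishimoto intertwining. Given data to be approximated and $\ep>0$, combine (i) a Rohlin tower $\{e_j\}_{j=0}^{n-1}$ in $(A\otimes B)_\infty$, (ii) a finite-dimensional subalgebra $F_0\subset e_0(A\otimes B)e_0$ capturing the local data, and (iii) a path of unitaries in $A$ witnessing $\alpha\in\HInn(A)$, to construct a circle algebra $C(\T,F_0)\subset (A\rtimes_\alpha\Z)\otimes B$ that approximately contains both $u$ and the previous-stage subalgebra. Interleaving with a reverse approximation yields an AT inductive system whose limit is the crossed product.

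For the reverse direction, assume $(A\rtimes_\alpha\Z)\otimes B$ is AT with real rank zero and a unique trace. Uniqueness of the trace forces $\alpha$ to be strongly outer: if some $\alpha^n$ were weakly inner, implemented by $v\in\pi_\tau(A)''$, then $u^nv^{-1}$ would be central in $\pi_\tau(A)''\rtimes_\alpha\Z$, giving additional traces that would descend to $A\rtimes_\alpha\Z$. The AT-with-real-rank-zero hypothesis produces an abundance of mutually orthogonal projections in the crossed product; a fixed-point-averaging argument against the dual action of $\T$ extracts from these a unital embedding of a UHF algebra into $(A_\infty)^\alpha$. Lemma \ref{cyclic} then yields the Rohlin property for $\alpha$.

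The principal obstacle is the Evans--Kishimoto intertwining in the forward direction. At each stage one must construct a circle algebra inside the crossed product that simultaneously (a) contains the previous-stage subalgebra up to $\ep$, (b) accommodates the implementing unitary $u$ up to $\ep$, and (c) nests coherently into the next stage without accumulating spurious $K_1$-classes that would spoil the AT structure. The hypothesis $\alpha\in\HInn(A)$ is exactly what dispatches this obstruction: the homotopy from $\alpha$ to an inner automorphism allows $u$ to be deformed inside a circle algebra rather than generating an irreducible new $K_1$-contribution at each stage.
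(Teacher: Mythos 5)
This statement is imported verbatim from Kishimoto (\cite[Corollary 6.7]{K98JFA}); the paper gives no proof of it, so there is no internal argument to compare yours against. Judged on its own, your forward direction has the right overall shape --- simplicity from outerness, uniqueness of the trace by Rohlin-tower averaging, real rank zero by slicing Fourier polynomials along a tall tower, and local approximation by circle algebras built from a tower, a finite-dimensional subalgebra, and the homotopy witnessing $\alpha\in\HInn(A)$ --- and this is indeed the architecture of Kishimoto's Theorem 6.4. Two caveats: the device you need at the end is an Elliott-type approximate intertwining together with semiprojectivity of circle algebras (so that local approximation by circle algebras yields an AT limit), not the Evans--Kishimoto intertwining, which is a technique for proving cocycle conjugacy of automorphisms and does not produce inductive limit decompositions; and you never engage with why the UHF tensor factor $B$ appears in the statement --- it is there to make $K_0$ divisible so that the $\Ext$-obstructions to fitting the implementing unitary into a circle algebra vanish, which is precisely the role of Theorem \ref{Kishimoto1} in the paper's application.

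The genuine gap is in the reverse direction. You propose to extract a unital embedding of a UHF algebra into $(A_\infty)^\alpha$ by ``fixed-point averaging against the dual action'' and then invoke Lemma \ref{cyclic}. But averaging over the dual action is the conditional expectation $E:A\rtimes_\alpha\Z\to A$, which is unital and positive but not multiplicative: it does not carry projections to projections, nor a matrix subalgebra of central sequences of the crossed product to a matrix subalgebra of $(A_\infty)^\alpha$. No mechanism is supplied for producing a unital \emph{homomorphism} of a UHF algebra into the fixed-point central sequence algebra, and that is exactly the nontrivial hypothesis of Lemma \ref{cyclic}; the natural ways of populating $(A_\infty)^\alpha$ (as in Theorem \ref{embedZ}) already presuppose a Rohlin-type property, so your argument as written is circular. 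The standard route, and the one consistent with how this paper uses the surrounding results, is: uniqueness of the trace on $(A\rtimes_\alpha\Z)\otimes B$ forces $T(A)^\alpha\cong T(A\rtimes_\alpha\Z)$, hence $\alpha$ is strongly outer by \cite[Proposition 2.3]{K98JOT}, and then the Rohlin property follows from the Rohlin-type theorem for strongly (equivalently, uniformly) outer automorphisms in $\HInn$ of unital simple AT algebras of real rank zero with unique trace (\cite[Theorem 2.1]{K98JOT}; in this paper's language, Remark \ref{so=uo} combined with \cite[Theorem 4.8]{M09}). You should replace the UHF-embedding step by that argument.
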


See \cite[Theorem 6.4]{K98JFA} and \cite[Remark 3.7]{K99IJM} 
for relevant results. 

We need one more lemma. 

\begin{lem}\label{power}
Let $A$ be a unital nuclear $C^*$-algebra with a unique trace and 
let $\alpha\in\Aut(A)$. 
Let $k\in\N$ and let $C=A\rtimes_{\alpha^k}\Z$. 
Then there exists an action 
$\gamma:\Z/k\Z\curvearrowright C\otimes M_k$ such that 
\[
(C\otimes M_k)\rtimes_\gamma\Z/k\Z\cong(A\rtimes_\alpha\Z)\otimes M_k. 
\]
Moreover, if the $\Z$-action generated by $\alpha$ is strongly outer, 
then $\gamma$ is also strongly outer. 
\end{lem}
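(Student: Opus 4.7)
The plan is to construct $\gamma$ via Takai duality applied to the restriction of the dual $\T$-action on $B:=A\rtimes_\alpha\Z$ to the subgroup $\Z/k\Z$ of $k$-th roots of unity, and then to derive strong outerness from an irreducibility computation at the level of weak closures. First I would define $\theta\colon\Z/k\Z\curvearrowright B$ by $\theta_\zeta(a)=a$ for $a\in A$ and $\theta_\zeta(u)=\zeta u$, where $u$ is the canonical implementing unitary of $B$ and $\zeta=e^{2\pi i/k}$. Its fixed point algebra is generated by $A$ together with $v:=u^k$, and hence equals $C$.

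The key structural claim is that $B\rtimes_\theta\Z/k\Z\cong C\otimes M_k$: if $w$ denotes the implementing unitary of $\theta$ in the crossed product, then the spectral projections $p_j=\frac{1}{k}\sum_{l=0}^{k-1}\zeta^{-jl}w^l$ are mutually orthogonal and sum to $1$, and the relation $wu=\zeta uw$ forces $up_j u^{-1}=p_{j+1}$ (indices mod $k$), so that $e_{ij}:=u^{i-j}p_j$ form a unital system of $k\times k$ matrix units. A direct computation identifies the corner $p_0(B\rtimes_\theta\Z/k\Z)p_0$ with $Cp_0\cong C$, yielding the desired isomorphism. I then define $\gamma\colon\Z/k\Z\curvearrowright C\otimes M_k$ as the transport of the dual action of $\theta$, which fixes $B$ pointwise and sends $w\mapsto\zeta w$; Takai duality for finite abelian groups then gives $(C\otimes M_k)\rtimes_\gamma\Z/k\Z\cong B\otimes M_k=(A\rtimes_\alpha\Z)\otimes M_k$, as required.

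For the strong outerness claim, the unique trace $\tau$ on $A$ produces unique traces on both $B$ and $C$: any tracial state on $B=A\rtimes_\alpha\Z$ restricts to an $\alpha$-invariant trace on $A$ (which must be $\tau$), and a standard twisted-trace argument together with outerness of $\alpha^n$ on the hyperfinite II$_1$-factor $R:=\pi_\tau(A)''$ forces the extension to be unique; the same applies to $C$. Hence $C\otimes M_k$ admits a unique trace $\sigma=\tau_C\otimes\tr$, which is automatically $\gamma$-invariant, and it suffices to prove that $\gamma_{[j]}$ is outer on $N\otimes M_k:=\pi_\sigma(C\otimes M_k)''$ for every $[j]\ne 0$, where $N:=\pi_{\tau_C}(C)''$.

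Passing the crossed-product isomorphism to weak closures gives $(N\otimes M_k)\rtimes_\gamma\Z/k\Z\cong M\otimes M_k$ with $M:=\pi_{\tau_B}(B)''$, and both sides are II$_1$-factors. The standard characterization of outer finite group actions on II$_1$-factors reduces outerness of each $\gamma_{[j]}$ to the irreducibility $(N\otimes M_k)'\cap(M\otimes M_k)=\C$, which (via $M_k'\cap M_k=\C$) reduces further to $N'\cap M=\C$. Since $R\subset N$, it suffices to show $R'\cap M=\C$: any $x\in R'\cap M$ has a von Neumann Fourier expansion $x=\sum_n a_nu^n$, and outerness of each $\alpha^n$ ($n\ne 0$) on the factor $R$ forces $a_n=0$, leaving $x=a_0\in Z(R)=\C$. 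The main technical point is the concrete identification $B\rtimes_\theta\Z/k\Z\cong C\otimes M_k$ together with the transport of the dual action; once that is in place, the remainder is a standard combination of Takai duality with the irreducibility/outerness correspondence for subfactor inclusions.
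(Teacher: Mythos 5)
Your proposal is correct and follows essentially the same route as the paper: $\gamma$ arises from the restriction of the dual action on $B=A\rtimes_\alpha\Z$ to $\Z/k\Z$, the identification $B\rtimes\Z/k\Z\cong C\otimes M_k$ is obtained from the same system of matrix units built from powers of $u$ and spectral projections of the implementing unitary, and the second action is transported via Takesaki--Takai duality. For strong outerness the paper likewise passes to the weak closures and invokes outerness of the dual actions on the hyperfinite II$_1$-factor; your relative-commutant computation $R'\cap M=\C$ is just the standard proof of that cited fact.
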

\begin{proof}
Let $\beta:\Z/k\Z\curvearrowright A\rtimes_\alpha\Z$ be the restriction of 
the dual action $\hat\alpha:\T\curvearrowright A\rtimes_\alpha\Z$ 
to $\Z/k\Z\subset\T$. 
Then it is easy to see 
$(A\rtimes_\alpha\Z)\rtimes_\beta\Z/k\Z\cong C\otimes M_k$. 
Indeed $(A\rtimes_\alpha\Z)\rtimes_\beta\Z/k\Z$ is the universal $C^*$-algebra 
generated by $A$ and two unitaries $u$, $v$ satisfying 
\[
uau^*=\alpha(a),\quad vav^*=a\quad\forall a\in A,\quad 
vuv^*=\exp(2\pi\sqrt{-1}/k)u,\quad v^k=1. 
\]
The $C^*$-subalgebra $B$ generated by $A$ and $v$ is clearly 
isomorphic to $A\otimes C^*(\Z/k\Z)\cong A\otimes\C^n$. 
Let $p\in B$ be a minimal central projection 
corresponding to a minimal central projection in $\C^n$. 
Then we have $p+upu^*+\dots+u^{k-1}pu^{1-k}=1$, 
and so $(u^ipu^{-j})_{i,j}$ is a system of matrix units. 
It is easy to see that 
$p((A\rtimes_\alpha\Z)\rtimes_\beta\Z/k\Z)p$ is generated by $Ap$ and $u^kp$. 
Therefore $(A\rtimes_\alpha\Z)\rtimes_\beta\Z/k\Z$ is 
isomorphic to $C\otimes M_k$. 
By the Takesaki-Takai duality, one has 
\[
(A\rtimes_\alpha\Z)\rtimes_\beta\Z/k\Z\rtimes_{\hat\beta}\Z/k\Z
\cong(A\rtimes_\alpha\Z)\otimes M_k. 
\]
Hence there exists $\gamma:\Z/k\Z\curvearrowright C\otimes M_k$ such that 
$(C\otimes M_k)\rtimes_\gamma\Z/k\Z\cong(A\rtimes_\alpha\Z)\otimes M_k$. 

Assume further that 
the $\Z$-action generated by $\alpha$ is strongly outer. 
Let $T(A)=\{\tau\}$. 
By \cite[Lemma 4.3]{K96JFA}, $A\rtimes_\alpha\Z$ has a unique trace. 
Since the $\Z$-action generated by $\alpha^k$ is also strongly outer, 
$(A\rtimes_\alpha\Z)\rtimes_\beta\Z/k\Z\cong C\otimes M_k$ has 
a unique trace, too. 
We denote these tracial states by the same symbol $\tau$. 
Let $\bar\alpha$ be the weak extension of $\alpha$ to $\pi_\tau(A)''$ and 
let $\bar\beta$ be the weak extension of $\beta$ to 
$\pi_\tau(A\rtimes_\alpha\Z)''$. 
The action $\bar\beta$ is canonically identified with 
the restriction of the dual action of $\bar\alpha$ to $\Z/k\Z$. 
It is well-known that 
$\pi_\tau(A\rtimes_\alpha\Z)''$ is the hyperfinite II$_1$-factor and that 
the dual action of $\bar\alpha$ on $\pi_\tau(A\rtimes_\alpha\Z)''$ is outer. 
It follows that $\bar\beta$ is outer (i.e. $\beta$ is strongly outer). 
Then $\pi_\tau((A\rtimes_\alpha\Z)\rtimes_\beta\Z/k\Z)''$ is again 
the hyperfinite II$_1$-factor and 
the dual action of $\bar\beta$ is also outer. 
The weak extension of 
$\hat\beta:\Z/k\Z\curvearrowright(A\rtimes_\alpha\Z)\rtimes_\beta\Z/k\Z$ is 
canonically identified with the dual action of $\bar\beta$ 
on $\pi_\tau((A\rtimes_\alpha\Z)\rtimes_\beta\Z/k\Z)''$. 
Therefore $\hat\beta$ is strongly outer. 
\end{proof}

Combining these statements with the results obtained in the previous section, 
we can prove the following theorem. 

\begin{thm}\label{closedbyZ}
Let $A\in\mathcal{C}_0$ and let $\alpha:\Z\curvearrowright A$. 
Suppose that $A$ has finitely many extremal tracial states. 
Then the following are equivalent. 
\begin{enumerate}
\item $\alpha$ is strongly outer. 
\item $\alpha$ has the weak Rohlin property. 
\item $T(A)^\alpha$ is isomorphic to $T(A\rtimes_\alpha\Z)$ 
via the inclusion of $A$ into $A\rtimes_\alpha\Z$. 
\item $(A\rtimes_\alpha\Z)\otimes B$ has real rank zero 
for any UHF algebra of infinite type. 
\end{enumerate}
Furthermore, if $A$ has a unique trace and there exists $k\in\N$ such that 
$K_i(\alpha_k)$ induces the identity on $K_i(A)\otimes\Q$ for $i=0,1$, then 
the four conditions above are also equivalent to the following condition. 
\begin{enumerate}\setcounter{enumi}{4}
\item $A\rtimes_\alpha\Z$ belongs to $\mathcal{C}_0$. 
\end{enumerate}
\end{thm}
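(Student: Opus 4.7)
The plan is to establish the cycle $(1)\Rightarrow(2)\Rightarrow(3)\Rightarrow(1)$ together with $(1)\Leftrightarrow(4)$, and then to handle $(1)\Leftrightarrow(5)$ separately under the additional hypotheses. The equivalence $(1)\Leftrightarrow(2)$ is an instance of Theorem~\ref{wRohlintype}(1), and $(2)\Rightarrow(3)$ is recorded in Remark~\ref{wR>so}. For $(3)\Rightarrow(1)$ I argue the contrapositive by a Kishimoto-style trace-extension construction (compare \cite[Lemma~4.4]{K96JFA}): if some $\alpha^n$ with $n\ne0$ is weakly inner at a trace $\tau\in T(A)^{\alpha^n}$, implemented by a unitary $u\in\pi_\tau(A)''$, then after passing to the $\alpha$-orbit average $\tilde\tau\in T(A)^\alpha$ the element $\lambda^n u^*$ in $\pi_{\tilde\tau}(A\rtimes_\alpha\Z)''$ is central modulo the $A$-part, and integration against characters of the circle it generates produces a continuous family of pairwise distinct tracial states on $A\rtimes_\alpha\Z$ all restricting to $\tilde\tau$ on $A$; consequently $T(A\rtimes_\alpha\Z)\supsetneq T(A)^\alpha$, contradicting (3).

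For $(1)\Rightarrow(4)$, fix a UHF algebra $B$ of infinite type and consider $\alpha\otimes\id$ on $A\otimes B$. This action is strongly outer by Remark~\ref{sotimesid}, and Remark~\ref{RemonC0}(1) together with Lin's classification \cite[Theorem~5.2]{L04Duke} realizes $A\otimes B$ as a unital simple AH algebra with real rank zero, slow dimension growth, and finitely many extremal tracial states. Since $((A\otimes B)_\infty)^{\alpha\otimes\id}$ unitally contains a copy of $B$, Lemma~\ref{cyclic} yields the Rohlin property for $\alpha\otimes\id$; then the crossed-product preservation theorem of \cite{M09} (in the spirit of Kishimoto's Theorem~\ref{Kishimoto2}) gives that $(A\otimes B)\rtimes_{\alpha\otimes\id}\Z\cong(A\rtimes_\alpha\Z)\otimes B$ has tracial rank zero, hence real rank zero. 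The converse $(4)\Rightarrow(1)$ goes by contrapositive using the same construction as in $(3)\Rightarrow(1)$: failure of strong outerness supplies a continuous family of tracial states on $A\rtimes_\alpha\Z$ which persists after tensoring with $B$, and this is incompatible with real rank zero of a simple $\mathcal{Z}$-stable $C^*$-algebra by the trace-detection argument of \cite[Lemma~4.3]{K96JFA}.

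Finally, for the equivalence with (5) the forward direction $(1)\Rightarrow(5)$ assembles the structural ingredients: $\mathcal{Z}$-stability from Corollary~\ref{absorbZ}; tracial rank zero of $(A\rtimes_\alpha\Z)\otimes Q$ from the $(1)\Rightarrow(4)$ argument applied with $B=Q$ (which yields \emph{tracial} rank zero, not merely real rank zero); and nuclearity, simplicity and the UCT inherited from $A$ via the Pimsner--Voiculescu sequence. These collectively place $A\rtimes_\alpha\Z$ in $\mathcal{C}_0$. The converse $(5)\Rightarrow(4)$ is immediate from Remark~\ref{RemonC0}(1) applied to $A\rtimes_\alpha\Z\in\mathcal{C}$. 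The extra hypotheses that $A$ have a unique trace and that $K_i(\alpha_k)$ induce the identity on $K_i(A)\otimes\Q$ enter when converting the Pimsner--Voiculescu description of $K_*(A\rtimes_\alpha\Z)$ into the Riesz-interpolation and $K_0/\Tor\neq\Z$ conditions of Remark~\ref{RemonC0}(4), which are what secure the crossed product as a genuine member of $\mathcal{C}_0$ in the classification-applicable sense rather than merely a $\mathcal{Z}$-stable algebra. The principal technical obstacle is the step $(4)\Rightarrow(1)$: recovering strong outerness of $\alpha$ from a real-rank-zero hypothesis on the stabilized crossed product requires a careful combination of the trace-extension construction with Kishimoto's obstruction technique, rather than a direct Rohlin-tower production as used in the other directions.
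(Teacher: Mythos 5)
There is a genuine gap, and it sits at the heart of the theorem: your treatment of $(1)\Rightarrow(5)$. You claim that the $(1)\Rightarrow(4)$ argument, run with $B=Q$, ``yields tracial rank zero, not merely real rank zero'' of $(A\rtimes_\alpha\Z)\otimes Q$. It does not. What the tracial Rokhlin property (or even the full Rohlin property obtained from Lemma \ref{cyclic}) buys, via Osaka--Phillips, is real rank zero of the crossed product; tracial rank zero is a far stronger conclusion and is precisely the point where the extra hypotheses must be spent. The paper's route is: divisibility of $K_i(A\otimes Q)$ kills the groups $\Ext(K_i(A\otimes Q),K_{1-i}(A\otimes Q))$, so Theorem \ref{Kishimoto1} upgrades $\alpha^k\otimes\id$ from $\aInn(A\otimes Q)$ to $\HInn(A\otimes Q)$; Lemma \ref{cyclic} then gives the Rohlin property, and Kishimoto's Theorem \ref{Kishimoto2} identifies $(A\otimes Q)\rtimes_{\alpha^k\otimes\id}\Z$ as a unital simple AT algebra of real rank zero with a unique trace; finally Lemma \ref{power} realizes $((A\otimes Q)\rtimes_{\alpha\otimes\id}\Z)\otimes M_k$ as a crossed product of that AT algebra by a strongly outer $\Z/k\Z$-action, and Theorems \ref{finitegrp} and \ref{PhiRokhlin} together with \cite[Theorem 5.4]{L01PLMS} deliver tracial rank zero. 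None of this is optional: Theorem \ref{Kishimoto2} says that for $\alpha\in\HInn$ the Rohlin property is \emph{equivalent} to the crossed product being AT of real rank zero, so the $\HInn$ membership (hence the hypothesis $K_i(\alpha_k)=\id$ on $K_i(A)\otimes\Q$ and the unique trace) is doing real work there, not in a Pimsner--Voiculescu computation of Riesz interpolation as you suggest --- note also that membership in $\mathcal{C}_0$ only requires tracial rank zero of the $Q$-stabilization, not real rank zero of $A\rtimes_\alpha\Z$ itself.

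A second, smaller problem is your $(4)\Rightarrow(1)$. The assertion that a continuous family of distinct tracial states on $(A\rtimes_\alpha\Z)\otimes B$ is ``incompatible with real rank zero of a simple $\mathcal{Z}$-stable $C^*$-algebra'' is false as a general principle: simple real rank zero $C^*$-algebras can have arbitrarily large trace simplices. The correct mechanism (the paper's $(4)\Rightarrow(3)$, via \cite[Proposition 2.2]{K98JOT}) is that real rank zero forces every trace to be determined by its values on projections, while the dual action $\hat\alpha_t$ is trivial on $K_0$; hence every trace is $\hat\alpha$-invariant and $T(A\rtimes_\alpha\Z)\cong T(A)^\alpha$. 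Your circle of traces, being permuted nontrivially by the dual action, would indeed contradict this, but that is a different argument from the one you wrote. The remaining implications are fine in substance ($(1)\Leftrightarrow(2)$, $(2)\Rightarrow(3)$, $(5)\Rightarrow(4)$ match the paper; your hands-on $(3)\Rightarrow(1)$ replaces the citation of \cite[Proposition 2.3]{K98JOT} with the underlying Kishimoto construction, which is acceptable), though in $(1)\Rightarrow(4)$ you should only claim real rank zero of the crossed product, as that is all that \cite{OP1} provides and all that is needed.
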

\begin{proof}
(1)$\Leftrightarrow$(2) is shown in Theorem \ref{wRohlintype}. 
(2)$\Rightarrow$(3) is shown in Remark \ref{wR>so}. 
(3)$\Rightarrow$(1) follows from 
Proposition 2.3 and Remark 2.4 of \cite{K98JOT}. 
(4)$\Rightarrow$(3) follows from \cite[Proposition 2.2]{K98JOT}. 
Indeed (4) implies 
\[
(\tau\otimes\tau_B)\circ(\hat\alpha_t\otimes\id_B)
=\tau\otimes\tau_B
\]
for $\tau\in T(A\rtimes_\alpha\Z)$, $\tau_B\in T(B)$ and $t\in\T$, 
and so $\tau\circ\hat\alpha_t=\tau$. 

In order to prove (1)$\Rightarrow$(4), 
we assume that $\alpha$ is strongly outer. 
Let $B$ be a UHF algebra of infinite type. 
By the definition of $\mathcal{C}_0$ and Remark \ref{RemonC0} (1), 
$A\otimes B$ is a unital simple AH algebra with real rank zero and 
slow dimension growth. 
In addition $A\otimes B$ has finitely many extremal traces. 
By Remark \ref{sotimesid}, 
$\alpha\otimes\id:\Z\curvearrowright A\otimes B$ is strongly outer. 
Then by Remark \ref{so=uo} and \cite[Theorem 4.2]{M09}, 
$\alpha\otimes\id:\Z\curvearrowright A\otimes B$ has 
the tracial Rohlin property. 
It follows from \cite[Theorem 3.4]{L01TAMS}, \cite[Theorem 6.8]{L01PLMS} 
and \cite[Theorem 4.5]{OP1} that 
the crossed product 
$(A\otimes B)\rtimes_{\alpha\otimes\id}\Z\cong(A\rtimes_\alpha\Z)\otimes B$ 
has real rank zero. 

(5)$\Rightarrow$(4) follows trivially 
from the definition of $\mathcal{C}_0$ and Remark \ref{RemonC0} (1). 

The implication from (1) to (5) needs the full assumption. 
We regard $\alpha$ as a single automorphism. 
First, $A\rtimes_\alpha\Z$ is unital, separable and nuclear. 
In addition, it satisfies the UCT. 
Simplicity follows from \cite[Theorem 3.1]{K81CMP}. 
Corollary \ref{absorbZ} implies that 
$A\rtimes_\alpha\Z$ is $\mathcal{Z}$-stable. 
It remains for us to show that 
$(A\rtimes_\alpha\Z)\otimes Q$ has tracial rank zero, 
where $Q$ is the universal UHF algebra. 
We first claim that 
$C=(A\rtimes_{\alpha^k}\Z)\otimes Q$ has tracial rank zero. 
Since $K_i(\alpha^k\otimes\id)=\id$ on $K_i(A\otimes Q)$ for $i=0,1$ and 
$A\otimes Q$ is a unital simple AT algebra with real rank zero, 
by \cite[Corollary 3.2.8]{Rtext}, 
$\alpha^k\otimes\id$ is in $\aInn(A\otimes Q)$. 
Clearly $A\otimes Q$ has a unique trace and $K_1(A\otimes Q)\neq\Z$. 
For $i=0,1$, 
$K_i(A\otimes Q)\cong K_i(A)\otimes\Q$ is divisible, 
and so $\Ext(K_i(A\otimes Q),K_{1-i}(A\otimes Q))$ is zero. 
Hence Theorem \ref{Kishimoto1} tells us that 
$\alpha^k\otimes\id$ belongs to $\HInn(A)$. 
By Lemma \ref{cyclic} (or \cite[Theorem 2.1]{K98JOT}), 
$\alpha^k\otimes\id\in\Aut(A\otimes Q)$ has the Rohlin property. 
It follows from Theorem \ref{Kishimoto2} that 
$(A\otimes Q)\rtimes_{\alpha^k\otimes\id}\Z\cong C$ is 
a unital simple AT algebra of real rank zero with a unique trace. 
By Lemma \ref{power}, there exists a strongly outer action 
$\gamma:\Z/k\Z\curvearrowright C\otimes M_k$ such that 
\[
(C\otimes M_k)\rtimes_\gamma\Z/k\Z
\cong((A\otimes Q)\rtimes_{\alpha\otimes\id}\Z)\otimes M_k. 
\]
By virtue of Theorem \ref{finitegrp} and Theorem \ref{PhiRokhlin}, 
this algebra has tracial rank zero. 
By \cite[Theorem 5.4]{L01PLMS}, we can conclude that 
$(A\otimes Q)\rtimes_{\alpha\otimes\id}\Z\cong(A\rtimes_\alpha\Z)\otimes Q$ 
has tracial rank zero. 
\end{proof}

The following corollary is a generalization of \cite[Theorem 6.4]{K98JFA} 
(compare also \cite[Theorem 4.8]{M09}). 

\begin{cor}\label{AHclosedbyZ}
Let $A$ be a unital simple AH algebra 
with real rank zero and slow dimension growth 
and let $\alpha:\Z\curvearrowright A$. 
Suppose that $A$ has finitely many extremal tracial states. 
Then the following are equivalent. 
\begin{enumerate}
\item $\alpha$ is strongly outer. 
\item $\alpha$ has the weak Rohlin property. 
\item $T(A)^\alpha$ is isomorphic to $T(A\rtimes_\alpha\Z)$ 
via the inclusion of $A$ into $A\rtimes_\alpha\Z$. 
\item $A\rtimes_\alpha\Z$ has real rank zero. 
\end{enumerate}
Furthermore, if $A$ has a unique trace and there exists $k\in\N$ such that 
$K_i(\alpha_k)$ induces the identity on $K_i(A)\otimes\Q$ for $i=0,1$, then 
the four conditions above are also equivalent to the following condition. 
\begin{enumerate}\setcounter{enumi}{4}
\item $A\rtimes_\alpha\Z$ is a unital simple AH algebra 
with real rank zero and slow dimension growth. 
\end{enumerate}
\end{cor}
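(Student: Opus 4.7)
The plan is to reduce the corollary to Theorem \ref{closedbyZ} by first placing $A$ inside $\mathcal{C}_0$. Indeed, $A$ is unital simple separable nuclear and (being AH) satisfies the UCT; by \cite[Theorem 5.2]{L04Duke}, real rank zero plus slow dimension growth forces $A$ to have tracial rank zero, and by Remark \ref{RemonC0} (6) slow dimension growth is equivalent to $A\cong A\otimes\mathcal{Z}$. Hence $A\in\mathcal{C}_0$.

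Granted this, the equivalences (1)$\Leftrightarrow$(2)$\Leftrightarrow$(3) are already proved by Theorem \ref{closedbyZ}, and the implication (4)$\Rightarrow$(3) is obtained exactly as in the proof of Theorem \ref{closedbyZ} by invoking \cite[Proposition 2.2]{K98JOT}, since having real rank zero only makes that trace-preservation argument easier (no tensoring with a UHF algebra is required). The essential new content is the implication (1)$\Rightarrow$(4). Here the point is that $A$ itself is already a unital simple AH algebra with real rank zero and slow dimension growth, so we need not tensor with an auxiliary UHF algebra as in Theorem \ref{closedbyZ}: by Remark \ref{so=uo}, strong outerness of $\alpha$ on $A$ allows us to apply \cite[Theorem 4.2]{M09} directly and conclude that $\alpha$ has the tracial Rohlin property on $A$. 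Once this is in hand, \cite[Theorem 3.4]{L01TAMS}, \cite[Theorem 6.8]{L01PLMS} and \cite[Theorem 4.5]{OP1} yield real rank zero of $A\rtimes_\alpha\Z$.

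It remains to handle the supplementary condition (5) under the extra hypotheses (unique trace and $K_i(\alpha_k)=\mathrm{id}$ on $K_i(A)\otimes\Q$). The implication (5)$\Rightarrow$(4) is immediate. For (1)$\Rightarrow$(5), Theorem \ref{closedbyZ} already gives $A\rtimes_\alpha\Z\in\mathcal{C}_0$; in particular, $A\rtimes_\alpha\Z$ is $\mathcal{Z}$-stable, nuclear, and satisfies the UCT. Combined with the real rank zero just established in (4), Remark \ref{RemonC0} (4) (equivalence of real rank zero and tracial rank zero inside $\mathcal{C}_0$) yields tracial rank zero for $A\rtimes_\alpha\Z$. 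Applying \cite[Theorem 5.2]{L04Duke} in the other direction identifies $A\rtimes_\alpha\Z$ with a unital simple AH algebra with real rank zero, and slow dimension growth follows again from $\mathcal{Z}$-stability via Remark \ref{RemonC0} (6).

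The main structural obstacle I anticipate is justifying that \cite[Theorem 4.2]{M09} genuinely delivers the tracial Rohlin property on $A$ itself (not merely on $A\otimes B$), because that theorem is originally stated for uniformly outer actions. However, Remark \ref{so=uo} explicitly allows uniform outerness to be replaced by strong outerness throughout \cite{M09}, which removes the obstruction and makes the chain above go through.
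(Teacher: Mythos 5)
Your proposal is correct and follows essentially the same route as the paper: reduce to Theorem \ref{closedbyZ} by observing $A\in\mathcal{C}_0$, run the (1)$\Rightarrow$(4) and (4)$\Rightarrow$(3) arguments of that theorem directly on $A$ (which is already an AH algebra with real rank zero and slow dimension growth, so no tensoring with a UHF algebra is needed), and deduce (1)$\Rightarrow$(5) from Theorem \ref{closedbyZ}, Remark \ref{RemonC0} (4) and the classification theorem of Lin. The paper's proof is just a terser statement of exactly these steps, so no further comparison is needed.
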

\begin{proof}
The equivalence among (1), (2) and (3) is the same as Theorem \ref{closedbyZ}. 
(4)$\Rightarrow$(3) and (1)$\Rightarrow$(4) are also shown in the same way. 
(5)$\Rightarrow$(4) is trivial. 
(1)$\Rightarrow$(5) follows from 
Theorem \ref{closedbyZ}, Remark \ref{RemonC0} (4) and 
the classification theorem \cite[Theorem 5.2]{L04Duke}. 
\end{proof}

\section{Homotopy of an almost invariant unitary}

In this section we establish Lemma \ref{3torus}. 
The following lemma is a corollary of \cite[Theorem 1.5]{GL}. 

\begin{lem}\label{GongLin}
For $i=1,2,3$ and $n\in\N$, 
let $u_{i,n}$ be a unitary of $M_{d_n}$. 
Suppose that the following hold. 
\begin{enumerate}
\item For any $i,j=1,2,3$, 
$\lVert[u_{i,n},u_{j,n}]\rVert\to0$ as $n\to\infty$. 
\item For any $i,j=1,2,3$ and $n\in\N$, 
$\tr(\log(u_{i,n}u_{j,n}u_{i,n}^*u_{j,n}^*))=0$. 
\item For any $(j_1,j_2,j_3)\in\Z^3\setminus\{(0,0,0)\}$, 
$\tr(u_{1,n}^{j_1}u_{2,n}^{j_2}u_{3,n}^{j_3})\to0$ as $n\to\infty$. 
\end{enumerate}
Then there exist unitaries $u'_{i,n}\in M_{d_n}$ such that 
\[
\lim_{n\to\infty}\lVert u'_{i,n}-u_{i,n}\rVert=0
\quad\text{and}\quad 
[u'_{i,n},u'_{j,n}]=0
\]
for any $i,j=1,2,3$ and $n\in\N$. 
\end{lem}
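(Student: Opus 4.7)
The plan is to deduce the statement directly from Theorem 1.5 of \cite{GL}, which furnishes a sufficient $K$-theoretic / trace-theoretic criterion under which a triple of almost commuting unitaries in a matrix algebra may be approximated by a commuting triple. First I would package the data $(u_{1,n}, u_{2,n}, u_{3,n})$ into an asymptotic *-homomorphism $\phi_n : C(\T^3) \to M_{d_n}$ sending the three canonical unitary generators of $C(\T^3)\cong C^*(\Z^3)$ to the $u_{i,n}$; hypothesis (1) is what guarantees that the $\phi_n$ exist modulo an error tending to zero. The obstructions to perturbing $\phi_n$ into genuine *-homomorphisms (hence producing commuting replacements for the $u_{i,n}$) are, roughly, (a) three pairwise Bott classes in $K_0(M_{d_n})=\Z$ attached to the pairs $(u_{i,n},u_{j,n})$, and (b) residual trace-pairing data coming from the pullback of $\tr$ via $\phi_n$.

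For (a), the pairwise Bott obstruction associated with a pair of almost commuting unitaries in $M_{d_n}$ is (up to the standard normalization) the winding number of the determinant of $u_{i,n}u_{j,n}u_{i,n}^*u_{j,n}^*$, which equals $\tr(\log(u_{i,n}u_{j,n}u_{i,n}^*u_{j,n}^*))/(2\pi\sqrt{-1})$ as soon as the argument is close enough to $1$ for the principal logarithm to be defined. Hypothesis (2) forces this quantity to be exactly zero, so each pairwise Bott invariant vanishes.

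For (b), the pullback trace $\tr\circ\phi_n$ is determined on the dense group-algebra $\Z^3\subset C(\T^3)$ by the numbers $\tr(u_{1,n}^{j_1}u_{2,n}^{j_2}u_{3,n}^{j_3})$. By hypothesis (3) these vanish asymptotically for every non-zero $(j_1,j_2,j_3)\in\Z^3$, while at the identity they are $1$. Consequently $\tr\circ\phi_n$ converges weakly to the Haar tracial state on $C(\T^3)$, which is the unique tracial state annihilating every non-trivial character and thereby kills the remaining trace-based obstructions of \cite{GL}. With all the hypotheses of \cite[Theorem 1.5]{GL} verified, one obtains the desired commuting unitaries $u'_{i,n}\in M_{d_n}$ with $\lVert u'_{i,n}-u_{i,n}\rVert\to 0$.

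The main obstacle is the bookkeeping of matching our concrete trace conditions (2) and (3) to the precise form of the hypotheses used in \cite{GL}: verifying that condition (2) is really equivalent to vanishing of the integer-valued pairwise Bott invariants in the matrix setting, and that condition (3), which asserts convergence of $\tr\circ\phi_n$ to Haar measure on $\T^3$, indeed controls every remaining $K$-theoretic obstruction in the Gong--Lin framework for triples of unitaries. Once this dictionary is set up, the conclusion is immediate from \cite[Theorem 1.5]{GL}.
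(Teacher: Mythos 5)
Your proposal is correct and follows essentially the same route as the paper: both form the (asymptotic) homomorphism $C(\T^3)\to\prod M_{d_n}/\bigoplus M_{d_n}$ from hypothesis (1), use (2) to kill the three pairwise Bott obstructions in $K_0$, use (3) to identify the induced trace with integration against Haar measure on $\T^3$, and then invoke \cite[Theorem 1.5]{GL} to lift/perturb to genuinely commuting triples. The only cosmetic difference is that the paper phrases the reduction via a single unital homomorphism into the product-modulo-sum algebra rather than a sequence of asymptotic morphisms.
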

\begin{proof}
By (1), the sequences $(u_{i,n})_n$ of unitaries induce 
a unital homomorphism $\phi:C(\T^3)\to\prod M_{d_n}/\bigoplus M_{d_n}$. 
Condition (2) implies that 
$K_0(\phi)$ kills the three Bott elements in $K_0(C(\T^3))$. 
In other words, $K_*(\phi)$ equals the map 
induced by a unital homomorphism which factors through $\C$. 
By (3), for any $f\in C(\T^3)$, 
letting $(x_n)_n\in\prod M_{d_n}$ be a representative of $\phi(f)$ 
one has 
\[
\lim_{n\to\infty}\tr(x_n)=\int_{\T^3}f(t)\,dt, 
\]
where $dt$ is the normalized Haar measure on $\T^3$. 
It follows from Theorem 1.5 of \cite{GL} (and its proof) that 
the homomorphism $\phi$ lifts to a unital homomorphism 
from $C(\T^3)$ to $\prod M_{d_n}$. 
The proof is completed. 
\end{proof}

\begin{rem}
The lemma above fails if one removes the assumption (3). 
Indeed, as shown in \cite{V} and \cite[Theorem 2.3]{D}, 
there exist sequences $(a_{i,n})_n$, $i=1,2,3$, 
of self-adjoint matrices of norm one such that 
$\lVert[a_{i,n},a_{j,n}]\rVert\to0$ as $n\to\infty$ for any $i,j=1,2,3$, 
yet there exist no commuting triples $(a'_{i,n})_n$ of self-adjoint matrices 
satisfying $\lVert a_{i,n}-a'_{i,n}\rVert\to0$ as $n\to0$ for $i=1,2,3$. 
Then the unitaries $u_{i,n}=\exp(\pi\sqrt{-1}a_{i,n}/2)$ 
give a counter example. 
\end{rem}

We let $\xi_1=(1,0)$ and $\xi_2=(0,1)$ denote the generators of $\Z^2$. 

\begin{lem}
Let $B$ be a UHF algebra of infinite type with a unique trace $\tau$ and 
let $\alpha:\Z^2\curvearrowright B$ be a strongly outer action. 
For any finite subset $F\subset B$ and $\ep>0$, 
we can find a finite subset $G\subset B$, $\delta>0$ and $m\in\N$ 
such that the following holds: 
If $w\in U(B)$ satisfies 
\[
\lVert w-\alpha_{\xi_i}(w)\rVert<\delta,\quad 
\tau(\log(w\alpha_{\xi_i}(w^*)))=0,\quad 
\lVert[w,a]\rVert<\delta\quad\text{and}\quad\lvert\tau(w^j)\rvert<\delta
\]
for any $i=1,2$, $a\in G$ and $j\in\Z$ with $0<\lvert j\rvert<m$, then 
there exists a path of unitaries $\tilde w:[0,1]\to U(B)$ such that 
\[
\tilde w(0)=1,\quad \tilde w(1)=w,\quad 
\lVert\tilde w(t)-\alpha_{\xi_i}(\tilde w(t))\rVert<\ep
\quad\text{and}\quad\lVert[\tilde w(t),a]\rVert<\ep
\]
for all $t\in[0,1]$ and $a\in F$. 
In addition, if $F=\emptyset$, then $G=\emptyset$ is possible. 
\end{lem}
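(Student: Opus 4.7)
The plan is to apply Lemma \ref{GongLin} to a triple $(w_0, U_1, U_2)$ in a matrix subalgebra $\widetilde M \subset B$, where $U_1, U_2$ approximately implement $\alpha_{\xi_1}, \alpha_{\xi_2}$ via a Rohlin tower and $w_0$ is a matrix-algebra approximant of $w$, and then construct $\tilde w$ by functional calculus in the commutative $C^*$-algebra generated by the resulting commuting triple.

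Fix $F$ and $\ep$. Since $\alpha$ is a strongly outer $\Z^2$-action on the UHF algebra $B$, by Remark \ref{so=uo} it has the Rohlin property. For $N$ sufficiently large (depending on a quantitative version of Lemma \ref{GongLin}), I use the Rohlin property to build inside $B$ an approximately $(F,\delta')$-central matrix subalgebra $M\cong M_N\otimes M_N$ together with two commuting cyclic-shift unitaries $U_1,U_2\in M$ satisfying $\Ad U_i\approx\alpha_{\xi_i}$ on $M$. I then set $G$ to contain the matrix units of $M$, take $\delta$ small, and take $m$ somewhat larger than $N$. Using that $B$ is UHF of infinite type, there is an approximate tensor decomposition $B\approx M\otimes(M'\cap B)$; given any $w$ satisfying $\lVert[w,a]\rVert<\delta$ for $a\in G$, a standard averaging argument over $U(M)$ produces a unitary $w_0\in 1\otimes M_0$ inside a finite-dimensional enlargement $\widetilde M=M\otimes M_0$ of $M$, with $\lVert w_0-w\rVert$ small.

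The three hypotheses of Lemma \ref{GongLin} then hold for $(w_0,U_1,U_2)$. Condition (1) (almost commutation) follows from $\lVert w-\alpha_{\xi_i}(w)\rVert<\delta$ and $\Ad U_i\approx\alpha_{\xi_i}$. Condition (2) (vanishing determinant traces) holds since $[U_1,U_2]=0$ exactly and $\tr(\log(w_0U_iw_0^*U_i^*))\approx\tau(\log(w\alpha_{\xi_i}(w^*)))=0$, after upgrading this small error to an exact equality by a Fuglede--Kadison-type perturbation of $w_0$ inside $U(\widetilde M)_0$. Condition (3) factors via the tensor splitting as
\[
\tr(w_0^{j_1}U_1^{j_2}U_2^{j_3}) = \tr_{M_0}(w_0^{j_1})\,\tr_M(U_1^{j_2}U_2^{j_3}),
\]
where the $M$-factor vanishes for $0<|j_2|,|j_3|<N$ (cyclic-shift traces) and the $M_0$-factor is less than $\delta$ for $0<|j_1|<m$ (by $|\tau(w^j)|<\delta$ and $\tr_{M_0}(w_0^{j})\approx\tau(w^{j})$). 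Lemma \ref{GongLin} then yields a commuting triple $(W,V_1,V_2)$ in $\widetilde M$ close to $(w_0,U_1,U_2)$.

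Finally, pick a self-adjoint $h\in C^*(W,V_1,V_2)\subset\widetilde M$ with $\lVert h\rVert\le 1$ and $W=\exp(2\pi i h)$, and set $\tilde w(t)=\exp(2\pi i t h)$. Then $\tilde w$ is a unitary path from $1$ to $W$; each $\tilde w(t)$ commutes with $V_1,V_2$ and hence almost commutes with $\alpha_{\xi_i}$, and is approximately $F$-central because $\widetilde M$ is; a short unitary concatenation from $W$ to $w$ (via the small logarithm of $W^*w$) completes the homotopy. For $F=\emptyset$ no $F$-centrality of the Rohlin tower is needed, so $G=\emptyset$ suffices. The main obstacle is arranging hypotheses (2) and (3) uniformly --- condition (3) by a careful diagonal choice of $N$ and of the tensor-factor dimension, and condition (2) by the Fuglede--Kadison correction --- after which the construction of $\tilde w$ reduces to continuous functional calculus in the commutative subalgebra $C^*(W,V_1,V_2)$.
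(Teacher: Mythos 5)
There is a genuine gap at the heart of your construction. The unitaries $U_1,U_2$ built from a Rohlin tower satisfy $\Ad U_i\approx\alpha_{\xi_i}$ only on the tower algebra $M$ itself, whereas your approximant $w_0$ of $w$ is placed, by averaging over $U(M)$, in the relative commutant $1\otimes M_0\subset M'\cap B$. On $M'\cap B$ the map $\Ad U_i$ is the identity while $\alpha_{\xi_i}$ is strongly outer, so the two are nowhere near each other there. Consequently the triple $(w_0,U_1,U_2)$ commutes \emph{exactly}, Lemma \ref{GongLin} is vacuous (condition (2) reads $\tr(\log 1)=0$), and the final step fails: a path $\tilde w(t)=\exp(2\pi\sqrt{-1}\,th)$ that commutes with $V_1,V_2$, hence with $U_1,U_2$, carries no information about $\alpha_{\xi_i}(\tilde w(t))$. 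From $\alpha_{\xi_i}(w_0)\approx w_0$ one only gets $\exp(2\pi\sqrt{-1}\,\alpha_{\xi_i}(h))\approx\exp(2\pi\sqrt{-1}\,h)$, which does not give $\alpha_{\xi_i}(h)\approx h$; this is precisely the obstruction the lemma is meant to control, and in your argument the hypotheses $\tau(\log(w\alpha_{\xi_i}(w^*)))=0$ and $\lvert\tau(w^j)\rvert<\delta$ never enter in an essential way. Your stated justification of condition (1), ``follows from $\lVert w-\alpha_{\xi_i}(w)\rVert<\delta$ and $\Ad U_i\approx\alpha_{\xi_i}$,'' is the right mechanism, but your construction does not deliver it, because $\Ad U_i\approx\alpha_{\xi_i}$ fails exactly where $w_0$ lives.

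What is needed is a pair of unitaries implementing $\alpha_{\xi_1},\alpha_{\xi_2}$ exactly (or with controlled error) on a finite-dimensional subalgebra \emph{containing} $w$; only then does almost-commutation of the path with these unitaries translate into almost $\alpha_{\xi_i}$-invariance. The paper achieves this not via the Rohlin property but via the uniqueness theorem for strongly outer $\Z^2$-actions on UHF algebras of infinite type (Corollary 6.6 and Remark 6.7 of \cite{KM}): one replaces $(B,\alpha)$ by the product-type action $\beta$ on $B\otimes B$ with $\beta_{\xi_1}=\sigma\otimes\id$, $\beta_{\xi_2}=\id\otimes\sigma$ and $\sigma=\lim\Ad z_n$. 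For $w\in B_{n-1}\otimes B_{n-1}$ the unitaries $u_1=z_n\otimes1$, $u_2=1\otimes z_n$ implement $\beta_{\xi_i}$ exactly on $B_n\otimes B_n$, so the triple $(u_1,u_2,w)$ is a genuinely almost-commuting triple with $\lVert[w,u_i]\rVert=\lVert w-\beta_{\xi_i}(w)\rVert$ and $\tr(\log(wu_iw^*u_i^*))=\tau(\log(w\beta_{\xi_i}(w^*)))=0$, and condition (3) of Lemma \ref{GongLin} is verified by conjugating with $y_n\otimes1$ (which rotates $z_n$ by $\zeta_n$) together with the hypothesis $\lvert\tau(w^j)\rvert<\delta$. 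A tower unitary sitting inside $M$ can never track $\alpha_{\xi_i}$ on $M'\cap B$, so your construction cannot be repaired without some such classification or local-implementation input.
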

\begin{proof}
Let $(d_n)_n$ be a sequence of natural numbers such that 
$d_n<d_{n+1}$ and 
\[
B\cong M_{d_1}\otimes M_{d_2}\otimes M_{d_3}\otimes\dots. 
\]
We regard $B_n=M_{d_1}\otimes\dots\otimes M_{d_n}$ 
as a subalgebra of $B$. 
Let $x_n\in B_n\cap B_{n-1}'$ be a unitary 
whose spectrum is $\{\zeta_n^k\mid k=1,2,\dots,d_n\}$, 
where $\zeta_n=\exp(2\pi\sqrt{-1}/d_n)$. 
Let $y_n\in B_n\cap B_{n-1}'$ be a unitary 
such that $y_nx_ny_n^*=\zeta_nx_n$. 
Set $z_n=x_1x_2\dots x_n\in B_n$ and 
define $\sigma\in\Aut(B)$ by $\sigma=\lim\Ad z_n$. 
Define $\beta:\Z^2\curvearrowright B\otimes B$ 
by $\beta_{(1,0)}=\sigma\otimes\id$ and $\beta_{(0,1)}=\id\otimes\sigma$. 
By Corollary 6.6 and Remark 6.7 of \cite{KM}, 
$\alpha$ and $\beta$ are strongly cocycle conjugate, 
and so they are approximately conjugate 
in the sense of \cite[Definition 7 (1)]{N1}. 
Hence it suffices to show the assertion only for $\beta$. 
The unique trace on $B\otimes B$ is also denoted by $\tau$. 

We first consider the case $F=\emptyset$. 
Suppose that $w\in U(B\otimes B)$ satisfies 
\[
\lVert w-\beta_{\xi_i}(w)\rVert<\delta,\quad 
\tau(\log(w\beta_{\xi_i}(w^*)))=0
\quad\text{and}\quad\lvert\tau(w^j)\rvert<\delta
\]
for any $i=1,2$ and $j\in\Z$ with $0<\lvert j\rvert<m$. 
We may assume that 
$w$ belongs to $B_{n-1}\otimes B_{n-1}$ for some (large) $n\in\N$. 
Let $u_1=z_n\otimes1$ and $u_2=1\otimes z_n$. 
Then one has 
\[
\lVert[w,u_i]\rVert<\delta\quad\text{and}\quad 
\tau(\log(wu_iw^*u_i^*))=0
\]
for $i=1,2$. 
For any $(j_1,j_2,j_3)\in\Z^3$, 
\begin{align*}
\tau(u_1^{j_1}u_2^{j_2}w^{j_3})
&=\tau((y_n\otimes1)u_1^{j_1}u_2^{j_2}w^{j_3}(y_n\otimes1)^*) \\
&=\tau((y_n\otimes1)(z_n\otimes1)^{j_1}(y_n\otimes1)^*u_2^{j_2}w^{j_3})
=\zeta_n^{j_1}\tau(u_1^{j_1}u_2^{j_2}w^{j_3}), 
\end{align*}
and so $\tau(u_1^{j_1}u_2^{j_2}w^{j_3})=0$ if $0<\lvert j_1\rvert<d_n$. 
Similarly if $0<\lvert j_2\rvert<d_n$, then 
$\tau(u_1^{j_1}u_2^{j_2}w^{j_3})=0$, too. 
By the assumption, $\lvert\tau(w^{j_3})\rvert<\delta$ 
for any $j_3\in\Z$ with $0<\lvert j_3\rvert<m$. 
Therefore, if $\delta>0$ is small enough and $m\in\N$ is large enough, then 
the lemma above implies that 
the triple $(u_1,u_2,w)$ can be approximated 
by a commuting triple of unitaries in $B_n\otimes B_n$. 
Consequently 
we can obtain the desired path of unitaries $w:[0,1]\to U(B_n\otimes B_n)$. 

When $F\neq\emptyset$, we can choose $k\in\N$ so that 
$B_k\otimes B_k$ contains $F$ approximately. 
By letting $G$ be 
a sufficiently large finite subset of the unit ball of $B_k\otimes B_k$, 
we may assume that 
the unitary $w$ is in the relative commutant of $B_k\otimes B_k$. 
Then the same argument works for the relative commutant. 
\end{proof}

\begin{lem}\label{3torus}
Let $B$ be a UHF algebra of infinite type with a unique trace $\tau$ and 
let $\alpha:\Z^2\curvearrowright B$ be a strongly outer action. 
For any finite subset $F\subset B$ and $\ep>0$, 
we can find a finite subset $G\subset B$ and $\delta>0$ 
such that the following holds: 
If $w\in U(B)$ satisfies 
\[
\lVert w-\alpha_{\xi_i}(w)\rVert<\delta,\quad 
\tau(\log(w\alpha_{\xi_i}(w^*)))=0
\quad\text{and}\quad \lVert[w,a]\rVert<\delta
\]
for each $i=1,2$ and $a\in G$, then 
there exists a path of unitaries $\tilde w:[0,1]\to U(B)$ such that 
\[
\tilde w(0)=1,\quad \tilde w(1)=w,\quad 
\lVert\tilde w(t)-\alpha_{\xi_i}(\tilde w(t))\rVert<\ep
\quad\text{and}\quad \lVert[\tilde w(t),a]\rVert<\ep
\]
for all $t\in[0,1]$ and $a\in F$. 
In addition, if $F=\emptyset$, then $G=\emptyset$ is possible. 
\end{lem}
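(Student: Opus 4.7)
The plan is to reduce Lemma~\ref{3torus} to the previous lemma by introducing an auxiliary $\beta$-invariant Haar-like unitary $v\in B\otimes B$ that absorbs the missing spectral hypothesis $\lvert\tau(w^j)\rvert<\delta$ for $0<\lvert j\rvert<m$. Following the reduction in the proof of the previous lemma, Corollary~6.6 and Remark~6.7 of \cite{KM} allow us to work with the model action $\beta$ on $B\otimes B$. Given $F$ and $\ep$, apply the previous lemma to $F$ and $\ep/2$ to obtain $G_1$, $\delta_1$, and $m$; fix a prime power $q>m$ and a small tolerance $\eta\ll\min(\delta_1,\ep)$.

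The crucial construction is a unital embedding $\phi\colon M_q\hookrightarrow B\otimes B$ together with a continuous family of unital embeddings $\phi_t\colon M_q\to B\otimes B$, $t\in[0,1]$, with $\phi_0$ the scalar embedding and $\phi_1=\phi$, such that each $\phi_t(M_q)$ commutes with $F\cup G_1$ to within $\eta$ and $\lVert\beta_{\xi_i}\circ\phi_t-\phi_t\rVert<\eta$ for $i=1,2$. Such embeddings exist by the weak Rohlin property of $\beta$ supplied by Theorem~\ref{wRohlintype}: starting from a Rohlin tower for $\beta$ one constructs $M_q$ inside the $\beta$-fixed part of the central sequence algebra and realizes it at a sufficiently large finite stage. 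Put $v=\phi(u_q)$ with $u_q=\operatorname{diag}(1,\zeta_q,\dots,\zeta_q^{q-1})$, $\zeta_q=\exp(2\pi\sqrt{-1}/q)$, and $\tilde v(t)=\phi_t(\operatorname{diag}(1,\exp(2\pi\sqrt{-1}t/q),\dots,\exp(2\pi\sqrt{-1}(q-1)t/q)))$. A small scalar rotation enforces $\tau(\log(v\beta_{\xi_i}(v^*)))=0$, while $\tau(v^j)=0$ for $0<\lvert j\rvert<q$ from the spectral distribution of $u_q$.

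Set $G=G_1\cup F\cup G_0$, where $G_0$ is an $\eta$-net in the unit ball of $\phi(M_q)$, and choose $\delta\ll\eta$. Given $w$ satisfying the hypotheses of Lemma~\ref{3torus} with this $G,\delta$, the condition $\lVert[w,a]\rVert<\delta$ for $a\in G_0$ forces $w$ to approximately commute with $\phi(M_q)$; averaging $w$ over the unitary group of $\phi(M_q)$ yields $w_0\in\phi(M_q)'\cap(B\otimes B)$ with $\lVert w-w_0\rVert=O(\eta)$, and the tensor factorization of $\tau$ along $\phi(M_q)\cdot(\phi(M_q)'\cap(B\otimes B))$ gives $\tau(w_0^jv^{-j})=\tau(w_0^j)\tau(v^{-j})=0$ for $0<\lvert j\rvert<q$. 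Set $w'=wv^*$. The four hypotheses of the previous lemma for $w'$ with parameters $\delta_1,m$ then hold: approximate $\beta$-invariance and approximate commutation with $G_1$ by the triangle inequality from those of $w$ and $v$; the identity $\tau(\log(w'\beta_{\xi_i}((w')^*)))=\tau(\log(w\beta_{\xi_i}(w^*)))-\tau(\log(v\beta_{\xi_i}(v^*)))=0$ via the additivity formula \cite[Lemma~1]{HS} and the conjugation-invariance of $\tau$; and $\lvert\tau((w')^j)\rvert<\delta_1$ for $0<\lvert j\rvert<m$ from the factorization together with $(wv^*)^j\approx w^jv^{-j}$ and $w\approx w_0$. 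The previous lemma supplies a path $\tilde w'\colon[0,1]\to U(B\otimes B)$ from $1$ to $w'$; the pointwise product $\tilde w(t):=\tilde w'(t)\tilde v(t)$ is the desired path from $1$ to $w'v=w$.

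The principal obstacle is the construction of $\phi$ and the connecting family $\phi_t$ satisfying simultaneous approximate $\beta$-invariance, approximate commutation with $F\cup G_1$, and the exact determinant normalization at $t=1$; this is the step where strong outerness is used essentially, through the weak Rohlin property of $\beta$ on the UHF algebra $B\otimes B$. The addendum for $F=\emptyset$ is handled by the same scheme with $G_1=\emptyset$ and a placement of $\phi(M_q)$ chosen so that the auxiliary set $G_0$ may also be dropped.
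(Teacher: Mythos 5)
Your overall architecture --- pass to a tensor-square of $B$, multiply $w$ by an auxiliary unitary whose powers have small trace so that the preceding (unnumbered) lemma applies, then divide the resulting path by a path ending at that auxiliary unitary --- is the paper's strategy. But the step on which everything hinges is wrong as stated. The preceding lemma requires the \emph{exact} vanishing $\tau(\log(w'\beta_{\xi_i}((w')^*)))=0$ for its input $w'=wv^*$, and for this you need $\tau(\log(v\beta_{\xi_i}(v^*)))=0$ exactly, which you propose to arrange by ``a small scalar rotation'' of $v$. Replacing $v$ by $\lambda v$ with $\lambda\in\T$ leaves $(\lambda v)\beta_{\xi_i}((\lambda v)^*)=v\beta_{\xi_i}(v^*)$ unchanged, so a scalar rotation does nothing to this quantity; since your $v=\phi(u_q)$ is only approximately invariant, $\tau(\log(v\beta_{\xi_i}(v^*)))$ is merely small, not zero, and $wv^*$ cannot be fed into the preceding lemma. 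A genuine correction would have to go through Lemma \ref{UHF2} as in the proof of Lemma \ref{UHF3} (the quantity $(2\pi\sqrt{-1})^{-1}\tau(\log(v\beta_{\xi_i}(v^*)))$ does lie in $\tau(K_0(B))$, so a correction unitary $v_0$ exists for both $i=1,2$ simultaneously), but then you must also arrange $v_0$ to approximately commute with $\phi(M_q)$, with $F\cup G_1$ and with $w$, re-verify the factorization giving $\lvert\tau((w(vv_0)^*)^j)\rvert$ small, and extend the path so that it ends at $vv_0$; none of this is in your write-up. On top of that, the construction you yourself flag as the principal obstacle --- a unital, approximately $\beta$-invariant, approximately $(F\cup G_1)$-central copy of $M_q$ together with the connecting family $\phi_t$ --- is asserted rather than proved, and it is not a consequence of the weak Rohlin property alone without further argument.

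The paper sidesteps both difficulties with one move: it replaces $(B,\alpha)$ by $(B\otimes B,\alpha\otimes\id)$ (not by the model action $\beta=(\sigma\otimes\id,\id\otimes\sigma)$ used in the preceding lemma, which fixes no tensor factor) and takes the auxiliary unitary $x$ inside $\C\otimes B$, which is \emph{exactly} fixed by $\alpha\otimes\id$. Then $\lVert wx-(\alpha_{\xi_i}\otimes\id)(wx)\rVert=\lVert w-(\alpha_{\xi_i}\otimes\id)(w)\rVert$ and $\tau(\log(wx\,(\alpha_{\xi_i}\otimes\id)((wx)^*)))=\tau(\log(w(\alpha_{\xi_i}\otimes\id)(w^*)))=0$ hold with no correction whatsoever, and the path $\tilde x$ from $1$ to $x$ can also be taken in $\C\otimes B$. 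The moment condition $\tau((wx)^j)\approx0$ is then obtained not by trace factorization but by a Weyl-pair conjugation: choosing $y\in\C\otimes B$ with $yxy^*=\zeta x$ and $y$ approximately commuting with $w$ and $G$, one gets $\tau((wx)^j)\approx\zeta^j\tau((wx)^j)$, forcing $\tau((wx)^j)\approx0$ for $0<\lvert j\rvert<m$. Your trace-factorization computation of the moments is fine in itself, but you should relocate the auxiliary unitary to the trivially-acted tensor factor; as written, the determinant normalization is a genuine gap.
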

\begin{proof}
By Corollary 6.6 and Remark 6.7 of \cite{KM}, 
we may replace $(B,\alpha)$ with $(B\otimes B,\alpha\otimes\id)$. 
Suppose that we are given a finite subset $F\subset B\otimes B$ and $\ep>0$. 
Applying the lemma above to $F$ and $\ep/2$, 
we get $G\subset B\otimes B$, $\delta>0$ and $m\in\N$. 
We would like to show that $G$ and $\delta$ meet the requirement. 

Suppose that $w\in U(B\otimes B)$ satisfies 
\[
\lVert w-(\alpha_{\xi_i}\otimes\id)(w)\rVert<\delta,\quad 
\tau(\log(w(\alpha_{\xi_i}\otimes\id)(w^*)))=0
\quad\text{and}\quad\lVert[w,a]\rVert<\delta
\]
for every $i=1,2$ and $a\in G$. 
We can find $x,y\in U(\C\otimes B)$ such that 
$yxy^*=\zeta x$, $[x,a]\approx0$ and $[y,a]\approx0$ 
for any $a\in G\cup\{w\}$, where $\zeta=\exp(2\pi\sqrt{-1}/n)$ 
for some $n\geq m$. 
Moreover we may assume that 
there exists a path of unitaries $\tilde x:[0,1]\to U(\C\otimes B)$ such that 
$\tilde x(0)=1$, $\tilde x(1)=x$ and 
$\lVert[\tilde x(t),a]\rVert<\ep/2$ for all $t\in[0,1]$ and $a\in F$. 
The same argument as in the proof of the lemma above shows that 
$\tau((wx)^j)\approx0$ for any $j\in\Z$ with $0<\lvert j\rvert<m$. 
Hence we can apply the lemma above to $wx$ and 
obtain a path of unitaries  $\tilde w:[0,1]\to U(B\otimes B)$ such that 
\[
\tilde w(0)=1,\quad \tilde w(1)=wx,\quad 
\lVert\tilde w(t)-(\alpha_{\xi_i}\otimes\id)(\tilde w(t))\rVert<\ep/2
\]
and $\lVert[\tilde w(t),a]\rVert<\ep/2$ for all $t\in[0,1]$ and $a\in F$. 
Then the path $t\mapsto\tilde w(t)\tilde x(t)^*$ does the job. 
\end{proof}

\section{Uniqueness of $\Z^2$-actions on the Jiang-Su algebra}

In this section we will prove that 
all strongly outer actions of $\Z^2$ on the Jiang-Su algebra $\mathcal{Z}$ are 
strongly cocycle conjugate to each other (Theorem \ref{uniqueZ2}). 
We let $\omega$ denote the unique tracial state on $\mathcal{Z}$. 

Let $\alpha:\Z^2\curvearrowright A$ be an action of $\Z^2$ 
on a unital $C^*$-algebra $A$ 
such that $\alpha_g$ is approximately inner for every $g\in\Z^2$. 
We let $\xi_1=(1,0)$ and $\xi_2=(0,1)$ denote the generators of $\Z^2$. 
A pair of unitaries $(u_1,u_2)$ in $A$ is called 
an almost $\alpha$-cocycle (\cite[Section 3]{KM}), when 
\[
\lVert u_1\alpha_{\xi_1}(u_2)-u_2\alpha_{\xi_2}(u_1)\rVert<1. 
\]
We say that an almost $\alpha$-cocycle $(u_1,u_2)$ is admissible 
if $u_1,u_2\in U(A)_0$ and the associated element 
\[
\kappa(u_1,u_2,\alpha_{\xi_1},\alpha_{\xi_2})\in K_0(A)
\]
(see \cite[Section 3]{KM} for the definition) is zero. 

\begin{lem}\label{admissible}
Let $\alpha:\Z^2\curvearrowright\mathcal{Z}$ be an action of $\Z^2$ 
on $\mathcal{Z}$. 
Then any almost $\alpha$-cocycle is admissible. 
\end{lem}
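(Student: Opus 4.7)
Admissibility of an almost $\alpha$-cocycle $(u_1,u_2)$ amounts to two conditions: that $u_1,u_2$ lie in $U(\mathcal{Z})_0$, and that $\kappa(u_1,u_2,\alpha_{\xi_1},\alpha_{\xi_2})=0$ in $K_0(\mathcal{Z})$. The plan is to dispatch each by exploiting the very special structure of $\mathcal{Z}$: stable rank one, $K_0(\mathcal{Z})=\Z$, $K_1(\mathcal{Z})=0$, and a unique trace $\omega$.

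For the first condition, I would invoke that $\mathcal{Z}$ has stable rank one (Jiang--Su, or from $\mathcal{Z}$-stability plus stable finiteness) together with $K_1(\mathcal{Z})=0$. Under stable rank one, the natural map $U(\mathcal{Z})/U(\mathcal{Z})_0\to K_1(\mathcal{Z})$ is an isomorphism, so $U(\mathcal{Z})=U(\mathcal{Z})_0$ and both $u_1,u_2$ automatically lie in the connected component of $1$.

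For the second condition, the strategy is a trace computation. Since $K_0(\mathcal{Z})\cong\Z$ and the unique trace induces an injection $\omega_*\colon K_0(\mathcal{Z})\hookrightarrow\R$ with image $\Z$, we have $\kappa=0$ if and only if $\omega_*(\kappa)=0$. Unwinding the definition of $\kappa$ from \cite[Section~3]{KM}, the pairing $\omega_*(\kappa)$ should be expressible via the de la Harpe--Skandalis formalism as a prescribed combination of quantities of the form $\omega(\log(\cdot))$ applied to the cocycle defect $w:=u_1\alpha_{\xi_1}(u_2)(u_2\alpha_{\xi_2}(u_1))^*$ (which lies within distance less than $1$ of $1$, so $\log w$ is well defined) together with correction terms arising from paths joining $u_1,u_2$ to the identity in $U(\mathcal{Z})_0$. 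The crucial input is that $\omega$ is $\alpha$-invariant: being the unique trace on $\mathcal{Z}$, it satisfies $\omega\circ\alpha_g=\omega$ for every $g\in\Z^2$.

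Combining $\alpha$-invariance of $\omega$ with the multiplicativity $\omega(\log(xy))=\omega(\log x)+\omega(\log y)$ (valid when $\lVert x-1\rVert+\lVert y-1\rVert<2$) and traciality $\omega(\log(vxv^*))=\omega(\log x)$, the contributions of $u_i$ and of $\alpha_{\xi_j}(u_i)$ pair up and cancel term by term, leaving $\omega_*(\kappa)=0$. The main obstacle I anticipate is not conceptual but one of bookkeeping: extracting from the definition of $\kappa$ in \cite{KM} an explicit additive formula for $\omega_*(\kappa)$ in which these cancellations are manifest. Once the formula is written in the appropriate form, uniqueness of the trace on $\mathcal{Z}$ does the rest without further input.
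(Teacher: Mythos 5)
Your handling of the first condition is fine: $\mathcal{Z}$ has stable rank one and trivial $K_1$, so $U(\mathcal{Z})=U(\mathcal{Z})_0$ and both unitaries automatically lie in the connected component. The reduction of the second condition to showing $\omega_*(\kappa)=0$ is also fine, since $\omega_*$ is injective on $K_0(\mathcal{Z})=\Z$. The gap is the claim that $\omega_*(\kappa)$ vanishes by a term-by-term cancellation coming from $\alpha$-invariance and traciality of $\omega$. It does not. The element $\kappa(u_1,u_2,\alpha_{\xi_1},\alpha_{\xi_2})$ is extracted from the $K$-class of a \emph{loop} of unitaries built from paths joining $u_1,u_2$ to $1$, so $\omega_*(\kappa)$ is a de la Harpe--Skandalis determinant of that loop, a winding-number-type quantity that is not determined by endpoint data in a way your cancellations can reach. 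If your argument were valid it would prove $\kappa=0$ for every almost cocycle over every unital $C^*$-algebra with an invariant trace faithful on $K_0$, in particular over UHF algebras with their unique trace; admissibility would then be a vacuous hypothesis in \cite{KM} and in Lemma \ref{UHF1} of this paper, which it is not. The single-automorphism invariant recalled at the beginning of the final section makes the failure concrete: $\omega(\kappa(u,\alpha))=\tfrac{1}{2\pi\sqrt{-1}}\,\omega(\log(u\alpha(u^*)))$, and for $\alpha=\Ad z$ this is the Bott invariant of the almost commuting pair $(u,z)$, which is nonzero in general (Voiculescu's example). You cannot split $\omega(\log(u\alpha(u^*)))$ as $\omega(\log u)+\omega(\log\alpha(u^*))$, because $u$ and $\alpha(u^*)$ are nowhere near $1$; the additivity of $\tau\circ\log$ you invoke requires $\lVert x-1\rVert+\lVert y-1\rVert<2$.

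What actually closes the argument --- and is the entire content of the paper's one-line proof --- is boundedness plus discreteness rather than exact cancellation: \cite[Lemma 3.1]{KM} gives a uniform bound $\lvert\omega(\kappa)\rvert<1$, controlled by the cocycle defect $\lVert u_1\alpha_{\xi_1}(u_2)-u_2\alpha_{\xi_2}(u_1)\rVert<1$ (just as $\lvert\omega(\kappa(u,\alpha))\rvert<1/2$ follows from $\lVert u\alpha(u^*)-1\rVert<2$ in the one-variable case), while $\omega_*(K_0(\mathcal{Z}))=\Z$ because $K_0(\mathcal{Z})$ is generated by $[1]$. An integer of absolute value less than $1$ is $0$. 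So your instinct to exploit $K_0(\mathcal{Z})=\Z$ is exactly right, but its role is the discreteness of the range of the trace pairing, not injectivity of $\omega_*$ combined with an exact computation; the missing ingredient is the quantitative estimate on $\omega(\kappa)$ from \cite[Lemma 3.1]{KM}.
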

\begin{proof}
This immediately follows from \cite[Lemma 3.1]{KM}, 
because $K_0(\mathcal{Z})=\Z$. 
\end{proof}

\begin{lem}\label{UHF1}
Let $B$ be a UHF algebra with a unique trace $\tau$ and 
let $\alpha:\Z^2\curvearrowright B$ be a strongly outer action. 
For any finite subset $F\subset B$ and $\ep>0$, 
we can find a finite subset $G\subset B$ and $\delta>0$ 
such that the following holds: 
If $(u_1,u_2)$ is an admissible almost $\alpha$-cocycle in $B$ satisfying 
\[
\lVert u_1\alpha_{\xi_1}(u_2)-u_2\alpha_{\xi_2}(u_1)\rVert<\delta
\quad\text{and}\quad 
\lVert[u_i,a]\rVert<\delta
\]
for $i=1,2$ and $a\in G$, then 
there exists a unitary $v\in B$ such that 
\[
\lVert u_i-v\alpha_{\xi_i}(v^*)\rVert<\ep\quad\text{and}\quad 
\lVert[v,a]\rVert<\ep
\]
for $i=1,2$ and $a\in F$. 
In addition, if $F=\emptyset$, then $G=\emptyset$ is possible. 
\end{lem}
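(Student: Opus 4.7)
The plan is to prove the lemma in two cocycle-trivialization stages, using Lemma~\ref{3torus} as the new ingredient at the second stage. I treat the case $F = \emptyset$ first; general $F$ follows by carrying out the construction inside the relative commutant of a finite dimensional unital subalgebra of $B$ that $\ep/4$-approximately contains $F$, which is available because $B$ is UHF.

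\textbf{Stage 1 (Trivialize $u_1$).} Since $\alpha_{\xi_1}$ is a strongly outer automorphism of the UHF algebra $B$, it has the Rohlin property, and the admissibility hypothesis in particular forces the de la Harpe--Skandalis-type obstruction $\Delta_\tau(u_1) \in \R/\tau(K_0(B))$ to vanish. A standard cocycle vanishing argument of Kishimoto type then produces $v_1 \in U(B)$ with $\lVert u_1 - v_1\alpha_{\xi_1}(v_1^*)\rVert < \ep/4$ and $v_1$ almost commuting with $u_2$. Replacing $(u_1,u_2)$ by the equivalent admissible almost cocycle
\[
(u_1',\, u_2') = (v_1^* u_1 \alpha_{\xi_1}(v_1),\ v_1^* u_2 \alpha_{\xi_2}(v_1)),
\]
I may assume $u_1' \approx 1$; the almost cocycle identity then forces $u_2' \approx \alpha_{\xi_1}(u_2')$.

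\textbf{Stage 2 (Trivialize $u_2'$ while preserving $\alpha_{\xi_1}$-invariance).} I need $v_2 \in U(B)$ with $v_2\alpha_{\xi_2}(v_2^*) \approx u_2'$ and $v_2 \approx \alpha_{\xi_1}(v_2)$. First produce a preliminary $v_2^{(0)}$ trivializing $u_2'$ as an $\alpha_{\xi_2}$-coboundary, again using strong outerness of $\alpha_{\xi_2}$ and vanishing of the corresponding determinant obstruction (from admissibility). The failure of $\alpha_{\xi_1}$-invariance of $v_2^{(0)}$ is measured by the unitary $w := v_2^{(0)}\alpha_{\xi_1}(v_2^{(0)})^*$, which by construction is approximately invariant under both $\alpha_{\xi_1}$ and $\alpha_{\xi_2}$ and whose trace--logarithms $\tau(\log(w\alpha_{\xi_i}(w^*)))$ vanish (again extracted from admissibility). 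Apply Lemma~\ref{3torus} to $w$ to obtain a continuous path from $1$ to $w$ of approximately $\alpha$-biinvariant unitaries, then extract a correction $c \in U(B)$ with $c\alpha_{\xi_1}(c^*) \approx w$ and $c\alpha_{\xi_2}(c^*) \approx 1$ via a telescoping product along a Rohlin tower for $\alpha_{\xi_1}$. Setting $v_2 := c^* v_2^{(0)}$ and $v := v_1 v_2$ produces the desired trivializing unitary.

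The main obstacle is Stage 2, specifically verifying that $w$ meets the hypotheses of Lemma~\ref{3torus} after the preliminary coboundary is peeled off, and extracting the correction $c$ from the resulting path without destroying the $\alpha_{\xi_2}$-coboundary structure already achieved by $v_2^{(0)}$. The admissibility hypothesis is essential at three points---to trivialize $u_1$, to trivialize $u_2'$ as an $\alpha_{\xi_2}$-coboundary, and to supply the trace--log compatibility required by Lemma~\ref{3torus} for $w$---and it is precisely the simultaneous vanishing of these obstructions that lets the two one-dimensional trivializations be reconciled into the single unitary $v$.
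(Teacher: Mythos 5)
Your overall strategy is a direct Evans--Kishimoto-style two-stage trivialization, which amounts to an attempt to re-prove the cohomology vanishing theorem for $\Z^2$-actions on UHF algebras from scratch; the paper instead disposes of the lemma by a short reductio ad absurdum: if the statement failed, a sequence of counterexamples $(u_{1,n},u_{2,n})$ would define an exact admissible $\alpha$-cocycle $(\tilde u_1,\tilde u_2)$ in the central sequence algebra $B_\infty$, and Theorems 4.7 and 4.8 of \cite{KM} then produce $v\in B_\infty$ with $\tilde u_i=v\alpha_{\xi_i}(v^*)$, a contradiction. Measured against that, your sketch has several genuine gaps. First, Lemma \ref{3torus} is only available for UHF algebras of \emph{infinite type}, while the present lemma is asserted for arbitrary UHF algebras, so your route proves a strictly weaker statement. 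Second, admissibility of the pair $(u_1,u_2)$ is the vanishing of the single joint invariant $\kappa(u_1,u_2,\alpha_{\xi_1},\alpha_{\xi_2})\in K_0(B)$; it does not force $\Delta_\tau(u_1)=0$ or $\Delta_\tau(u_2)=0$ (this is exactly why Lemma \ref{UHF3} must impose $\Delta_\tau(u_i)=0$ as an \emph{additional} hypothesis on top of admissibility), and likewise your assertion that $\tau(\log(w\alpha_{\xi_i}(w^*)))=0$ for $w=v_2^{(0)}\alpha_{\xi_1}(v_2^{(0)})^*$ is ``extracted from admissibility'' is precisely the hard bookkeeping at the heart of \cite[Theorem 4.7]{KM} and cannot be waved through.

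Third, the claim that $w$ is approximately $\alpha_{\xi_2}$-invariant is unjustified: from $v_2^{(0)}\alpha_{\xi_2}(v_2^{(0)})^*\approx u_2'$ one computes $\alpha_{\xi_2}(w)\approx u_2'^{\,*}\,w\,\alpha_{\xi_1}(u_2')\approx u_2'^{\,*}\,w\,u_2'$, so what you need is that $w$ approximately commutes with $u_2'$, and nothing in your construction arranges this. Finally, the telescoping (Berg-technique) extraction of a correction $c$ satisfying both $c\alpha_{\xi_1}(c^*)\approx w$ and $c\alpha_{\xi_2}(c^*)\approx1$ requires a Rohlin tower for $\alpha_{\xi_1}$ whose projections are approximately $\alpha_{\xi_2}$-invariant, i.e.\ the Rohlin property of the $\Z^2$-action in the sense of \cite{N1,KM}, not merely the Rohlin property of the single automorphism $\alpha_{\xi_1}$; this too is left unaddressed. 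Each of these points is handled inside the proofs of Theorems 4.7 and 4.8 of \cite{KM}, which is why the paper quotes those results as a black box rather than reconstructing them.
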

\begin{proof}
Note that $\alpha$ is uniformly outer by Remark \ref{so=uo}. 
It follows from \cite[Theorem 3]{N1} that $\alpha$ has the Rohlin property. 
The proof is by contradiction. 
Assume that there exist a finite subset $F\subset B$ and $\ep>0$ 
such that the assertion does not hold. 
We choose an increasing sequence $\{G_n\}_{n=0}^\infty$ of 
finite subsets of $B$ whose union is dense in $B$. 
From the hypothesis, for each $n\in\N$, 
we would find an admissible almost $\alpha$-cocycle $(u_{1,n},u_{2,n})$ in $B$ 
satisfying 
\[
\lVert u_{1,n}\alpha_{\xi_1}(u_{2,n})-u_{2,n}\alpha_{\xi_2}(u_{1,n})\rVert
<1/n
\quad\text{and}\quad 
\lVert[u_{i,n},a]\rVert<n^{-1}
\]
for $i=1,2$ and $a\in G_n$, and such that 
there does not exist a unitary $v\in B$ satisfying 
\[
\lVert u_{i,n}-v\alpha_{\xi_i}(v^*)\rVert<\ep\quad\text{and}\quad 
\lVert[v,a]\rVert<\ep
\]
for $i=1,2$ and $a\in F$. 
Notice that $(u_{i,n})_n$ is a central sequence for $i=1,2$. 
Let $\tilde u_i\in B_\infty$ be the image of $(u_{i,n})_n$. 
Then one has 
$\tilde u_1\alpha_{\xi_1}(\tilde u_2)=\tilde u_2\alpha_{\xi_2}(\tilde u_1)$, 
and so the pair of unitaries $(\tilde u_1,\tilde u_2)$ gives rise to 
an $\alpha$-cocycle in $B_\infty$ (see \cite[Section 2]{KM}), 
which is admissible because every $(u_{1,n},u_{2,n})$ is admissible. 
Thanks to \cite[Theorem 4.8]{KM}, one can find a unitary $v\in B_\infty$ 
such that $\tilde u_i=v\alpha_{\xi_i}(v^*)$ holds for $i=1,2$. 
This is a contradiction. 

In the case that $F$ is empty, 
we can argue in the same way as above 
using \cite[Theorem 4.7]{KM} instead of \cite[Theorem 4.8]{KM}. 
\end{proof}

\begin{lem}\label{UHF2}
Let $B$ be a UHF algebra of infinite type with a unique trace $\tau$ and 
let $\alpha:\Z^2\curvearrowright B$ be a strongly outer action. 
For any finite subset $F\subset B$, $\ep>0$ and 
$r\in\tau(K_0(B))$ with $\lvert r\rvert<1/2$, 
there exists $v\in U(B)$ such that 
\[
\lVert v-\alpha_{\xi_1}(v)\rVert<\lvert e^{2\pi\sqrt{-1}r}-1\rvert+\ep,\quad 
\lVert v-\alpha_{\xi_2}(v)\rVert<\ep
\]
\[
\tau(\log(v\alpha_{\xi_1}(v^*)))=2\pi\sqrt{-1}r,\quad 
\tau(\log(v\alpha_{\xi_2}(v^*)))=0
\]
and $\lVert[v,a]\rVert<\ep$ for every $a\in F$. 
\end{lem}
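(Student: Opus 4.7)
The plan is to construct $v$ explicitly in a model action to which $\alpha$ is strongly cocycle conjugate, and then transfer the construction back to $\alpha$, using the rigidity of the de la Harpe--Skandalis determinant to upgrade approximate trace conditions to exact equalities. First, I would reduce to the case $0\le r<1/2$: for $r<0$ the conclusion for $\lvert r\rvert$ gives a $v$, and replacing it by $v^*$ negates both trace-of-log values while preserving the norm bounds.

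By Corollary 6.6 and Remark 6.7 of \cite{KM} (exactly as in the proof of the preceding lemma), $\alpha$ is strongly cocycle conjugate to the model action $\beta\colon\Z^2\curvearrowright B\otimes B$ given by $\beta_{\xi_1}=\sigma\otimes\id$ and $\beta_{\xi_2}=\id\otimes\sigma$, where $\sigma=\lim\Ad(z_n)$ is the product-shift automorphism with $z_n=x_1x_2\cdots x_n$. The auxiliary unitaries $y_n\in B_n\cap B_{n-1}'$ satisfy $\sigma(y_n^j)=\zeta_n^{-j}y_n^j$ with $\zeta_n=e^{2\pi\sqrt{-1}/d_n}$. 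In the model, the construction of $v$ is explicit: since $B$ is UHF of infinite type we have $\tau(K_0(B))\subset\Q$, so write $r=p/q$; choose $n$ sufficiently large so that $q\mid d_n$; set $j=d_np/q$ and $v_\beta:=y_n^j\otimes 1$. Then $\beta_{\xi_1}(v_\beta)=e^{-2\pi\sqrt{-1}r}v_\beta$ and $\beta_{\xi_2}(v_\beta)=v_\beta$, so $v_\beta\beta_{\xi_1}(v_\beta^*)=e^{2\pi\sqrt{-1}r}\cdot 1$ and $v_\beta\beta_{\xi_2}(v_\beta^*)=1$; all the norm and trace identities hold exactly for $\beta$, and $v_\beta$ commutes with $B_{n-1}\otimes B$.

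To transfer to $\alpha$, let $\mu\colon B\to B\otimes B$ be the isomorphism with $\mu\circ\alpha^u_g\circ\mu^{-1}=\beta_g$ and let $\{w_m\}\subset U(B)$ be the approximating unitaries with $\|u_g-w_m\alpha_g(w_m^*)\|\to 0$. Choosing $n$ so that $\mu(F)$ is approximately contained in $B_{n-1}\otimes B$ (which is possible by continuity of $\mu$ and the inductive-limit structure of $B\otimes B$), I would define $v:=w_m^*\mu^{-1}(v_\beta)w_m$ for $m$ sufficiently large. A direct computation using the asymptotic coboundary property of $u_g$ shows $v\alpha_{\xi_1}(v^*)\to e^{2\pi\sqrt{-1}r}$ and $v\alpha_{\xi_2}(v^*)\to 1$ as $m\to\infty$, so the norm bounds and the commutator bound hold for all $m$ sufficiently large.

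The hard part is upgrading these approximate trace identities to the exact equalities required. Since $K_1(B)=0$, every unitary lies in $U(B)_0$, and the de la Harpe--Skandalis determinant vanishes on multiplicative commutators, giving $\tau(\log(v\alpha_{\xi_i}(v^*)))/(2\pi\sqrt{-1})\in\tau(K_0(B))$ whenever $\|v-\alpha_{\xi_i}(v)\|<2$. As the image group $\tau(K_0(B))$ is a countable subset of $\R$, the continuous map sending a candidate $v$ (in the open set defined by the norm bounds) to its trace-of-log value is locally constant, and therefore constant on each connected component. The final step is to verify that the family $\{v_m\}$ eventually lies in a connected component whose constant value is exactly $2\pi\sqrt{-1}r$ and $0$, respectively; this follows by connecting the $v_m$'s to a lift of the corresponding exact element in the central sequence algebra, where the SCC yields a strictly equivariant realization of $\beta$ inside $(B_\infty,\alpha_\infty)$.
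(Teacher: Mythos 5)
Your overall strategy coincides with the paper's: the proof given there is exactly ``reduce to a product type action via Corollary 6.6 and Remark 6.7 of \cite{KM}, then construct the unitary explicitly for that model'', and your explicit eigenvector $v_\beta=y_n^j\otimes 1$ (with $\sigma(y_n)=\zeta_n^{-1}y_n$, so that $v_\beta\beta_{\xi_1}(v_\beta^*)=e^{2\pi\sqrt{-1}r}\cdot 1$ exactly) is precisely the kind of construction the cited technique of \cite[Lemma 6.2]{KM} produces. Two small points in this part need repair but are harmless: (i) a reduced fraction $r=p/q$ in $\tau(K_0(B))$ need not have $q\mid d_n$ for the given decomposition, only $q\mid d_1\cdots d_n$, so you must regroup the tensor factors (possible since $B$ is of infinite type) before building $\sigma$; (ii) the commutator estimate requires $\mu(w_ma w_m^*)$, not $\mu(F)$, to sit approximately in $B_{n-1}\otimes B$, so $m$ must be fixed before $n$ is chosen.

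The genuine gap is in your final step, the upgrade from $v\alpha_{\xi_i}(v^*)\approx e^{2\pi\sqrt{-1}r}$ to the exact trace identity. The observation that $v\mapsto\frac{1}{2\pi\sqrt{-1}}\tau(\log(v\alpha_{\xi_i}(v^*)))$ is locally constant with values in the countable group $\tau(K_0(B))$ is correct, but it is useless unless you identify the connected component, i.e.\ exhibit a path from $v_m$, staying inside the open set $\lVert v\alpha_{\xi_i}(v^*)-e^{2\pi\sqrt{-1}r}\rVert<2$ for both $i$ simultaneously, to an anchor point where the value is computable. The candidate anchor $\mu^{-1}(v_\beta)$ does not work because $\mu$ intertwines $\beta$ with the \emph{perturbed} action $\alpha^u$, not with $\alpha$, so $\mu^{-1}(v_\beta)\alpha_{\xi_i}(\mu^{-1}(v_\beta)^*)$ is not a scalar; and ``connecting $v_m$ to a lift of an element of the central sequence algebra'' is not a path argument in $U(B)$ at all, since a lift is a sequence and the determinant computation lives in $B$, not $B_\infty$. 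No such path is produced, so the exact equalities are not established. Fortunately the detour is unnecessary: writing $s_m=w_m\alpha_{\xi_i}(w_m^*)u_{\xi_i}^*\to 1$ and $V=\mu^{-1}(v_\beta)$, one computes $v\alpha_{\xi_i}(v^*)=e^{2\pi\sqrt{-1}r_i}\,\Ad w_m^*(Vs_mV^*s_m^*)$ with $r_1=r$, $r_2=0$, and then the additivity $\tau(\log(xy))=\tau(\log x)+\tau(\log y)$ for $\lVert x-1\rVert+\lVert y-1\rVert<2$ gives $\tau(\log(Vs_mV^*s_m^*))=\tau(\log s_m)-\tau(\log s_m)=0$ once $\lVert s_m-1\rVert<1$, hence $\tau(\log(v\alpha_{\xi_i}(v^*)))=2\pi\sqrt{-1}r_i$ exactly. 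This is the same $\tau\circ\log$ bookkeeping the paper performs in the proofs of Lemma \ref{UHF3} and Lemma \ref{prestability}, and with it your argument closes.
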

\begin{proof}
By Corollary 6.6 and Remark 6.7 of \cite{KM}, 
$\alpha$ is strongly cocycle conjugate to a product type action $\gamma$ 
(constructed in \cite[Lemma 5.3]{KM} for example). 
For such $\gamma$, we can prove the assertion 
by the technique used in the proof of \cite[Lemma 6.2]{KM}. 
\end{proof}

\begin{lem}\label{UHF3}
Let $B$ be a UHF algebra of infinite type with a unique trace $\tau$ and 
let $\alpha:\Z^2\curvearrowright B$ be a strongly outer action. 
For any finite subset $F\subset B$ and $\ep>0$, 
we can find a finite subset $G\subset B$ and $\delta>0$ 
such that the following holds: 
If $(u_1,u_2)$ is an admissible almost $\alpha$-cocycle in $B$ satisfying 
\[
\lVert u_1\alpha_{\xi_1}(u_2)-u_2\alpha_{\xi_2}(u_1)\rVert<\delta,\quad 
\lVert[u_i,a]\rVert<\delta\quad\text{and}\quad 
\Delta_\tau(u_i)=0
\]
for $i=1,2$ and $a\in G$, then 
there exists a unitary $v\in B$ such that 
\[
\lVert u_i-v\alpha_{\xi_i}(v^*)\rVert<\ep,\quad 
\lVert[v,a]\rVert<\ep\quad\text{and}\quad 
\tau(\log(v^*u_i\alpha_{\xi_i}(v)))=0
\]
for $i=1,2$ and $a\in F$. 
In addition, if $F=\emptyset$, then $G=\emptyset$ is possible. 
\end{lem}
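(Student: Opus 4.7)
The strategy is to apply Lemma \ref{UHF1} to produce an approximate solution $v_0$, to identify the de la Harpe--Skandalis defect that prevents $v_0$ from satisfying the trace condition, and finally to correct this defect in both coordinates simultaneously by multiplying $v_0$ by a product of two auxiliary unitaries furnished by Lemma \ref{UHF2}. Given $F$ and $\ep$, the plan is to fix a sufficiently small $\ep'>0$ and apply Lemma \ref{UHF1} with input $(F,\ep')$; after possibly shrinking the resulting tolerance, the finite set and scalar it produces will serve as the $G$ and $\delta$ of the present lemma (the implication $F=\emptyset\Rightarrow G=\emptyset$ is inherited from Lemma \ref{UHF1}). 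For an admissible almost $\alpha$-cocycle $(u_1,u_2)$ satisfying the hypotheses, Lemma \ref{UHF1} then yields $v_0\in U(B)$ with $\lVert u_i-v_0\alpha_{\xi_i}(v_0^*)\rVert<\ep'$ and $\lVert[v_0,a]\rVert<\ep'$ for every $a\in F$.

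Setting $w_i:=v_0^*u_i\alpha_{\xi_i}(v_0)$, one has $\lVert w_i-1\rVert<\ep'$, so the standard logarithm gives a purely imaginary number $\tau(\log w_i)=2\pi\sqrt{-1}\,r_i$ with $r_i\in\R$. Because $K_1(B)=0$ forces $v_0\in U(B)_0$, the additivity of $\Delta_\tau$ combined with $\Delta_\tau\circ\alpha_g=\Delta_\tau$ (valid because $\tau$ is $\alpha$-invariant) yields $\Delta_\tau(u_i)=\Delta_\tau(w_i)$, and the hypothesis $\Delta_\tau(u_i)=0$ therefore forces $r_i\in\tau(K_0(B))$; shrinking $\ep'$ further one also arranges $\lvert r_i\rvert<1/2$. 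I would then apply Lemma \ref{UHF2} with parameter $r_1$, finite set $F$ and a small tolerance to obtain $z_1\in U(B)$ with $\tau(\log(z_1\alpha_{\xi_1}(z_1^*)))=2\pi\sqrt{-1}\,r_1$, $\tau(\log(z_1\alpha_{\xi_2}(z_1^*)))=0$, and with $\lVert z_1-\alpha_{\xi_i}(z_1)\rVert$ and $\lVert[z_1,a]\rVert$ controlled. Applying Lemma \ref{UHF2} to the swapped action $\tilde\alpha_{\xi_1}:=\alpha_{\xi_2}$, $\tilde\alpha_{\xi_2}:=\alpha_{\xi_1}$ (which is still strongly outer) with parameter $r_2$ produces $z_2$ with the symmetric role. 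Finally set $z:=z_1z_2$ and $v:=v_0z$.

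To verify the trace condition, the first step is to compute $\tau(\log(z\alpha_{\xi_i}(z^*)))$ via the identity
\[
z\alpha_{\xi_1}(z^*)=\bigl[z_1\alpha_{\xi_1}(z_1^*)\bigr]\cdot\bigl[\alpha_{\xi_1}(z_1)\,z_2\alpha_{\xi_1}(z_2^*)\,\alpha_{\xi_1}(z_1^*)\bigr]
\]
and its symmetric analogue for $\xi_2$. Each factorisation expresses $z\alpha_{\xi_i}(z^*)$ as a product of two unitaries close to $1$, the second being a unitary conjugate of $z_j\alpha_{\xi_i}(z_j^*)$ for the appropriate $j$. Since the sum of their distances to $1$ is strictly less than $2$ once $\ep'$ is small, the additivity $\tau(\log(uv))=\tau(\log u)+\tau(\log v)$ recalled in Section 2 applies and yields $\tau(\log(z\alpha_{\xi_i}(z^*)))=2\pi\sqrt{-1}\,r_i$. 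Rewriting $v^*u_i\alpha_{\xi_i}(v)=z^*w_i\alpha_{\xi_i}(z)$ and factoring it as $[z^*\alpha_{\xi_i}(z)]\cdot[\alpha_{\xi_i}(z^*)w_i\alpha_{\xi_i}(z)]$, a further application of the same additivity gives $\tau(\log(v^*u_i\alpha_{\xi_i}(v)))=-2\pi\sqrt{-1}\,r_i+2\pi\sqrt{-1}\,r_i=0$. The required norm estimate $\lVert u_i-v\alpha_{\xi_i}(v^*)\rVert\leq\ep'+\lVert z\alpha_{\xi_i}(z^*)-1\rVert$ and the commutator bound $\lVert[v,a]\rVert\leq\lVert[v_0,a]\rVert+\lVert[z,a]\rVert$ are then controlled by the various small tolerances.

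The principal obstacle is that Lemma \ref{UHF2} only controls one generator at a time, whereas both determinants must be adjusted simultaneously. I circumvent this by splitting the correction as $z=z_1z_2$ and using the log-trace additivity to see that the two separate contributions combine as intended. This forces $\lvert r_i\rvert$ and the UHF2 tolerances to be small enough that all four unitaries entering the additivity statement lie in the region $\lVert\cdot-1\rVert+\lVert\cdot-1\rVert<2$, which is arranged by choosing $\ep'$ sufficiently small at the outset.
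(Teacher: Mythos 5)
Your proposal is correct and follows essentially the same route as the paper: apply Lemma \ref{UHF1} to get an approximate coboundary, observe that $\Delta_\tau(u_i)=0$ places the logarithmic defects $r_i$ in $\tau(K_0(B))$, and cancel them by multiplying with a correcting unitary obtained from two applications of Lemma \ref{UHF2}, using the additivity of $\tau\circ\log$ for unitaries near $1$. The only differences are notational and the level of detail in the factorisation identities.
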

\begin{proof}
Suppose that we are given $F\subset B$ and $\ep>0$. 
By applying Lemma \ref{UHF1} to $F$ and $\ep/2$, 
we obtain a finite subset $G\subset B$ and $\delta>0$. 
We would like to show that $G$ and $\delta$ meet the requirement. 
For $(u_1,u_2)$ as above, 
there exists $v\in U(B)$ such that 
\[
\lVert u_i-v\alpha_{\xi_i}(v^*)\rVert<\ep/2\quad\text{and}\quad 
\lVert[v,a]\rVert<\ep/2
\]
for $i=1,2$ and $a\in F$. 
Define $h_i\in B$ by 
\[
h_i=\frac{1}{2\pi\sqrt{-1}}\log(v^*u_i\alpha_{\xi_i}(v)). 
\]
From $\Delta_\tau(u_i)=0$, 
one also obtains $\tau(h_i)\in\tau(K_0(B))$. 
By using Lemma \ref{UHF2} twice, 
we can find a unitary $v_0\in B$ such that 
\[
\lVert v_0-\alpha_{\xi_i}(v_0)\rVert<\ep/2,\quad 
\tau(\log(v_0\alpha_{\xi_i}(v_0^*)))=2\pi\sqrt{-1}\tau(h_i)
\]
for each $i=1,2$ and $\lVert[v_0,a]\rVert<\ep/2$ for every $a\in F$. 
Then 
\begin{align*}
\tau(\log(v_0^*v^*u_i\alpha_{\xi_i}(vv_0)))
&=\tau(\log(v^*u_i\alpha_{\xi_i}(vv_0)v_0^*) \\
&=\tau(\log(v^*u_i\alpha_{\xi_i}(v)))
+\tau(\log(\alpha_{\xi_i}(v_0)v_0^*)) \\
&=0
\end{align*}
for each $i=1,2$. 
Replacing $v$ with $vv_0$, we can complete the proof. 
\end{proof}

\begin{lem}\label{prestability}
Let $\alpha:\Z^2\curvearrowright\mathcal{Z}$ be 
a strongly outer action of $\Z^2$ on $\mathcal{Z}$. 
For any finite subset $F\subset\mathcal{Z}$ and $\ep>0$, 
there exist a finite subset $G\subset\mathcal{Z}$ and $\delta>0$ 
such that the following holds: 
If $(u_1,u_2)$ is an almost $\alpha$-cocycle in $\mathcal{Z}$ 
satisfying 
\[
\Delta_\omega(u_1)=\Delta_\omega(u_2)=0,\quad 
\lVert u_1\alpha_{\xi_1}(u_2)-u_2\alpha_{\xi_2}(u_1)\rVert<\delta
\]
and $\lVert[u_i,a]\rVert<\delta$ for any $a\in G$ and $i=1,2$, then 
there exists a unitary $v\in\mathcal{Z}\otimes\mathcal{Z}$ such that 
\[
\lVert u_i\otimes1-v(\alpha_{\xi_i}\otimes\id)(v^*)\rVert<\ep
\quad\text{and}\quad 
\lVert[v,a\otimes1]\rVert<\ep
\]
for any $a\in F$ and $i=1,2$. 
In addition, if $F=\emptyset$, then $G=\emptyset$ is possible. 
\end{lem}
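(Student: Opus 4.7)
The plan is to reduce to the UHF setting by tensoring with a UHF algebra $B$ of infinite type, apply Lemma~\ref{UHF3} to the resulting cocycle in $\mathcal{Z} \otimes B \cong B$, and then transfer the unitary back into $\mathcal{Z} \otimes \mathcal{Z}$ using Lemma~\ref{3torus} together with the vanishing of the log-trace.

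First I would fix a UHF algebra $B$ of infinite type, so that $\mathcal{Z} \otimes B \cong B$. By Remark~\ref{sotimesid} the action $\alpha \otimes \id_B$ on $\mathcal{Z} \otimes B$ is strongly outer, and $(u_1 \otimes 1, u_2 \otimes 1)$ is an almost $(\alpha \otimes \id)$-cocycle. Admissibility follows from Lemma~\ref{admissible} applied in $\mathcal{Z}$ and naturality of the $\kappa$-invariant under the inclusion $\mathcal{Z} \hookrightarrow \mathcal{Z} \otimes B$. The de la Harpe--Skandalis determinant $\Delta_{\omega \otimes \tau_B}(u_i \otimes 1)$ has the same numerical value as $\Delta_\omega(u_i) = 0$, and this value vanishes in $\R/(\omega \otimes \tau_B)(K_0(\mathcal{Z} \otimes B))$ because $\omega(K_0(\mathcal{Z})) = \Z$ is contained in the denominator.

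For $\delta$ sufficiently small and the commutation set sufficiently large (including $F \otimes 1$ as well as additional elements needed in the transfer step), Lemma~\ref{UHF3} then yields a unitary $v_0 \in \mathcal{Z} \otimes B$ with
\[
\|u_i \otimes 1 - v_0 (\alpha_{\xi_i} \otimes \id)(v_0^*)\| < \ep/4, \qquad \|[v_0, a \otimes 1]\| < \ep/4 \ (a \in F),
\]
together with $\tau(\log(v_0^*(u_i \otimes 1)(\alpha_{\xi_i} \otimes \id)(v_0))) = 0$ for $i = 1, 2$.

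The main obstacle is then to replace $v_0 \in \mathcal{Z} \otimes B$ by a unitary $v \in \mathcal{Z} \otimes \mathcal{Z}$ while preserving the approximate cocycle identity and the commutation with $F \otimes 1$. Fixing a unital embedding $\mathcal{Z} \hookrightarrow B$ gives a unital inclusion $\mathcal{Z} \otimes \mathcal{Z} \hookrightarrow \mathcal{Z} \otimes B$, and one exploits the freedom to right-multiply $v_0$ by $(\alpha \otimes \id)$-invariant unitaries lying in $1 \otimes U(B)$: since $\alpha \otimes \id$ acts trivially on $1 \otimes B$, any such modification leaves $v_0 (\alpha_{\xi_i} \otimes \id)(v_0^*)$ unchanged. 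The role of Lemma~\ref{3torus} is to supply the required correction via a path of almost-invariant unitaries in $\mathcal{Z} \otimes B$; the vanishing of the log-trace from the previous step is precisely the obstruction whose triviality is needed for that path to exist and terminate in $\mathcal{Z} \otimes \mathcal{Z}$. Making this correction precise for a general $v_0$ (which need not be of simple tensor form) is the technical heart of the argument, and will likely require first slicing $v_0$ across a Rohlin tower coming from the weak Rohlin property of $\alpha \otimes \id$, then applying Lemma~\ref{3torus} separately on each piece.
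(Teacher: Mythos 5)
The first half of your argument (tensoring with a UHF algebra, checking admissibility and the vanishing of the determinant, and invoking Lemma \ref{UHF3}) matches the paper. But the transfer step, which you correctly identify as the technical heart, has a genuine gap: there is no reason that the left coset $v_0\,(1\otimes U(B))$ of a unitary $v_0\in\mathcal{Z}\otimes B$ meets the subalgebra $\mathcal{Z}\otimes\mathcal{Z}$, and neither right-multiplication by invariant unitaries nor slicing along a Rohlin tower produces an element of $\mathcal{Z}\otimes\mathcal{Z}$. A single application of Lemma \ref{UHF3} to one UHF algebra gives you a unitary living in the wrong algebra, and nothing in your sketch forces it back into $\mathcal{Z}\otimes\mathcal{Z}$.

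The mechanism the paper uses is different and is the reason the statement is about $\mathcal{Z}\otimes\mathcal{Z}$ rather than $\mathcal{Z}$: by \cite[Proposition 3.3]{RW}, $\mathcal{Z}$ contains a unital copy of the generalized dimension drop algebra $Z=\{f\in C([0,1],B_0\otimes B_1)\mid f(0)\in B_0\otimes\C,\ f(1)\in\C\otimes B_1\}$ with $B_0,B_1$ the UHF algebras of type $2^\infty$ and $3^\infty$. One applies Lemma \ref{UHF3} \emph{twice}, to $\alpha\otimes\id$ on $\mathcal{Z}\otimes B_0$ and on $\mathcal{Z}\otimes B_1$, obtaining unitaries $v_{0,n}\in\mathcal{Z}\otimes B_0$ and $v_{1,n}\in\mathcal{Z}\otimes B_1$ that each approximately implement the cocycle with vanishing log-trace. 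The quotient $w_n=v_{0,n}^*v_{1,n}$ is then an almost $(\alpha_{\xi_i}\otimes\id)$-invariant central unitary in $\mathcal{Z}\otimes B_0\otimes B_1$ with $\tau(\log(w_n(\alpha_{\xi_i}\otimes\id)(w_n^*)))=0$, and Lemma \ref{3torus} supplies an almost-invariant path $\tilde w_n$ from $1$ to $w_n$. The point is that $t\mapsto v_{0,n}\tilde w_n(t)$ is a path of unitaries in $\mathcal{Z}\otimes(B_0\otimes B_1)$ whose value at $t=0$ lies in $\mathcal{Z}\otimes B_0\otimes\C$ and whose value at $t=1$ equals $v_{1,n}\in\mathcal{Z}\otimes\C\otimes B_1$; hence it defines a unitary in $\mathcal{Z}\otimes Z\subset\mathcal{Z}\otimes\mathcal{Z}$, and it still approximately implements $u_i\otimes1$ because $\tilde w_n(t)$ is almost invariant for every $t$. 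This coprime-pair-plus-path device is exactly what replaces your ``correction by $1\otimes U(B)$,'' and it is where both Lemma \ref{3torus} and the log-trace conditions from Lemma \ref{UHF3} are actually consumed. Without it, I do not see how to complete your argument.
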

\begin{proof}
According to \cite[Proposition 3.3]{RW}, 
the Jiang-Su algebra $\mathcal{Z}$ contains a unital subalgebra 
isomorphic to 
\[
Z=\{f\in C([0,1],B_0\otimes B_1)\mid 
f(0)\in B_0\otimes\C, \ f(1)\in \C\otimes B_1\}, 
\]
where $B_0$ and $B_1$ are the UHF algebras of type $2^\infty$ and $3^\infty$, 
respectively. 
We identify $B_0$ and $B_1$ with $B_0\otimes\C$ and $\C\otimes B_1$. 
Set $B=B_0\otimes B_1$ and 
denote the unique trace on $\mathcal{Z}\otimes B$ by $\tau$. 
By Remark \ref{sotimesid}, 
$\alpha\otimes\id$ on $\mathcal{Z}\otimes B_j$ and $\mathcal{Z}\otimes B$ is 
strongly outer. 

The proof is by contradiction. 
Suppose that $(u_{1,n})_n$ and $(u_{2,n})_n$ are central sequences 
of unitaries in $\mathcal{Z}$ satisfying 
\[
\Delta_\omega(u_{1,n})=\Delta_\omega(u_{2,n})=0\quad\text{and}\quad 
\lim_{n\to\infty}
\lVert u_{1,n}\alpha_{\xi_1}(u_{2,n})-u_{2,n}\alpha_{\xi_2}(u_{1,n})\rVert=0. 
\]
By Lemma \ref{admissible}, 
$(u_{1,n},u_{2,n})$ is an admissible almost $\alpha$-cocycle 
(if $n$ is sufficiently large). 
Therefore $(u_{1,n}\otimes1,u_{2,n}\otimes1)$ is 
an admissible almost $\alpha\otimes\id$-cocycle in $\mathcal{Z}\otimes B_j$ 
for each $j=0,1$. 
Evidently one has $\Delta_{\tau}(u_{i,n}\otimes1)=0$ for $i=1,2$. 
Applying Lemma \ref{UHF3} to 
$\alpha\otimes\id:\Z^2\curvearrowright\mathcal{Z}\otimes B_j$, 
we obtain a central sequence of unitaries $(v_{j,n})_n$ 
in $\mathcal{Z}\otimes B_j$ such that 
\[
\lim_{n\to\infty}
\lVert u_{i,n}\otimes1-v_{j,n}(\alpha_{\xi_i}\otimes\id)(v_{j,n}^*)\rVert=0
\]
and 
\[
\tau(\log(v_{j,n}^*(u_{i,n}\otimes1)(\alpha_{\xi_i}\otimes\id)(v_{j,n})))=0. 
\]
Put $w_n=v_{0,n}^*v_{1,n}$. 
Then $w_n$ is a central sequence of unitaries in $\mathcal{Z}\otimes B$ and 
$\lVert w_n-(\alpha_{\xi_i}\otimes\id)(w_n)\rVert\to0$ as $n\to\infty$ 
for each $i=1,2$. 
Moreover, 
\begin{align*}
& \tau(\log(w_n(\alpha_{\xi_i}\otimes\id)(w_n^*))) \\
&=\tau(\log(v_{0,n}^*v_{1,n}(\alpha_{\xi_i}\otimes\id)(v_{1,n}^*v_{0,n}))) \\
&=\tau(\log(v_{1,n}(\alpha_{\xi_i}\otimes\id)(v_{1,n}^*v_{0,n})v_{0,n}^*)) \\
&=\tau(\log((u_{i,n}\otimes1)^*v_{1,n}
(\alpha_{\xi_i}\otimes\id)(v_{1,n}^*v_{0,n})v_{0,n}^*(u_{i,n}\otimes1))) \\
&=\tau(\log((u_{i,n}\otimes1)^*v_{1,n}(\alpha_{\xi_i}\otimes\id)(v_{1,n}^*)))
+\tau(\log(\alpha_{\xi_i}(v_{0,n})v_{0,n}^*(u_{i,n}\otimes1))) \\
&=0. 
\end{align*}
By virtue of Lemma \ref{3torus}, 
there exists a central sequence of unitaries $(\tilde w_n)_n$ 
in $C([0,1],\mathcal{Z}\otimes B)$ satisfying 
\[
\tilde w_n(0)=1,\quad \tilde w_n(1)=w_n
\]
and 
\[
\lim_{n\to\infty}\sup_{t\in[0,1]}
\lVert\tilde w_n(t)-(\alpha_{\xi_i}\otimes\id)(\tilde w_n(t))\rVert=0. 
\]
We define a central sequence of unitaries $(v_n)_n$ 
in $C([0,1],\mathcal{Z}\otimes B)=\mathcal{Z}\otimes C([0,1],B)$ 
by $v_n(t)=v_{0,n}\tilde w_n(t)$. 
It is easy to see that $v_n$ is in $\mathcal{Z}\otimes Z$ and 
\[
\lim_{n\to\infty}
\lVert u_{i,n}\otimes1-v_n(\alpha_{\xi_i}\otimes\id)(v_n^*)\rVert=0
\]
for each $i=1,2$. 
The proof is completed, 
because $\mathcal{Z}\otimes Z$ is 
a subalgebra of $\mathcal{Z}\otimes\mathcal{Z}$. 
\end{proof}

\begin{thm}\label{stability}
Let $\alpha:\Z^2\curvearrowright\mathcal{Z}$ be 
a strongly outer action of $\Z^2$ on $\mathcal{Z}$. 
For any finite subset $F\subset\mathcal{Z}$ and $\ep>0$, 
there exist a finite subset $G\subset\mathcal{Z}$ and $\delta>0$ 
such that the following holds: 
If $(u_1,u_2)$ is an almost $\alpha$-cocycle in $\mathcal{Z}$ 
satisfying 
\[
\Delta_\omega(u_1)=\Delta_\omega(u_2)=0,\quad 
\lVert u_1\alpha_{\xi_1}(u_2)-u_2\alpha_{\xi_2}(u_1)\rVert<\delta
\]
and $\lVert[u_i,a]\rVert<\delta$ for any $a\in G$ and $i=1,2$, then 
there exists a unitary $v\in\mathcal{Z}$ such that 
\[
\lVert u_i-v\alpha_{\xi_i}(v^*)\rVert<\ep\quad\text{and}\quad 
\lVert[v,a]\rVert<\ep
\]
for any $a\in F$ and $i=1,2$. 
In addition, if $F=\emptyset$, then $G=\emptyset$ is possible. 
\end{thm}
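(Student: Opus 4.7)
The plan is to reduce to Lemma~\ref{prestability} by ``absorbing'' the auxiliary $\mathcal{Z}$-factor through a unital embedding into the central sequence algebra. Since $\alpha$ is strongly outer on $\mathcal{Z}\in\mathcal{C}_0$ and the unique trace $\omega$ is trivially $\alpha$-invariant, Theorem~\ref{wRohlintype} yields the weak Rohlin property for $\alpha$, and Theorem~\ref{embedZ} then provides a unital embedding $\sigma:\mathcal{Z}\to(\mathcal{Z}_\infty)^\alpha$. Because $\sigma(\mathcal{Z})\subset\mathcal{Z}^\infty\cap\mathcal{Z}'$ commutes with $\mathcal{Z}\subset\mathcal{Z}^\infty$, the formula $\Phi(x\otimes y)=x\sigma(y)$ extends to a unital $*$-homomorphism $\Phi:\mathcal{Z}\otimes\mathcal{Z}\to\mathcal{Z}^\infty$, which is isometric because $\mathcal{Z}\otimes\mathcal{Z}$ is simple. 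Since every $\sigma(y)$ is fixed by the extended action $\alpha^\infty$, one verifies directly that $\Phi\circ(\alpha_g\otimes\id)=\alpha_g^\infty\circ\Phi$ for every $g\in\Z^2$, and clearly $\Phi(u_i\otimes 1)=u_i$ under the identification $\mathcal{Z}\subset\mathcal{Z}^\infty$.

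Given $F\subset\mathcal{Z}$ and $\ep>0$, I would apply Lemma~\ref{prestability} with the same $F$ and with tolerance $\ep/2$ to obtain a finite set $G\subset\mathcal{Z}$ and $\delta>0$, and take these as the $G$ and $\delta$ of the theorem. For any almost $\alpha$-cocycle $(u_1,u_2)$ satisfying the hypotheses, Lemma~\ref{prestability} yields a unitary $v\in\mathcal{Z}\otimes\mathcal{Z}$ with $\lVert u_i\otimes 1-v(\alpha_{\xi_i}\otimes\id)(v^*)\rVert<\ep/2$ and $\lVert[v,a\otimes 1]\rVert<\ep/2$ for $a\in F$. Applying the isometric equivariant embedding $\Phi$ gives, in $\mathcal{Z}^\infty$,
\[
\lVert u_i-\Phi(v)\,\alpha_{\xi_i}^\infty(\Phi(v))^*\rVert<\ep/2,\qquad
\lVert[\Phi(v),a]\rVert<\ep/2.
\]

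Finally, since $\Phi(v)$ is a unitary in $\mathcal{Z}^\infty$, any representing sequence $(V_n)_n\in\ell^\infty(\N,\mathcal{Z})$ satisfies $\lVert V_n^*V_n-1\rVert\to 0$, and the polar replacement $\tilde v_n=V_n(V_n^*V_n)^{-1/2}\in U(\mathcal{Z})$ is well-defined and arbitrarily close to $V_n$ for $n$ large. Since the norm on $\mathcal{Z}^\infty$ is the limit superior of the norms of representatives, $\tilde v_n$ meets the conclusions of the theorem for all sufficiently large $n$. The ``$F=\emptyset$ implies $G=\emptyset$'' addendum transfers verbatim from the corresponding clause of Lemma~\ref{prestability}, since the commutator conditions on $v$ and $\Phi(v)$ are then vacuous. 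The main point requiring care is the equivariance and isometric nature of $\Phi$; beyond this bookkeeping I do not expect any serious obstacle.
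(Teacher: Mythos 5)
Your proof is correct and follows essentially the same route as the paper: both reduce the statement to Lemma~\ref{prestability} by absorbing the auxiliary tensor factor of $\mathcal{Z}$, the paper by quoting the strong cocycle conjugacy of $(\mathcal{Z},\alpha)$ with $(\mathcal{Z}\otimes\mathcal{Z},\alpha\otimes\id)$ from Corollary~\ref{absorbZ}. Your implementation of the transfer step --- the unital embedding of $\mathcal{Z}$ into $(\mathcal{Z}_\infty)^\alpha$ from Theorem~\ref{embedZ}, the equivariant unital homomorphism $\mathcal{Z}\otimes\mathcal{Z}\to\mathcal{Z}^\infty$ restricting to the identity on the first factor, and the lift of the resulting unitary to a representing sequence --- is a legitimate and more explicit way of carrying out the paper's terse final sentence, and all the details you supply (equivariance, isometry via simplicity, the $\limsup$ description of the norm on $\mathcal{Z}^\infty$) check out.
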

\begin{proof}
By Corollary \ref{absorbZ}, 
$(\mathcal{Z},\alpha)$ is strongly cocycle conjugate to 
$(\mathcal{Z}\otimes\mathcal{Z},\alpha\otimes\id)$. 
As $\mathcal{Z}$ is isomorphic to 
the infinite tensor product of $\mathcal{Z}$, 
the assertion follows from the lemma above. 
\end{proof}

\begin{lem}\label{almstexists}
Let $\alpha,\beta:\Z^2\curvearrowright\mathcal{Z}$ be 
two strongly outer actions of $\Z^2$. 
Then there exist $u_1,u_2\in U(\mathcal{Z}^\infty)$ 
such that $u_1\alpha_{\xi_1}(u_2)=u_2\alpha_{\xi_2}(u_1)$ and 
\[
\beta_{\xi_i}(a)=(\Ad u_i\circ\alpha_{\xi_i})(a)
\]
for any $a\in\mathcal{Z}$ and $i=1,2$. 
\end{lem}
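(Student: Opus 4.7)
My approach is to build $u_1, u_2$ in two stages: first produce candidate implementing unitaries in $U(\mathcal{Z}^\infty)$ using approximate innerness of automorphisms of $\mathcal{Z}$, and then perturb them by central unitaries to enforce the $2$-cocycle identity exactly. By Jiang--Su's theorem (\cite[Theorem 3]{JS}), each automorphism $\alpha_{\xi_i}^{-1}\beta_{\xi_i}$ of $\mathcal{Z}$ is approximately inner, so there exist sequences $(w_{i,n})_n\subset U(\mathcal{Z})$ with $\Ad w_{i,n}(a)\to\alpha_{\xi_i}^{-1}\beta_{\xi_i}(a)$ for every $a\in\mathcal{Z}$. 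Let $u_i^{(0)}\in U(\mathcal{Z}^\infty)$ be the class of $(\alpha_{\xi_i}(w_{i,n}))_n$, so that $\beta_{\xi_i}=\Ad u_i^{(0)}\circ\alpha_{\xi_i}$ on $\mathcal{Z}$. After multiplying each representative by a suitable scalar in $\T$, which leaves the $\Ad$ condition untouched, I may further assume that $\Delta_\omega$ of each representative vanishes.

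Now set $d := (u_1^{(0)}\alpha_{\xi_1}(u_2^{(0)}))^*\, u_2^{(0)}\alpha_{\xi_2}(u_1^{(0)}) \in U(\mathcal{Z}^\infty)$. Because $\beta_{\xi_1}\beta_{\xi_2}=\beta_{\xi_2}\beta_{\xi_1}$ on $\mathcal{Z}$, both $u_1^{(0)}\alpha_{\xi_1}(u_2^{(0)})$ and $u_2^{(0)}\alpha_{\xi_2}(u_1^{(0)})$ implement the same map $\beta_{\xi_1+\xi_2}\circ\alpha_{\xi_1+\xi_2}^{-1}$ out of $\mathcal{Z}$, so $\Ad d$ is trivial on $\mathcal{Z}$ and $d\in U(\mathcal{Z}_\infty)$. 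I then replace $u_i^{(0)}$ by $u_i:=u_i^{(0)} v_i$ with $v_1,v_2\in U(\mathcal{Z}_\infty)$; since $v_i$ commutes with $\mathcal{Z}$, the first requirement is automatically preserved. Choosing representatives $(v_{i,n})_n$ that are asymptotically central with respect to $\mathcal{Z}$ \emph{and} to the fixed representatives of $u_1^{(0)}, u_2^{(0)}$ via a standard diagonal argument, the new defect collapses in $\mathcal{Z}^\infty$ to $\alpha_{\xi_1}(v_2^{-1})\,v_1^{-1}\, d\, v_2\,\alpha_{\xi_2}(v_1)$, and demanding that this equal $1$ reduces the task to solving
\[
d \;=\; v_1\,\alpha_{\xi_1}(v_2)\,\alpha_{\xi_2}(v_1^{-1})\,v_2^{-1}
\]
inside $U(\mathcal{Z}_\infty)$.

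\textbf{The main obstacle} is this last step: trivializing the central-valued $2$-cocycle $d$. My plan is to invoke Theorem \ref{stability}. A representative $(d_n)_n$ of $d$ in $U(\mathcal{Z})$ is automatically asymptotically central, and the normalization from the first stage, combined with the identity $\tau(\log(uv))=\tau(\log u)+\tau(\log v)$ for unitaries close to $1$, forces $\Delta_\omega(d_n)\to 0$. To coax $d$ into the hypothesis of Theorem \ref{stability}, which concerns almost $\alpha$-cocycles of \emph{pairs}, I would average $d$ against an $(F,\varepsilon)$-invariant finite subset $K\subset\Z^2$ furnished by the weak Rohlin property of $\alpha$ (Theorem \ref{wRohlintype}) so as to manufacture an admissible almost $\alpha$-cocycle in $\mathcal{Z}$ whose $1$-coboundary defect recovers $d$ up to errors controlled by $\varepsilon$ and $|K|$. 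Theorem \ref{stability} then provides unitaries in $\mathcal{Z}$ whose classes give the sought-after $v_1, v_2\in U(\mathcal{Z}_\infty)$. The delicate technical point, and where this plan could require modification, is arranging the Rohlin average so that the resulting pair is a genuine almost cocycle with asymptotically central entries and with $\Delta_\omega=0$, while preserving the exact equality of the resulting coboundary with $d$ in $\mathcal{Z}^\infty$.
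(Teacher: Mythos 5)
Your first stage (producing $u_i^{(0)}\in U(\mathcal{Z}^\infty)$ with $\beta_{\xi_i}=\Ad u_i^{(0)}\circ\alpha_{\xi_i}$ by approximate innerness, and observing that the defect $d$ lies in $U(\mathcal{Z}_\infty)$ with $\Delta_\omega$ of its representatives vanishing) is sound. The genuine gap is the last step: you must write the central unitary $d$ as $v_2\,\alpha_{\xi_2}(v_1)\,\alpha_{\xi_1}(v_2)^{-1}v_1^{-1}$ with $v_1,v_2\in U(\mathcal{Z}_\infty)$. This is a \emph{two}-cohomology vanishing problem for the induced $\Z^2$-action on the central sequence algebra, whereas Theorem \ref{stability} is a \emph{one}-cohomology (stability) statement: it takes as input a pair $(u_1,u_2)$ that is already an almost $\alpha$-cocycle and produces a single unitary $v$ with $u_i\approx v\alpha_{\xi_i}(v^*)$. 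These are different problems, and your proposed bridge --- ``averaging $d$ against a Rohlin set to manufacture an almost $\alpha$-cocycle whose coboundary defect recovers $d$'' --- is not a construction; it is precisely the missing argument. Indeed, $2$-cohomology vanishing for $\Z^2$-actions is the subject of Section~8 of the paper (Lemma \ref{ocneanu1}, Proposition \ref{ocneanu3}, Theorems \ref{ocneanu4} and \ref{ocneanu5}), where it requires a genuine Ocneanu-type Rohlin-tower argument and carries a nontrivial obstruction in $H^2(\Z^2,\T)$; none of that machinery is available at the point where Lemma \ref{almstexists} is proved, and you have not supplied a substitute.

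The paper avoids the $2$-cocycle problem altogether by a different decomposition. By the uniqueness of strongly outer $\Z$-actions on $\mathcal{Z}$ up to cocycle conjugacy (\cite[Theorem 1.3]{S3} together with \cite[Theorem 1.2]{S2}), one first finds $\mu\in\Aut(\mathcal{Z})$ and a \emph{genuine} unitary $v_1\in U(\mathcal{Z})$ with $\mu\circ\beta_{\xi_1}\circ\mu^{-1}=\Ad v_1\circ\alpha_{\xi_1}$ exactly; only the second generator is then implemented approximately, by some $x\in U(\mathcal{Z}^\infty)$. The resulting defect $y=v_1\alpha_{\xi_1}(x)(x\alpha_{\xi_2}(v_1))^*$ lies in $U(\mathcal{Z}_\infty)$, and the equation to solve becomes $y^*=z\,(\Ad v_1\circ\alpha_{\xi_1})(z^*)$ --- a $1$-coboundary problem for the \emph{single} automorphism $\Ad v_1\circ\alpha_{\xi_1}$, which is exactly what \cite[Theorem 5.3]{S3} provides. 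Setting $v_2=z^*x$ and undoing $\mu$ by an approximately implementing unitary finishes the proof. If you want to salvage your plan, you would need to import this idea: align one generator exactly first, so that the remaining obstruction is a $1$-cocycle for a single automorphism rather than a $2$-cocycle for the pair.
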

\begin{proof}
By \cite[Theorem 1.2]{S2} and \cite[Theorem 1.3]{S3}, 
there exist $\mu\in\Aut(\mathcal{Z})$ and $v_1\in U(\mathcal{Z})$ 
satisfying $\mu\circ\beta_{\xi_1}\circ\mu^{-1}=\Ad v_1\circ\alpha_{\xi_1}$ 
on $\mathcal{Z}$. 
Since any automorphism of $\mathcal{Z}$ is approximately inner, 
we can find $x\in U(\mathcal{Z}^\infty)$ such that 
\[
(\mu\circ\beta_{\xi_2}\circ\mu^{-1})(a)
=(\Ad x\circ\alpha_{\xi_2})(a)
\]
for all $a\in\mathcal{Z}$. 
It is easy to see that 
$y=v_1\alpha_{\xi_1}(x)(x\alpha_{\xi_2}(v_1))^*$ is a unitary 
in $\mathcal{Z}_\infty$. 
From the definition, 
$y$ has a representative sequence $(y_n)_n\in\ell^\infty(\N,\mathcal{Z})$ 
satisfying $\Delta_\omega(y_n)=0$. 
It follows from \cite[Theorem 5.3]{S3} that 
there exists $z\in U(\mathcal{Z}_\infty)$ satisfying 
\[
y^*=z(\Ad v_1\circ\alpha_{\xi_1})(z^*). 
\]
Set $v_2=z^*x\in\mathcal{Z}^\infty$. 
One obtains $v_1\alpha_{\xi_1}(v_2)=v_2\alpha_{\xi_2}(v_1)$ and 
\[
(\mu\circ\beta_{\xi_2}\circ\mu^{-1})(a)=(\Ad v_2\circ\alpha_{\xi_2})(a)
\]
for any $a\in\mathcal{Z}$. 
As $\mu\in\Aut(\mathcal{Z})$ is approximately inner, 
there exists $w\in U(\mathcal{Z}^\infty)$ such that 
$\mu(a)=\Ad w(a)$ for every $a\in\mathcal{Z}$. 
Put $u_i=w^*v_i\alpha_{\xi_i}(w)$ for $i=1,2$. 
Then $u_1$ and $u_2$ meet the requirement. 
\end{proof}

\begin{thm}\label{ccexists}
Let $\alpha,\beta:\Z^2\curvearrowright\mathcal{Z}$ be 
two strongly outer actions of $\Z^2$. 
For any finite subset $F\subset\mathcal{Z}$ and $\ep>0$, 
there exist $u_1,u_2\in U(\mathcal{Z})$ 
such that $u_1\alpha_{\xi_1}(u_2)=u_2\alpha_{\xi_2}(u_1)$ and 
\[
\lVert\beta_{\xi_i}(a)-(\Ad u_i\circ\alpha_{\xi_i})(a)\rVert<\ep
\]
for any $a\in F$ and $i=1,2$. 
\end{thm}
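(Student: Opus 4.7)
The plan is to combine Lemma \ref{almstexists}, which produces a genuine $\alpha$-cocycle in $\mathcal{Z}^\infty$ that exactly intertwines $\alpha$ and $\beta$ on $\mathcal{Z}$, with the $F=\emptyset$ clause of Theorem \ref{stability}, which approximates almost $\alpha$-cocycles in $\mathcal{Z}$ by coboundaries without any commutation hypothesis. First I apply Lemma \ref{almstexists} to obtain $u_1,u_2\in U(\mathcal{Z}^\infty)$ satisfying $u_1\alpha_{\xi_1}(u_2)=u_2\alpha_{\xi_2}(u_1)$ and $\beta_{\xi_i}(a)=\Ad u_i(\alpha_{\xi_i}(a))$ for every $a\in\mathcal{Z}$, and I choose a representative sequence of unitaries $(u_{i,n})_n$ in $\mathcal{Z}$. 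Because these relations are exact in $\mathcal{Z}^\infty$ and in $\mathcal{Z}$ respectively, one has
\[
\lim_{n\to\infty}\lVert u_{1,n}\alpha_{\xi_1}(u_{2,n})-u_{2,n}\alpha_{\xi_2}(u_{1,n})\rVert=0
\quad\text{and}\quad
\lim_{n\to\infty}\lVert\beta_{\xi_i}(a)-\Ad u_{i,n}(\alpha_{\xi_i}(a))\rVert=0
\]
for every $a\in F$ and each $i=1,2$.

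Next, since $\omega(K_0(\mathcal{Z}))=\Z$ and $\Delta_\omega(e^{2\pi\sqrt{-1}\theta}\cdot 1_{\mathcal{Z}})=\theta\bmod\Z$, I choose scalars $\lambda_{i,n}\in\T$ so that $\tilde u_{i,n}:=\lambda_{i,n}u_{i,n}$ satisfies $\Delta_\omega(\tilde u_{i,n})=0$. The scalar adjustment changes neither $\Ad u_{i,n}=\Ad\tilde u_{i,n}$ nor the norm of the cocycle defect, and by Lemma \ref{admissible} the pair $(\tilde u_{1,n},\tilde u_{2,n})$ is an admissible almost $\alpha$-cocycle as soon as the defect is less than $1$. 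I fix $\ep'>0$ with $2\ep'\max_{a\in F}\lVert a\rVert<\ep/2$ and apply Theorem \ref{stability} with empty finite subset, obtaining $\delta>0$. Choosing $n$ large enough so that both the cocycle defect is less than $\delta$ and $\lVert\beta_{\xi_i}(a)-\Ad u_{i,n}(\alpha_{\xi_i}(a))\rVert<\ep/2$ on $F$, Theorem \ref{stability} produces $v\in U(\mathcal{Z})$ with $\lVert\tilde u_{i,n}-v\alpha_{\xi_i}(v^*)\rVert<\ep'$. Setting $u_i:=v\alpha_{\xi_i}(v^*)$ yields a genuine $\alpha$-cocycle because
\[
u_1\alpha_{\xi_1}(u_2)=v\alpha_{\xi_1+\xi_2}(v^*)=u_2\alpha_{\xi_2}(u_1),
\]
and for any $a\in F$ the triangle inequality gives
\[
\lVert\beta_{\xi_i}(a)-\Ad u_i(\alpha_{\xi_i}(a))\rVert
\leq\lVert\beta_{\xi_i}(a)-\Ad\tilde u_{i,n}(\alpha_{\xi_i}(a))\rVert
+2\lVert\tilde u_{i,n}-u_i\rVert\lVert a\rVert<\tfrac{\ep}{2}+\tfrac{\ep}{2}.
\]

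The central subtlety is that the unitaries supplied by Lemma \ref{almstexists} lie in $\mathcal{Z}^\infty$ but not in the central sequence algebra $\mathcal{Z}_\infty$, so a lift $(u_{i,n})$ need not approximately commute with elements of $F\subset\mathcal{Z}$. Consequently the main body of Theorem \ref{stability}, which imposes a commutation hypothesis $\lVert[u_i,a]\rVert<\delta$ for $a\in G$, cannot be invoked directly; the argument must instead exploit the supplementary clause ``if $F=\emptyset$, then $G=\emptyset$ is possible,'' which drops that hypothesis at the price of surrendering the $\lVert[v,a]\rVert<\ep$ conclusion — a loss that is harmless here because the approximate intertwining $\beta_{\xi_i}(a)\approx\Ad u_i(\alpha_{\xi_i}(a))$ is transferred from $\tilde u_{i,n}$ to $u_i$ by norm closeness alone.
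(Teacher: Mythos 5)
Your proof is correct and follows essentially the same route as the paper's: apply Lemma \ref{almstexists}, pass to a representative sequence to get an almost $\alpha$-cocycle in $\mathcal{Z}$ with small defect and $\ep/2$-intertwining on $F$, normalize the determinants (by scalars, since $\omega(K_0(\mathcal{Z}))=\Z$), and invoke the $F=\emptyset$ clause of Theorem \ref{stability} to replace it by an exact coboundary $v\alpha_{\xi_i}(v^*)$. Your explicit remark that the unitaries live in $\mathcal{Z}^\infty$ rather than $\mathcal{Z}_\infty$, forcing the use of the $G=\emptyset$ variant, is exactly the point the paper leaves implicit.
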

\begin{proof}
Lemma \ref{almstexists} yields 
an almost $\alpha$-cocycle $(u_1,u_2)$ in $\mathcal{Z}$ such that 
\[
\lVert u_1\alpha_{\xi_1}(u_2)-u_2\alpha_{\xi_2}(u_1)\rVert\approx0
\quad\text{and}\quad 
\lVert\beta_{\xi_i}(a)-(\Ad u_i\circ\alpha_{\xi_i})(a)\rVert<\ep/2
\]
for any $a\in F$ and $i=1,2$. 
Clearly we may further assume $\Delta_\omega(u_1)=\Delta_\omega(u_2)=0$. 
From Theorem \ref{stability}, we get $v\in U(\mathcal{Z})$ satisfying 
\[
\lVert u_i-v\alpha_{\xi_i}(v^*)\rVert<\ep/2
\]
for $i=1,2$. 
Therefore 
the unitaries $v\alpha_{\xi_1}(v^*)$ and $v\alpha_{\xi_2}(v^*)$ do the job. 
\end{proof}

\begin{thm}\label{uniqueZ2}
Any two strongly outer actions of $\Z^2$ on the Jiang-Su algebra $\mathcal{Z}$ 
are strongly cocycle conjugate to each other. 
\end{thm}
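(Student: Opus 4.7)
The plan is to use a two-sided Evans--Kishimoto intertwining argument, with Theorem \ref{ccexists} providing the approximate cocycle conjugacy at each step and Theorem \ref{stability} supplying the central correction that will ensure the output is \emph{strong} cocycle conjugacy. Approximate innerness of every automorphism of $\mathcal{Z}$ will guarantee that limits of inner conjugations assemble into an automorphism. Fix dense sequences $(a_n),(b_n)$ in the unit ball of $\mathcal{Z}$ and a rapidly decreasing $(\epsilon_n)$ with $\sum\epsilon_n<\infty$.

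First I would inductively construct, at each stage $n$, an $\alpha$-cocycle $w^{(n)}=(w_1^{(n)},w_2^{(n)})$ in $\mathcal{Z}$ normalized so that $\Delta_\omega(w_i^{(n)})=0$ (achieved by multiplying by scalars of modulus one, which leaves both the cocycle identity and $\Ad w_i^{(n)}$ unchanged), together with unitaries $s_n,t_n\in U(\mathcal{Z})$, such that $\Ad t_n^{-1}\circ\Ad s_n$ intertwines $\alpha^{w^{(n)}}$ with $\beta$ to within $\epsilon_n$ on $\{a_1,\dots,a_n,b_1,\dots,b_n\}$, and the sequences $(s_n),(t_n)$ are Cauchy on these growing subsets. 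The inductive step alternates between two moves: on odd steps, Theorem \ref{ccexists} is applied to $\alpha^{w^{(n)}}$ and the current conjugate of $\beta$ to produce a new $\alpha$-cocycle perturbation that closes the gap on a larger finite set; on even steps, the symmetric move for $\beta$ is performed, with the correction absorbed into an update of $t_n$. In the limit one obtains a genuine $\alpha$-cocycle $w_g=\lim w_g^{(n)}$, an automorphism $\mu=\lim\Ad(t_n^{-1}s_n)$ of $\mathcal{Z}$, and the conjugacy $\mu\circ\alpha^w\circ\mu^{-1}=\beta$.

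To upgrade this cocycle conjugacy to \emph{strong} cocycle conjugacy, I would produce in parallel a sequence of unitaries $v_n\in U(\mathcal{Z})$ with $\|w_g-v_n\alpha_g(v_n^*)\|\to 0$ for $g=\xi_1,\xi_2$; the identity then propagates to all $g\in\Z^2$ via the cocycle relation by a telescoping argument. The correction from $w^{(n)}$ to $w^{(n+1)}$ is designed to behave asymptotically as a central sequence of almost $\alpha$-cocycles with vanishing $\Delta_\omega$, so Theorem \ref{stability} applies at each stage and yields a correcting unitary $v_n$ with the required closeness. Summability of the error terms $\epsilon_n$ then forces $v_n\alpha_g(v_n^*)$ to converge to $w_g$ in norm.

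The main obstacle is reconciling the hypotheses of Theorem \ref{stability}, which requires the almost-cocycle to almost commute with a given test set $G$, with the fact that a genuine cocycle in $\mathcal{Z}$ is not central. The argument therefore cannot apply Theorem \ref{stability} to the cocycles $w^{(n)}$ themselves, but only to their increments between consecutive stages, which must be arranged to be central. Orchestrating the intertwining so that (i) the perturbations produced by Theorem \ref{ccexists} are central relative to previously handled finite sets, (ii) the scalar normalization keeping $\Delta_\omega(w_i^{(n)})=0$ remains consistent across stages without disturbing earlier approximations, and (iii) the coboundaries $v_n\alpha_{\xi_i}(v_n^*)$ track the evolving $w_i^{(n)}$ throughout, is the most delicate bookkeeping in the proof.
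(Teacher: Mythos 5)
Your proposal is correct and takes essentially the same route as the paper: the authors also run the Evans--Kishimoto intertwining argument with Theorem \ref{ccexists} supplying the approximate cocycle conjugacy at each stage and Theorem \ref{stability} applied to the (asymptotically central, determinant-normalized) increments, referring to \cite[Theorem 5.2]{M08} for the bookkeeping you sketch. The details you flag as delicate are precisely the ones the paper delegates to that reference.
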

\begin{proof}
The proof is carried out by the Evans-Kishimoto intertwining argument. 
In each step of this argument, 
we have to use Theorem \ref{ccexists} and 
Theorem \ref{stability} repeatedly. 
We omit the detail, 
because it is exactly the same as \cite[Theorem 5.2]{M08}. 
\end{proof}

\section{Cocycle actions of $\Z^2$}

In this section 
we determine when a strongly outer cocycle action of $\Z^2$ 
on a UHF algebra or the Jiang-Su algebra is equivalent to a genuine action. 

Let $\alpha$ be an automorphism of a unital $C^*$-algebra $A$. 
For a unitary $u\in U(A)_0$ satisfying $\lVert u-\alpha(u)\rVert<2$, 
an element $\kappa(u,\alpha)$ in $\Coker(\id-K_0(\alpha))$ is 
introduced in \cite[Section 7]{IM}. 
The element $\kappa(u,\alpha)$ is obtained 
from the $K_1$-class of the path $v:[0,1]\to U(A)$ such that 
$v(0)=v(1)=1$ and 
\[
\lVert x(t)\alpha(x(t))^*-v(t)\rVert<2, 
\]
where $x:[0,1]\to U(A)$ is a path such that $x(0)=1$ and $x(1)=u$. 
In what follows, 
we consider only the case that $\alpha$ is approximately inner. 
Thus $\kappa(u,\alpha)$ is in $K_0(A)$. 
When $T(A)$ is not empty, for $\tau\in T(A)$, it is easy to see that 
\[
\tau(\kappa(u,\alpha))=\frac{1}{2\pi\sqrt{-1}}\tau(\log(u\alpha(u^*))). 
\]
In particular, $\lvert\tau(\kappa(u,\alpha))\rvert$ is less than $1/2$. 
Hence, if $A$ is $\mathcal{Z}$, then $\kappa(u,\alpha)$ is always zero. 

\begin{lem}
For any $\ep>0$ there exists $\delta>0$ such that the following holds. 
Let $A$ be a unital simple AF algebra with finitely many extremal traces 
and let $\alpha$ be an approximately inner automorphism of $A$ such that 
the $\Z$-action generated by $\alpha$ is strongly outer. 
If $u\in U(A)$ satisfies 
\[
\lVert u-\alpha(u)\rVert<\delta\quad\text{and}\quad \kappa(u,\alpha)=0, 
\]
then there exists a path of unitaries $w:[0,1]\to U(A)$ such that 
\[
w(0)=1,\quad w(1)=u,\quad 
\lVert w(t)-\alpha(w(t))\rVert<\ep
\]
for all $t\in[0,1]$ and $\Lip(w)<4$. 
\end{lem}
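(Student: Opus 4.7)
The plan is to construct $w$ via Kishimoto's Rohlin-tower averaging. Since $A$ is a unital simple AF algebra with finitely many extremal traces and the $\Z$-action generated by $\alpha$ is strongly outer, a classical theorem of Kishimoto (see \cite{K96JFA}, or the proof of Theorem \ref{wRohlintype}\,(1) specialized to the AF setting) gives that $\alpha$ has the Rohlin property. Given $\ep>0$, I would first choose $p\in\N$ large (with $\pi/p<\ep/4$, say), then shrink $\delta>0$ depending on $p$ and $\ep$.

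Given $u$ as in the hypothesis, the unitary $u\alpha(u^*)$ lies within $\delta$ of $1$, so write $u\alpha(u^*)=e^{ih}$ with $h=h^*$ of norm controlled by $\delta$. The assumption $\kappa(u,\alpha)=0$, together with the identity $\tau(\kappa(u,\alpha))=\tfrac{1}{2\pi\sqrt{-1}}\tau(\log(u\alpha(u^*)))$ recalled just above and the fact that for AF algebras with finitely many extremal traces $K_0(A)$ embeds in $\R^{T(A)}$ via the trace pairings, yields $\tau(h)=0$ for every $\tau\in T(A)$. Since $K_1(A)=0$ one has $U(A)=U(A)_0$, and I would choose a path $x:[0,1]\to U(A)$ from $1$ to $u$ with $\Lip(x)\leq\pi$ (by approximating $u$ inside a large finite-dimensional subalgebra, writing that approximation as a single exponential with spectral radius at most $\pi$, and correcting).

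Using a Rohlin tower $\{e_j\}_{j=0}^{p-1}$ for $\alpha$ (with $\alpha(e_j)\approx e_{j+1\bmod p}$, $\sum_j e_j\approx 1$, and approximately commuting with $u$, $h$ and all $x(t)$), I would build a correction path $y:[0,1]\to U(A)$ with $y(0)=y(1)=1$ such that $w(t):=x(t)y(t)$ satisfies $\lVert w(t)-\alpha(w(t))\rVert<\ep$ uniformly. Schematically, $y(t)$ distributes the log-defect $x(t)\alpha(x(t)^*)$ across the tower by placing a fractional piece of the corresponding self-adjoint on each level $e_j$ so that telescoping along $\alpha$ cancels the defect. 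The trace condition $\tau(h)=0$ is precisely what guarantees that the telescoped sum closes up at $t=1$, giving $y(1)=1$ and hence $w(1)=u$ on the nose. The local speed of each level is $O(\lVert h\rVert)$ uniformly in $p$, so $\Lip(y)=O(\delta)$, and $\Lip(w)\leq\Lip(x)+\Lip(y)\leq\pi+O(\delta)<4$ once $\delta$ is small.

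The main obstacle is juggling the three constraints at once: uniform $\ep$-invariance (which wants $p$ large), the tight Lipschitz bound $<4$ (which forces $\Lip(y)<4-\pi$ independently of $p$, so the correction must be executed in a single pass through the tower rather than by iterated averaging), and the exact endpoint $w(1)=u$ (which is where $\kappa(u,\alpha)=0$ is indispensable). Without the vanishing of $\kappa$, the correction would accumulate a residual term of order $\lVert h\rVert$ at $t=1$ that cannot be absorbed without a concatenating patch, which in turn would push the Lipschitz constant past $\pi$ and threaten the bound $4$.
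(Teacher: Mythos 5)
There is a genuine gap, and it sits exactly where the real difficulty of the lemma lies. Your initial path $x(t)=e^{\sqrt{-1}tk}$ from $1$ to $u$ comes with no control whatsoever on $\lVert x(t)-\alpha(x(t))\rVert$ for $0<t<1$: even if $\lVert u-\alpha(u)\rVert<\delta$, the defect $x(t)\alpha(x(t))^*$ at intermediate times can have norm distance close to $2$ from $1$ (this is precisely the almost-commuting-unitaries phenomenon: $\lVert[u,z]\rVert$ small does not give $\lVert[e^{\sqrt{-1}tk},z]\rVert$ small for a logarithm $k$ of $u$). Consequently the ``log-defect'' you propose to distribute over the Rohlin tower is not even defined along the path, and the correction $y$ cannot have $\Lip(y)=O(\delta)$ — it would have to traverse an essentially arbitrary loop in $U(A)$. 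Producing \emph{some} path from $1$ to $u$ whose defect is uniformly small is the entire content of the lemma; it cannot be split off as a preliminary step and then repaired by an $O(\delta)$ perturbation.

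The deeper problem is that your argument extracts from $\kappa(u,\alpha)=0$ only the scalar condition $\tau(\log(u\alpha(u^*)))=0$, and any proof using only that condition proves a false statement. The claim that $K_0(A)$ embeds into $\R^{T(A)}$ via the trace pairings is wrong for simple AF algebras in general (even with a unique trace the infinitesimal subgroup of $K_0(A)$ can be nontrivial, e.g.\ the simple dimension group $\Z^2$ with positive cone $\{(a,b):a>0\}\cup\{0\}$). Nonzero infinitesimal classes are realizable as $\kappa(u,\alpha)$ with $\lVert u-\alpha(u)\rVert$ arbitrarily small, and for such $u$ the conclusion fails: if a path $w$ with $\lVert w(t)-\alpha(w(t))\rVert<\ep<2$ existed, one could take $v\equiv1$ in the definition of $\kappa$ and conclude $\kappa(u,\alpha)=0$. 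So the full $K_0$-obstruction, not just its trace, must enter the construction. The paper handles this by a completely different route: it builds a concrete product-type automorphism $\sigma=\lim\Ad z_n$, reduces the problem for $\sigma$ to a finite-dimensional homotopy lemma for almost commuting unitaries (\cite[Lemma 4.2]{KM}, which is where both the $K_0$-obstruction and the Lipschitz bound $<4$ are actually dealt with), and then transfers the statement to a general $\alpha$ using the fact that $\alpha$ is strongly cocycle conjugate to $\sigma$. If you want a tower-based argument, you would still need a substitute for that finite-dimensional input.
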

\begin{proof}
Clearly $A$ is of infinite dimensional. 
Applying \cite[Lemma 4.2]{KM} to $\ep$, 
we obtain a positive real number $\delta>0$. 
We may assume $\delta<2$. 

Let $x_n$ be a unitary of $M_n(\C)$ 
whose spectrum is $\{\zeta^k\mid k=0,1,\dots,n{-}1\}$, 
where $\zeta=\exp(2\pi\sqrt{-1}/n)$. 
One can find an increasing sequence $\{A_n\}_{n=1}^\infty$ of 
unital finite dimensional subalgebras of $A$ such that 
$\bigcup_n A_n$ is dense in $A$ and 
there exists a unital embedding $\pi_n:M_n\oplus M_{n+1}\to A_n\cap A_{n-1}'$. 
Let $y_n=\pi_n(x_n\oplus x_{n+1})$ and $z_n=y_1y_2\dots y_n$. 
Define an automorphism $\sigma$ of $A$ 
by $\sigma=\lim_{n\to\infty}\Ad z_n$. 
Then $\sigma$ is an approximately inner automorphism with the Rohlin property. 
In particular, 
the $\Z$-action generated by $\sigma$ is strongly outer. 

We would like to show that the assertion holds for $\sigma$. 
Suppose that $u\in U(A)$ satisfies 
$\lVert u-\sigma(u)\rVert<\delta$ and $\kappa(u,\sigma)=0$. 
Without loss of generality, 
we may assume that there exists $n\in\N$ such that $u\in A_n$. 
From the assumption, there exists $m\geq n$ such that 
$\kappa(u,\Ad z_m)$ is zero in $K_0(A_m)$. 
Therefore, by \cite[Lemma 4.2]{KM}, 
one can find a path of unitaries $w:[0,1]\to U(A_m)$ such that 
\[
w(0)=1,\quad w(1)=u,\quad 
\lVert[z_m,w(t)]\rVert<\ep
\]
for all $t\in[0,1]$ and $\Lip(w)<4$. 
Then $w$ is a desired path because $\sigma(w(t))=z_mw(t)z_m^*$. 

Suppose that 
$\alpha$ is an approximately inner automorphism of $A$ such that 
the $\Z$-action generated by $\alpha$ is strongly outer. 
It is well-known that 
$\alpha$ and $\sigma$ are strongly cocycle conjugate 
(see \cite{K95crelle,K98JFA,M09}). 
Hence the assertion also holds for $\alpha$. 
\end{proof}

The following is an adaptation of \cite[Lemma 7.9]{IM}. 

\begin{lem}\label{ocneanu1}
For any $\ep>0$, there exists $\delta>0$ such that the following holds. 
Let $A$ be a unital simple AF algebra with a unique trace and 
let $\alpha,\beta$ be approximately inner automorphisms of $A$ such that 
$\alpha^m\circ\beta^n$ is not weakly inner 
for all $(m,n)\in\Z^2\setminus\{(0,0)\}$. 
Let $u$ and $w$ be unitaries in $A$ such that 
\[
\beta\circ\alpha=\Ad w\circ\alpha\circ\beta,\quad 
\lVert w-1\rVert<\delta,\quad 
\lVert u-\alpha(u)\rVert<\delta\quad\text{and}\quad 
\kappa(u,\alpha)=0. 
\]
Then there exists a unitary $v\in A$ such that 
\[
\lVert v-\alpha(v)\rVert<\ep\quad\text{and}\quad 
\lVert u-v\beta(v)^*\rVert<\ep. 
\]
\end{lem}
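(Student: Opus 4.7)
The plan is to adapt the argument of \cite[Lemma 7.9]{IM} to the AF setting. The basic strategy is to solve $v\beta(v)^* \approx u$ by ``spreading'' a telescoping product of $\beta$-translates of $u$ over a Rohlin tower for $\beta$, arranged so as to be approximately $\alpha$-invariant; the path from $1$ to $u$ supplied by the preceding lemma will be used to smooth out the tops of the towers.

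First, apply the preceding lemma with an auxiliary parameter $\ep' \ll \ep$ to obtain a path $\tilde w:[0,1]\to U(A)$ with $\tilde w(0)=1$, $\tilde w(1)=u$, $\Lip(\tilde w)<4$ and $\lVert\tilde w(t)-\alpha(\tilde w(t))\rVert<\ep'$. Second, since $\beta^n$ is not weakly inner for any $n\ne 0$ and $\beta$ is approximately inner, Kishimoto's results (\cite{K95crelle,K96JFA,K98JFA}) give $\beta$ the Rohlin property on the simple AF algebra $A$ with unique trace. Using that $\alpha$ and $\beta$ almost commute ($\lVert w-1\rVert<\delta$) and that every $\alpha^m\beta^n$ with $(m,n)\ne(0,0)$ is not weakly inner, I would refine the construction to produce a two-column Rohlin tower $\{e_0,\dots,e_{n-1}\}\cup\{f_0,\dots,f_n\}$ of coprime heights $n$ and $n+1$ that is not only almost central in $A$ and approximately orthogonal under $\beta$-translation, but also approximately $\alpha$-invariant: $\alpha(e_i)\approx e_i$ and $\alpha(f_j)\approx f_j$.

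Third, set $c_i = u\beta(u)\cdots\beta^{i-1}(u)$ for $0\le i\le n-1$ and $d_j=u\beta(u)\cdots\beta^{j-1}(u)$ for $0\le j\le n$, with $c_0=d_0=1$, and define
\[
v \;=\; \sum_{i=0}^{n-1} c_i e_i \;+\; \sum_{j=0}^{n} d_j f_j.
\]
Since $c_i\beta(c_{i-1})^*=u$ and $\beta(e_{i-1})\approx e_i$, a direct computation gives $v\beta(v)^*\approx u$ on the body of each column; the defect is confined to the top of each tower, where it is controlled by the Lipschitz bound on $\tilde w$ (used to interpolate $c_{n-1}$ back toward $1$) and by the coprimality of $n$ and $n+1$ in the standard Kishimoto manner. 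For the approximate $\alpha$-invariance, the identity $\alpha\beta^j=\Ad\bigl(w^*\beta(w^*)\cdots\beta^{j-1}(w^*)\bigr)\circ\beta^j\alpha$ together with $\alpha(u)\approx u$ yields $\alpha(\beta^j(u))\approx \beta^j(u)$ with error $O(j\delta)$, so $\alpha(c_i)\approx c_i$ with cumulative error $O(n^2\delta)$; combined with $\alpha(e_i)\approx e_i$ and $\alpha(f_j)\approx f_j$, this gives $\lVert v-\alpha(v)\rVert = O(n^2\delta+\ep')$.

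Finally, given $\ep>0$, choose $n$ large so that the Rohlin defect and the Lipschitz contribution to $\lVert v\beta(v)^*-u\rVert$ are each below $\ep/3$, and then take $\delta>0$ so small that $n^2\delta<\ep/3$ and the tower is within $\ep/3$ of being $\alpha$-invariant. The main obstacle is precisely this quadratic accumulation in $n$ of the commutation defect when pushing $\alpha$ through successive $\beta$-translates: it forces the quantifier order ``fix $n$ from $\ep$ first, then shrink $\delta$ according to $n$'', and makes genuine use of the hypothesis $\lVert w-1\rVert<\delta$ in securing the simultaneous approximate $\alpha$-invariance of the Rohlin tower and of the $c_i$'s.
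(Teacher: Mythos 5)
Your overall architecture is the same as the paper's: an approximately $\alpha$-invariant two-column Rohlin tower for $\beta$ of heights $m$ and $m{+}1$, the telescoping products $u_k=u\beta(u)\cdots\beta^{k-1}(u)$ distributed along the columns, and a Lipschitz path used to close up the top. (Two remarks in passing: your quadratic bound $O(n^2\delta)$ for $\lVert\alpha(c_i)-c_i\rVert$ is harmless given your quantifier order, though the telescoping recursion $\lVert\alpha(u_{k+1})-u_{k+1}\rVert\le\lVert\alpha(u_k)-u_k\rVert+3\delta$ actually gives the linear bound $(3k-2)\delta$; and the $\alpha$-invariant tower is not obtained by ``refining'' the single-automorphism Rohlin property --- it is precisely \cite[Theorem 5.5]{M09}, and it is where the hypothesis that $\alpha^m\circ\beta^n$ is not weakly inner for \emph{all} $(m,n)\neq(0,0)$ is used.)

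There is, however, a genuine gap at the decisive step. You apply the preceding path lemma only to $u$, obtaining $\tilde w$ from $1$ to $u$ with $\Lip(\tilde w)<4$, and then claim this path ``interpolates $c_{n-1}$ back toward $1$.'' It cannot: what the wrap-around at the top of each column requires is a path $y$ from $u_m$ (resp.\ $u_{m+1}$) to $1$, approximately $\alpha$-invariant pointwise, whose Lipschitz constant is bounded \emph{independently of $m$}, so that the consecutive increments $y((k{+}1)/m)-y(k/m)$ inserted along the tower are of size $4/m$. Any path from $u_m$ to $1$ manufactured from $\tilde w$ (e.g.\ $t\mapsto\tilde w(t)\beta(\tilde w(t))\cdots\beta^{m-1}(\tilde w(t))$) has Lipschitz constant of order $m$, so the increments are $O(1)$ and the estimate $\lVert v\beta(v)^*-u\rVert<\ep$ collapses; the coprimality of the two heights does not rescue this, as it only enters in the partition of unity $\sum\beta^i(e)+\sum\beta^j(f)=1$. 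The paper instead applies the path lemma directly to $u_m$ and $u_{m+1}$, which is legitimate only after verifying its hypotheses for these elements: $\lVert u_m-\alpha(u_m)\rVert<\delta'$ (which you essentially have) and, crucially, $\kappa(u_m,\alpha)=0$. This last vanishing, which follows from $\kappa(u,\alpha)=0$ by the additivity of $\kappa$ under the product $u_{k+1}=u\beta(u_k)$ (\cite[Lemma 7.5]{IM}), is the reason the hypothesis $\kappa(u,\alpha)=0$ appears in the statement at all, and your proposal never engages with it.
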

\begin{proof}
Choose $m\in\N$ so that $4/m<\ep$. 
Applying the lemma above to $\ep/2$, we obtain $\delta'>0$. 
Choose $\delta>0$ so that 
$(3m{+}1)\delta<\delta'$ and $(5m{-}2)\delta<\ep/2$. 

By \cite[Theorem 5.5]{M09}, 
there exist projections $e$ and $f$ in $A_\infty$ such that 
\[
\alpha(e)=e,\quad \alpha(f)=f,\quad \beta^m(e)=e,\quad \beta^{m+1}(f)=f
\]
and 
\[
\sum_{i=0}^{m-1}\beta^i(e)+\sum_{j=0}^m\beta^j(f)=1. 
\]
Define $u_k\in U(A)$ for $k=0,1,\dots$ 
by $u_0=1$ and $u_{k+1}=u\beta(u_k)$. 
By an elementary estimate, we obtain 
\[
\lVert u_k-\alpha(u_k)\rVert<(3k-2)\delta
\]
for any $k\in\N$. 
Moreover, from \cite[Lemma 7.5]{IM}, 
we can see $\kappa(u_k,\alpha)=0$. 
By applying the lemma above to $u_m$ and $u_{m+1}$, 
we obtain a path of unitaries $y$ and $z$ such that the following hold. 
\begin{itemize}
\item $y(1)=z(1)=1$, $y(0)=u_m$ and $z(0)=u_{m+1}$. 
\item $\lVert y(t)-\alpha(y(t))\rVert<\ep/2$ 
and $\lVert z(t)-\alpha(z(t))\rVert<\ep/2$ for all $t\in[0,1]$. 
\item Both $\Lip(y)$ and $\Lip(z)$ are less than $4$. 
\end{itemize}
Define a unitary $v\in A^\infty$ by 
\[
v=\sum_{k=0}^{m-1}u_k\beta^k(y(k/m))\beta^k(e)
+\sum_{k=0}^mu_k\beta^k(z(k/(m{+}1)))\beta^k(f). 
\]
We can easily see $\lVert u-v\beta(v)^*\rVert<4/m<\ep$. 
Furthermore, 
\[
\lVert v-\alpha(v)\rVert
<(3m-2)\delta+2m\delta+\ep/2<\ep, 
\]
which completes the proof. 
\end{proof}

One can show the following lemma 
in a similar fashion to Lemma \ref{UHF2}. 

\begin{lem}\label{UHF2'}
Let $A$ be a UHF algebra with a unique trace $\tau$ and 
let $\alpha$ be an automorphism of $A$ such that 
the $\Z$-action generated by $\alpha$ is strongly outer. 
Then for any $\ep>0$ and $r\in\tau(K_0(A))$ with $\lvert r\rvert<1/2$, 
there exists $v\in U(A)$ such that 
\[
\lVert v-\alpha(v)\rVert<\lvert e^{2\pi\sqrt{-1}r}-1\rvert+\ep,\quad 
\tau(\log(v\alpha(v^*)))=2\pi\sqrt{-1}r. 
\]
\end{lem}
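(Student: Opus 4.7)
The proof parallels Lemma \ref{UHF2}. By the single-automorphism analogue of \cite[Corollary 6.6 and Remark 6.7]{KM}---established by Kishimoto \cite{K95crelle}---every strongly outer automorphism of a UHF algebra with unique trace is strongly cocycle conjugate to a product type automorphism $\gamma = \bigotimes_{n=1}^\infty \Ad w_n$ on $A \cong \bigotimes_{n} M_{k_n}$, where the $w_n$'s are of a standard form. The norm bound $\lVert v - \alpha(v)\rVert < \lvert e^{2\pi\sqrt{-1}r} - 1\rvert + \ep$ transfers across a strong cocycle conjugacy at the cost of an $\ep/2$ perturbation, while the trace identity $\tau(\log(v\alpha(v^*))) = 2\pi\sqrt{-1}r$ is unaffected by coboundary corrections $v \mapsto u v \alpha(u^*)$ thanks to \cite[Lemma 1]{HS}. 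So it suffices to construct $v$ for $\gamma$.

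Since $r \in \tau(K_0(A))$ with $\lvert r\rvert < 1/2$, write $r = a/n$ with $n = k_1 k_2 \cdots k_m$ and $\lvert a\rvert < n/2$. For a sufficiently large $N > m$, the product type model of \cite[Lemma 5.3]{KM} (transported to the single-automorphism setting) provides, inside $\bigotimes_{l = N}^{N'} M_{k_l}$ for some $N'\ge N$, a unital copy of $M_n$ with matrix units $\{e_{i,j}\}_{i,j=0}^{n-1}$ on which $\gamma$ acts as the canonical cyclic shift, that is, $\gamma(e_{k,k}) = e_{k+1 \bmod n,\, k+1 \bmod n}$ exactly. Placing this $M_n$ sufficiently far out in the tensor product ensures that it approximately commutes with any prescribed finite subset $F \subset A$.

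Now define
\[
v = \sum_{k=0}^{n-1} e^{2\pi\sqrt{-1}k a/n}\, e_{k,k}.
\]
A direct computation gives $\gamma(v) = e^{-2\pi\sqrt{-1}r} v$, whence $\lVert v - \gamma(v)\rVert = \lvert e^{2\pi\sqrt{-1}r} - 1\rvert$ and $v \gamma(v^*) = e^{2\pi\sqrt{-1}r}\cdot 1$, so $\tau(\log(v\gamma(v^*))) = 2\pi\sqrt{-1}r$ exactly. Transferring back via the strong cocycle conjugacy yields the desired $v \in U(A)$. The principal difficulty lies in the first step: exhibiting, inside a late tensor factor of the product type model, an exact cyclic $\gamma$-permutation of $n$ mutually orthogonal rank-one projections. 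Once this cyclic shift algebra is in hand, the verification of both the norm bound and the exact de la Harpe--Skandalis identity is a one-line matrix computation.
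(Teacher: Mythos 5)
Your proposal takes essentially the same route as the paper, which disposes of this lemma by reducing to a product type automorphism via Kishimoto's cocycle conjugacy theorem for strongly outer automorphisms of UHF algebras and then carrying out precisely the explicit root-of-unity computation you describe (the technique of \cite[Lemma 6.2]{KM}), so the argument is correct in substance. When writing it out, note only that for a general UHF algebra the unital copy of $M_n$ carrying the exact cyclic shift need not embed in a tail $\bigotimes_{l\geq N}M_{k_l}$ (harmless here, since the statement imposes no approximate centrality requirement, so the initial tensor factors may be used), and that transporting the exact identity $\tau(\log(v\alpha(v^*)))=2\pi\sqrt{-1}r$ across the cocycle perturbation a priori introduces a small error lying in $\tau(K_0(A))$, which must then be cancelled by a second, tiny application of the same construction, as in the proof of Lemma \ref{UHF3}.
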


\begin{lem}\label{ocneanu2}
Let $A$ be a UHF algebra with a unique trace $\tau$ and 
let $\alpha$ be an automorphism of $A$ such that 
the $\Z$-action generated by $\alpha$ is strongly outer. 
Suppose that 
$w\in U(A)$ satisfies $\lVert w-1\rVert<2$ and $\tau(\log w)=0$. 
Then for any $\ep>0$ there exists $u\in U(A)$ such that 
\[
\lVert uw\alpha(u^*)-1\rVert<\ep,\quad 
\lVert u-\alpha(u)\rVert<2\quad\text{and}\quad 
\tau(\log(u\alpha(u^*))=0. 
\]
\end{lem}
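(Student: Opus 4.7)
The problem is equivalent to realizing $w$ as an approximate $\alpha$-coboundary: if $u^{-1}\alpha(u)\approx w$ in operator norm, then $uw\alpha(u^*)\approx 1$. The hypothesis $\tau(\log w)=0$ gives precisely $\Delta_\tau(w)=0$, which removes the de la Harpe--Skandalis obstruction to such a factorization, while the hypothesis $\lVert w-1\rVert<2$ makes $\log w$ well-defined and permits the trace-log algebra from the preliminaries.

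Since $\alpha$ is strongly outer on the UHF algebra $A$, Remark \ref{so=uo} shows it is uniformly outer, and hence (by \cite{N1}) has the Rohlin property. Following the Rohlin tower construction of Lemma \ref{ocneanu1}, for a large integer $m$ we obtain projections $e,f\in A_\infty$ with $\alpha^m(e)=e$, $\alpha^{m+1}(f)=f$, and $\sum_{i=0}^{m-1}\alpha^i(e)+\sum_{j=0}^m\alpha^j(f)=1$. Setting $V_k=w\alpha(w)\cdots\alpha^{k-1}(w)$ (so $V_0=1$ and $V_{k+1}=w\alpha(V_k)$), one checks via $\tau\circ\alpha=\tau$ and $\tau(\log w)=0$ that $\tau(\log(V_k\alpha(V_k^*)))=0$ whenever the logs are defined, so $\kappa(V_k,\alpha)=0$ in $K_0(A)$. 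Using the first (unlabelled) lemma of Section 7 applied to $V_m$ and $V_{m+1}$, obtain nearly $\alpha$-invariant paths $y,z:[0,1]\to U(A)$ of Lipschitz constant $<4$ with $y(0)=V_m$, $y(1)=1$, $z(0)=V_{m+1}$, $z(1)=1$, and define $u'$ by a telescoping sum over the two Rohlin towers completely analogous to the construction of $v$ in Lemma \ref{ocneanu1}.

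A telescoping computation, using centrality of $\alpha^k(e),\alpha^k(f)$ in $A_\infty$ together with $w\alpha(V_k)=V_{k+1}$, shows $\lVert u'w\alpha({u'}^*)-1\rVert=O(1/m)$ and $\lVert u'-\alpha(u')\rVert<2$. The residual trace deviation $\tau(\log(u'\alpha({u'}^*)))$ lies in $2\pi\sqrt{-1}\tau(K_0(A))$ with small modulus, so Lemma \ref{UHF2'} produces a unitary $v\in U(A)$ supplying the needed correction and with $\lVert v-1\rVert$ small; setting $u=vu'$ completes the proof. The main obstacle is that the first lemma of Section 7 requires $\lVert V_m-\alpha(V_m)\rVert<\delta$, which need not hold when $\lVert w-1\rVert$ is close to $2$; to handle the general case one first writes $w=\exp(2\pi\sqrt{-1}h)$ with $h$ self-adjoint and $\tau(h)=0$, decomposes $w=w_1w_2\cdots w_n$ with $w_j=\exp(2\pi\sqrt{-1}h/n)$ of arbitrarily small norm, and iteratively applies the above construction to the conjugated remainders $\tilde w_j=\alpha(v_{j-1})w_j\alpha(v_{j-1}^*)$, setting $v_j=x_jv_{j-1}$; the per-iteration $\alpha$-noninvariance is of order $1/n$, and the accumulation telescopes to keep $\lVert u-\alpha(u)\rVert$ safely below $2$ while the total product error stays below $\ep/2$ before the final trace adjustment.
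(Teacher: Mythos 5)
Your proposal reaches the right conclusion and its endgame coincides with the paper's: after producing $u'$ with $u'w\alpha(u'^*)\approx 1$, you correct the residual $\tau(\log(u'\alpha(u'^*)))\in 2\pi\sqrt{-1}\,\tau(K_0(A))$ by a small unitary from Lemma \ref{UHF2'}, exactly as the paper does ``as in Lemma \ref{UHF3}''. The difference is entirely in the first step. The paper disposes of it in one line: $\alpha$ has the Rohlin property (strong outerness $\Rightarrow$ uniform outerness $\Rightarrow$ Rohlin on a UHF algebra), hence the stability, which directly yields $u$ with $\lVert uw\alpha(u^*)-1\rVert<\ep/2$; the remaining condition $\lVert u-\alpha(u)\rVert<2$ then costs nothing, since $\lVert u-\alpha(u)\rVert\le\lVert 1-w\rVert+\lVert uw\alpha(u^*)-1\rVert<2$ once one normalizes $\ep<2-\lVert w-1\rVert$ at the outset. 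You instead re-derive stability from scratch via Rohlin towers, which is legitimate (it is how stability is proved), but you over-engineer it: the almost $\alpha$-invariance of the connecting path is \emph{not} needed here. It was needed in Lemma \ref{ocneanu1} only because there are two commuting automorphisms and the $\beta$-coboundary must be almost $\alpha$-invariant; in the single-automorphism setting the path from $V_m$ to $1$ enters only through the telescoping estimate of $\lVert u'w\alpha(u'^*)-1\rVert$, for which any path with $\Lip<4$ suffices (and such a path always exists in a UHF algebra, where every unitary is $\exp(\sqrt{-1}h)$ with $\lVert h\rVert\le\pi$). Consequently the ``main obstacle'' you identify --- that $\lVert V_m-\alpha(V_m)\rVert$ need not be small --- is not an obstacle, and the whole decomposition $w=w_1\cdots w_n$ with the $n$-fold iteration is superfluous.

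Two caveats on your write-up as it stands. First, the claim that the telescoping computation itself yields $\lVert u'-\alpha(u')\rVert<2$ is not right in the non-iterated version: read off from the tower sum, that estimate picks up $\lVert y(t)-\alpha(y(t))\rVert$, which is exactly the quantity you cannot control without the path lemma; the correct route is the inequality $\lVert u'-\alpha(u')\rVert\le\lVert 1-w\rVert+\lVert u'w\alpha(u'^*)-1\rVert$ quoted above. Second, in the iteration the naive accumulation $\sum_j\lVert x_j-\alpha(x_j)\rVert$ is of order $\sum_j\lVert w_j-1\rVert\approx 2\pi\lVert h\rVert$, which can exceed $2$; ``the accumulation telescopes'' must therefore be made precise via the coboundary identity $\alpha(v_n)\approx v_nw$ rather than the triangle inequality. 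Both points are repaired by the single observation that $\lVert u-\alpha(u)\rVert<2$ is a formal consequence of $\lVert uw\alpha(u^*)-1\rVert<\ep$ and $\ep<2-\lVert w-1\rVert$, after which your argument collapses to the paper's.
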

\begin{proof}
We may assume $\ep$ is less than $2-\lVert w{-}1\rVert$. 
The automorphism $\alpha$ has the stability, and so 
there exists $u\in U(A)$ such that $\lVert uw\alpha(u^*)-1\rVert<\ep/2$. 
Then the rest of the proof is similar to Lemma \ref{UHF3}. 
One can make use of Lemma \ref{UHF2'} instead of Lemma \ref{UHF2}. 
\end{proof}

\begin{prop}\label{ocneanu3}
For any $\ep>0$, there exists $\delta>0$ such that the following holds. 
Let $A$ be a UHF algebra with a unique trace $\tau$. 
Suppose that 
$\alpha,\beta\in\Aut(A)$ and $w\in U(A)$ satisfy 
\[
\beta\circ\alpha=\Ad w\circ\alpha\circ\beta,\quad 
\lVert w-1\rVert<\delta\quad\text{and}\quad \tau(\log w)=0. 
\]
Suppose further that 
$\alpha^m\circ\beta^n$ is not weakly inner 
for all $(m,n)\in\Z^2\setminus\{(0,0)\}$. 
Then there exist $a,b\in U(A)$ such that 
\[
\lVert a-1\rVert<\ep,\quad \lVert b-1\rVert<\ep
\]
and 
\[
b\beta(a)w\alpha(b)^*a^*=1. 
\]
In particular, 
\[
(\Ad a\circ\alpha)\circ(\Ad b\circ\beta)
=(\Ad b\circ\beta)\circ(\Ad a\circ\alpha). 
\]
\end{prop}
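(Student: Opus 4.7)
The plan is to set $b=1$ throughout and construct $a$ as a norm-convergent infinite product of near-identity unitaries, so that the target identity $b\beta(a)w\alpha(b^{*})a^{*}=1$ reduces to $\beta(a)\,w\,a^{*}=1$. I will achieve this by iterating a single reduction step built from Lemmas~\ref{ocneanu1} and~\ref{ocneanu2}: each iteration will contribute a tiny $c_n$ close to $1$ that gets absorbed into $a$, while strictly shrinking a running $2$-cocycle $w_n$.

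Fix a summable sequence $(\eta_n)_{n\ge 0}$ of positive tolerances with $\sum_n\eta_n<\ep$. Starting from $\alpha_0:=\alpha$ and $w_0:=w$, I maintain inductively that $(\alpha_n,\beta,w_n)$ is a cocycle action with $\lVert w_n-1\rVert$ very small and $\tau(\log w_n)=0$. First, apply Lemma~\ref{ocneanu2} to the strongly outer $\alpha_n$ and to $w_n$ to obtain $u_n\in U(A)$ with $\lVert u_nw_n\alpha_n(u_n^{*})-1\rVert$ arbitrarily small and $\tau(\log(u_n\alpha_n(u_n^{*})))=0$; because $A$ is UHF, $K_0(\alpha_n)=\id$ and $\tau$ is injective on $K_0(A)$, so $\kappa(u_n,\alpha_n)=0$. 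The relation $u_n^{*}\alpha_n(u_n)\approx w_n$ also forces $\lVert u_n-\alpha_n(u_n)\rVert$ small. Next apply Lemma~\ref{ocneanu1} (its outerness hypothesis on $\alpha_n^m\beta^l$ for $(m,l)\ne(0,0)$ is preserved because $\alpha_n$ differs from $\alpha$ by an inner automorphism) to obtain $v_n\in U(A)$ with $\lVert v_n-\alpha_n(v_n)\rVert$ and $\lVert u_n-v_n\beta(v_n^{*})\rVert$ small; setting $c_n:=v_n^{*}\alpha_n(v_n)$ gives $\lVert c_n-1\rVert<\eta_n$.

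Substituting $u_n\approx v_n\beta(v_n^{*})$ into $u_nw_n\alpha_n(u_n^{*})\approx 1$ and rewriting $\alpha_n\beta(v_n)=w_n^{*}\beta\alpha_n(v_n)\,w_n$ via the cocycle identity $\alpha_n\beta=\Ad(w_n^{*})\circ\beta\alpha_n$ collapses to $v_n\beta(c_n)w_n\alpha_n(v_n^{*})\approx 1$, which after cancellation is $\beta(c_n)\,w_n\,c_n^{*}\approx 1$. Set $\alpha_{n+1}:=\Ad c_n\circ\alpha_n$ and $w_{n+1}:=\beta(c_n)\,w_n\,c_n^{*}$; one checks directly that $(\alpha_{n+1},\beta,w_{n+1})$ is again a cocycle action, and log-additivity from Section~2 yields $\tau(\log w_{n+1})=\tau(\log c_n)+\tau(\log w_n)-\tau(\log c_n)=0$. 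By induction $w_{n+1}=\beta(A_n)\,w\,A_n^{*}$ where $A_n:=c_n c_{n-1}\cdots c_0$; the partial products $(A_n)$ are Cauchy because $\lVert A_{n+1}-A_n\rVert\le\lVert c_{n+1}-1\rVert$, and they converge to a unitary $a\in U(A)$ with $\lVert a-1\rVert<\ep$. Passing to the limit with $\lVert w_n-1\rVert\to 0$ gives $\beta(a)\,w\,a^{*}=1$ exactly, so $(a,b)=(a,1)$ meets the requirements.

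The main obstacle is the quantitative bookkeeping. Because Lemmas~\ref{ocneanu1} and~\ref{ocneanu2} are stated only qualitatively, I must chain them recursively, choosing at each stage a $\delta^{(n)}$ small enough that the combined output simultaneously meets the preassigned $\eta_n$ and forces $\lVert w_{n+1}-1\rVert$ within the admissible range for the next iteration. I also need to verify at each step that the lemma hypotheses propagate: strong outerness of $\alpha_n^m\beta^l$ survives inner perturbation, $\tau\circ\alpha_n=\tau$ for the same reason, and the $K_0$-triviality of $\kappa(u_n,\alpha_n)$ is automatic because $\tau$ embeds $K_0(A)$ into $\R$.
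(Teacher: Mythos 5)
Your algebraic manipulations are correct as far as they go (in particular the identity $w_{n+1}=\Ad(v_n^*)\bigl(\tilde u_nw_n\alpha_n(\tilde u_n^*)\bigr)$ with $\tilde u_n=v_n\beta(v_n^*)$, and the reduction of $\kappa(u_n,\alpha_n)=0$ to $\tau(\log(u_n\alpha_n(u_n^*)))=0$), but the quantitative bookkeeping you defer to the last paragraph cannot be carried out, and this is a fatal gap rather than a routine verification. Your bound $\lVert w_{n+1}-1\rVert\le\ep_n''+2\lVert u_n-v_n\beta(v_n^*)\rVert$ ties the size of the next $2$-cocycle to the tolerance of Lemma \ref{ocneanu1} at stage $n$, and that tolerance is \emph{not} freely small: to run Lemma \ref{ocneanu1} with output tolerance $\eta$ you must have $\lVert w_n-1\rVert<\delta(\eta)$ on input, and its proof forces $\delta(\eta)\lesssim\eta^2$ (a Rohlin tower of height $m\approx 4/\eta$ with $\delta\approx\eta/m$). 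So the requirement that $\lVert w_{n+1}-1\rVert$ lie "within the admissible range for the next iteration" reads $2\eta_n\lesssim\delta(\eta_{n+1})\le\eta_{n+1}$: the tolerances, hence the norms $\lVert c_n-1\rVert$, must \emph{grow} along the iteration and cannot be summable; equivalently, the admissible bound on $\lVert w_0-1\rVert$ is an infinitely nested composition $\delta(\delta(\cdots))=0$, so no positive $\delta$ exists for your scheme. The culprit is precisely your decision to set $b=1$.

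The argument the paper intends (that of \cite[Proposition 7.10]{IM}) perturbs $\beta$ as well. Put $c_n=v_n^*\alpha_n(v_n)$ \emph{and} $d_n=v_n^*u_n\beta_n(v_n)$, so that $\lVert d_n-1\rVert=\lVert u_n-v_n\beta_n(v_n^*)\rVert$; writing $w_n=u_n^*z_n\alpha_n(u_n)$ with $z_n=u_nw_n\alpha_n(u_n^*)$ and conjugating everything by $\Ad(v_n^*)$, one checks that the $2$-cocycle for the pair $(\Ad c_n\circ\alpha_n,\ \Ad d_n\circ\beta_n)$ is exactly $v_n^*z_nv_n$. Its distance to $1$ is the Lemma \ref{ocneanu2} error, which \emph{is} freely prescribable (Lemma \ref{ocneanu2} needs only $\lVert w_n-1\rVert<2$ and $\tau(\log w_n)=0$), in particular can be made $<\delta(\eta_{n+1})/2$ with $\eta_{n+1}$ already fixed. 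The Lemma \ref{ocneanu1} tolerance now controls only $\lVert c_n-1\rVert$ and $\lVert d_n-1\rVert$ and may be taken equal to $\eta_n$ with $\sum_n\eta_n<\ep$; the recursion becomes a genuine forward recursion, both infinite products converge, and $b\beta(a)w\alpha(b)^*a^*=\lim_nw_{n+1}=1$. This decoupling of the two errors is exactly why the statement must produce two unitaries $a$ and $b$.
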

\begin{proof}
Using Lemma \ref{ocneanu1} and Lemma \ref{ocneanu2}, 
one can prove this statement 
in the same way as \cite[Proposition 7.10]{IM}. 
Notice that $\kappa(u,\alpha)$ equals $\tau(\log(u\alpha(u^*)))$, 
because $A$ is a UHF algebra. 
\end{proof}

As in the previous section, 
we let $\xi_1=(1,0)$ and $\xi_2=(0,1)$ be the generators of $\Z^2$. 

\begin{thm}\label{ocneanu4}
Let $(\alpha,u)$ be a strongly outer cocycle action of $\Z^2$ 
on a UHF algebra $A$ with a unique trace $\tau$. 
If $\Delta_\tau(u(\xi_1,\xi_2)u(\xi_2,\xi_1)^*)=0$, then 
$(\alpha,u)$ is equivalent to a genuine action. 
\end{thm}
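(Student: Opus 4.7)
The plan is to reduce Theorem \ref{ocneanu4} to Proposition \ref{ocneanu3}, which handles the perturbation problem when the twist $w = u(\xi_1,\xi_2)u(\xi_2,\xi_1)^*$ is close to $1$. Writing $\alpha_1 = \alpha_{\xi_1}$ and $\alpha_2 = \alpha_{\xi_2}$, the essential cocycle datum is the single commutation relation $\alpha_2 \circ \alpha_1 = \Ad w \circ \alpha_1 \circ \alpha_2$, and an equivalence of cocycle actions realized by unitaries $c_1,c_2 \in U(A)$ modifies $w$ via a specific coboundary-type expression in $c_1, c_2, \alpha_1, \alpha_2$. The task thus reduces to exhibiting $c_1, c_2 \in U(A)$ making the new twist equal to $1$, which yields a genuine $\Z^2$-action generated by $\Ad c_1 \circ \alpha_1$ and $\Ad c_2 \circ \alpha_2$ to which $(\alpha,u)$ is equivalent in the sense of Definition \ref{ca} (3).

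First I would establish a shrinking step: given a twist $w$ satisfying $\lVert w-1\rVert < 2$ and $\tau(\log w) = 0$, Lemma \ref{ocneanu2} applied to the strongly outer automorphism $\alpha_2$ furnishes a unitary $u\in U(A)$ with $uw\alpha_2(u^*)\approx 1$ and $\tau(\log(u\alpha_2(u^*))) = 0$; interpreting this as an equivalence of cocycle actions (perturbing only in the $\alpha_2$-direction), one obtains a new twist of arbitrarily small norm with the vanishing log-trace condition preserved, using the additivity $\tau(\log(xy)) = \tau(\log x) + \tau(\log y)$ from Section 2 together with $\tau\circ\alpha_2 = \tau$. To handle a general $w$, I would invoke the hypothesis $\Delta_\tau(w)=0$: since $A$ is UHF we have $U(A) = U(A)_0$, so one can fix a continuous path from $1$ to $w$ and factor $w = w_N w_{N-1} \cdots w_1$ with each $\lVert w_j - 1\rVert < 2$. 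The total log-trace $\sum_j \tau(\log w_j)$ lies in $2\pi\sqrt{-1}\,\tau(K_0(A))$ by the vanishing of $\Delta_\tau(w)$, so inserting correcting unitaries produced by Lemma \ref{UHF2'} (which realises any prescribed log-trace value in $\tau(K_0(A))$) allows each factor to be arranged with zero log-trace. Sequentially processing the factors via the shrinking step brings the cumulative twist into the regime $\lVert w-1\rVert < \delta$ with $\tau(\log w) = 0$, where $\delta$ is the threshold appearing in Proposition \ref{ocneanu3}.

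A final application of Proposition \ref{ocneanu3} then exactly trivializes the remaining small twist, giving the desired equivalence with a genuine action. The main technical obstacle I anticipate is the bookkeeping through the iteration: each shrinking step composes the cumulative perturbation with new inner unitaries, and the transformation rule for the twist is non-linear in these unitaries (mixing $\alpha_1$ and $\alpha_2$), so subsequent steps can partially disturb normalizations achieved earlier. Careful choice of the initial factorization $w = w_N\cdots w_1$ with $\lVert w_j - 1\rVert$ sufficiently small, combined with an inductive estimate ensuring both the norm and log-trace hypotheses of Lemma \ref{ocneanu2} remain satisfied at every stage, should overcome this. Finally, a summability control on the sequence of perturbations guarantees that the cumulative unitaries converge in norm to honest $c_1, c_2 \in U(A)$, completing the equivalence.
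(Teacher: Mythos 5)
Your overall skeleton is the same as the paper's: isolate the single commutation twist $w$ with $\alpha_{\xi_2}\circ\alpha_{\xi_1}=\Ad w\circ\alpha_{\xi_1}\circ\alpha_{\xi_2}$, use the determinant hypothesis to bring $w$ into the regime $\lVert w-1\rVert<\delta$, $\tau(\log w)=0$, and finish with Proposition \ref{ocneanu3}. Where you genuinely diverge is in the middle step. The paper does it in one shot: since $\alpha_{\xi_2}$ generates a strongly outer (hence Rohlin) $\Z$-action on a UHF algebra, the single-automorphism analogue of Lemma \ref{UHF3} applies directly to the arbitrary unitary $w$ with $\Delta_\tau(w)=0$ --- stability gives $v$ with $\alpha_{\xi_2}(v^*)wv\approx1$, and Lemma \ref{UHF2'} corrects the log-trace to exactly zero; the perturbation $\alpha_{\xi_1}\mapsto\Ad v^*\circ\alpha_{\xi_1}$ then hands the small twist $w'=\alpha_{\xi_2}(v^*)wv$ to Proposition \ref{ocneanu3}. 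You instead bootstrap from the small-twist case (Lemma \ref{ocneanu2}) by factoring $w=w_N\cdots w_1$ along a path in $U(A)=U(A)_0$ and peeling off one factor at a time. This is workable: at step $k$ the unitary to be shrunk is a conjugate $U_{k-1}w_kU_{k-1}^*$, which inherits $\lVert\cdot-1\rVert<2$ and zero log-trace, and the errors only accumulate additively. What your route buys is that it uses only the stated lemmas as black boxes; what it costs is exactly the bookkeeping you flag, which the paper's one-shot appeal to stability avoids entirely (note that Lemma \ref{ocneanu2} is itself proved by invoking that same stability, so nothing is really being saved).

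Three loose ends you should tie up. First, an indexing mismatch: perturbing in the $\alpha_{\xi_2}$-direction transforms the twist as $w\mapsto uw\alpha_{\xi_1}(u^*)$ (take $a=1$, $b=u$ in $b\beta(a)w\alpha(b)^*a^*$), so the automorphism appearing in the coboundary is $\alpha_{\xi_1}$, not $\alpha_{\xi_2}$; as written your shrinking step pairs the wrong generator with the wrong perturbation. Second, the iteration is finite ($N$ factors, hence a finite product $u_N\cdots u_1$), so the summability/convergence concern at the end is vacuous. Third, and most substantively, after the iteration the residual twist $W$ satisfies only $\lVert W-1\rVert<\delta$ and $\tau(\log W)\in2\pi\sqrt{-1}\,\tau(K_0(A))$ (small but, since $\tau(K_0(A))$ may be dense in $\R$, not necessarily zero), whereas Proposition \ref{ocneanu3} demands $\tau(\log W)=0$ exactly; likewise the individual $\tau(\log w_j)$ need not lie in $2\pi\sqrt{-1}\,\tau(K_0(A))$, so arranging each factor to have zero log-trace requires redistributing by central scalars along the factorization rather than by Lemma \ref{UHF2'} alone. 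One final application of Lemma \ref{UHF2'} to the residual twist repairs the last point; with these corrections your argument goes through.
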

\begin{proof}
Let $w=u(\xi_2,\xi_1)u(\xi_1,\xi_2)^*$. 
Then 
\[
\alpha_{\xi_2}\circ\alpha_{\xi_1}
=\Ad w\circ\alpha_{\xi_1}\circ\alpha_{\xi_2}
\]
and $\Delta_\tau(w)=0$. 
In a similar fashion to Lemma \ref{UHF3}, for any $\delta>0$, 
one can find $v\in U(A)$ such that 
$\lVert\alpha_{\xi_2}(v^*)wv-1\rVert<\delta$ and 
$\tau(\log(\alpha_{\xi_2}(v^*)wv))=0$. 
Letting $w'=\alpha_{\xi_2}(v)^*wv$, we have 
\[
\alpha_{\xi_2}\circ(\Ad v^*\circ\alpha_{\xi_1})=
\Ad w'\circ(\Ad v^*\circ\alpha_{\xi_1})\circ\alpha_{\xi_2}. 
\]
Then the conclusion follows from Proposition \ref{ocneanu3}. 
\end{proof}

\begin{lem}
Let $A$ be a UHF algebra of infinite type with a unique trace $\tau$ and 
let $\alpha:\Z^2\curvearrowright A$ be a strongly outer action of $\Z^2$. 
Suppose that $(u_g)_{g\in\Z^2}$ is an $\alpha$-cocycle in $A$ 
satisfying $\Delta_\tau(u_g)=0$ for any $g\in\Z^2$. 
Then there exists a continuous family $(v_t)_{t\in[1,\infty)}$ 
of unitaries in $A$ such that 
\[
\lim_{t\to\infty}v_t\alpha_g(v_t^*)=u_g
\]
holds for every $g\in\Z^2$. 
\end{lem}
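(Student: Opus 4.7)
The strategy is to construct a sequence $\{v_n\}\subset U(A)$ of approximate coboundaries for $(u_g)$ satisfying a trace-normalization condition, connect consecutive $v_n$'s by continuous paths via Lemma \ref{3torus}, and concatenate these into the desired continuous family $(v_t)_{t\in[1,\infty)}$.

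First I would construct $v_n\in U(A)$ with (a) $\lVert v_n\alpha_g(v_n^*)-u_g\rVert\to 0$ for every $g\in\Z^2$ and (b) $\tau(\log(v_n^* u_{\xi_i}\alpha_{\xi_i}(v_n)))=0$ for $i=1,2$ and all sufficiently large $n$. Since $\alpha$ is strongly outer on the UHF algebra $A$, the standard cocycle trivialization machinery (embedding $(u_g)$ as a constant cocycle and applying the argument underlying \cite[Theorem 4.8]{KM}, or working in the hyperfinite II$_1$ weak closure and approximating) produces $v_n'\in U(A)$ satisfying (a). Setting $x_{n,i}':=(v_n')^* u_{\xi_i}\alpha_{\xi_i}(v_n')\to 1$, one has $\Delta_\tau(x_{n,i}')=-\Delta_\tau(v_n')+\Delta_\tau(u_{\xi_i})+\Delta_\tau(v_n')=0$ in $\R/\tau(K_0(A))$, so $r_{n,i}:=\tau(\log x_{n,i}')/(2\pi\sqrt{-1})\in\tau(K_0(A))$ and $r_{n,i}\to 0$. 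Two applications of Lemma \ref{UHF2} (one per generator) composed together yield $v_{0,n}$ with $\tau(\log(v_{0,n}\alpha_{\xi_i}(v_{0,n}^*)))=2\pi\sqrt{-1}\,r_{n,i}$ and $\lVert v_{0,n}-\alpha_{\xi_j}(v_{0,n})\rVert$ small. Setting $v_n:=v_n' v_{0,n}$ and using additivity of $\tau\circ\log$ for unitaries close to $1$ gives both (a) and (b).

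Next, set $w_n:=v_{n+1}^* v_n$. The estimates on $v_n$ and $v_{n+1}$ immediately give $\lVert w_n-\alpha_{\xi_i}(w_n)\rVert\to 0$. Writing $x_{n,i}:=v_n^* u_{\xi_i}\alpha_{\xi_i}(v_n)$, so that $x_{n,i}\to 1$ and $\tau(\log x_{n,i})=0$, one verifies the algebraic identity $x_{n+1,i}=w_n\,x_{n,i}\,\alpha_{\xi_i}(w_n^*)$, which rewrites as
\[
w_n\alpha_{\xi_i}(w_n^*)=x_{n+1,i}\cdot\bigl(\alpha_{\xi_i}(w_n)\,x_{n,i}^{-1}\,\alpha_{\xi_i}(w_n^*)\bigr).
\]
Both factors on the right are norm-close to $1$, so additivity of $\tau\circ\log$ (\cite[Lemma 1]{HS}) combined with trace invariance under unitary conjugation yields
\[
\tau(\log(w_n\alpha_{\xi_i}(w_n^*)))=\tau(\log x_{n+1,i})-\tau(\log x_{n,i})=0.
\]
Hence $w_n$ satisfies the hypotheses of Lemma \ref{3torus} with $F=\emptyset$ and shrinking error, producing continuous paths $\tilde w_n:[0,1]\to U(A)$ with $\tilde w_n(0)=1$, $\tilde w_n(1)=w_n$, and $\sup_s\lVert\tilde w_n(s)-\alpha_{\xi_i}(\tilde w_n(s))\rVert\to 0$.

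Finally, I would define $v_t:=v_n\,\tilde w_n(t-n)^*$ for $t\in[n,n+1]$. The endpoint values $v_n\cdot 1=v_n$ and $v_n w_n^*=v_{n+1}$ match, so the family is continuous on $[1,\infty)$. For each generator,
\[
v_t\alpha_{\xi_i}(v_t^*)=v_n\bigl[\tilde w_n(t-n)^*\alpha_{\xi_i}(\tilde w_n(t-n))\bigr]\alpha_{\xi_i}(v_n^*)\longrightarrow u_{\xi_i},
\]
and convergence $v_t\alpha_g(v_t^*)\to u_g$ for arbitrary $g\in\Z^2$ follows by induction on word length in the generators using the cocycle relation $u_{g+h}=u_g\alpha_g(u_h)$. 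The main obstacle is arranging the \emph{exact} vanishing of $\tau(\log(w_n\alpha_{\xi_i}(w_n^*)))$ demanded by Lemma \ref{3torus}; this is handled precisely by the normalization (b), after which the identity $x_{n+1,i}=w_n x_{n,i}\alpha_{\xi_i}(w_n^*)$ automatically propagates $\tau(\log x_{n,i})=0$ into the needed condition on $w_n$, avoiding any approximate adjustments.
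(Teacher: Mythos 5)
Your proposal is correct and follows essentially the same route as the paper: produce approximate coboundaries via the stability theorem of \cite{KM}, normalize the trace of the logarithm using Lemma \ref{UHF2} twice, check that the transition unitaries $w_n$ satisfy the exact hypothesis $\tau(\log(w_n\alpha_{\xi_i}(w_n^*)))=0$ of Lemma \ref{3torus}, and concatenate the resulting paths. The only cosmetic differences are that the paper takes $w_n=x_n^*x_{n+1}$ rather than $v_{n+1}^*v_n$ and verifies the trace identity by inserting $u_{\xi_i}$ instead of your conjugation identity $x_{n+1,i}=w_nx_{n,i}\alpha_{\xi_i}(w_n^*)$, and the paper is slightly more explicit that the cocycle is admissible (via the remark after \cite[Definition 3.2]{KM}) before invoking \cite[Theorem 4.7]{KM}.
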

\begin{proof}
By the remark following \cite[Definition 3.2]{KM}, 
$(u_g)_g$ is admissible. 
It follows from \cite[Theorem 4.7]{KM} that 
we can find a sequence of unitaries $(x_n)_n$ in $A$ such that 
$x_n\alpha_g(x_n^*)\to u_g$ for all $g\in\Z^2$. 
We may assume $\lVert x_n\alpha_{\xi_i}(x_n^*)-u_{\xi_i}\rVert<1/2$ 
for any $n\in\N$ and $i=1,2$. 
For each $n\in\N$ and $i=1,2$, 
\[
r_{i,n}=\frac{1}{2\pi\sqrt{-1}}\tau(\log(x_n^*u_{\xi_i}\alpha_{\xi_i}(x_n)))
\]
is in $\tau(K_0(A))$, because 
$\Delta_\tau(x_n^*u_{\xi_i}\alpha_{\xi_i}(x_n))=\Delta_\tau(u_{\xi_i})=0$. 
By using Lemma \ref{UHF2} twice, one can find a unitary $y_n\in A$ such that 
\[
\lVert y_n-\alpha_{\xi_i}(y_n)\rVert
<\lVert x_n\alpha_{\xi_i}(x_n^*)-u_{\xi_i}\rVert+1/n
\]
and 
\[
\tau(\log(y_n\alpha_{\xi_i}(y_n^*)))=2\pi\sqrt{-1}r_{i,n}
\]
for each $i=1,2$. 
Then 
\[
\tau(\log(y_n^*x_n^*u_{\xi_i}\alpha_{\xi_i}(x_ny_n)))
=\tau(\log(x_n^*u_{\xi_i}\alpha_{\xi_i}(x_n)))
+\tau(\log(y_n^*\alpha_{\xi_i}(y_n)))=0. 
\]
Hence, by replacing $x_n$ with $x_ny_n$, 
we may assume $\tau(\log(x_n^*u_{\xi_i}\alpha_{\xi_i}(x_n)))=0$ 
for any $n\in\N$ and $i=1,2$. 
Set $w_n=x_n^*x_{n+1}$. 
We have $\lVert w_n-\alpha_{\xi_i}(w_n)\rVert\to0$ as $n\to\infty$ 
for each $i=1,2$. 
Moreover, 
\begin{align*}
\tau(\log(w_n\alpha_{\xi_i}(w_n^*)))
&=\tau(\log(x_n^*x_{n+1}\alpha_{\xi_i}(x_{n+1}^*x_n))) \\
&=\tau(\log(x_{n+1}\alpha_{\xi_i}(x_{n+1}^*x_n)x_n^*)) \\
&=\tau(\log(u_{\xi_i}^*x_{n+1}
\alpha_{\xi_i}(x_{n+1}^*x_n)x_n^*u_{\xi_i})) \\
&=\tau(\log(u_{\xi_i}^*x_{n+1}\alpha_{\xi_i}(x_{n+1}^*)))
+\tau(\log(\alpha_{\xi_i}(x_n)x_n^*u_{\xi_i})) \\
&=0. 
\end{align*}
Therefore Lemma \ref{3torus} applies and yields 
a sequence $(\tilde w_n)_n$ of unitaries in $C([0,1],A)$ such that 
$\tilde w_n(0)=1$, $\tilde w_n(1)=w_n$ and 
\[
\lim_{n\to\infty}\sup_{t\in[0,1]}
\lVert\tilde w_n(t)-\alpha_{\xi_i}(\tilde w_n(t))\rVert=0. 
\]
Define $v:[1,\infty)\to U(A)$ 
by $v_{n+t}=x_n\tilde w_n(t)$ for $n\in\N$ and $t\in[0,1]$. 
It is easy to see that $(v_t)_{t\in[1,\infty)}$ meets the requirement. 
\end{proof}

Recall that 
the unique tracial state on $\mathcal{Z}$ is denoted by $\omega$. 

\begin{thm}\label{ocneanu5}
Let $(\alpha,u)$ be a strongly outer cocycle action of $\Z^2$ 
on the Jiang-Su algebra $\mathcal{Z}$. 
If $\Delta_\omega(u(\xi_1,\xi_2)u(\xi_2,\xi_1)^*)=0$, then 
$(\alpha,u)$ is equivalent to a genuine action. 
\end{thm}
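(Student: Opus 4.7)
My plan is to adapt the ``reduction to UHF endpoints and interpolation via the dimension drop'' strategy of Lemma \ref{prestability} and Theorem \ref{stability} to the cocycle action setting, substituting Theorem \ref{ocneanu4} for Lemma \ref{UHF3} at the UHF endpoints. Since $\mathcal{Z}\cong\mathcal{Z}\otimes\mathcal{Z}$, Remark \ref{absorbZca} allows us to replace $(\alpha,u)$ on $\mathcal{Z}$ with $(\alpha\otimes\id,u\otimes1)$ on $\mathcal{Z}\otimes\mathcal{Z}$. Fix a unital copy of the dimension drop $Z$ inside the second $\mathcal{Z}$-factor, with endpoints the UHF algebras $B_0$ and $B_1$ of types $2^\infty$ and $3^\infty$, as in the proof of Lemma \ref{prestability}. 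Our goal is to construct unitaries $a,b\in\mathcal{Z}\otimes Z$ such that $\Ad a\circ(\alpha_{\xi_1}\otimes\id)$ and $\Ad b\circ(\alpha_{\xi_2}\otimes\id)$ commute on $\mathcal{Z}\otimes\mathcal{Z}$; the existence of such a pair is equivalent to $(\alpha\otimes\id,u\otimes1)$ being equivalent to a genuine action.

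For each $j\in\{0,1\}$, the restriction of the tensor-product cocycle action to $\mathcal{Z}\otimes B_j\cong B_j$ (UHF, since $B_j$ is $\mathcal{Z}$-stable) is strongly outer by the cocycle analog of Remark \ref{sotimesid}, and the hypothesis $\Delta_\omega(u(\xi_1,\xi_2)u(\xi_2,\xi_1)^*)=0$ transfers directly to the corresponding $\Delta_\tau$-vanishing. Theorem \ref{ocneanu4} therefore produces unitaries $a_j,b_j\in\mathcal{Z}\otimes B_j$ implementing the equivalence to a genuine action in each UHF endpoint algebra. Viewing everything inside $\mathcal{Z}\otimes B$ with $B=B_0\otimes B_1$, the pairs $(a_0,b_0)$ and $(a_1,b_1)$ determine commuting genuine $\Z^2$-actions $\gamma^0,\gamma^1$ whose generators are inner perturbations of each other, and whose ``difference'' $(a_1a_0^*,b_1b_0^*)$ is a $\gamma^0$-cocycle. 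By a preliminary normalization of $a_j,b_j$ using Lemma \ref{UHF2'} inside each $\mathcal{Z}\otimes B_j$, exactly as in the proof of Lemma \ref{UHF3}, we arrange $\Delta_\tau(a_1a_0^*)=\Delta_\tau(b_1b_0^*)=0$ for the unique trace $\tau$ on $\mathcal{Z}\otimes B$.

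The final step is to connect $(a_0,b_0)$ to $(a_1,b_1)$ by a continuous path whose values at $t=0$ and $t=1$ lie in $\mathcal{Z}\otimes B_0$ and $\mathcal{Z}\otimes B_1$ respectively, so that the path lies in $\mathcal{Z}\otimes Z$, and along which the commutation relation is preserved. For this we apply the unlabeled continuous family lemma immediately preceding this theorem to the $\gamma^0$-cocycle $(a_1a_0^*,b_1b_0^*)$ in the UHF algebra $\mathcal{Z}\otimes B$, obtaining a continuous family $v_t$ trivializing the cocycle; combining this with Lemma \ref{3torus} to handle boundary matching produces paths of the form $a(t):=a_0v_t$ and $b(t):=b_0v_t$ in $\mathcal{Z}\otimes C([0,1],B)$ with the required endpoint conditions and commuting perturbations throughout. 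The main obstacle will be simultaneously reconciling the $\Delta$-vanishing conditions at both endpoints so that both the continuous family lemma and Lemma \ref{3torus} can be applied for the two generators $\xi_1,\xi_2$ at once, while preserving along the path the exact commutation $b\alpha_{\xi_2}(a)w\alpha_{\xi_1}(b^*)a^*=1$ (with $w=u(\xi_2,\xi_1)u(\xi_1,\xi_2)^*$) produced by Theorem \ref{ocneanu4}; balancing these constraints is the core technical difficulty, and is the cocycle-action analog of the corresponding step in the proof of Theorem \ref{stability}.
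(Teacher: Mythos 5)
Your overall strategy coincides with the paper's: reduce to $(\alpha\otimes\id,u\otimes1)$ on $\mathcal{Z}\otimes\mathcal{Z}$ via Remark \ref{absorbZca}, trivialize the cocycle action over each UHF endpoint $\mathcal{Z}\otimes B_j$ of the dimension drop algebra using Theorem \ref{ocneanu4}, normalize the de la Harpe--Skandalis determinants of the trivializing unitaries, observe that their ``difference'' is a cocycle for the genuine action $\gamma^0$ coming from the $j=0$ endpoint, and trivialize that cocycle by a continuous family of unitaries so as to interpolate inside $\mathcal{Z}\otimes Z$. All of these ingredients are exactly the ones the paper uses.

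The one genuine gap is the final interpolation, which you yourself flag as the ``core technical difficulty.'' The formula $a(t)=a_0v_t$, $b(t)=b_0v_t$ does not work: the continuous family $(v_t)$ produced by the unlabeled lemma satisfies $v_t\gamma^0_g(v_t^*)\to w_{1,g}w_{0,g}^*$ as $t\to1$, but $v_t$ itself need not converge, so your path has no endpoint in $\mathcal{Z}\otimes B_1$; moreover $\Ad(a_0v_t)\circ(\alpha_{\xi_1}\otimes\id)$ and $\Ad(b_0v_t)\circ(\alpha_{\xi_2}\otimes\id)$ need not commute at intermediate $t$. The paper's resolution dissolves the difficulty you anticipate: writing $w_{j,g}$ for the trivializing unitaries (for all $g\in\Z^2$, not just the generators) and $\beta_g=\Ad w_{0,g}\circ(\alpha_g\otimes\id)$, one sets $\tilde w_g(t)=v_t\beta_g(v_t^*)w_{0,g}$ for $t\in[0,1)$ and $\tilde w_g(1)=w_{1,g}$. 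For fixed $t<1$ the perturbation by $\tilde w_g(t)$ is precisely $\Ad v_t\circ\beta_g\circ\Ad v_t^*$, so the exact identity $u(g,h)\otimes1=(\alpha_g\otimes\id)(\tilde w_h(t)^*)\tilde w_g(t)^*\tilde w_{gh}(t)$ holds identically in $t$ with no balancing of constraints along the path, while only the convergent product $v_t\beta_g(v_t^*)$ enters the definition, giving continuity and the correct value at $t=1$. There is also no need to invoke Lemma \ref{3torus} separately for ``boundary matching'': it is already consumed inside the proof of the continuous-family lemma, which handles both generators (indeed all of $\Z^2$) at once.
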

\begin{proof}
We can perturb $u(\cdot,\cdot)$ by a $\T$-valued coboundary 
and assume $\Delta_w(u(g,h))=0$ for any $g,h\in\Z^2$ 
(see Remark \ref{OlPedTak} below). 
Let $Z\subset\mathcal{Z}$, $B_0$, $B_1$ and $B$ be 
as in the proof of Lemma \ref{prestability}. 
We denote the unique trace on $\mathcal{Z}\otimes B$ by $\tau$. 
By Theorem \ref{ocneanu4}, for each $j=0,1$, 
$(\alpha\otimes\id,u\otimes1)$ on $\mathcal{Z}\otimes B_j$ is 
equivalent to a genuine action. 
Thus there exists a family of unitaries $(w_{j,g})_{g\in\Z^2}$ 
in $\mathcal{Z}\otimes B_j$ such that 
\[
u(g,h)\otimes1=(\alpha_g\otimes\id)(w_{j,h}^*)w_{j,g}^*w_{j,gh}
\]
for all $g,h\in\Z^2$. 
In Proposition \ref{ocneanu3}, it is easy to modify the unitaries $a,b$ 
so that $\Delta_\tau(a)=\Delta_\tau(b)=0$. 
In the proof of Theorem \ref{ocneanu4}, 
one can choose the unitary $v$ so that $\Delta_\tau(v)=0$. 
Therefore we may assume $\Delta_\tau(w_{j,g})=0$ for every $g\in\Z^2$. 
Define a $\Z^2$-action $\beta:\Z^2\curvearrowright\mathcal{Z}\otimes B$ 
by $\beta_g=\Ad w_{0,g}\circ(\alpha_g\otimes\id)$. 
Clearly $(w_{1,g}w_{0,g}^*)_{g\in\Z^2}$ is 
a $\beta$-cocycle in $\mathcal{Z}\otimes B$. 
It follows from the lemma above that 
there exists a continuous family of unitaries 
$(v_t)_{t\in[0,1)}$ in $\mathcal{Z}\otimes B$ 
such that $v_t\beta_g(v_t^*)\to w_{1,g}w_{0,g}^*$ as $t\to1$ 
for all $g\in\Z^2$. 
We may further assume $v_0=1$. 
Define $\tilde w_g\in U(Z)\subset U(\mathcal{Z})$ by 
\[
\tilde w_g(t)=\begin{cases}v_t\beta_g(v_t^*)w_{0,g}&t\in[0,1)\\
w_{1,g}&t=1. \end{cases}
\]
It is straightforward to check 
\[
u(g,h)\otimes1=(\alpha_g\otimes\id)(\tilde w_h^*)\tilde w_g^*\tilde w_{gh}
\]
for all $g,h\in\Z^2$, which means that 
$(\alpha\otimes\id,u\otimes1)$ on $\mathcal{Z}\otimes Z$ 
(and hence on $\mathcal{Z}\otimes\mathcal{Z}$) is 
equivalent to a genuine action. 
By Remark \ref{absorbZca}, 
$(\alpha,u)$ on $\mathcal{Z}$ is equivalent to 
$(\alpha\otimes\id,u\otimes1)$ on $\mathcal{Z}\otimes\mathcal{Z}$, 
and so the proof is completed. 
\end{proof}

\begin{rem}\label{OlPedTak}
By \cite[Proposition 3.2]{OPT}, 
the hypothesis $\Delta_\omega(u(\xi_1,\xi_2)u(\xi_2,\xi_1)^*)=0$ 
in the theorem above is equivalent to 
triviality of the cohomology class of $[\Delta_\omega(u(\cdot,\cdot))]$ 
in $H^2(\Z^2,\T)$, 
where the range of $\Delta_\omega$ is identified with $\T$. 
\end{rem}

\bigskip

\noindent
\textbf{Acknowledgments}\\
We would like to thank the referee for a number of helpful comments.


\begin{thebibliography}{99}
\bibitem{A}
D. Archey, 
\textit{Crossed product $C^*$-algebras by finite group actions 
with the projection free tracial Rokhlin property}, 
preprint. 
arXiv:0902.3324
\bibitem{BKR}
B. Blackadar, A. Kumjian and M. R\o rdam, 
\textit{Approximately central matrix units and 
the structure of non-commutative tori}, 
K-theory 6 (1992), 267--284. 
\bibitem{CP}
J. Cuntz and G. K. Pedersen, 
\textit{Equivalence and traces on $C^*$-algebras}, 
J. Funct. Anal. 33 (1979), 135--164. 
\bibitem{D}
K. R. Davidson, 
\textit{Almost commuting Hermitian matrices}, 
Math. Scand. 56 (1985), 222--240. 
\bibitem{ELPW}
S. Echterhoff, W. Lueck, N. C. Phillips and S. Walters, 
\textit{The structure of crossed products of irrational rotation algebras 
by finite subgroups of $SL_2(\Z)$}, 
J. Reine Angew. Math. 639 (2010), 173--221. 
arXiv:math/0609784
\bibitem{GL}
G. Gong and H. Lin, 
\textit{Classification of homomorphisms from $C(X)$ to 
simple $C^*$-algebras of real rank zero}, 
Acta Math. Sin. (Engl. Ser.) 16 (2000), 181--206. 
\bibitem{HS}
P. de la Harpe and G. Skandalis, 
\textit{D\'eterminant associ\'e \`a une trace sur une alg\'ebre de Banach}, 
Ann. Inst. Fourier (Grenoble) 34 (1984), 241--260. 
\bibitem{HW}
I. Hirshberg and W. Winter, 
\textit{Rokhlin actions and self-absorbing $C^*$-algebras}, 
Pacific J. Math. 233 (2007), 125--143. 
arXiv:math/0505356
\bibitem{IM}
M. Izumi and H. Matui, 
\textit{$\Z^2$-actions on Kirchberg algebras}, 
Adv. Math. 224 (2010), 355--400. 
arXiv:0902.0194
\bibitem{JS}
X. Jiang and H. Su, 
\textit{On a simple unital projectionless $C^*$-algebra}, 
Amer. J. Math. 121 (1999), 359--413. 
\bibitem{KM}
T. Katsura and H. Matui, 
\textit{Classification of uniformly outer actions of $\Z^2$ 
on UHF algebras}, 
Adv. Math. 218 (2008), 940--968. 
arXiv:0708.4073
\bibitem{K81CMP}
A. Kishimoto, 
\textit{Outer automorphisms and reduced crossed products of 
simple $C^*$-algebras}, 
Comm. Math. Phys. 81 (1981), 429--435. 
\bibitem{K95crelle}
A. Kishimoto, 
\textit{The Rohlin property for automorphisms of UHF algebras}, 
J. Reine Angew. Math. 465 (1995), 183--196. 
\bibitem{K96JFA}
A. Kishimoto, 
\textit{The Rohlin property for shifts on UHF algebras and 
automorphisms of Cuntz algebras}, 
J. Funct. Anal. 140 (1996), 100--123. 
\bibitem{K98JOT}
A. Kishimoto, 
\textit{Automorphisms of AT algebras with the Rohlin property}, 
J. Operator Theory 40 (1998), 277--294. 
\bibitem{K98JFA}
A. Kishimoto, 
\textit{Unbounded derivations in AT algebras}, 
J. Funct. Anal. 160 (1998), 270--311. 
\bibitem{K99IJM}
A. Kishimoto, 
\textit{Pairs of simple dimension groups}, 
Internat. J. Math. 10 (1999), 739--761. 
\bibitem{L01TAMS}
H. Lin, 
\textit{Tracially AF $C^*$-algebras}, 
Trans. Amer. Math. Soc. 353 (2001), 693--722. 
\bibitem{L01PLMS}
H. Lin, 
\textit{The tracial topological rank of $C^*$-algebras}, 
Proc. London Math. Soc. 83 (2001), 199--234. 
\bibitem{L04Duke}
H. Lin, 
\textit{Classification of simple $C^*$-algebras 
with tracial topological rank zero}, 
Duke Math. J. 125 (2004), 91--119. 
\bibitem{LN1}
H. Lin and Z. Niu, 
\textit{Lifting $KK$-elements, asymptotical unitary equivalence and 
classification of simple $C^*$-algebras}, 
Adv. Math. 219 (2008), 1729--1769. 
arXiv:0802.1484
\bibitem{LN2}
H. Lin and Z. Niu, 
\textit{The range of a class of 
classifiable separable simple amenable $C^*$-algebras}, 
J. Funct. Anal. 260 (2011), 1--29. 
arXiv:0808.3424
\bibitem{LO}
H. Lin and H. Osaka, 
\textit{The Rokhlin property and the tracial topological rank}, 
J. Funct. Anal. 218 (2005), 475--494. 
arXiv:math/0402094
\bibitem{M01}
H. Matui, 
\textit{Ext and OrderExt classes of certain automorphisms of $C^*$-algebras 
arising from Cantor minimal systems}, 
Canad. J. Math. 53 (2001), 325--354. 
\bibitem{M08}
H. Matui, 
\textit{Classification of outer actions of $\Z^N$ on $\mathcal{O}_2$}, 
Adv. Math. 217 (2008), 2872--2896. 
arXiv:0708.4074
\bibitem{M09}
H. Matui, 
\textit{$\Z$-actions on AH algebras and $\Z^2$-actions on AF algebras}, 
Comm. Math. Phys. 297 (2010), 529--551. 
arXiv:0907.2474
\bibitem{N1}
H. Nakamura, 
\textit{The Rohlin property for $\Z^2$-actions on UHF algebras}, 
J. Math. Soc. Japan 51 (1999), 583--612. 
\bibitem{N2}
H. Nakamura, 
\textit{Aperiodic automorphisms of nuclear purely infinite simple 
$C^*$-algebras}, 
Ergodic Theory Dynam. Systems 20 (2000), 1749--1765. 
\bibitem{NgWinter}
P. W. Ng and W. Winter, 
\textit{A note on subhomogeneous $C^*$-algebras}, 
C. R. Math. Acad. Sci. Soc. R. Can. 28 (2006), 91--96. 
arXiv:math/0601069
\bibitem{O}
A. Ocneanu, 
\textit{Actions of discrete amenable groups on von Neumann algebras}, 
Lecture Notes in Mathematics 1138, Springer-Verlag, Berlin, 1985. 
\bibitem{OPT}
D. Olesen, G. K. Pedersen and M. Takesaki, 
\textit{Ergodic actions of compact abelian groups}, 
J. Operator Theory 3 (1980), 237--269. 
\bibitem{OP1}
H. Osaka and N. C. Phillips, 
\textit{Stable and real rank for crossed products 
by automorphisms with the tracial Rokhlin property}, 
Ergodic Theory Dynam. Systems 26 (2006), 1579--1621. 
arXiv:math/0409168
\bibitem{OP2}
H. Osaka and N. C. Phillips, 
\textit{Furstenberg transformations on irrational rotation algebras}, 
Ergodic Theory Dynam. Systems 26 (2006), 1623--1651. 
arXiv:math/0409169
\bibitem{Pedersenbook}
G. K. Pedersen, 
\textit{$C*$-algebras and their automorphism groups}, 
London Mathematical Society Monographs, 14. 
Academic Press, Inc. London-New York, 1979. 
\bibitem{P}
N. C. Phillips, 
\textit{The tracial Rokhlin property 
for actions of finite groups on $C^*$-algebras}, 
preprint. 
arXiv:math/0609782
\bibitem{Rtext}
M. R\o rdam, 
\textit{Classification of nuclear, simple $C^*$-algebras}, 
Classification of nuclear $C^*$-algebras. Entropy in operator algebras, 
1--145. 
Encyclopaedia Math. Sci. 126, Springer, Berlin, 2002. 
\bibitem{R04IJM}
M. R\o rdam, 
\textit{The stable and the real rank of 
$\mathcal{Z}$-absorbing $C^*$-algebras}, 
Internat. J. Math. 15 (2004), 1065--1084. 
arXiv:math/0408020
\bibitem{RW}
M. R\o rdam and W. Winter, 
\textit{The Jiang-Su algebra revisited}, 
J. Reine. Angew. Math. 642 (2010), 129--155. 
arXiv:0801.2259
\bibitem{S1}
Y. Sato, 
\textit{Certain aperiodic automorphisms of 
unital simple projectionless $C^*$-algebras}, 
Internat. J. Math. 20 (2009), 1233-1261. 
arXiv:0807.4761
\bibitem{S2}
Y. Sato, 
\textit{A generalization of the Jiang-Su construction}, 
preprint. 
arXiv:0903.5286
\bibitem{S3}
Y. Sato, 
\textit{The Rohlin property for automorphisms of the Jiang-Su algebra}, 
J. Funct. Anal. 259 (2010), 453--476. 
arXiv:0908.0135
\bibitem{Toms0910}
A. S. Toms, 
\textit{$K$-theoretic rigidity and slow dimension growth}, 
preprint. 
arXiv:0910.2061
\bibitem{TW07TAMS}
A. S. Toms and W. Winter, 
\textit{Strongly self-absorbing $C^*$-algebras}, 
Trans. Amer. Math. Soc. 359 (2007), 3999--4029. 
arXiv:math/0502211
\bibitem{TW08CJM}
A. S. Toms and W. Winter, 
\textit{$\mathcal{Z}$-stable ASH algebras}, 
Canad. J. Math. 60 (2008), 703--720. 
arXiv:math/0508218
\bibitem{TW0903}
A. S. Toms and W. Winter, 
\textit{Minimal dynamics and $K$-theoretic rigidity: Elliott's conjecture}, 
preprint. 
arXiv:0903.4133
\bibitem{V}
D. Voiculescu, 
\textit{Remarks on the singular extension 
in the $C^*$-algebra of the Heisenberg group}, 
J. Operator Theory 5 (1981), 147--170. 
\bibitem{Winter07JFA}
W. Winter, 
\textit{Simple $C^*$-algebras with locally finite decomposition rank}, 
J. Funct. Anal. 243 (2007), 394--425. 
arXiv:math/0602617
\bibitem{Winter0708}
W. Winter, 
\textit{Localizing the Elliott conjecture 
at strongly self-absorbing $C^*$-algebras}, 
preprint. 
arXiv:0708.0283
\bibitem{Winter0806}
W. Winter, 
\textit{Decomposition rank and $\mathcal{Z}$-stability}, 
Invent. Math. 179 (2010), 229--301. 
arXiv:0806.2948
\bibitem{W1006}
W. Winter, 
\textit{Nuclear dimension and $\mathcal{Z}$-stability 
of pure $C^*$-algebras}, 
to appear in Invent. Math. 
arXiv:1006.2731
\end{thebibliography}
\end{document}